\newcommand{\N}{{\mathbb N}}
\newcommand{\Z}{{\mathbb Z}}
\newcommand{\R}{{\mathbb R}}
\newcommand{\C}{{\mathbb C}}
\newcommand{\bD}{{\mathbb D}}
\newcommand{\E}{{\mathbb E}}
\newcommand{\bL}{{\mathbb L}}
\newcommand{\bB}{{\mathbb B}}
\newcommand{\eps}{{\varepsilon}}
\newcommand{\tauetabar}{{(\underline{\tau},\underline{\eta})}}
\newcommand{\utau}{\underline{\tau}}
\newcommand{\ueta}{\underline{\eta}}
\newcommand{\bA}{{\mathbb A}}
\newcommand\cA{{\cal  A}}
\newcommand\cB{{\cal  B}}
\newcommand\cD{{\cal  D}}
\newcommand\cH{{\cal  H}}
\newcommand\cV{{\cal  V}}
\newcommand\cI{{\cal  I}}
\newcommand\cG{{\cal  G}}
\newcommand\cE{{\cal  E}}
\newcommand\cF{{\cal  F}}
\newcommand\cO{{\cal O}}
\newcommand\cU{{\mathcal U}}
\newcommand\cL{{\mathcal L}}
\newcommand{\opeg}{{\text{\rm Op}^{\varepsilon,\gamma}}}
\newcommand{\bfS}{{\bf S}}
\newcommand{\uzeta}{\underline{\zeta}}
\newcommand{\ua}{\underline{a}}
\newcommand{\X}{{\xi+\frac{k\, \beta}{\eps}}}
\newcommand\adots{\mathinner{\mkern2mu\raise1pt\hbox{.}
\mkern3mu\raise4pt\hbox{.}\mkern1mu\raise7pt\hbox{.}}}
\newcommand{\re}{{\rm Re }\, }
\newtheorem{theo}{Theorem}[section]
\newtheorem{prop}[theo]{Proposition}
\newtheorem{cor}[theo]{Corollary}
\newtheorem{lem}[theo]{Lemma}
\newtheorem{defn}[theo]{Definition}
\newtheorem{rem}[theo]{Remark}
\newtheorem{nota}[theo]{Notations}
\newtheorem{assumption}[theo]{Assumption}
\numberwithin{equation}{section}
 \title{Amplification of pulses in nonlinear geometric optics}
\author{Jean-Fran\c{c}ois {\sc Coulombel}\thanks{CNRS and Universit\'e de Nantes, Laboratoire de math\'ematiques 
Jean Leray (UMR CNRS 6629), 2 rue de la Houssini\`ere, BP 92208, 44322 Nantes Cedex 3, France. Email: 
{\tt jean-francois.coulombel@univ-nantes.fr}.},
%
%
Mark {\sc Williams}\thanks{University of North Carolina, Mathematics Department, CB 3250, Phillips Hall, 
Chapel Hill, NC 27599. USA. Email: {\tt williams@email.unc.edu}. Research of M.W. was partially supported by 
NSF grants number DMS-0701201 and DMS-1001616.}}
\begin{document}

\maketitle

\begin{abstract}
In this companion paper to our study of amplification of wavetrains \cite{CGW3}, we study weakly stable semilinear 
hyperbolic boundary value problems with pulse data. Here weak stability means that exponentially growing modes 
are absent, but the so-called uniform Lopatinskii condition fails at some boundary frequency in the hyperbolic region. 
As a consequence of this degeneracy there is again an amplification phenomenon: outgoing pulses of amplitude 
$O(\eps^2)$ and wavelength $\eps$ give rise to reflected pulses of amplitude $O(\eps)$, so the overall solution 
has amplitude $O(\eps)$. Moreover, the reflecting pulses emanate from a radiating pulse that propagates in the 
boundary along a characteristic of the Lopatinskii determinant.

In the case of $N\times N$ systems considered here, a single outgoing pulse produces on reflection a family of 
incoming pulses traveling at different group velocities. Unlike wavetrains, pulses do not interact to produce resonances 
that affect the leading order profiles. However, pulse interactions do affect lower order profiles and so these interactions 
have to be estimated carefully in the error analysis. Whereas the error analysis in the wavetrain case dealt with small 
divisor problems by approximating periodic profiles by trigonometric polynomials (which amounts to using a high 
frequency cutoff), in the pulse case we approximate decaying profiles with nonzero moments by profiles with zero 
moments (a low frequency cutoff). Unlike the wavetrain case, we are now able to obtain a rate of convergence in 
the limit describing convergence of approximate to exact solutions.
\end{abstract}

\tableofcontents

\section{Introduction}\label{intro}

\emph{\quad}In this paper we study the reflection of pulses in weakly stable semilinear hyperbolic boundary value 
problems. The problems are weakly stable in the sense that exponentially growing modes are absent, but the 
uniform Lopatinskii condition fails at a boundary frequency $\beta$ in the hyperbolic region\footnote{See Definition 
\ref{def1} and Assumption \ref{assumption3} for precise statements.} $\cH$. As a consequence of this degeneracy 
in the boundary conditions, there is an amplification phenomenon: boundary data of amplitude $\eps$ in the problem 
\eqref{2} below gives rise to a response of amplitude $O(1)$.

On $\overline {\mathbb{R}}^{d+1}_+ = \{x=(x',x_d)=(t,y,x_d)=(t,x''):x_d\geq 0\}$, consider the $N\times N$ semilinear 
hyperbolic boundary problem  for $v=v_\eps(x)$, where $\eps>0$\footnote{We usually suppress the subscript $\eps$.}:
\begin{align}\label{0}
\begin{split}
&(a)\;L(\partial)v+f_0(v)=0\\
&(b)\;\phi(v)=\eps^2 \, G(x',\frac{x'\cdot\beta}{\eps})\text{ on }x_d=0\\
&(c)\;v=0 \text{ and }G=0\text{ in }t<0,
\end{split}
\end{align}
where $L(\partial):=\partial_t+\sum^{d}_{j=1}B_j\partial_j$, the matrix $B_d$ is invertible, and both $f_0(v)$ and 
$\phi(v)$ vanish at $v=0$. The function $G(x',\theta_0)$ is assumed to have polynomial decay in $\theta_0$, and 
the frequency $\beta \in \R^d \setminus \{0\}$ is taken to be a boundary frequency at which the uniform Lopatinskii 
condition fails. A consequence of this failure is that the choice of the factor $\eps^2$ in \eqref{0}(b) corresponds to 
the weakly nonlinear regime for this problem. The leading order profile is coupled to the next order profile in the 
nonlinear system \eqref{14b} derived below.

Before proceeding we write the problem in an equivalent form that is better adapted to the boundary.
After multiplying the  first equation by  $(B_d)^{-1}$ we obtain
\begin{align*}
\begin{split}
&\tilde L(\partial)v+f(v)=0\\
&\phi(v)=\eps^2 \, G(x',\frac{x'\cdot\beta}{\eps})\text{ on }x_d=0\\
&v=0 \text{ and }G=0\text{ in }t<0,
\end{split}
\end{align*}
where we have set
\begin{align*}
\tilde L(\partial):=\partial_d+\sum^{d-1}_{j=0}A_j \, \partial_j \text{ with }A_j :=B_d^{-1}B_j 
\text{ for } j=0,\dots,d-1 \, ,\quad \partial_0 := \partial_t \, .
 \end{align*}
Setting $v=\eps u$  and writing  $f(v)=D(v)v$, $\phi(v)=\psi(v)v$, we get the problem for $u=u_\eps(x)$
\begin{align}\label{2}
\begin{split}
&(a)\;\tilde L(\partial)u+D(\eps u)u=0\\
&(b)\;\psi(\eps u)u=\eps \, G(x',\frac{x'\cdot\beta}{\eps})\text{ on }x_d=0\\
&(c)\;u=0 \text{ in }t<0.
\end{split}
\end{align}

For this problem we pose the two basic questions of rigorous nonlinear geometric optics:\\

(1)\;Does an exact solution $u_\eps$ of \eqref{2} exist for $\eps\in (0,1]$ on a fixed time interval $[0,T_0]$ 
independent of $\eps$?

(2)\;  Suppose the answer to the first question is yes.  If we let $u^{app}_\eps$ denote an approximate solution 
on $[0,T_0]$ constructed by the methods of nonlinear geometric optics (that is, solving eikonal equations for 
phases and suitable transport equations for profiles), how well does $u^{app}_\eps$ approximate $u_\eps$ for 
$\eps$ small? For example, is it true that
\begin{align}\label{2z}
\lim_{\eps\to 0}|u_\eps-u^{app}_\eps|_{L^\infty}\to 0 \, ?
\end{align}

The amplification phenomenon was studied in a formal way for several different quasilinear problems in 
\cite{AM,MA,MR}. In \cite{MR} amplification was studied in connection with Mach stem formation in reacting 
shock fronts, while \cite{AM} explored a connection to the formation of instabilities in compressible vortex sheets. 
Both papers derived equations for profiles using an ansatz that exhibited amplification; however, neither of the 
two questions posed above were addressed. The first rigorous amplification results were proved in \cite{CG} 
for \emph{linear} wavetrains. That paper provided positive answers to the above questions by making use of 
approximate solutions of high order, and showed in particular that the limit \eqref{2z} holds.

An amplification result for wavetrains in weakly stable \emph{semilinear} problems was proved in \cite{CGW3}, 
and in this paper we prove the analogous result for pulses. Our approximate solution has the form
\begin{equation}
\label{a10z}
u^{app}_\eps(x)= \cU^0 ( x,\theta_0,\xi_d)|_{\theta_0=\frac{\phi_0}{\eps},\xi_d=\frac{x_d}{\eps}},
\end{equation}
where\footnote{The notation in \eqref{a10z} and \eqref{a10y} is explained in sections \ref{assumptions} and \ref{mr}.}
\begin{align}\label{a10y}
\cU^0 \left( x,\theta_0,\xi_d \right) =\sum_{m\in\cI} \, \sum^{\nu_{k_m}}_{k=1}
\sigma_{m,k}\left(x,\theta_0+\omega_m\xi_d\right) \, r_{m,k}.
\end{align}
Here each pulse profile $\sigma_{m,k}(x,\theta)$ is polynomially decaying in $\theta\in\R$, rather than periodic in 
$\theta$ as in the wavetrain case. From the explicit and rather simple equations satisfied by the profiles appearing 
in $\cU^0$, we read off the main amplification phenomenon for solutions of \eqref{2}: in $t>0$ pulses of amplitude 
$O(1)$, which emanate from a pulse in the boundary that propagates along a characteristic of the Lopatinskii 
determinant, radiate into the interior at distinct group velocities determined by $\beta$.

In the case of $N\times N$ systems considered here, a single outgoing pulse produces on reflection a family of 
incoming pulses traveling at different group velocities. Unlike wavetrains, pulses do not interact to produce resonances 
that affect the leading order profiles. One effect of this is that we do not need to use Nash-Moser iteration to construct 
the leading pulse profiles as we did in the wavetrain case. The construction of exact solutions is again based on 
the study of an associated singular system \eqref{15p}. This system is solved by a Nash-Moser iteration argument 
based on a tame estimate for the linearized singular system \eqref{i33a}. The proof of that estimate and the 
Nash-Moser iteration can be carried over {\it verbatim} from the corresponding argument in \cite{CGW3} for 
wavetrains\footnote{Thus, we do not repeat those (lengthy) arguments here.}. The estimate for the linearized 
singular system also plays a key role in the error analysis here.

The main differences between the analysis of this paper and that of \cite{CGW3} occur in the study of the profile 
equations and in the error analysis. Although pulses do not produce resonances, pulse interactions do affect lower 
order profiles and so these interactions have to be estimated carefully in the error analysis. Unlike the wavetrain 
case, we are now able to obtain a rate of convergence in the limit \eqref{2z}. Whereas the error analysis in the 
wavetrain case dealt with small divisor problems by approximating periodic profiles by trigonometric polynomials 
(which amounts to using a high frequency cutoff), in the pulse case we approximate decaying profiles with nonzero 
moments by profiles with zero moments (using a low frequency cutoff)\footnote{We borrow the idea of introducing 
low frequency cutoffs from \cite{AR2} where single phase pulses in free space are studied.}. We provide a more 
detailed summary of the argument after stating our main assumptions.

\subsection{Assumptions}
\label{assumptions}

We make the following hyperbolicity assumption on the system \eqref{0}:

\begin{assumption}
\label{assumption1}
There exist an integer $q \ge 1$, some real functions $\lambda_1,\dots,\lambda_q$ that are analytic on $\R^d
\setminus \{ 0 \}$ and homogeneous of degree $1$, and there exist some positive integers $\nu_1,\dots,\nu_q$
such that:
\begin{equation*}
\forall \, \xi=(\xi_1,\dots,\xi_d) \in \R^d \setminus \{ 0 \} \, ,\quad
\det \Big[ \tau \, I+\sum_{j=1}^d \xi_j \, B_j \Big] =\prod_{k=1}^q \big( \tau+\lambda_k(\xi) \big)^{\nu_k} \, .
\end{equation*}
Moreover the eigenvalues $\lambda_1(\xi),\dots,\lambda_q(\xi)$ are semi-simple (their algebraic multiplicity
equals their geometric multiplicity) and satisfy $\lambda_1(\xi)<\dots<\lambda_q(\xi)$ for all $\xi \in \R^d
\setminus \{ 0\}$.
\end{assumption}

\noindent For simplicity, we restrict our analysis to noncharacteristic boundaries and therefore make the
following:

\begin{assumption}
\label{assumption2}
The matrix $B_d$ is invertible and the matrix $B:=\psi(0)$ has maximal rank, its rank $p$ being equal to the
number of positive eigenvalues of $B_d$ (counted with their multiplicity). Moreover, the integer $p$ satisfies
$1 \le p \le N-1$.
\end{assumption}

In the normal modes analysis for  \eqref{2}, one first performs a Laplace transform in the time variable $t$ and
a Fourier transform in the tangential space variables $y$.  We let $\tau-i\, \gamma \in \C$ and $\eta \in \R^{d-1}$
denote the dual variables of $t$ and $y$.  We introduce the symbol
\begin{equation*}
{\mathcal A}(\zeta):= -i \, B_d^{-1} \left( (\tau-i\gamma) \, I +\sum_{j=1}^{d-1} \eta_j \, B_j \right)
\, ,\quad \zeta:=(\tau-i\gamma,\eta) \in \C \times \R^{d-1} \, .
\end{equation*}
For future use, we also define the following sets of frequencies:
\begin{align*}
& \Xi := \Big\{ (\tau-i\gamma,\eta) \in \C \times \R^{d-1} \setminus (0,0) : \gamma \ge 0 \Big\} \, ,
& \Sigma := \Big\{ \zeta \in \Xi : \tau^2 +\gamma^2 +|\eta|^2 =1 \Big\} \, ,\\
& \Xi_0 := \Big\{ (\tau,\eta) \in \R \times \R^{d-1} \setminus (0,0) \Big\} = \Xi \cap \{ \gamma = 0 \} \, ,
& \Sigma_0 := \Sigma \cap \Xi_0 \, .
\end{align*}

Two key objects in our analysis are the hyperbolic region and the glancing set that are defined as follows:

\begin{defn}
\label{def1}
\begin{itemize}
 \item The hyperbolic region ${\mathcal H}$ is the set of all $(\tau,\eta) \in \Xi_0$ such that the matrix
       ${\mathcal A}(\tau,\eta)$ is diagonalizable with purely imaginary eigenvalues.

 \item Let ${\bf G}$ denote the set of all $(\tau,\xi) \in \R \times \R^d$ such that $\xi \neq 0$ and there exists
       an integer $k \in \{1,\dots,q\}$ satisfying:
\begin{equation*}
\tau + \lambda_k(\xi) = \dfrac{\partial \lambda_k}{\partial \xi_d} (\xi) = 0 \, .
\end{equation*}
If $\pi ({\bf G})$ denotes the projection of ${\bf G}$ on the $d$ first coordinates (in other words $\pi (\tau,\xi)
=(\tau,\xi_1,\dots,\xi_{d-1})$ for all $(\tau,\xi)$), the glancing set ${\mathcal G}$ is ${\mathcal G} :=
\pi ({\bf G}) \subset \Xi_0$.
\end{itemize}
\end{defn}

\noindent We recall the following result that is due to Kreiss \cite{K} in the strictly hyperbolic case (when all integers
$\nu_j$ in Assumption \ref{assumption1} equal $1$) and to M\'etivier \cite{Met} in our more general framework:

\begin{prop}[\cite{K,Met}]
\label{thm1}
Let Assumptions \ref{assumption1} and \ref{assumption2} be satisfied. Then for all $\zeta \in \Xi \setminus
\Xi_0$, the matrix ${\mathcal A}(\zeta)$ has no purely imaginary eigenvalue and its stable subspace $\E^s
(\zeta)$ has dimension $p$. Furthermore, $\E^s$ defines an analytic vector bundle over $\Xi \setminus \Xi_0$
that can be extended as a continuous vector bundle over $\Xi$.
\end{prop}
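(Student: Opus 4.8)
The plan is to establish the three assertions of Proposition \ref{thm1} in turn: absence of purely imaginary eigenvalues of ${\mathcal A}(\zeta)$ for $\zeta \in \Xi \setminus \Xi_0$ together with $\dim \E^s(\zeta) = p$; analyticity of the bundle $\E^s$ over $\Xi \setminus \Xi_0$; and finally the continuous extension across $\Xi_0$, which is the real substance of the statement. For the first point, suppose $i\,\xi_d \in i\,\R$ were an eigenvalue of ${\mathcal A}(\zeta)$ for some $\zeta = (\tau - i\gamma,\eta)$ with $\gamma > 0$. Unwinding the definition of ${\mathcal A}$ and multiplying the eigenvalue relation by $i\,B_d$ shows that $\tau - i\gamma$ is a root of $z \mapsto \det\bigl[z\,I + \sum_{j=1}^{d-1}\eta_j B_j + \xi_d B_d\bigr]$; since $\gamma > 0$ the case $(\eta,\xi_d) = 0$ is excluded, so by Assumption \ref{assumption1} we get $\tau - i\gamma = -\lambda_k(\eta,\xi_d)$ for some $k$ with $(\eta,\xi_d) \in \R^d \setminus \{0\}$. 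As $\lambda_k$ is real-valued, taking imaginary parts forces $\gamma = 0$, a contradiction. Hence for $\gamma > 0$ the spectrum of ${\mathcal A}(\zeta)$ is split by $i\,\R$, and $\dim \E^s(\zeta)$ — the total algebraic multiplicity of the eigenvalues in $\{\re z < 0\}$ — is locally constant on the connected set $\{\gamma > 0\}$, hence constant. I would evaluate it at $\zeta = (-i\gamma,0)$, where ${\mathcal A} = -\gamma\,B_d^{-1}$ has eigenvalues $-\gamma/\mu$ as $\mu$ ranges over the real nonzero eigenvalues of $B_d$; by Assumption \ref{assumption2} ($B_d$ has $p$ positive and $N-p$ negative eigenvalues counted with multiplicity) exactly $p$ of these lie in $\{\re z < 0\}$, so $\dim \E^s \equiv p$.

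Analyticity of $\E^s$ on $\Xi \setminus \Xi_0$ is then a routine consequence of the spectral gap just established: $\zeta \mapsto {\mathcal A}(\zeta)$ is holomorphic (in fact affine), and near any $\underline\zeta \in \Xi \setminus \Xi_0$ one fixes a contour $\Gamma \subset \{\re z < 0\}$ enclosing all stable eigenvalues and sets
\begin{equation*}
\pi^s(\zeta) := \frac{1}{2\pi i}\oint_\Gamma \bigl(z\,I - {\mathcal A}(\zeta)\bigr)^{-1}\,dz,
\end{equation*}
which is holomorphic in $\zeta$ and equals the spectral projector onto $\E^s(\zeta)$. This exhibits $\E^s$ as a rank-$p$ analytic subbundle of $\Xi \setminus \Xi_0 \times \C^N$. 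By the degree-$0$ homogeneity of $\E^s$ (inherited from the degree-$1$ homogeneity of ${\mathcal A}$) one may, if a compact base is convenient, work on the sphere $\Sigma \setminus \Sigma_0$ instead.

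The delicate part is the continuous extension of $\E^s$ up to $\Xi_0$, i.e. to $\gamma = 0$, where eigenvalues of ${\mathcal A}$ may sit on $i\,\R$; this is exactly the Kreiss/M\'etivier theorem, and I would either invoke \cite{K,Met} directly or reprove it by the block-structure method, localizing near a base point $\underline\zeta_0 = (\underline\tau_0,\underline\eta_0) \in \Xi_0$ and classifying it. If $\underline\zeta_0$ is elliptic (${\mathcal A}(\underline\zeta_0)$ has no eigenvalue on $i\,\R$), the Dunford projector above extends analytically with no change. If $\underline\zeta_0 \in {\mathcal H} \setminus {\mathcal G}$, then ${\mathcal A}(\underline\zeta_0)$ is diagonalizable with semi-simple purely imaginary eigenvalues $i\,\underline\xi_{d,\ell}$, each $\underline\xi_{d,\ell}$ a simple root in $\xi_d$ of $\underline\tau_0 + \lambda_k(\underline\eta_0,\cdot)$ with $\partial\lambda_k/\partial\xi_d \neq 0$; perturbing in $\gamma$ one finds $\re\bigl(i\,\xi_{d,\ell}(\zeta)\bigr) = -\gamma\,\bigl(\partial\lambda_k/\partial\xi_d\bigr)^{-1} + O(\gamma^2)$, so the associated eigenspace joins $\E^s$ or its complement according to the sign of the group velocity $\partial\lambda_k/\partial\xi_d$, and the limit of $\E^s$ is the direct sum of the "incoming" eigenspaces.

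The genuine obstacle is the behavior at glancing points of ${\mathcal G}$ and at mixed elliptic/hyperbolic/glancing frequencies, where ${\mathcal A}(\underline\zeta_0)$ carries Jordan blocks on $i\,\R$ coming from multiple roots in $\xi_d$. There one invokes Kreiss' block reduction (the strictly hyperbolic case, all $\nu_k = 1$) and M\'etivier's extension of it under Assumption \ref{assumption1} (semi-simple eigenvalues, possibly with $\nu_k > 1$): this produces a neighborhood of $\underline\zeta_0$ and a holomorphic change of basis conjugating ${\mathcal A}$ to a block-diagonal form whose blocks are of four standard types, and for each block type one checks directly that the stable subspace extends continuously; reassembling the blocks yields the continuous extension of $\E^s$ over all of $\Xi$. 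Controlling the Puiseux-type expansions of the eigenvalues of ${\mathcal A}$ near such multiple roots, and verifying that the stable/unstable splitting nevertheless has a continuous limit despite the loss of diagonalizability at $\gamma = 0$, is where the real work lies; everything else is bookkeeping.
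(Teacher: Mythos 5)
The paper does not actually prove Proposition~\ref{thm1}: it recalls it from Kreiss \cite{K} (strictly hyperbolic case) and M\'etivier \cite{Met} (the constantly hyperbolic case of Assumption~\ref{assumption1}), so there is no in-paper proof to compare against. Your sketch is sound and is in line with the standard treatment those references give: the reduction of the eigenvalue equation for ${\mathcal A}(\zeta)$ to the dispersion relation and the resulting spectral split for $\gamma>0$ are correct, the dimension count at $\zeta=(-i\gamma,0)$ via the eigenvalues $-\gamma/\mu_j$ of $-\gamma\,B_d^{-1}$ is correct, the Dunford--Riesz projector argument gives analyticity on $\Xi\setminus\Xi_0$, and the expansion $\re\bigl(i\xi_{d,\ell}(\zeta)\bigr) = -\gamma\,(\partial\lambda_k/\partial\xi_d)^{-1}+O(\gamma^2)$ at non-glancing hyperbolic points has the right sign (incoming group velocity $\Rightarrow$ stable). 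You rightly identify the continuous extension across $\Xi_0$ at glancing and mixed frequencies as the genuine content, and deferring there to Kreiss' block-structure lemma and M\'etivier's extension of it is exactly what the paper itself does by citing \cite{K,Met}; there is no gap in what you have written, only an honest black box where the paper places one too.
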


\noindent For all $(\tau,\eta) \in \Xi_0$, we let $\E^s(\tau,\eta)$ denote the continuous extension of $\E^s$ to the
point $(\tau,\eta)$.  The analysis in \cite{Met} shows that away from the glancing set ${\mathcal G} \subset \Xi_0$,
$\E^s(\zeta)$ depends analytically on $\zeta$, and the hyperbolic region ${\mathcal H}$ does not contain any
glancing point.

To treat the case when the boundary operator in \eqref{2}(b) is independent of $u$, meaning $\psi(\eps u)
\equiv \psi(0) =:B$, we make the following \emph{weak stability assumption} on the problem $(L(\partial),B)$.

\begin{assumption}
\label{assumption3}
\begin{itemize}
 \item For all $\zeta \in \Xi \setminus \Xi_0$, $\text{\rm Ker} B \cap \E^s (\zeta) = \{ 0\}$.

 \item The set $\Upsilon_0 := \{ \zeta \in \Sigma_0 : \text{\rm Ker} B \cap \E^s (\zeta) \neq \{ 0\} \}$ is
       nonempty and included in the hyperbolic region ${\mathcal H}$.
       
 \item For all $\underline{\zeta} \in \Upsilon_0$, there exists a neighborhood ${\mathcal V}$ of $\underline{\zeta}$
       in $\Sigma$, a real valued ${\mathcal C}^\infty$ function $\sigma$ defined on ${\mathcal V}$, a basis
       $E_1(\zeta),\dots,E_p(\zeta)$ of $\E^s(\zeta)$ that is of class ${\mathcal C}^\infty$ with respect to
       $\zeta \in {\mathcal V}$, and a matrix $P(\zeta) \in \text{\rm GL}_p (\C)$ that is of class
       ${\mathcal C}^\infty$ with respect to $\zeta \in {\mathcal V}$, such that
\begin{equation*}
\forall \, \zeta \in {\mathcal V} \, ,\quad B \, \begin{pmatrix}
E_1(\zeta) & \dots & E_p(\zeta) \end{pmatrix}
= P(\zeta) \, \text{\rm diag } \big( \gamma +i\, \sigma (\zeta),1,\dots,1 \big) \, .
\end{equation*}
\end{itemize}
\end{assumption}

\noindent For comparison and later reference we recall the following definition.

\begin{defn}[\cite{K}]
\label{ustable}
As before let $p$ be the number of positive eigenvalues of $B_d$. The problem $(\tilde L(\partial),B)$ is said to
be \emph{uniformly stable} or to satisfy the \emph{uniform Lopatinskii condition} if
\begin{equation*}
B \, : \, \E^s (\zeta) \longrightarrow \C^p
\end{equation*}
is an isomorphism for all $\zeta\in \Sigma$.
\end{defn}

\begin{rem}
\label{careful}
\textup{Observe that if $(\tilde L(\partial),B)$ satisfies the uniform Lopatinskii condition, continuity implies that this
condition still holds for $(\tilde L(\partial),B+\dot\psi)$, where $\dot\psi$ is any sufficiently small perturbation of $B$.
Hence the uniform Lopatinskii condition is a convenient framework for nonlinear perturbation. The analogous
statement may not be true when $(\tilde L(\partial),B)$ is only weakly stable. Remarkably, weak stability persists
under perturbation in the so-called WR class exhibited in \cite{BRSZ}, and Assumption \ref{assumption3}
above is a convenient equivalent definition of the WR class (see \cite[Appendix B]{CG} or \cite[Chapter 8]{BS}). 
In order to handle general nonlinear boundary conditions as in \eqref{2} we shall strengthen Assumption 
\ref{assumption3} in Assumption \ref{nonlinbc} below.}
\end{rem}

\textbf{Boundary and interior phases.}
We consider a planar real phase $\phi_0$ defined on the boundary:
\begin{equation}
\label{phasebord}
\quad \phi_0(t,y) :=
\underline{\tau} \, t +\underline{\eta} \cdot y \, ,\quad \tauetabar \in \Xi_0 \, .
\end{equation}
As follows from earlier works, see e.g. \cite{MA}, oscillations or pulses on the boundary associated with the 
phase $\phi_0$ give rise to oscillations or pulses in the interior associated with some planar phases $\phi_m$. 
These phases are characteristic for the hyperbolic operator $L(\partial)$ and their trace on the boundary 
$\{ x_d=0\}$ equals $\phi_0$. For now we make the following:

\begin{assumption}
\label{a8}
The phase $\phi_0$ defined by \eqref{phasebord} satisfies $\tauetabar \in \Upsilon_0$. In particular
$\tauetabar \in {\mathcal H}$.
\end{assumption}

\noindent Thanks to Assumption \ref{a8}, we know that the matrix ${\mathcal A} \tauetabar$ is
diagonalizable with purely imaginary eigenvalues. These eigenvalues are denoted $i\, {\omega}_1,
\dots,i\, {\omega}_M$, where the ${\omega}_m$'s are real and pairwise distinct. The
${\omega}_m$'s are the roots (and all the roots are real) of the dispersion relation:
\begin{equation*}
\det \Big[ \underline{\tau} \, I+\sum_{j=1}^{d-1} \underline{\eta}_j \, B_j +\omega \, B_d \Big] = 0 \, .
\end{equation*}
To each root ${\omega}_m$ there corresponds a unique integer $k_m \in \{ 1,\dots,q\}$ such that
$\underline{\tau} + \lambda_{k_m} (\underline{\eta},{\omega}_m)=0$. We can then define the following
real\footnote{If $\tauetabar$ does not belong to the hyperbolic region ${\mathcal H}$, some of the phases
$\varphi_m$ may be complex, see e.g. \cite{H,W2,Le,Mar}. Moreover, glancing phases introduce a new
scale $\sqrt{\eps}$ as well as boundary layers.} phases and their associated group velocities:
\begin{equation}
\label{phases}
\forall \, m =1,\dots,M \, ,\quad \phi_m (x):= \phi_0(t,y)+{\omega}_m \, x_d \, ,\quad
{\bf v}_m := \nabla \lambda_{k_m} (\underline{\eta},{\omega}_m) \, .
\end{equation}
Let us observe that each group velocity ${\bf v}_m$ is either incoming or outgoing with respect to the space
domain $\R^d_+$: the last coordinate of ${\bf v}_m$ is nonzero. This property holds because $\tauetabar$
does not belong to the glancing set ${\mathcal G}$. We can therefore adopt the following classification:

\begin{defn}
\label{def2}
The phase $\phi_m$ is incoming if the group velocity ${\bf v}_m$ is incoming (that is, when $\partial_{\xi_d}
\lambda_{k_m} (\underline{\eta},{\omega}_m)>0$), and it is outgoing if the group velocity ${\bf v}_m$
is outgoing ($\partial_{\xi_d} \lambda_{k_m} (\underline{\eta},{\omega}_m) <0$).
\end{defn}

\noindent In all that follows, we let ${\mathcal I}$ denote the set of indices $m \in \{ 1,\dots,M\}$ such
that $\phi_m$ is an incoming phase, and ${\mathcal{O}}$ denote the set of indices $m \in \{ 1,\dots,M\}$ such
that $\phi_m$ is an outgoing phase.   If $p\geq 1$, then $\cI$ is nonempty, while if $p\leq N-1$, $\cO$ is
nonempty (see Lemma \ref{lem1} below).  We  will use the notation:
\begin{align*}
&L(\tau,\xi) := \tau \, I +\sum_{j=1}^d \xi_j \, B_j \, ,\; \;
\tilde L(\beta,\omega_m):=\omega_m \, I +\sum_{k=0}^{d-1} \beta_k \, A_k,\\
&\;\beta:=(\utau,\ueta), \;\;x'=(t,y),\;\; \phi_0(x')=\beta\cdot x'.
\end{align*}
For each phase $\phi_m$, ${\rm d}\phi_m$ denotes the differential of the function $\phi_m$ with respect to its
argument $x=(t,y,x_d)$. It follows from Assumption \ref{assumption1} that the eigenspace of ${\mathcal A}
(\beta)$ associated with the eigenvalue $i\, {\omega}_m$ coincides with the kernel of $L({\rm d}\phi_m)$ and has 
dimension $\nu_{k_m}$. The following result is a direct consequence of the diagonalizability of ${\mathcal A} 
(\beta)$, see \cite{CG} for the proof:

\begin{lem}
\label{lem2}
The space $\C^N$ admits the decomposition:
\begin{equation}
\label{decomposition2}
\C^N = \oplus_{m=1}^M \, \ker L({\rm d}\phi_m)
\end{equation}
and each vector space in \eqref{decomposition2} admits a basis of real vectors. If we let $P_1,\dots,P_M$ denote
the projectors associated with the decomposition \eqref{decomposition2}, then there holds $\text{\rm Im } \tilde L 
({\rm d}\phi_m) = \ker P_m$ for all $m=1,\dots,M$.
\end{lem}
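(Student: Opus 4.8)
The plan is to reduce all three assertions to the spectral theory of the matrix ${\mathcal A}(\beta)$, which by Assumption~\ref{a8} is diagonalizable with pairwise distinct, purely imaginary eigenvalues $i\,{\omega}_1,\dots,i\,{\omega}_M$. The first step is to set up the algebraic dictionary between $\tilde L({\rm d}\phi_m)$ and ${\mathcal A}(\beta)$. Since ${\rm d}\phi_m=(\underline\tau,\underline\eta,{\omega}_m)=(\beta,{\omega}_m)$ and $A_j=B_d^{-1}B_j$ with $A_0=B_d^{-1}$ (because $B_0=I$), one has
\begin{align*}
\tilde L({\rm d}\phi_m)=B_d^{-1}L({\rm d}\phi_m)&={\omega}_m\,I+B_d^{-1}\Big(\underline\tau\,I+\sum_{j=1}^{d-1}\underline\eta_j\,B_j\Big)\\
&={\omega}_m\,I+i\,{\mathcal A}(\beta)=i\,\big({\mathcal A}(\beta)-i\,{\omega}_m\,I\big).
\end{align*}
Because $B_d$ is invertible and $i\neq 0$, this yields $\ker L({\rm d}\phi_m)=\ker\tilde L({\rm d}\phi_m)=\ker\big({\mathcal A}(\beta)-i\,{\omega}_m\,I\big)$ and $\text{\rm Im}\,\tilde L({\rm d}\phi_m)=\text{\rm Im}\big({\mathcal A}(\beta)-i\,{\omega}_m\,I\big)$; the first identification is exactly the one already recalled above the statement, and that eigenspace has dimension $\nu_{k_m}$.

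Given this, the decomposition \eqref{decomposition2} is immediate: diagonalizability of ${\mathcal A}(\beta)$ says that $\C^N$ is the direct sum of its eigenspaces, which are precisely the spaces $\ker L({\rm d}\phi_m)$. Consequently the projectors $P_1,\dots,P_M$ associated with \eqref{decomposition2} are the spectral projectors of ${\mathcal A}(\beta)$, and in particular $\ker P_m=\bigoplus_{m'\neq m}\ker L({\rm d}\phi_{m'})$. For the real-basis claim I would use that the matrix $L({\rm d}\phi_m)=\underline\tau\,I+\sum_{j=1}^{d-1}\underline\eta_j\,B_j+{\omega}_m\,B_d$ has real entries: writing $v=a+i\,b$ with $a,b\in\R^N$, the equation $L({\rm d}\phi_m)v=0$ forces $L({\rm d}\phi_m)a=L({\rm d}\phi_m)b=0$, so $\ker L({\rm d}\phi_m)$ is the complexification of its real kernel and any $\R$-basis of the latter is a $\C$-basis of the former. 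It remains to identify $\text{\rm Im}\,\tilde L({\rm d}\phi_m)$ with $\ker P_m$: by the dictionary this amounts to $\text{\rm Im}\big({\mathcal A}(\beta)-i\,{\omega}_m\,I\big)=\bigoplus_{m'\neq m}\ker\big({\mathcal A}(\beta)-i\,{\omega}_{m'}\,I\big)$, which is the standard fact that for a diagonalizable operator $T$ with distinct eigenvalues $\mu_m$ the map $T-\mu_m\,I$ kills the $\mu_m$-eigenspace and restricts to an isomorphism of every other eigenspace $E_{m'}$ (it acts there as the nonzero scalar $\mu_{m'}-\mu_m$), so that $\text{\rm Im}(T-\mu_m\,I)=\bigoplus_{m'\neq m}E_{m'}$; taking $T={\mathcal A}(\beta)$ and $E_{m'}=\ker L({\rm d}\phi_{m'})$ finishes the proof.

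As the excerpt itself signals, there is no genuine obstacle here --- the statement is ``a direct consequence of the diagonalizability of ${\mathcal A}(\beta)$''. The only points that deserve a line of care are the signs in the identity $\tilde L({\rm d}\phi_m)=i\,({\mathcal A}(\beta)-i\,{\omega}_m\,I)$ (equivalently, checking $B_0=I$ and $A_j=B_d^{-1}B_j$ so that $\sum_{j=0}^{d-1}\beta_j\,A_j=i\,{\mathcal A}(\beta)$), and the elementary identification of $\ker L({\rm d}\phi_m)$ with the complexification of its real kernel that underlies the real-basis statement.
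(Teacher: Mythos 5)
Your proof is correct and follows exactly the route the paper indicates — reducing all three claims to the diagonalizability of ${\mathcal A}(\beta)$ via the identity $\tilde L({\rm d}\phi_m) = B_d^{-1}L({\rm d}\phi_m) = i\big({\mathcal A}(\beta) - i\omega_m I\big)$ — which is precisely what the paper means by "a direct consequence of the diagonalizability of ${\mathcal A}(\beta)$" (deferring the details to \cite{CG}). Your sign-and-normalization check ($B_0=I$, $A_j=B_d^{-1}B_j$, $i{\mathcal A}(\beta)=\sum_{j=0}^{d-1}\beta_j A_j$) and the observation that $\ker L({\rm d}\phi_m)$ is the complexification of a real kernel because $L({\rm d}\phi_m)$ has real entries are exactly the right points to verify.
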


The following well-known lemma, whose proof is also recalled in \cite{CG}, gives a useful decomposition of $\E^s$ 
in the hyperbolic region.

\begin{lem}
\label{lem1}
The stable subspace $\E^s(\beta)$ admits the decomposition:
\begin{equation}
\label{decomposition1}
\E^s (\beta) = \oplus_{m \in {\mathcal I}} \, \ker L({\rm d} \phi_m) \, ,
\end{equation}
and each vector space in the decomposition \eqref{decomposition1} admits a basis of real vectors.
\end{lem}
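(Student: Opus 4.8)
The plan is to derive the decomposition of $\E^s(\beta)$ from a perturbation analysis of the spectrum of $\mathcal{A}(\zeta)$ as $\zeta=(\utau-i\gamma,\ueta)$ is moved into the region $\gamma>0$, and then to let $\gamma\to 0^+$ using the continuity of the bundle $\E^s$ near $\beta$. I first recall what is already known on the boundary: since $\beta=\tauetabar\in\cH$ (Assumption \ref{a8}), the matrix $\mathcal{A}(\beta)$ is diagonalizable with pairwise distinct purely imaginary eigenvalues $i\,\omega_1,\dots,i\,\omega_M$, and the eigenspace attached to $i\,\omega_m$ coincides with $\ker L({\rm d}\phi_m)$, of dimension $\nu_{k_m}$. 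Hence $\C^N=\oplus_{m=1}^M\ker L({\rm d}\phi_m)$ and $\sum_{m=1}^M\nu_{k_m}=N$; I write $P_m$ for the associated eigenprojector of $\mathcal{A}(\beta)$ onto $\ker L({\rm d}\phi_m)$.

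Next I localize the spectrum for $\gamma>0$. For $\zeta$ close to $\beta$ with $\gamma>0$, Proposition \ref{thm1} tells us that $\mathcal{A}(\zeta)$ has no purely imaginary eigenvalue, while by continuity all of its eigenvalues lie in small, pairwise disjoint disks $D_m$ centered at $i\,\omega_m$. Let $P_m(\zeta)$ denote the Riesz projector onto the sum of generalized eigenspaces associated with the eigenvalues in $D_m$; then $\zeta\mapsto P_m(\zeta)$ is continuous, $\dim\,\mathrm{Im}\,P_m(\zeta)=\nu_{k_m}$, and $P_m(\zeta)\to P_m$ as $\zeta\to\beta$. By Assumption \ref{assumption1} (extending the identity there holomorphically in the first variable), a number $\mu$ is an eigenvalue of $\mathcal{A}(\zeta)$ if and only if $\xi_d:=-i\,\mu$ is a root of
\begin{equation*}
\det\big((\tau-i\gamma)\,I+\sum_{j=1}^{d-1}\eta_j\,B_j+\xi_d\,B_d\big)=\prod_{k=1}^q\big(\tau-i\gamma+\lambda_k(\eta,\xi_d)\big)^{\nu_k}=0 \, .
\end{equation*}
Near $\xi_d=\omega_m$ only the factor with index $k_m$ vanishes (the correspondence $\omega_m\mapsto k_m$ being one-to-one, the other factors stay bounded away from $0$), and since $\tauetabar\notin\cG$ we have $\partial_{\xi_d}\lambda_{k_m}(\ueta,\omega_m)\neq 0$; the implicit function theorem then produces a single simple root $\xi_d=g_m(\tau-i\gamma,\eta)$ of $\tau-i\gamma+\lambda_{k_m}(\eta,\xi_d)=0$ with $g_m(\utau,\ueta)=\omega_m$, so that $D_m$ contains the single eigenvalue $\mu_m(\zeta)=i\,g_m(\zeta)$, of algebraic multiplicity $\nu_{k_m}$.

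Differentiating $\tau-i\gamma+\lambda_{k_m}(\eta,g_m)=0$ with respect to $\gamma$ at $(\utau,\ueta,0)$ gives $\partial_\gamma g_m=i/\partial_{\xi_d}\lambda_{k_m}(\ueta,\omega_m)$, whence for small $\gamma>0$
\begin{equation*}
\mathrm{Re}\,\mu_m(\utau-i\gamma,\ueta)=-\,\mathrm{Im}\,g_m(\utau-i\gamma,\ueta)=-\,\frac{\gamma}{\partial_{\xi_d}\lambda_{k_m}(\ueta,\omega_m)}+o(\gamma) \, .
\end{equation*}
Thus the whole cluster in $D_m$ is stable (negative real parts) when $\partial_{\xi_d}\lambda_{k_m}(\ueta,\omega_m)>0$, i.e. when $\phi_m$ is incoming in the sense of Definition \ref{def2}, and unstable otherwise; hence, for $\zeta$ near $\beta$ with $\gamma>0$, $\mathrm{Im}\,P_m(\zeta)\subset\E^s(\zeta)$ when $m\in\cI$ and $\mathrm{Im}\,P_m(\zeta)\cap\E^s(\zeta)=\{0\}$ when $m\in\cO$.

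To conclude I count dimensions: by Proposition \ref{thm1}, $\dim\E^s(\zeta)=p$, and since there are no neutral modes exactly $N-p$ eigenvalues (counted with multiplicity) are unstable; these are precisely the ones coming from the indices in $\cO$, so $\sum_{m\in\cO}\nu_{k_m}=N-p$ and therefore $\sum_{m\in\cI}\nu_{k_m}=p$. Since the spaces $\mathrm{Im}\,P_m(\zeta)$, $m\in\cI$, are in direct sum inside $\E^s(\zeta)$ and their dimensions already add up to $p$, we get $\E^s(\zeta)=\oplus_{m\in\cI}\mathrm{Im}\,P_m(\zeta)$ for every such $\zeta$; letting $\zeta\to\beta$ and using the continuity of $\E^s$ at $\beta$ (Proposition \ref{thm1}, together with $\beta\in\cH$ and $\cH\cap\cG=\emptyset$) and $P_m(\zeta)\to P_m$ yields $\E^s(\beta)=\oplus_{m\in\cI}\ker L({\rm d}\phi_m)$. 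Finally each $\ker L({\rm d}\phi_m)=\ker\big(\utau\,I+\sum_{j=1}^{d-1}\ueta_j\,B_j+\omega_m\,B_d\big)$ is the kernel of a real matrix, hence has a basis of real vectors (this is also part of Lemma \ref{lem2}). The step I expect to be the main point is the eigenvalue perturbation: one must check that the entire $\nu_{k_m}$-dimensional generalized eigenspace attached to $i\,\omega_m$ moves off the imaginary axis with a single, definite sign of the real part, and this works here precisely because the non-glancing hypothesis collapses that cluster to the single eigenvalue $\mu_m(\zeta)$ computed above.
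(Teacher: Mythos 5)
Your proof is correct and is the standard perturbation argument for this decomposition: move $\zeta$ off the boundary into $\gamma>0$, use the non-glancing hypothesis (via the implicit function theorem and the dispersion relation from Assumption \ref{assumption1}) to track each cluster of eigenvalues of $\mathcal{A}(\zeta)$ near $i\omega_m$ as a single eigenvalue $\mu_m(\zeta)$ of multiplicity $\nu_{k_m}$ whose real part has the sign of $-\partial_{\xi_d}\lambda_{k_m}$, count dimensions against $\dim\E^s(\zeta)=p$ from Proposition \ref{thm1}, and pass to the limit $\gamma\to 0^+$ using the continuity of $\E^s$ and of the Riesz projectors. The paper does not reproduce a proof but cites \cite{CG}, where this is proved by essentially the same Kreiss-type argument; the only tiny blemish in your write-up is the parenthetical "the correspondence $\omega_m\mapsto k_m$ being one-to-one" (it is single-valued, not necessarily injective), but that is exactly what your argument actually uses, so nothing is affected.
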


By Assumption \ref{assumption3} we know that the vector space $\ker B \, \cap \, \E^s(\beta)$ is one-dimensional; 
moreover, it admits a real basis because $B$ has real coefficients and $\E^s(\beta)$ has a real basis. This vector 
space is therefore spanned by a vector $e \in \R^N \setminus \{ 0\}$ that we can decompose in a unique way by 
using Lemma \ref{lem1}:
\begin{equation}
\label{defe}
\ker B \, \cap \, \E^s(\beta) = \text{\rm Span } \{e\} \, ,\quad 
e = \sum_{m \in {\mathcal I}} e_m \, ,\quad P_m \, e_m =e_m \, .
\end{equation}
Each vector $e_m$ in \eqref{defe} has real components. We also know that the vector space $B \, \E^s(\beta)$ 
is $(p-1)$-dimensional. We can therefore write it as the kernel of a real linear form:
\begin{equation}
\label{defb}
B \, \E^s(\beta) = \left\{ X \in \C^p \, ,\quad b \cdot X =\sum_{j=1}^p b_j \, X_j = 0 \right\} \, ,
\end{equation}
for a suitable vector $b \in \R^p \setminus \{ 0\}$.
\bigskip

To formulate our last assumption we observe first that for every point $\uzeta\in\cH$ there is a neighborhood
$\cV$ of $\uzeta$ in $\Sigma$ and a $C^\infty$ conjugator $Q_0(\zeta)$ defined on $\cV$ such that
\begin{align}\label{conjugator}
Q_0(\zeta)\cA(\zeta)Q_0^{-1}(\zeta)=\begin{pmatrix}
i\omega_1(\zeta) I_{n_1} & \; & 0\\
\; & \ddots & \;\\
0 & \; & i\omega_J(\zeta) I_{n_J} \end{pmatrix} =:\bD_1(\zeta),
\end{align}
where the $\omega_j$ are real when $\gamma=0$ and there is a constant $c>0$ such that either
\begin{equation*}
\re (i\omega_j) \leq -c\, \gamma \text{ or } \re (i\omega_j) \geq c\, \gamma \text{ for all } \zeta \in \cV.
\end{equation*}
In view of Lemma \ref{lem1} we can choose the first $p$ columns of $Q^{-1}_0(\zeta)$ to be a basis of 
$\E^s(\zeta)$, and we write
\begin{equation*}
Q_0^{-1}(\zeta)=[Q_{in}(\zeta) \;Q_{out}(\zeta)].
\end{equation*}
Choose $J'$ so that the first $J'$ blocks of $\bD_1$ lie in the first $p$ columns, and the remaining blocks 
in the remaining $N-p$ columns. Thus, $\re (i\omega_j) \le -c\, \gamma$ for $1\leq j\leq J'$.

Observing that the linearization of the boundary condition in \eqref{2} is
\begin{equation*}
\dot u \longmapsto \psi(\eps u) \dot u +[{\rm d}\psi(\eps u) \, \dot u] \, \eps u \, ,
\end{equation*}
we define the operator
\begin{align}\label{ca2}
\cB(v_1,v_2) \, \dot u := \psi(v_1) \dot u +[{\rm d}\psi(v_1) \, \dot u] \, v_2 \, ,
\end{align}
which appears in Assumption \ref{nonlinbc} below. For later use we also define
\begin{align}\label{caa}
\cD(v_1,v_2) \, \dot u := D(v_1) \dot u +[{\rm d}D(v_1) \, \dot u] \, v_2 \, ,
\end{align}
as well as
\begin{align}\label{caaa}
\cB(v_1) :=\cB(v_1,v_1) \, , \; \; \cD(v_1) := \cD(v_1,v_1) \, .
\end{align}

We now state the weak stability assumption that we make when considering the general case of nonlinear
boundary conditions in \eqref{2}.

\begin{assumption}
\label{nonlinbc}
\begin{itemize}
 \item There exists a neighborhood $\cO$ of $(0,0)\in\R^{2N}$ such that for all $(v_1,v_2)\in\cO$ and all
          $\zeta \in \Xi \setminus \Xi_0$, $\ker \;\cB(v_1,v_2) \cap \E^s (\zeta) = \{ 0\}$. For each $(v_1,v_2) \in
          \cO$ the set $\Upsilon(v_1,v_2):=\{\zeta\in\Sigma_0:\ker \; \cB(v_1,v_2) \cap \E^s (\zeta) \neq \{ 0\} \}$
          is nonempty and included in the hyperbolic region ${\mathcal H}$. Moreover, if we set $\Upsilon :=
          \cup_{(v_1,v_2)\in\cO}\Upsilon(v_1,v_2)$, then $\overline{\Upsilon}\subset \cH$ (closure in $\Sigma_0$).

 \item For every $\uzeta\in\overline{\Upsilon}$ there exists a neighborhood $\cV$ of $\uzeta$ in $\Sigma$
          and a $C^\infty$ function $\sigma(v_1,v_2,\zeta)$ on $\cO\times\cV$ such that for all $(v_1,v_2,\zeta)
          \in \cO\times \cV$ we have $\text{\rm Ker}\; \cB(v_1,v_2) \cap \E^s (\zeta) \neq \{ 0\}$ if and only if
          $\zeta\in\Sigma_0$ and $\sigma(v_1,v_2,\zeta)=0$.

          \qquad Moreover, there exist matrices $P_i(v_1,v_2,\zeta)\in\text{\rm GL}_p (\C)$, $i=1,2,$ of class
          $C^\infty$ on $\cO\times\cV$ such that $\forall \, (v_1,v_2,\zeta) \in \cO\times {\mathcal V}$
\begin{equation}\label{ca3a}
P_1(v_1,v_2,\zeta)\cB(v_1,v_2)Q_{in}(\zeta)P_2(v_1,v_2,\zeta) =\text{\rm diag }
\big( \gamma +i\, \sigma (v_1,v_2,\zeta),1,\dots,1 \big) \, .
\end{equation}
 \end{itemize}
\end{assumption}

For nonlinear boundary conditions, the phase $\phi_0$ in \eqref{phasebord} is assumed to satisfy
$\tauetabar \in \Upsilon(0,0)$, or in other words the intersection $\ker B \cap \E^s \tauetabar$ is not
reduced to $\{ 0\}$. The set $\Upsilon_0$ in Assumption \ref{assumption3} is a short notation for
$\Upsilon (0,0)$. The phases $\phi_m$ are still defined by \eqref{phases} and thus only depend
on $L(\partial)$ and $B$ and not on the nonlinear perturbations $f_0$ and $\psi (\eps \, u) -\psi(0)$
added in \eqref{2}.

\begin{rem}\label{ca3}
1) \textup{The properties stated in Assumption \ref{nonlinbc} are just a convenient description of the requirements
for belonging to the WR class of \cite{BRSZ}. Like the uniform Lopatinskii condition, Assumption \ref{nonlinbc} can
in practice be verified by hand via a ``constant-coefficient" computation. More precisely, for $(v_1,v_2)$ near
$(0,0)\in\R^{2N}$ and $\zeta\in\Sigma$, one can define (see, e.g., \cite[chapter 4]{BS}) a Lopatinskii determinant
$\Delta(v_1,v_2,\zeta)$ that is $C^\infty$ in $(v_1,v_2)$, analytic in $\zeta=(\tau-i\gamma,\eta)$ on $\Sigma
\setminus \cG$, continuous on $\Sigma$, and satisfies
\begin{equation*}
\Delta(v_1,v_2,\zeta)=0\text{ if and only if }\text{\rm Ker}\; \cB(v_1,v_2) \cap \E^s (\zeta) \neq \{ 0\}.
\end{equation*}
In particular, $\Delta(v_1,v_2,\cdot)$ is real-analytic on $\cH$.}

\textup{It is shown in Remark 1.13 of \cite{CGW3} that Assumption \ref{nonlinbc} holds provided
\begin{equation}\label{ca4}
\emptyset\neq\{\zeta\in\Sigma: \Delta(0,0,\zeta)=0\}\subset \cH\text{ and }
\Delta(0,0,\uzeta)=0 \Rightarrow \partial_\tau \Delta(0,0,\uzeta)\neq 0,
\end{equation}
and it thus only involves a weak stability property for the linearized problem at $(v_1,v_2)=(0,0)$. 
Instead of assuming \eqref{ca4}, we have stated Assumption \ref{nonlinbc} in a form that is more directly
applicable to the error analysis of Theorem \ref{main2}.}

2) \textup{To prove the basic estimate for the linearized singular system, Proposition \ref{i5z}, and to construct 
the exact solution $U_\eps$ to the singular system \eqref{15p} below, it is enough to require that the analogue of 
Assumption \ref{nonlinbc} holds when $\cB(v_1,v_2)$ is replaced by $\cB(v_1)$ given in \eqref{caaa}. However, 
for the error analysis of section \ref{error} in the case of nonlinear boundary conditions, we need Assumption 
\ref{nonlinbc} as stated.}
\end{rem}

\subsection{Main results}
\label{mr}

For each $m\in\{1,\dots,M\}$ we let
\begin{equation*}
r_{m,k}, \;k=1,\dots,\nu_{k_m}
\end{equation*}
denote a basis of $\ker L({\rm d}\phi_m)$ consisting of real vectors. In section \ref{error} we shall construct a
``corrected" approximate solution $u^c_\eps$ of \eqref{2} of the form
\begin{equation}
\label{a10}
u^c_\eps(x)= \Big[ \cU^0 ( x,\theta_0,\xi_d) +\eps \, \cU^1 ( x,\theta_0,\xi_d)
+\eps^2 \, \cU^2_p ( x,\theta_0,\xi_d ) \Big]|_{\theta_0=\frac{\phi_0}{\eps},\xi_d=\frac{x_d}{\eps}},
\end{equation}
where
\begin{align}
\label{a11}
\begin{split}
&\cU^0 \left( x,\theta_0,\xi_d \right) =\sum_{m\in\cI} \, \sum^{\nu_{k_m}}_{k=1}
\sigma_{m,k}\left(x,\theta_0+\omega_m\xi_d\right) \, r_{m,k} \, ,\\
&\cU^1 \left( x,\theta_0,\xi_d\right) =\sum_{m\in \cI} \, \sum^{\nu_{k_m}}_{k=1}
\tau_{m,k}\left(x,\theta_0+\omega_m\xi_d\right) \, r_{m,k} -  {\bf R}_\infty \left(\tilde L(\partial)\cU^0+D(0)\cU^0\right),
\end{split}
\end{align}
and ${\bf R}_\infty$ is the integral operator defined in \eqref{R}. The $\sigma_{m,k}$'s and $\tau_{m,k}$'s 
are scalar functions decaying in $\theta_m:=\theta_0+\omega_m\xi_d$. The $\sigma_{m,k}$'s satisfy 
\begin{equation*}
\int_\R \sigma_{m,k}(x,\theta) \, {\rm d}\theta =0 \, ,\quad  \text{\rm (moment zero)}
\end{equation*}
and  describe the propagation of oscillations with phase $\phi_m$ and group velocity ${\bf v}_m$. 
The second corrector $\eps^2 \, \cU^2_p(x,\theta_0,\xi_d)$ in \eqref{a10} is a more complex object 
constructed in the error analysis of section \ref{error}.

The following theorem, our main result, is an immediate corollary of the more precise Theorem \ref{main2}. 
Here we let $\Omega_T :=\{(x,\theta_0)=(t,y,x_d,\theta_0)\in\R^{d+1}\times \R^1: x_d\geq 0, t <T\}$, and 
$b\Omega_T :=\{(t,y,\theta_0) \in \R^d \times \R^1: t <T\}$. The spaces $E^s_T$ appearing in the theorem are
\begin{equation}
\label{esp}
E^s_T :=C(\R_{x_d},H^s(b\Omega_T))\cap L^2(\R_{x_d},H^{s+1}(b\Omega_T)),
\end{equation}
while the Sobolev spaces with $\theta$-weights $\Gamma^s_T$ and $b\Gamma^s_T$ are defined in Definition 
\ref{c1}. With $[r]$ denoting the smallest integer greater than $r$, we define
\begin{equation}
\label{a11z}
a_0:=\left[\frac{d+1}{2}\right],\; a_1:=\left[\frac{d+1}{2}\right]+M_0,\; a :=\max(2a_0+3,a_1+1), \; \tilde a :=2a-a_0 \, ,
\end{equation}
where $M_0:=3\, d+5$ is fixed according to the regularity requirements in the singular pseudodifferential 
calculus (see Appendix \ref{calculus}).

\begin{theo}
\label{main}
We make Assumptions \ref{assumption1}, \ref{assumption2}, \ref{assumption3}, and \ref{a8} when the boundary 
condition in \eqref{2} is linear ($\psi(\eps u) \equiv \psi(0)$); in the general case we substitute Assumption 
\ref{nonlinbc} for Assumption \ref{assumption3}. Fix $T>0$, set $M_0:=3\, d+5$, and let $a$ and $\tilde a$ 
be as in \eqref{a11z}.

Consider the semilinear boundary problem \eqref{2}, where $G\in b\Gamma^{\tilde a}_T$. There exists $\eps_0>0$ 
such that if $\langle G \rangle_{b\Gamma^{a+2}_T}$ is small enough, there exists a unique function $U_\eps 
(x,\theta_0)\in E^{a-1}_T$ satisfying the singular system \eqref{15p} on $\Omega_T$ such that
\begin{equation*}
u_\eps(x):=U_\eps\left(x,\frac{\phi_0}{\eps}\right)
\end{equation*}
is an exact solution of \eqref{2} on $(-\infty,T] \times \overline{\R}^d_+$ for $0<\eps \leq \eps_0$. In addition 
there exists a profile $\cU^0(x,\theta_0,\xi_d)$ as in \eqref{a11}, whose components $\sigma_{m,k}$ lie in 
$\Gamma^{a+1}_T$, such that the approximate solution defined by
\begin{equation*}
u^{app}_\eps :=\cU^0 \left(x,\frac{\phi_0}{\eps},\frac{x_d}{\eps} \right)
\end{equation*}
satisfies
\begin{equation*}
\lim_{\eps\to 0}|u_\eps-u^{app}_\eps|_{L^\infty}=0\quad \text{\rm on }(-\infty,T]\times \overline{\R}^d_+.
\end{equation*}
In fact we have the rate of convergence
\begin{equation*}
|u_\eps-u^{app}_\eps|_{L^\infty}\leq C\, \eps^{\frac{1}{2M_1+3}} \, ,\quad 
\text{\rm where } M_1:=\left[ \frac{d}{2}+3\right] \, .
\end{equation*}
\end{theo}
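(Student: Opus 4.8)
The plan is to deduce Theorem \ref{main} from the sharper Theorem \ref{main2}, so the real work is organizing the three ingredients that Theorem \ref{main2} packages: (i) existence and uniqueness of an exact solution $U_\eps$ of the singular system \eqref{15p} in $E^{a-1}_T$ with small norm; (ii) construction of the leading profile $\cU^0$ (and the correctors $\cU^1$, $\cU^2_p$) solving the profile equations; and (iii) an error estimate comparing $u_\eps$ to the corrected approximate solution $u^c_\eps$, from which the $L^\infty$ rate for $u_\eps-u^{app}_\eps$ follows. For (i), I would invoke the Nash--Moser scheme exactly as in \cite{CGW3}: the linearized singular system \eqref{i33a} satisfies a tame estimate (Proposition \ref{i5z}), which under the smallness hypothesis $\langle G\rangle_{b\Gamma^{a+2}_T}$ small feeds the iteration and produces $U_\eps\in E^{a-1}_T$ uniformly in $\eps\in(0,\eps_0]$; one then checks that $u_\eps(x):=U_\eps(x,\phi_0/\eps)$ solves \eqref{2} on $(-\infty,T]\times\overline{\R}^d_+$, the singular variable $\theta_0$ being frozen at $\phi_0/\eps$. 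Uniqueness in $E^{a-1}_T$ is part of the same fixed-point statement.

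For (ii), I would build $\cU^0$ from the ansatz \eqref{a11}: plug the formal expansion $u\sim \cU^0+\eps\cU^1+\dots$ into \eqref{2}, expand $D(\eps u)$ and $\psi(\eps u)$ in $\eps$, and collect powers. At order $\eps^0$ in the interior one gets $\tilde L(\mathrm{d}\phi_m)$ applied to each component, forcing $\cU^0$ to lie in $\oplus_{m\in\cI}\ker L(\mathrm{d}\phi_m)$ (using Lemma \ref{lem1}, which is why only incoming phases appear); at order $\eps$ one applies the projectors $P_m$ of Lemma \ref{lem2} to obtain transport equations along the group velocities ${\bf v}_m$ for the scalar profiles $\sigma_{m,k}(x,\theta_m)$. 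The boundary condition at leading order, together with Assumption \ref{assumption3}/\ref{nonlinbc} (the vanishing of the Lopatinskii determinant, encoded by the factor $\gamma+i\sigma$ in \eqref{ca3a}), couples the incoming profiles to a radiating boundary profile propagating along a characteristic of the Lopatinskii determinant — this is the amplification mechanism and it closes the profile system \eqref{14b}. Since pulses do not resonate, the leading profile equations decouple cleanly and no Nash--Moser is needed here; solvability in $\Gamma^{a+1}_T$ follows from the transport structure and the polynomial decay of $G\in b\Gamma^{\tilde a}_T$, with the zero-moment normalization $\int_\R\sigma_{m,k}\,\mathrm{d}\theta=0$ imposed as part of the construction.

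For (iii), the key device is the low-frequency cutoff: approximate the decaying profiles (which generically have nonzero moments) by profiles with zero moments, controlling the error of that approximation in the weighted spaces $\Gamma^s_T$. One forms the corrected approximate solution $u^c_\eps$ of \eqref{a10}, computes the residual $\tilde L(\partial)u^c_\eps+D(\eps u^c_\eps)u^c_\eps$ and the boundary residual, and shows these are $O(\eps^{K})$ in the norms dictated by the tame estimate, where the exponent is governed by how many derivatives the cutoff costs — this is where $M_1=[d/2+3]$ enters. Then the difference $w_\eps:=u_\eps-u^c_\eps$ (in singular-variable form) solves a linearized singular system with small source; applying Proposition \ref{i5z} gives $\|w_\eps\|\le C\eps^{1/(2M_1+3)}$ in a norm that embeds into $L^\infty$, and since $u^c_\eps-u^{app}_\eps=\eps\cU^1+\eps^2\cU^2_p$ is itself $O(\eps)$ in $L^\infty$, the stated rate for $u_\eps-u^{app}_\eps$ follows. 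The main obstacle is step (iii): one must balance the gain in $\eps$ from the cutoff against the loss of derivatives and the linearized estimate's constants, and simultaneously control the pulse self-interactions and interactions that feed the lower-order profiles $\cU^1,\cU^2_p$ — these interaction terms must be estimated in the $\theta$-weighted spaces carefully enough that the residual is genuinely small in the right norm, which is precisely the place where the pulse analysis diverges from the wavetrain argument of \cite{CGW3}.
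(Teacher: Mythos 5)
Your proposal follows essentially the same route as the paper: reduce Theorem \ref{main} to the quantitative Theorem \ref{main2}, combining Nash--Moser existence for the singular system (Theorem \ref{k8b}), profile construction in weighted spaces (Theorem \ref{m14ab}), and an error analysis based on moment-zero cutoffs and the tame linearized estimate (Proposition \ref{i33a}, not the $L^2$ estimate Proposition \ref{i5z} you cite, though the latter underlies it). Minor slips aside---it is the $\eps^{-1}$ interior equation that forces $\cU^0\in\oplus_{m}\ker L(\mathrm{d}\phi_m)$, with outgoing components vanishing by transport from zero data in $t<0$ rather than by Lemma \ref{lem1} alone---the structure matches the paper's.
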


Observe that although the boundary data in the problem \eqref{2} is of size $O(\eps)$, the approximate solution 
$u^{app}_\eps$ is of size $O(1)$, exhibiting an amplification due to the weak stability at frequency $\beta$. The 
main information provided by Theorem \ref{main} is that this amplification does not rule out existence of a smooth 
solution on a fixed time interval, that is it does not trigger a violent instability, at least in this weakly nonlinear regime. 
Although the first and second correctors $\cU^1$ and $\cU^2_p$ do not appear in the statement of Theorem 
\ref{main}, they are essential in the error analysis. As an application of Theorem \ref{main}, an easy analysis of 
the equations for the leading profile $\cU^0$ (see \eqref{m6} or \eqref{r8}) gives the qualitative information about 
the exact solution described below in Remark \ref{13a}(a).

\begin{rem}\label{13a}
\textup{a)\; In order to avoid some technicalities we have stated our main result for the problem \eqref{2} where all 
data is $0$ in $t<0$. This result easily implies a similar result in which outgoing pulses defined in $t<0$ of amplitude 
$O(\eps)$ and wavelength $\eps$ give rise to reflected pulses of amplitude $O(1)$. In either formulation, inspection 
of the profile equations  shows that the pulses of amplitude $O(1)$ emanate from a radiating pulse that propagates 
in the boundary along a characteristic of the Lopatinskii determinant.}

\textup{b)\; We have decided to fix $T>0$ at the start and choose data small enough so that a solution to the nonlinear 
problem exists up to time $T$.  With only minor changes one can fix the data  in the problem ($G$ in \eqref{2}) at the 
start, and then choose $T$ small enough so that a solution to the nonlinear problem exists up to time $T$.}
\end{rem}

In the remainder of this introduction, we discuss the construction of exact solutions, the construction of the 
approximate solution $\cU^0$, and the error analysis. The proofs are given in Sections \ref{profanalysis}, 
\ref{exact}, and \ref{error}. We hope that the analysis developed in this article will be useful in justifying
quasilinear amplification phenomena such as the Mach stems or kink modes formation of \cite{MR,AM}.

\subsection{Exact solutions and singular systems}

\emph{\quad} The theory of weakly stable hyperbolic initial boundary value problems fails to provide a solution 
of the system \eqref{2} that exists on a fixed time interval independent of $\eps$\footnote{This would be true 
even for problems $(\tilde L(\partial),B)$ that are uniformly stable in the sense of Definition \ref{ustable}.}. In 
order to obtain such an exact solution to the system \eqref{2} we adopt the strategy of studying an associated 
singular problem first used by \cite{JMR} for an initial value problem in free space. We look for a solution of the form
\begin{align}\label{14p}
u_\eps(x)=U_\eps(x,\theta_0)|_{\theta_0=\frac{\phi_0(x')}{\eps}},
\end{align}
where $U_\eps(x,\theta_0)$ satisfies the singular system derived by substituting \eqref{14p} into the problem 
\eqref{2}. Recalling that $\tilde L(\partial)=\partial_d +\sum^{d-1}_{j=0}A_j \, \partial_j$, we obtain:
\begin{align}\label{15p}
\begin{split}
&\partial_d U_\eps+\sum^{d-1}_{j=0}A_j\left(\partial_j +\frac{\beta_j \partial_{\theta_0}}{\eps}\right) U_\eps
+D(\eps U_\eps)\, U_\eps=:\\
&\qquad\qquad\partial_d U_\eps+\mathbb{A}\left(\partial_{x'}+\frac{\beta \partial_{\theta_0}}{\eps}\right)U_\eps
+D(\eps U_\eps)\, U_\eps=0\\
&\psi(\eps U_\eps) \, U_\eps|_{x_d=0} =\eps \, G(x',\theta_0),\\
&U_\eps =0 \text{ in } t<0.
\end{split}
\end{align}

In the wavetrain case the function $U_\eps(x,\theta_0)$ is taken to be periodic in $\theta_0$, but in the case 
of pulses the variable $\theta_0\in\R$ is treated as just another tangential variable on an equal footing with $x'$. 
The special difficulties presented by such singular problems when there is a boundary are described in detail in 
the introductions to \cite{W1} and \cite{CGW1}; these difficulties are present for both wavetrains and pulses. In 
particular we mention: (a) symmetry assumptions on the matrices $B_j$ appearing in the problem \eqref{0} 
equivalent to \eqref{2} are generally of no help in obtaining an $L^2$ estimate for \eqref{15p} (boundary 
conditions satisfying Assumption \ref{assumption3} can not be maximally dissipative, see \cite{CG}); (b) one 
cannot control $L^\infty$ norms just by estimating tangential derivatives $\partial^\alpha_{(x',\theta_0)}U_\eps$ 
because \eqref{15p} is not a hyperbolic problem in the $x_d$ direction\footnote{For initial value problems in free 
space, one \emph{can} control $L^\infty$ norms just by estimating enough derivatives tangent to time slices $t=c$.}; 
moreover, even if one has estimates of tangential derivatives  uniform with respect to $\eps$, because of the 
factors $1/\eps$ in \eqref{15p} one cannot just use the equation to control $\partial_d U_\eps$ and thereby 
control $L^\infty$ norms.

In \cite{CGW2} a class of singular pseudodifferential operators, acting on functions $U(x',\theta_0)$
and having the form
\begin{equation}
\label{16p}
p_D U(x',\theta_0)=\frac{1}{(2\, \pi)^{d+1}} \, \int_{\R^d \times \R} {\rm e}^{ix' \cdot \xi' +i\theta_0 k} \,
p\left(\eps V(x',\theta_0),\xi'+\frac{k\beta}{\eps},\gamma\right) \, \widehat{U}(\xi',k) \, {\rm d}\xi' 
\, {\rm d}k, \; \; \gamma\geq 1,
\end{equation}
was introduced to deal with these difficulties. Observe that the differential operator $\mathbb{A}$ appearing in 
\eqref{15p} can be expressed in this form. Operators like \eqref{16p}, but with the integral over $k$ replaced 
by a sum over $k\in\Z$, were first used in \cite{W1} to prove uniform estimates for quasilinear singular systems 
arising in the uniformly stable wavetrain case. The calculi of \cite{CGW2} are inspired by the calculus of \cite{W1}, 
but incorporate improvements needed for the study of amplification in both the wavetrain and pulse cases.

Under the weak stability assumptions of the present paper, Assumption \ref{assumption3} or Assumption 
\ref{nonlinbc}, the basic $L^2$ estimate for the problem $(\tilde L(\partial),B)$ and for its singular analogue 
both exhibit loss of one tangential derivative. To derive energy estimates with a loss of derivative, one needs 
the remainder in the composition of two zero-order singular operators to be a smoothing operator, and not 
just bounded on $L^2$ with small norm as in \cite{W1}. The calculi constructed in \cite{CGW2} have this 
smoothing property. The improved wavetrain calculus was used in \cite{CGW3} to study amplification of 
wavetrains. As noted in that paper, the arguments used to prove the main $L^2$ estimate for the linearized 
singular problem and the higher order tame estimates carry over {\it verbatim} to the pulse case. The 
Nash-Moser iteration used in \cite{CGW3} to construct an exact solution $U_\eps(x,\theta_0)$ of \eqref{15p} 
on $\Omega_T$ also carries over verbatim to the pulse case. In section \ref{exact} we state the main estimates 
for the linearized singular problem in the pulse case along with the existence theorem for the nonlinear singular 
system \eqref{15p}, and refer to \cite{CGW3} for the proofs. The tame estimate for the linearized singular 
problem is given in terms of $E^s_T$ spaces (recall \eqref{esp}), and these spaces are used as well in the 
Nash-Moser iteration. The singular pulse calculus summarized in Appendix \ref{calculus} must, of course, 
be substituted for the wavetrain calculus used in \cite{CGW3}.

\subsection{Derivation of the profile equations}
\label{derivation}

\emph{\quad} At first we work formally, looking for a corrected approximate solution of the form
\begin{equation*}
u^c_\eps(x)=\Big[ \cU^0(x,\theta_0,\xi_d)+\eps \, \cU^1(x,\theta_0,\xi_d)+\eps^2 \, \cU^2(x,\theta_0,\xi_d) 
\Big]|_{\theta_0=\frac{\phi_0}{\eps},\xi_d=\frac{x_d}{\eps}}.
\end{equation*}
Plugging into the system \eqref{2} and setting coefficients of successive powers of $\eps$ equal to zero,
we obtain interior profile equations
\begin{align}\label{13}
\begin{split}
\eps^{-1} &: \quad \tilde\cL(\partial_{\theta_0},\partial_{\xi_d})\cU^0=0\\
\eps^0 &: \quad \tilde \cL(\partial_{\theta_0},\partial_{\xi_d})\cU^1+\tilde L(\partial)\cU^0+D(0)\cU^0=0\\
\eps^1 &: \quad \tilde \cL(\partial_{\theta_0},\partial_{\xi_d})\cU^2+\tilde L(\partial)\cU^1+D(0)\cU^1 
+({\rm d}D(0) \, \cU^0) \cU^0=0,
\end{split}
\end{align}
where
\begin{equation*}
\tilde\cL(\partial_{\theta_0},\partial_{\xi_d}) 
:=\partial_{\xi_d}+\sum^{d-1}_{j=0}\beta_j A_j\partial_{\theta_0}=\partial_{\xi_d}+i\cA(\beta)\partial_{\theta_0} .
\end{equation*}
Similarly, we obtain the  boundary equations on $\{x_d=0,\xi_d=0\}$,
\begin{align}\label{14}
\begin{split}
&\eps^0:B\, \cU^0=0\\
&\eps^1:B\, \cU^1+({\rm d}\psi(0) \, \cU^0) \cU^0 =G(x',\theta_0).
\end{split}
\end{align}

For ease of exposition we will begin by studying these equations in the $3\times 3$ strictly hyperbolic case, 
which contains all the main difficulties. In section \ref{extension} we describe the (minor) changes needed to 
treat the general case. We define the boundary phase $\phi_0:=\beta\cdot x'$, eigenvalues $\omega_j$, and 
interior phases $\phi_j:=\phi_0+\omega_j x_d$ as in Section \ref{assumptions}, where $\beta\in \cH$. We 
suppose $\omega_1$ and $\omega_3$ are incoming (or causal) and $\omega_2$ is outgoing. The 
corresponding right and left eigenvectors of $i\cA(\beta)$ are $r_j$ and $l_j$, $j=1,2,3$, normalized so 
that $l_j \cdot r_k =\delta_{jk}$.

A general function $W(x,\theta_0,\xi_d)$ valued in $\R^3$ can be written
\begin{equation*}
W=W_1(x,\theta_0,\xi_d)r_1+W_2(x,\theta_0,\xi_d)r_2+W_2(x,\theta_0,\xi_d)r_3
\end{equation*}
where we refer to $W_i$, $i=1,3$ as the \emph{incoming} components and $W_2$ as the \emph{outgoing} 
component of $W$. With $F(x,\theta_0,\xi_d)=\sum^3_{i=1}F_i(x,\theta_0,\xi_d)r_i$ valued in $\R^3$ consider 
now an equation of the form
\begin{equation*}
\tilde \cL(\partial_{\theta_0},\partial_{\xi_d})W=F.
\end{equation*}
Using the matrix $\begin{bmatrix}r_1&r_2&r_3\end{bmatrix}$ to diagonalize $i\cA(\beta)$,  we find that the 
$W_i$ must satisfy
\begin{align}\label{21}
(\partial_{\xi_d}-\omega_i\partial_{\theta_0})W_i(x,\theta_0,\xi_d)=l_i\cdot F:=F_i(x,\theta_0,\xi_d),\;i=1,2,3.
\end{align}
The general solution to \eqref{21} is
\begin{align}\label{22}
W_i(x,\theta_0,\xi_d)=\tau^*_i(x,\theta_0+\omega_i\xi_d)+\int^{\xi_d}_0 F_i(x,\theta_0+\omega_i(\xi_d-s),s)\, 
{\rm d}s,
\end{align}
where $\tau^*_i$ is arbitrary. This can be rewritten
\begin{align}\label{23}
\begin{split}
W_i(x,\theta_0,\xi_d)&= \tau^*_i(x,\theta_0+\omega_i\xi_d)+\int_{\R^+} F_i(x,\theta_0+\omega_i(\xi_d-s),s)\, {\rm d}s
-\int_{\xi_d}^{+\infty} F_i(x,\theta_0+\omega_j(\xi_d-s),s)\, {\rm d}s\\
&:=\tau_i(x,\theta_0+\omega_i\xi_d) -\int_{\xi_d}^{+\infty} F_i(x,\theta_0+\omega_i(\xi_d-s),s)\, {\rm d}s \, ,
\end{split}
\end{align}
provided the integrals in \eqref{23} exist.

\begin{defn}[Functions of type $\cF$]\label{30}
Suppose
\begin{align}\label{30aa}
F(x,\theta_0,\xi_d)=\sum_{i=1}^3 F_i(x,\theta_0,\xi_d)r_i,
\end{align}
where each $F_i$ has the form
\begin{align}\label{30a}
\begin{split}
&F_i(x,\theta_0,\xi_d)=\sum_{k=1}^3 f^i_k(x,\theta_0+\omega_k \, \xi_d) 
+\sum_{l \le m=1}^3 g^i_{l,m}(x,\theta_0+\omega_l \, \xi_d) \, h^i_{l,m}(x,\theta_0+\omega_m \, \xi_d),\\
\end{split}
\end{align}
where the functions  $f^i_k(x,\theta)$, $g^i_{l,m}(x,\theta)$, $h^i_{l,m}(x,\theta)$ are real-valued, $C^1$, and decay 
along with their first order partials at the rate $O(\langle\theta\rangle^{-2})$ uniformly with respect to $x$. We then 
say that $F$ is of \emph{type $\cF$}. For such functions $F$, define
\begin{align*}
{\bf E} F(x,\theta_0,\xi_d):=\sum_{j=1}^3 \left( \lim_{T\to\infty} \, \dfrac{1}{T} \, \int^T_0 
l_j\cdot F(x,\theta_0+\omega_j \, (\xi_d-s),s) \, {\rm d}s \right) \, r_j \, .
\end{align*}
\end{defn}

\begin{rem}\label{30ab}
\textup{For $F$ as in \eqref{30a}, we have
\begin{equation*}
{\bf E} F=\sum^3_{i=1} \tilde F_i(x,\theta_0,\xi_d) \, r_i,\text{ where } 
\tilde F_i :=f^i_i(x,\theta_0+\omega_i\xi_d)+g^i_{i,i}(x,\theta_0+\omega_i\xi_d)h^i_{i,i}(x,\theta_0+\omega_i\xi_d).
\end{equation*}}
\end{rem}

\begin{rem}\label{32}
\textup{The definition of ${\bf E}$ can be extended to more general functions. For example, if
\begin{equation*}
F=\sum^3_{i=1}F_i(x,\theta_0+\omega_i\xi_d)\, r_i,
\end{equation*}
where the $F_i(x,\theta)$ are arbitrary continuous functions (not necessarily decaying with respect to $\theta$), 
the limits that define ${\bf E} F$ exist and we have ${\bf E} F=F$.}

\textup{For another example, suppose $F$ is of type $\cF$ and satisfies ${\bf E} F=0$. Define
\begin{align}\label{R}
{\bf R}_\infty F(x,\theta_0,\xi_d):=-\sum_{i=1}^3\left( \int_{\xi_d}^{+\infty} F_i(x,\theta_0+\omega_i(\xi_d-s),s) \, 
{\rm d}s\right)r_i \, .
\end{align}
Then the limits defining ${\bf R}_\infty F$ and ${\bf E} {\bf R}_\infty F$ exist and we have ${\bf E} {\bf R}_\infty F=0$.}
\end{rem}

The following two results are proved in \cite{CW4}.

\begin{prop}[\cite{CW4}]
\label{35}
Suppose $F$ is of type $\cF$ and ${\bf E}F=0$. Then ${\bf R}_\infty F$ is bounded and
\begin{align}\label{35z}
\tilde \cL(\partial_{\theta_0},\partial_{\xi_d}){\bf R}_\infty F ={\bf R}_\infty\tilde \cL(\partial_{\theta_0},\partial_{\xi_d})F 
=F =(I-{\bf E})F.
\end{align}
\end{prop}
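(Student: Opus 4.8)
The plan is to verify the three claimed properties of $\mathbf{R}_\infty F$ directly from the formula \eqref{R} and the decay hypothesis built into ``type $\cF$'', exploiting the crucial identity $\mathbf{E}F=0$. First I would record what $\mathbf{E}F=0$ actually means componentwise: by Remark \ref{30ab}, the vanishing of $\mathbf{E}F$ forces, for each $i$, the resonant self-interaction terms $f^i_i$ and $g^i_{i,i}\,h^i_{i,i}$ to cancel, i.e. $f^i_i(x,\theta)+g^i_{i,i}(x,\theta)\,h^i_{i,i}(x,\theta)\equiv 0$. Consequently, in the integrand $F_i(x,\theta_0+\omega_i(\xi_d-s),s)$ appearing in \eqref{R}, every surviving summand is of the form $f^i_k(x,\theta_0+\omega_k s+\omega_i(\xi_d-s))$ with $k\neq i$ — hence a function of $\theta_0+\omega_i\xi_d+(\omega_k-\omega_i)s$ with $\omega_k-\omega_i\neq 0$ — or a product $g^i_{l,m}\,h^i_{l,m}$ with $(l,m)\neq(i,i)$; in each case the argument moves at a nonzero speed in $s$ as $s\to+\infty$, so the $O(\langle\theta\rangle^{-2})$ decay makes the integrand integrable on $[\xi_d,+\infty)$ and the tail integral is $O(\langle \theta_0+\omega_i\xi_d\rangle^{-1})$ uniformly in $x$ (one changes variables $s\mapsto \theta_0+\omega_i\xi_d+(\omega_k-\omega_i)s$, or for the quadratic terms uses that at least one of the two factors has argument moving at nonzero speed while the other is bounded). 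This gives boundedness of $\mathbf{R}_\infty F$, and in fact integrability/decay of each component in $\theta_i:=\theta_0+\omega_i\xi_d$, which is what one needs to re-apply $\mathbf{E}$ and $\mathbf{R}_\infty$.

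Next I would establish the operator identity \eqref{35z}. The middle and right equalities, $\mathbf{R}_\infty\tilde\cL F = F = (I-\mathbf{E})F$, are immediate: the second is just the hypothesis $\mathbf{E}F=0$, and for the first one writes $\tilde\cL F$ in components — using \eqref{21}, $(\tilde\cL F)_i=(\partial_{\xi_d}-\omega_i\partial_{\theta_0})F_i$ — and observes that this derivative is exactly $\frac{d}{d\xi_d}$ of $F_i$ evaluated along the characteristic $\theta_0+\omega_i(\xi_d-s)$, so the $s$-integral in \eqref{R} telescopes: $-\int_{\xi_d}^{+\infty}\frac{\partial}{\partial\xi_d}\big[F_i(x,\theta_0+\omega_i(\xi_d-s),s)\big]\,ds$, and since $F_i$ decays as $s\to+\infty$ along these lines (again by the $\mathbf{E}F=0$ cancellation), the boundary term at $+\infty$ vanishes and we recover $F_i(x,\theta_0,\xi_d)$. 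For the left equality $\tilde\cL\,\mathbf{R}_\infty F=F$, I would differentiate \eqref{R} under the integral sign: applying $(\partial_{\xi_d}-\omega_i\partial_{\theta_0})$ to $-\int_{\xi_d}^{+\infty}F_i(x,\theta_0+\omega_i(\xi_d-s),s)\,ds$ kills the integrand along characteristics (the combination annihilates any function of $\theta_0+\omega_i\xi_d$ and of $s$ alone in the relevant way) and leaves only the contribution from differentiating the lower limit $\xi_d$, namely $+F_i(x,\theta_0,\xi_d)$. Assembling components with the $r_i$ recovers $\tilde\cL\,\mathbf{R}_\infty F=F$.

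The main obstacle, and the only place where real care is needed, is the justification of differentiation under the integral sign and of the vanishing boundary term at $s=+\infty$ in a way that is uniform in $x$ and compatible with the limited ($C^1$, quadratic decay) regularity assumed. The quadratic-decay rate $O(\langle\theta\rangle^{-2})$ on $F_i$ and its first partials is exactly strong enough: it makes $F_i$ and $\partial F_i$ integrable along the non-characteristic directions, legitimizing dominated convergence for the differentiation and forcing the $s\to+\infty$ boundary term to zero, but it is not more than needed, so one cannot be cavalier. I would handle this by splitting $F_i$ into its (finitely many) constituent pieces from \eqref{30a}, discarding the resonant self-interaction piece via $\mathbf{E}F=0$, and treating each remaining linear piece $f^i_k$ ($k\neq i$) and each quadratic piece $g^i_{l,m}h^i_{l,m}$ separately with an explicit change of variables that turns the $s$-integral into an integral of a decaying function over a line in $\theta$-space. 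Finally, $\mathbf{E}\,\mathbf{R}_\infty F=0$ follows because, as noted in Remark \ref{32}, each component of $\mathbf{R}_\infty F$ is, up to the integrable remainder just analyzed, a function of the single variable $\theta_i=\theta_0+\omega_i\xi_d$ with no resonant self-interaction term, so the Cesàro averages defining $\mathbf{E}$ vanish; this is essentially the bookkeeping already done in the first step.
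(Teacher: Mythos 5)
The paper does not reproduce the proof of this proposition (it cites \cite{CW4}), so I can only judge your argument on its own terms. Your approach — use Remark \ref{30ab} to identify the $\mathbf{E}F=0$ cancellation $f^i_i+g^i_{i,i}h^i_{i,i}\equiv 0$, show that every surviving piece of $F_i(x,\theta_0+\omega_i(\xi_d-s),s)$ moves at nonzero speed in $s$ and hence is integrable, then prove the two operator identities by the fundamental theorem of calculus along characteristics and by differentiating under the integral sign — is the natural and correct one, and it establishes the proposition. The regularity and decay hypotheses in Definition \ref{30} are exactly what is used: $C^1$ plus $O(\langle\theta\rangle^{-2})$ decay of the functions and their first partials.

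Two small inaccuracies to fix. First, in the telescoping identity you write $-\int_{\xi_d}^{+\infty}\frac{\partial}{\partial\xi_d}\bigl[F_i(x,\theta_0+\omega_i(\xi_d-s),s)\bigr]\,{\rm d}s$; this should be $\frac{\partial}{\partial s}$, since $(\tilde\cL F)_i(x,\theta_0+\omega_i(\xi_d-s),s)=\frac{\partial}{\partial s}\bigl[F_i(x,\theta_0+\omega_i(\xi_d-s),s)\bigr]$ (the $\xi_d$-derivative of that composite is $\omega_i\,\partial_{\theta_0}F_i$, which is not what you want). The intent is clear from the word ``telescopes,'' but as written the displayed formula is wrong. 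Second, your concluding sentence inverts the reason why $\mathbf{E}\,\mathbf{R}_\infty F=0$: after the change of variables, the $i$-th component of $\mathbf{R}_\infty F$ is a sum of decaying functions of $\theta_k=\theta_0+\omega_k\xi_d$ with $k\neq i$ (compare Proposition \ref{m11a}), not ``a function of the single variable $\theta_i$.'' If it were a function of $\theta_i$, $\mathbf{E}$ would reproduce it rather than annihilate it; it is precisely the nonresonance $k\neq i$ that makes the Cesàro average along the $i$-th characteristic vanish. (Relatedly, the tail-decay estimate $O(\langle\theta_0+\omega_i\xi_d\rangle^{-1})$ you state should read $O(\langle\theta_0+\omega_k\xi_d\rangle^{-1})$ for the piece coming from $f^i_k$.) None of this affects the validity of the three equalities in \eqref{35z}, and in fact $\mathbf{E}\,\mathbf{R}_\infty F=0$ is part of Remark \ref{32} rather than the proposition you were asked to prove — but since you included it, you should state the reason correctly.
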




\begin{prop}[\cite{CW4}]
\label{31}
Let $F(x,\theta_0,\xi_d)$ be a function of type $\cF$. 
(a)\; Then the equation $\tilde \cL(\partial_{\theta_0},\partial_{\xi_d})\cU =F$ has a solution bounded in $(\theta_0,\xi_d)$ 
if and only if ${\bf E}F=0$.

(b)\; When ${\bf E}F=0$, every $C^1$ solution bounded in $(\theta_0,\xi_d)$  has the form
\begin{equation*}
\cU=\sum_{i=1}^3 \tau_i(x,\theta_0+\omega_i\xi_d) \, r_i +{\bf R}_\infty F 
\text{ with } \tau_i(x,\theta)\in C^1 \text{ and bounded.}
\end{equation*}
Here ${\bf E}\cU=\sum_{i=1}^3 \tau_i(x,\theta_0+\omega_i\xi_d)\, r_i$ and $(I-{\bf E})\cU={\bf R}_\infty F$.

(c)If $\cU$ is of type $\cF$ then
\begin{align}\label{35y}
{\bf E}\tilde \cL(\partial_{\theta_0},\partial_{\xi_d})\cU =\tilde \cL(\partial_{\theta_0},\partial_{\xi_d}){\bf E}\cU=0.
\end{align}
\end{prop}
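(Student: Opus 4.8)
The plan is to prove the three parts of Proposition \ref{31} by reducing everything to the component-wise identities for the transport operators $\partial_{\xi_d}-\omega_i\partial_{\theta_0}$, exactly as in the derivation leading to \eqref{21}--\eqref{23}. First I would diagonalize: writing $\cU=\sum_i \cU_i r_i$ and $F=\sum_i F_i r_i$ with $F_i=l_i\cdot F$, the equation $\tilde\cL(\partial_{\theta_0},\partial_{\xi_d})\cU=F$ is equivalent to the scalar family $(\partial_{\xi_d}-\omega_i\partial_{\theta_0})\cU_i=F_i$. For part (a), the "only if" direction is the crux: along the $i$-th characteristic $\theta_0\mapsto\theta_0+\omega_i(\xi_d-s)$ the function $\cU_i$ differs from $\int_0^{\xi_d}F_i(x,\theta_0+\omega_i(\xi_d-s),s)\,{\rm d}s$ by a function constant along that characteristic (i.e.\ depending only on $x$ and $\theta_0+\omega_i\xi_d$), so boundedness of $\cU_i$ in $(\theta_0,\xi_d)$ forces the Cesàro average $\frac1T\int_0^T F_i(x,\theta_0+\omega_i(\xi_d-s),s)\,{\rm d}s$ to have limit zero, which is precisely the statement ${\bf E}F=0$. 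For the "if" direction, assuming ${\bf E}F=0$ one checks via the type-$\cF$ structure \eqref{30a} that the integrals in \eqref{23} converge and define a bounded solution; this is essentially the content of Proposition \ref{35}, so I would simply invoke it, noting ${\bf R}_\infty F$ solves the equation and is bounded.

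For part (b), given two bounded $C^1$ solutions their difference $\cV$ satisfies $\tilde\cL\cV=0$, hence each component satisfies $(\partial_{\xi_d}-\omega_i\partial_{\theta_0})\cV_i=0$, so $\cV_i=\tau_i(x,\theta_0+\omega_i\xi_d)$ for some $C^1$ function $\tau_i$; boundedness of $\cV_i$ gives boundedness of $\tau_i$ (set $\xi_d=0$). Combined with the particular solution ${\bf R}_\infty F$ from \eqref{R}, this yields the stated form. The identification ${\bf E}\cU=\sum_i\tau_i(x,\theta_0+\omega_i\xi_d)r_i$ follows from Remark \ref{32} (the operator ${\bf E}$ fixes functions of the single-phase form $F_i(x,\theta_0+\omega_i\xi_d)$) together with ${\bf E}{\bf R}_\infty F=0$, which is part of Proposition \ref{35} via the last equality in \eqref{35z}; then $(I-{\bf E})\cU={\bf R}_\infty F$ is immediate.

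For part (c), the point is that if $\cU$ is of type $\cF$ then $\tilde\cL\cU=\partial_{\xi_d}\cU+i\cA(\beta)\partial_{\theta_0}\cU$ is again of type $\cF$ (the derivatives preserve the decay rate $O(\langle\theta\rangle^{-2})\to O(\langle\theta\rangle^{-2})$ on the relevant pieces, up to absorbing products into the $g\cdot h$ terms — actually one should be slightly careful here, since differentiating a product keeps it a sum of products of decaying functions), so ${\bf E}\tilde\cL\cU$ is well-defined. Componentwise, $l_i\cdot\tilde\cL\cU=(\partial_{\xi_d}-\omega_i\partial_{\theta_0})\cU_i$, and since $\cU_i$ is of type $\cF$ one computes directly that the Cesàro average of $(\partial_{\xi_d}-\omega_i\partial_{\theta_0})\cU_i$ along the $i$-th characteristic is $\lim_{T\to\infty}\frac1T[\,\cU_i(x,\theta_0,\,\cdot\,)\,]_0^T$, a telescoping/fundamental-theorem-of-calculus computation whose limit vanishes by boundedness of $\cU_i$; hence ${\bf E}\tilde\cL\cU=0$. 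For the other equality, ${\bf E}\cU$ has single-phase components $\tilde\cU_i(x,\theta_0+\omega_i\xi_d)r_i$ (Remark \ref{30ab}), which are annihilated by $(\partial_{\xi_d}-\omega_i\partial_{\theta_0})$, so $\tilde\cL{\bf E}\cU=0$.

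The main obstacle I anticipate is the bookkeeping in part (a)'s necessity direction and in part (c): one must verify that "bounded in $(\theta_0,\xi_d)$" genuinely forces the Cesàro limits to vanish, which requires knowing that $\cU_i$ minus the relevant antiderivative is a genuine function of $(x,\theta_0+\omega_i\xi_d)$ alone — a change-of-variables argument along characteristics — and then that the average of a bounded function's difference quotient along a ray tends to zero. These are short but need care to state cleanly; the cited results \cite{CW4} (Propositions \ref{35} and \ref{31}) are presumably where the full details live, so in practice I would state the reduction to the scalar transport equations, carry out the characteristic computation for the "only if" part, and defer the convergence estimates for the type-$\cF$ integrals to Proposition \ref{35}.
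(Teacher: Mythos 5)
Your proposal is essentially the correct and natural argument, and since the paper defers the proof to \cite{CW4} it cannot be compared against an in-text proof; your component-wise reduction to the scalar transport equations $(\partial_{\xi_d}-\omega_i\partial_{\theta_0})\cU_i=F_i$, followed by the telescoping/Cesàro computations, is precisely the approach one expects.

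Two small points of precision. In part (a), "only if," the step where boundedness of $\cU_i$ forces the Cesàro limit to vanish deserves one extra sentence: write $\cU_i(x,\theta_0,\xi_d)=\tau_i^*(x,\theta_0+\omega_i\xi_d)+\int_0^{\xi_d}F_i(x,\theta_0+\omega_i(\xi_d-s),s)\,{\rm d}s$ as in \eqref{22}; setting $\xi_d=0$ shows $\tau_i^*$ is bounded, and then evaluating $\cU_i$ at $(\theta_i-\omega_iT,T)$ with $\theta_i:=\theta_0+\omega_i\xi_d$ shows $\left|\int_0^T F_i(x,\theta_i-\omega_is,s)\,{\rm d}s\right|\le 2\sup|\cU_i|$ uniformly in $T$, whence the Cesàro average vanishes. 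You gesture at this but it is worth stating explicitly, since boundedness of the homogeneous part does not come for free. Second, your citation of the last equality in \eqref{35z} for ${\bf E}{\bf R}_\infty F=0$ is misplaced: \eqref{35z} records $\tilde\cL{\bf R}_\infty F=F$, not ${\bf E}{\bf R}_\infty F=0$; the latter is stated directly in Remark \ref{32}, which is what you should cite. Your caution in part (c) about whether $\tilde\cL\cU$ is literally of type $\cF$ is reasonable — strictly speaking $\tilde\cL\cU$ has only $C^0$ decaying components when $\cU$ has $C^1$ ones — but the Cesàro limits defining ${\bf E}\tilde\cL\cU$ still exist (and telescope to zero) under the stated decay, which is what matters.
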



By applying the operators ${\bf E}$ and ${\bf R}_\infty$ to the equations \eqref{13} formally (for now) using 
the properties \eqref{35z},\eqref{35y}, we obtain:
\begin{align}\label{14a}
\begin{split}
&(a)\; {\bf E} \cU^0=\cU^0,\\
&(b)\; {\bf E} (\tilde L(\partial)\cU^0+D(0)\cU^0)=0,\\
&(c)\; B\cU^0=0\text{ on }x_d=0,\xi_d=0,\\
&(d)\; \cU^0=0\text{ in } t<0
\end{split}
\end{align}
and
\begin{align}\label{14b}
\begin{split}
&(a)\; (I-{\bf E})\cU^1+{\bf R}_\infty(\tilde L(\partial)\cU^0+D(0)\cU^0)=0,\\
&(b)\; {\bf E}\left( \tilde L(\partial)\cU^1+D(0)\cU^1+({\rm d}D(0) \, \cU^0) \cU^0 \right)=0,\\
&(c)\; B {\bf E} \cU^1=G -({\rm d}\psi(0) \, \cU^0) \cU^0 -B(I-{\bf E}) \cU^1 \text{ on }x_d=0, \xi_d=0,\\
&(d)\; \cU^1=0\text{ in }t<0.
\end{split}
\end{align}

To construct $\cU^1$ we write $\cU^1={\bf E}\cU^1+(I-{\bf E})\cU^1$ where
\begin{align}\label{15}
(I-{\bf E})\cU^1=-{\bf R}_\infty(\tilde L(\partial)\cU^0+D(0)\cU^0)={\bf R}_\infty F^0 \, ,
\end{align}
and ${\bf E}\cU^1$ satisfies
\begin{align}\label{15bb}
\begin{split}
&(a)\; {\bf E} (\tilde L(\partial)+D(0)) {\bf E} \cU^1 =-{\bf E} (({\rm d}D(0) \, \cU^0) \cU^0) 
-{\bf E} (\tilde L(\partial) +D(0)) {\bf R}_\infty F^0 \, ,\\
&(b)\; B {\bf E} \cU^1=G -({\rm d}\psi(0) \, \cU^0) \cU^0 -{\bf R}_\infty F^0 \text{ on }x_d=0, \xi_d=0\\
&(c)\; {\bf E} \cU^1=0\text{ in }t<0.
\end{split}
\end{align}

\subsection{Error analysis}
\label{erran}

\emph{\quad} The error analysis is done by studying the singular problem \eqref{m22} satisfied by the difference
\begin{equation*}
W_\eps(x,\theta_0)=U_\eps(x,\theta_0)-\cU_\eps(x,\theta_0),
\end{equation*}
where $U_\eps$ is the exact solution of the semilinear singular problem \eqref{15p} and $\cU_\eps$ is a corrected 
approximate solution of the form
\begin{equation*}
\cU_\eps(x,\theta_0)=\cU^0 (x,\theta_0,\frac{x_d}{\eps}) +\eps \, \cU^1 ( x,\theta_0,\frac{x_d}{\eps})
+\eps^2 \, \cU^2_p ( x,\theta_0,\frac{x_d}{\eps} ).
\end{equation*}
Here $\cU^0$ and $\cU^1$ have the forms given in \eqref{a11} and satisfy the profile equations \eqref{14a}, 
\eqref{14b}, and the second corrector $\cU^2_p$ remains to be chosen.

Let us denote $\cU^0_\eps(x,\theta_0):=\cU^0(x,\theta_0,\frac{x_d}{\eps})$ and similarly define $\cU^1_\eps$, 
$\cU^2_{p,\eps}$. The idea is to apply the tame estimate of Proposition \ref{i33a} to the linear singular problem 
\eqref{m22} satisfied by $W_\eps$. To apply this estimate we need the functions $\cU^0_\eps$, $\cU^1_\eps$, 
and $\cU^2_{p,\eps}$  to lie in $E^s_T$ spaces for appropriate choices of $s$. Thus, roughly speaking, these 
functions must decay in $(x',\theta_0)$ ``like functions in $H^s(x',\theta_0)$".

The pulse profiles $\sigma_{m,k}(x,\theta_m)$ defining $\cU^0(x,\theta_0,\xi_d)$ have good decay in $\theta_m$, 
so it is not hard to see  that $\cU^0_\eps$ lies in an $E^s_T$ space (Proposition \ref{h1}). Moreover, we show 
that the $\sigma_{m,k}(x,\theta_m)$ actually have moment zero, and this can be used to show that $\cU^1_\eps$, 
whose definition involves taking integrals as in \eqref{R} of an expression that is linear in $\cU^0$, also lies in an 
$E^s_T$ space (see, for example, Lemma \ref{m9} and Proposition \ref{m10a}). If the $\sigma_{m,k}(x,\theta_m)$ 
did not have moment zero, $\cU^1_\eps$ would fail to lie in any $E^s_T$ space.

However, the second corrector presents greater difficulties. Considering the $\eps^1$ profile equation in \eqref{13}, 
we see that a natural first choice for the second corrector would  be
\begin{align}\label{q3}
\cU^2 =-{\bf R}_\infty(I-{\bf E}) 
\left( \tilde L(\partial)\cU^1 +D(0)\cU^1 +({\rm d}D(0) \, \cU^0) \cU^0 \right).
\end{align}
Since the primitive $-\int_\theta^{+\infty} f(s) {\rm d}s$ of a function $f$ that decays, say, like $|s|^{-2}$ as $|s|\to+\infty$, 
itself decays to zero as $|\theta|\to+\infty$ if and only if $f$ has moment zero ($\int_\R f(s)\, {\rm d}s=0$), and 
since neither $\cU^1$ nor the  term  $( {\rm d}D(0) \, \cU^0) \cU^0$ in \eqref{q3} has moment zero\footnote{More 
precisely, we refer here to the profiles like $\tau_{m,k}(x,\theta_m)$ or products of profiles that appear in these terms.}, 
we see that this choice of $\cU^2_\eps$ generally cannot lie in any $E^s_T$ space. A natural idea then is to replace 
$\cU^0$ and $\cU^1$ by approximations that involve profiles having moment zero. More precisely, we take for 
example $\cU^0_p$ to be as in \eqref{a11}, but where the profile $\sigma_{m,k}(x,\theta_m)$ is replaced by the 
moment-zero approximation $\sigma_{m,k,p}(x,\theta_m)$ whose Fourier transform in $\theta_m$ is defined by
\begin{equation*}
\hat\sigma_{m,k,p}(x,\eta)=\chi_p(\eta)\hat\sigma_{m,k}(x,\eta).
\end{equation*}
Here $\chi_p(\eta)$ is a smooth cutoff function vanishing on a neighborhood of $0$ of size $O(|p|)$ and equal to one 
outside a slightly larger neighborhood\footnote{The cutoff renders harmless the small divisor that appears when one 
writes the Fourier transform of the $\theta_m$-primitive of $\sigma_{m,k,p}$ in terms of $\hat\sigma_{m,k,p}(x,\eta)$.}. 
In place of \eqref{q3} one could then try
\begin{align}\label{q5}
\cU^2_p =-{\bf R}_\infty \, (I-{\bf E}) \, \left( \tilde L(\partial)\cU^1_p +D(0)\cU^1_p
+({\rm d}D(0) \, \cU^0_p) \cU^0_p \right).
\end{align}
The contributions to $\cU^2_{p,\eps}$ involving $\cU^1_p$ now do lie in suitable $E^s_T$ spaces, but some of the 
contributions from the quadratic term do not lie in any $E^s_T$ space. The difficulty is caused by products of the 
form $\sigma_{m,k,p}\sigma_{m,k',p}$, and the resolution is to redefine $\cU^2_p$ as in \eqref{q5} after replacing 
these terms by $(\sigma_{m,k,p}\sigma_{m,k',p})_p$, see \eqref{k3}. We note that the contributions to $\cU^2_{p,\eps}$ 
from ``transversal interactions" like $\sigma_{m,k,p}\sigma_{m',k',p}$, where $m\neq m'$ do already lie in appropriate 
$E^s_T$ spaces, see Proposition \ref{h12}.

These moment-zero approximations introduce errors that blow up as $p\to 0$, of course, but taking $p=\eps^b$ 
for an appropriate $b>0$, one can hope to control these errors using the factor $\eps^2$ in $\eps^2\cU^2_p$. This 
strategy works and is carried out in the proof of Theorem \ref{main2} in section \ref{proofmain}. The proof relies on 
the machinery of moment-zero approximations developed in section \ref{mzero} and the quadratic interaction 
estimates of Appendix \ref{interaction}.

\begin{rem}\label{q6}
1) \textup{We learned the idea of using moment-zero approximations to construct correctors in pulse problems 
from the paper \cite{AR2}.}

2) \textup{Together with \cite{CW4}, which dealt with reflecting pulses in quasilinear uniformly stable  boundary 
problems, this paper completes the first stage of our project to rigorously justify, when that is possible, the formal 
constructions of \cite{HMR,AM,MA,MR} in boundary problems involving multiple interacting pulses. The operators 
${\bf E}$, ${\bf R}_\infty$, and the machinery of moment-zero approximations developed in these papers provide 
a set of tools for rigorously constructing leading profiles and correctors. The estimates for weakly stable singular 
systems \eqref{15p} given in \cite{CGW3}\footnote{Recall that the wavetrain estimate in \cite{CGW3} applies 
verbatim to the pulse case, as long as one uses the singular pulse calculus instead of the singular wavetrain 
calculus in the proof. Both calculi are constructed in \cite{CGW2}.} and for uniformly stable quasilinear singular 
systems in \cite{CW4} provide the basis for showing that approximate solutions are close to exact solutions for 
$\eps$ small. The approach to error analysis based on singular systems is especially well-suited to situations 
involving several pulses traveling at distinct group velocities. In these situations ``one-phase methods" that 
depend on constructing high order approximate solutions, or which use estimates of conormal or striated type, 
seem inapplicable. Finally, we stress that pulses do interact; the interactions do not produce resonances that 
affect leading order profiles as in the wavetrain case, but the interactions, whether of transversal or self-interaction 
type, do affect correctors to the leading order profiles. We have treated the analogous problems for wavetrains 
in \cite{CGW1,CGW3}.}
\end{rem}

\section{Analysis of profile equations}
\label{profanalysis}

\emph{\quad} For ease of exposition we continue as in Section \ref{derivation} to consider the $3\times 3$ strictly 
hyperbolic case. The minor changes needed to treat the general case are described in section \ref{extension}.

\subsection{The key subsystem for $\cU^0$}

Let us write $\cU^0$, the solution of the $\eps^{-1}$ equation in \eqref{13}, as
\begin{equation*}
\cU^0=\tilde\sigma_1(x,\theta_0,\xi_d) \, r_1+\tilde\sigma_2(x,\theta_0,\xi_d) \, r_2 
+\tilde\sigma_3(x,\theta_0,\xi_d) \, r_3.
\end{equation*}
As we saw in \eqref{22}, the $\tilde\sigma_i$'s have the form
\begin{equation*}
\tilde\sigma_i(x,\theta_0,\xi_d) =\sigma_i(x,\theta_0+\omega_i \, \xi_d) \quad \text{ for some } \sigma_i(x,\theta_i).
\end{equation*}
Recall that we assume that the phase $\phi_2$ is outgoing while the other two phases $\phi_1,\phi_3$ 
are incoming (the case where only one phase is incoming and all other phases are outgoing is actually simpler). 
Our first task is to show that $\sigma_2 \equiv 0$. We will need the following version of a classical lemma by 
Lax \cite{L}, see \cite[Lemma 2.11]{CGW1} for the proof.

 \begin{prop}\label{23a}
 Let $W(x,\theta_0,\xi_d)=\sum^3_{i=1}w_i(x,\theta_0,\xi_d)r_i$ be any $C^1$ function.  Then
 \begin{equation*}
 \tilde L(\partial)W=\sum^3_{i=1}(X_{\phi_i}w_i) \, r_i+ \sum^3_{i=1} \Big( \sum_{k\neq i}V^i_k w_k \Big) r_i \, ,
\end{equation*}
where $X_{\phi_i}$ is the characteristic vector field
\begin{equation*}
X_{\phi_i} :=\partial_{x_d}+\sum^{d-1}_{j=0}-\partial_{\xi_j}\omega_i(\beta)\partial_{x_j} 
=\dfrac{1}{{\bf v}_i \cdot {\bf e}_d} \, (\partial_t +{\bf v}_i \cdot \nabla_x) \, ,
\end{equation*}
and $V^i_k$, for $k\neq i$, is the tangential vector field
\begin{equation*}
V^i_k :=\sum^{d-1}_{l=0}(l_i A_l r_k) \, \partial_{x_l}.
\end{equation*}
\end{prop}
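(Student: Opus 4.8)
The plan is to compute $\tilde L(\partial)W$ directly, using the expansion $W=\sum_i w_i r_i$ and the fact that the $r_i$ are constant vectors (eigenvectors of $i\cA(\beta)$, hence independent of $x$). Recalling that $\tilde L(\partial)=\partial_{x_d}+\sum_{j=0}^{d-1} A_j \partial_{x_j}$, I would write
\begin{equation*}
\tilde L(\partial)W=\sum_{i=1}^3 \Big( \partial_{x_d} w_i \, r_i + \sum_{j=0}^{d-1} (\partial_{x_j} w_i)\, A_j r_i \Big),
\end{equation*}
and then decompose each vector $A_j r_i$ back into the eigenbasis via $A_j r_i=\sum_k (l_k A_j r_i)\, r_k$, using the biorthonormality $l_k\cdot r_i=\delta_{ki}$. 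Reindexing the sum so that the coefficient of $r_i$ is collected, the term with $k=i$ produces $\sum_j (l_i A_j r_i)\,\partial_{x_j} w_i$, which combines with $\partial_{x_d} w_i$ to form a first-order scalar operator acting on $w_i$; the terms with $k\neq i$ produce exactly the cross terms $\sum_{k\neq i} V^i_k w_k$ with $V^i_k=\sum_{j=0}^{d-1}(l_i A_j r_k)\partial_{x_j}$.

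The one substantive point — and the only place where Assumption \ref{assumption1} (semisimplicity) and the structure of the characteristic variety enter — is the identification of the diagonal operator $\partial_{x_d}+\sum_j (l_i A_j r_i)\partial_{x_j}$ with the geometric vector field $X_{\phi_i}$. For this I would differentiate the defining relation of the eigenvalue branch. Since $r_i$ spans $\ker L(\mathrm{d}\phi_i)$ and the phase $\phi_i=\phi_0+\omega_i x_d$ corresponds to $\underline\tau+\lambda_{k_i}(\underline\eta,\omega_i)=0$, one differentiates $L(\underline\tau,\underline\eta,\omega)\, r(\omega)=0$ (a smooth local choice of eigenvector through $r_i=r(\omega_i)$) in a direction, pairs on the left with $l_i$, and uses $l_i L(\mathrm{d}\phi_i)=0$ to kill the $\dot r$ contribution. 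This yields $l_i\, \partial_{\xi_j}L(\mathrm{d}\phi_i)\, r_i = \partial_{\xi_j}(\underline\tau+\lambda_{k_i})=\partial_{\xi_j}\lambda_{k_i}(\underline\eta,\omega_i)$ for the tangential directions, and normalization of $B_d$-type terms gives the $x_d$-coefficient; after multiplying through by $\tilde L(\mathrm{d}\phi_i)$'s normalization one reads off $l_i A_j r_i=-\partial_{\xi_j}\omega_i(\beta)$, which is precisely the coefficient appearing in $X_{\phi_i}$. The identity $X_{\phi_i}=\frac{1}{\mathbf{v}_i\cdot\mathbf{e}_d}(\partial_t+\mathbf{v}_i\cdot\nabla_x)$ is then just the definition $\mathbf{v}_i=\nabla\lambda_{k_i}(\underline\eta,\omega_i)$ together with homogeneity of $\lambda_{k_i}$, rearranged.

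I expect the main obstacle to be bookkeeping rather than conceptual: keeping straight which variables are "tangential" ($x_0=t,\dots,x_{d-1}$) versus normal ($x_d$), and correctly relating the coefficients $l_i A_j r_i$ to the gradient of the eigenvalue, since $A_j=B_d^{-1}B_j$ carries the extra $B_d^{-1}$ that must be accounted for when passing between $L(\partial)$ and $\tilde L(\partial)$. Since the statement cites \cite{L} and \cite[Lemma 2.11]{CGW1} for the proof, I would in fact only sketch this computation and refer the reader there for the routine verification of the coefficient identities, emphasizing that the key input is the semisimplicity of the eigenvalues $\lambda_k$ in Assumption \ref{assumption1}, which guarantees that the eigenprojector onto $\ker L(\mathrm{d}\phi_i)$ is well-behaved and that no Jordan-block corrections appear in the decomposition.
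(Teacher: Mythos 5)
The paper does not prove this proposition itself; it cites Lax \cite{L} and \cite[Lemma 2.11]{CGW1}. Your argument is the standard one underlying that lemma, and it is correct: expand $\tilde L(\partial)W$ over the eigenbasis $\{r_i\}$, split diagonal from off-diagonal coefficients $l_i A_j r_k$, and identify $l_i A_j r_i = -\partial_{\zeta_j}\omega_i(\beta)$ by differentiating the eigenrelation $\bigl(\omega_m(\zeta) I + \sum_j \zeta_j A_j\bigr) r_m(\zeta) = 0$ and pairing with $l_i$ (the $\dot r$ term drops because $l_i$ is the corresponding left eigenvector). Your minor caveat about the $B_d^{-1}$ bookkeeping is resolved most cleanly by differentiating the $\tilde L$ eigenrelation directly, rather than $L$, which avoids any renormalization; and in the $3\times3$ strictly hyperbolic setting of this section the eigenvalues are simple, so the smoothness of the branches and the basis decomposition are automatic (the appeal to Assumption~\ref{assumption1} becomes relevant only for the general $N\times N$ extension in Section~\ref{extension}). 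Subject to these small tightenings your proof matches the cited argument.
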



The relation \eqref{14a}(b) can be simplified by using Proposition \ref{23a}. More precisely, we recall 
the notation $F^0 :=-(\tilde L(\partial)\cU^0+D(0)\cU^0)$, see \eqref{15}. The next Proposition is an 
immediate consequence of Proposition \ref{23a}.

\begin{prop}\label{m3az}
The function $F^0$ reads $F^0 =\sum_{i=1}^3 F_i \, r_i$, with
\begin{equation*}
F_i(x,\theta_0,\xi_d)=-X_{\phi_i}\sigma_i-\sum_{k\neq i}V^i_k\sigma_k+\sum_{k=1}^3 e^i_k \, \sigma_k \, ,
\end{equation*}
for some real constants $e^i_k$ and vector fields $V^i_k$ as in Proposition \ref{23a}.
\end{prop}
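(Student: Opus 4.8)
The plan is to verify \eqref{14a}(b) directly by writing out $F^0 = -(\tilde L(\partial)\cU^0+D(0)\cU^0)$ in the eigenbasis $\{r_1,r_2,r_3\}$ and applying Proposition \ref{23a}, then simply reading off the coefficients of each $r_i$. First I would write $\cU^0 = \sum_{i=1}^3 \tilde\sigma_i r_i$ with $\tilde\sigma_i(x,\theta_0,\xi_d) = \sigma_i(x,\theta_0+\omega_i\xi_d)$ and apply Proposition \ref{23a} with $w_i = \tilde\sigma_i$. This immediately gives
\begin{equation*}
\tilde L(\partial)\cU^0 = \sum_{i=1}^3 (X_{\phi_i}\tilde\sigma_i)\, r_i + \sum_{i=1}^3\Big(\sum_{k\neq i} V^i_k \tilde\sigma_k\Big) r_i \, .
\end{equation*}

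Next I would handle the zeroth-order term $D(0)\cU^0 = D(0)\sum_{k=1}^3 \tilde\sigma_k r_k$. Since $D(0)$ is a constant matrix and the $r_k$ span $\R^3$, we can expand $D(0) r_k = \sum_{i=1}^3 c^i_k r_i$ for some real constants $c^i_k$ (obtained concretely as $c^i_k = l_i\cdot D(0) r_k$ using the dual basis), so that $D(0)\cU^0 = \sum_{i=1}^3\big(\sum_{k=1}^3 c^i_k \tilde\sigma_k\big) r_i$. Combining the two contributions and using $F^0 = -(\tilde L(\partial)\cU^0 + D(0)\cU^0)$, the coefficient of $r_i$ in $F^0$ is exactly
\begin{equation*}
F_i = -X_{\phi_i}\tilde\sigma_i - \sum_{k\neq i} V^i_k \tilde\sigma_k - \sum_{k=1}^3 c^i_k \tilde\sigma_k \, ,
\end{equation*}
which has the claimed form with $e^i_k := -c^i_k$ (absorbing the sign into the constants), and the $V^i_k$ are the tangential vector fields of Proposition \ref{23a}. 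Finally I would note that $\tilde\sigma_k(x,\theta_0,\xi_d) = \sigma_k(x,\theta_0+\omega_k\xi_d)$, so the formula is stated in terms of the $\sigma_k$'s as written.

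There is no real obstacle here: the proposition is labeled an immediate consequence of Proposition \ref{23a}, and the only content beyond quoting that lemma is the trivial observation that a constant matrix applied to a linear combination of the $r_i$ produces another linear combination with the same profiles $\sigma_k$ as coefficients. The one point requiring a sentence of care is that $X_{\phi_i}$ is a genuine differential operator in $x$ (it does not touch $\theta_0$ or $\xi_d$), so $X_{\phi_i}\tilde\sigma_i$ is naturally expressed as $X_{\phi_i}\sigma_i$ evaluated at $\theta=\theta_0+\omega_i\xi_d$; similarly the tangential vector fields $V^i_k$ act only in $x$. Thus the proof is three lines: invoke Proposition \ref{23a}, expand $D(0)r_k$ in the eigenbasis, collect coefficients.
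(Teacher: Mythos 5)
Your proof is correct and is exactly the elaboration of what the paper leaves implicit when it calls the proposition ``an immediate consequence of Proposition \ref{23a}'': apply Lax's lemma to the $\tilde L(\partial)$ term, expand the constant matrix $D(0)$ in the (real) eigenbasis via $e^i_k=-l_i\cdot D(0)r_k$, and collect coefficients. The side remark that $X_{\phi_i}$ and the $V^i_k$ act only in $x$, so each term remains a function of the single variable $\theta_k=\theta_0+\omega_k\xi_d$, is the right point to flag and is also what the paper intends.
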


In particular, the requirement ${\bf E} F^0=0$ in \eqref{14a}(b) reads
\begin{equation}
\label{equationF0}
X_{\phi}\sigma_i -e_i^i \, \sigma_i=0, \quad i=1,2,3.
\end{equation}
Since the outgoing mode $\sigma_2$ is zero in $t<0$, this implies $\sigma_2 \equiv 0$ by integrating along 
the characteristics.

Using \eqref{14} ($\eps^0$) and $\sigma_2=0$, we find the boundary condition
\begin{equation*}
B\, (\sigma_1(x',0,\theta_0) \, r_1 +\sigma_3(x',0,\theta_0) \, r_3)=0 \, ,
\end{equation*}
and we recall that the vectors $r_1,r_3$ span the stable subspace $\E^s(\beta)$, see Lemma \ref{lem1}. 
Thus, by \eqref{defe} we have
\begin{equation*}
\sigma_1(x',0,\theta_0) \, r_1 +\sigma_3(x',0,\theta_0) \, r_3 =a(x',\theta_0) \, e \, , 
\quad \text{ for some scalar function }a(x',\theta_0).
\end{equation*}
Since $e=e_1+e_3$, where $e_i\in\mathrm{span}\;\{ r_i\}$, we deduce
\begin{equation}\label{m0}
\sigma_i(x',0,\theta_0) \, r_i =a(x',\theta_0) \, e_i \, ,\quad i=1,3 \, .
\end{equation}
Using \eqref{equationF0} and \eqref{m0}, we have derived the main subsystem governing $\cU^0$:
\begin{align}\label{m1}
\begin{split}
&X_{\phi_i}\sigma_i -e_i^i \, \sigma_i=0\text{ for }i=1,3\\
&\sigma_i (x',0,\theta) \, r_i=a(x',\theta) \, e_i \, ,\\
&\sigma_i=0\text{ in } t<0 \, ,
\end{split}
\end{align}
where the scalar function $a(x',\theta)$ remains to be determined.


\paragraph{Boundary equation}

From the definition of $F^0$, the fact that $\cU^0$ is purely incoming ($\sigma_2 \equiv 0$), and Remark 
\ref{30ab}, we see that the outgoing component of the right side of \eqref{15bb}(a) is zero. Writing
\begin{equation*}
{\bf E}\cU^1=\sum^3_{i=1} \tau_i(x,\theta_0+\omega_i\xi_d) \, r_i
\end{equation*}
and using Proposition \ref{23a}, we find
\begin{equation*}
X_{\phi_2}\tau_2 -e_2^2 \, \tau_2=0, \quad \tau_2=0\text{ in }t<0 \, ,
\end{equation*}
and thus $\tau_2 \equiv 0$.

Since ${\bf E}\cU^1$ is purely incoming, it is valued in $\E^s(\beta)$. Therefore, by \eqref{defb}, the equation 
\eqref{15bb}(b) is solvable if and only if the condition
\begin{align}\label{m2}
b\cdot\left[G -({\rm d}\psi(0) \, \cU^0)\cU^0 -B \, (I-{\bf E}) \, \cU^1\right]=0 \text{ on } x_d=\xi_d=0 \, ,
\end{align}
holds. Since $b\cdot Br_i=0$, $i=1,3$, using \eqref{15} we see that
\begin{align*}
b \cdot B \, (I-{\bf E}) \cU^1 \, (x',0,\theta_0,0) &= b \cdot B \, {\bf R}_\infty F^0 \\
&= -b\cdot B \, r_2 \left( \int_{\R^+} F_2(x',0,\theta_0-\omega_2 s,s) 
\, {\rm d}s \right).
\end{align*}
Thus, the solvability condition \eqref{m2} becomes
\begin{align}\label{39}
b\cdot \left[G -({\rm d}\psi(0) \, \cU^0)\cU^0 +Br_2\left( \int_{\R^+} F_2(x',0,\theta_0-\omega_2s,s) \, {\rm d}s 
\right) \right]=0.
\end{align}

We now rewrite this equation in terms of the function $a(x',\theta_0)$ appearing in \eqref{m1}. We will use 
the following fundamental fact:

\begin{prop}[\cite{CG}]\label{m5}
There holds
\begin{equation}\label{12a}
b\cdot B\sum_{m\in \{1,3\}}R_mA_j e_m=\kappa \partial_{\eta_j}\sigma(\beta),\;j=0,\dots,d-1,\;\; 
\text{ where }\kappa\in\R\setminus 0,\;\partial_\tau \sigma(\beta)=1,
\end{equation}
the function $\sigma$ is defined in Assumption \ref{assumption3}, and $R_m:=\sum_{k\neq m} 
\frac{P_k}{\omega_m-\omega_k}$, so\footnote{Recall $P_iX=(l_i\cdot X) r_i$.}
\begin{align*}
R_1=\frac{P_2}{\omega_1-\omega_2}+\frac{P_3}{\omega_1-\omega_3},\quad 
R_3=\frac{P_1}{\omega_3-\omega_1}+\frac{P_2}{\omega_3-\omega_2}.
\end{align*}
Thus,
\begin{align}\label{12b}
b\cdot B\sum_{m\in\cI}R_m \tilde{L}(\partial)e_m=\kappa\left( 
\partial_\tau\sigma(\beta)\partial_t+\sum^{d-1}_{j=1}\partial_{\eta_j}\sigma(\beta)\partial_{x_j}\right):= X_{Lop}.
\end{align}
\end{prop}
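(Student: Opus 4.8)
The plan is to derive \eqref{12a} by differentiating the weak-stability normal form of Assumption \ref{assumption3} at the base frequency $\beta$; formula \eqref{12b} will then be a pure reformulation. I work with real frequencies $\zeta=(\tau,\eta)$ near $\beta$. Since $\beta=\tauetabar\in\Upsilon_0\subset\cH$ and $\cH$ is open and misses the glancing set, for $\zeta$ near $\beta$ the matrix $\cA(\zeta)$ is diagonalizable with purely imaginary, pairwise distinct eigenvalues $\mu_m(\zeta)$ near $i\omega_m$, so its Riesz spectral projectors $\Pi_m(\zeta)$ are analytic near $\beta$ with $\Pi_m(\beta)=P_m$, the projector of Lemma \ref{lem2} onto $\ker L({\rm d}\phi_m)$. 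Because $\E^s(\zeta)$ is $\cA(\zeta)$-invariant, continuous, and equals $\oplus_{m\in\cI}\ker L({\rm d}\phi_m)$ at $\beta$ by Lemma \ref{lem1}, it coincides near $\beta$ with the sum of the eigenspaces of $\cA(\zeta)$ attached to $\{\mu_m(\zeta):m\in\cI\}$, whose projector is $\Pi^s(\zeta):=\sum_{m\in\cI}\Pi_m(\zeta)$. Take the $C^\infty$ basis $E_1(\zeta),\dots,E_p(\zeta)$ of $\E^s(\zeta)$ and the matrix $P(\zeta)=[p_1(\zeta)\ \cdots\ p_p(\zeta)]\in\mathrm{GL}_p(\C)$ of Assumption \ref{assumption3}, so that $B\,E_1(\zeta)=(\gamma+i\,\sigma(\zeta))\,p_1(\zeta)$ and $B\,E_j(\zeta)=p_j(\zeta)$ for $j\ge 2$; after rescaling $E_1$ we may take $E_1(\beta)=e$, since $\sigma(\beta)=0$ forces $E_1(\beta)\in\ker B\cap\E^s(\beta)=\mathrm{Span}\{e\}$. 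At $\beta$ the $p-1$ vectors $p_2(\beta),\dots,p_p(\beta)=B\,E_2(\beta),\dots,B\,E_p(\beta)$ span $B\,\E^s(\beta)$, so \eqref{defb} gives $b\cdot p_j(\beta)=0$ for $j\ge 2$, while $b\cdot p_1(\beta)\neq 0$ because $b\neq 0$ and $P(\beta)$ is invertible.

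I will use two ingredients. First, $b\cdot B$ annihilates $\E^s(\beta)$ by \eqref{defb}; in particular $b\cdot B\,\Pi^s(\beta)=0$ and $b\cdot B\,P_m=0$ for every $m\in\cI$ (since $P_m$ projects onto $\ker L({\rm d}\phi_m)\subset\E^s(\beta)$). Second, from $\cA(\zeta)=-i\,B_d^{-1}\big((\tau-i\gamma)I+\sum_{j=1}^{d-1}\eta_j B_j\big)$ together with $A_j=B_d^{-1}B_j$ (and $A_0:=B_d^{-1}$) one gets $\partial_{\eta_j}\cA(\beta)=-i\,A_j$ and $\partial_\tau\cA(\beta)=-i\,A_0$, while Kato's first-order perturbation formula for the projector of the semisimple eigenvalue $i\omega_m$, with reduced resolvent $S_m:=\sum_{k\neq m}\frac{P_k}{i\omega_m-i\omega_k}=-i\,R_m$, gives
\[
\partial_{\eta_j}\Pi_m(\beta)=\Pi_m(\beta)\big(\partial_{\eta_j}\cA(\beta)\big)S_m+S_m\big(\partial_{\eta_j}\cA(\beta)\big)\Pi_m(\beta)=-\,P_mA_jR_m-R_mA_jP_m,
\]
and likewise with $(\eta_j,A_j)$ replaced by $(\tau,A_0)$.

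Now differentiate $B\,E_1(\zeta)=(\gamma+i\sigma(\zeta))\,p_1(\zeta)$ at $\beta$ in the direction $\eta_j$ (or $\tau$; write $\partial_{\eta_0}:=\partial_\tau$), where $\gamma=\sigma(\beta)=0$ and $\partial_{\eta_j}\gamma=0$: the terms carrying $\gamma+i\sigma$ drop and we obtain $B\,\partial_{\eta_j}E_1(\beta)=i\,\partial_{\eta_j}\sigma(\beta)\,p_1(\beta)$, hence $b\cdot B\,\partial_{\eta_j}E_1(\beta)=i\,(b\cdot p_1(\beta))\,\partial_{\eta_j}\sigma(\beta)$. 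On the other hand, writing $E_1(\zeta)=\Pi^s(\zeta)E_1(\zeta)$, differentiating, and applying $b\cdot B$ — which annihilates the term $\Pi^s(\beta)\,\partial_{\eta_j}E_1(\beta)$ by the first ingredient — gives $b\cdot B\,\partial_{\eta_j}E_1(\beta)=\sum_{m\in\cI}b\cdot B\big(\partial_{\eta_j}\Pi_m(\beta)\big)e$; inserting the second ingredient and discarding the $P_mA_jR_m$ contributions via $b\cdot B\,P_m=0$ leaves $-\sum_{m\in\cI}b\cdot B\,R_mA_jP_me=-\,b\cdot B\sum_{m\in\cI}R_mA_je_m$ (using $P_me=e_m$). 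Comparing the two computations, $b\cdot B\sum_{m\in\cI}R_mA_je_m=\kappa\,\partial_{\eta_j}\sigma(\beta)$ for all $j=0,\dots,d-1$, with $\kappa:=-i(b\cdot p_1(\beta))$ the same constant throughout. Reading the $j=0$ relation, $\kappa$ equals the ratio of $b\cdot B\sum_{m\in\cI}R_mA_0e_m$ — a real number, since $B$, $A_0$, $b$, the $e_m$, and the $R_m$ (real combinations of the real projectors $P_k$ of Lemma \ref{lem2}) are all real and $\sigma$ is real-valued — by $\partial_\tau\sigma(\beta)$, which is nonzero by the non-degeneracy underlying Assumption \ref{assumption3} (cf. \eqref{ca4}); hence $\kappa\in\R$, and $\kappa\neq 0$ because $b\cdot p_1(\beta)\neq 0$. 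Under the standard normalization of $\sigma$ one has moreover $\partial_\tau\sigma(\beta)=1$, and this is exactly \eqref{12a}. Finally, interpreting $\tilde L(\partial)e_m$ as the vector-field-valued operator $\sum_{j=0}^{d-1}(A_je_m)\,\partial_{x_j}$ (with $\partial_{x_0}=\partial_t$) and summing \eqref{12a} against $\partial_{x_j}$ turns the left side into $b\cdot B\sum_{m\in\cI}R_m\tilde L(\partial)e_m$ and the right side into $\kappa\big(\partial_\tau\sigma(\beta)\partial_t+\sum_{j=1}^{d-1}\partial_{\eta_j}\sigma(\beta)\partial_{x_j}\big)=X_{Lop}$, which is \eqref{12b}.

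The substantive work is all in the preliminaries: showing that near $\beta$ the stable bundle $\E^s(\zeta)$ is the smoothly varying direct sum of the incoming eigenspaces of $\cA(\zeta)$ — this is where $\beta\in\cH$ away from glancing (Assumption \ref{a8}) is essential — and identifying $\Pi_m(\beta)=P_m$. After that the argument is a one-line spectral-perturbation computation, the only delicacy being to track the factors of $i$ carefully (in $\partial_{\eta_j}\cA(\beta)=-iA_j$ and in $S_m=-iR_m$) so that the final constant $\kappa$ comes out real and nonzero.
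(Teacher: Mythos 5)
Your proof is correct, and the route you take — differentiating the normal-form factorization of Assumption \ref{assumption3} at $\beta$, applying Kato's first-order projector perturbation formula, and exploiting the fact that $b\cdot B$ annihilates $\E^s(\beta)$ so that $b\cdot BP_m=0$ for $m\in\cI$ — is precisely the kind of spectral-perturbation argument one would expect (and, as far as one can tell, the one carried out in \cite{CG}, to which the paper delegates the proof). Your bookkeeping of the factors of $i$ in $\partial_{\eta_j}\cA(\beta)=-iA_j$ and in the reduced resolvent $S_m$ is consistent, and the resulting identity $b\cdot B\sum_{m\in\cI}R_mA_je_m=-i\,(b\cdot p_1(\beta))\,\partial_{\eta_j}\sigma(\beta)$ with $\kappa=-i\,b\cdot p_1(\beta)$ is exactly \eqref{12a}.

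Two small points are worth tightening. First, the claim $\partial_\tau\sigma(\beta)=1$ (and in particular $\partial_\tau\sigma(\beta)\neq 0$) is invoked as a ``standard normalization,'' but it carries real content: it is what makes your argument that $\kappa$ is real non-circular, since you deduce $\kappa\in\R$ from the identity $\kappa\,\partial_\tau\sigma(\beta)=b\cdot B\sum_{m\in\cI}R_mA_0e_m$ and the reality of the right-hand side. The fact $\partial_\tau\sigma(\beta)=1$ follows from the specific structure $\gamma+i\sigma(\zeta)$ of the factorization together with analyticity of the Lopatinskii determinant in $\zeta=\tau-i\gamma$ on $\cH$ (a Cauchy--Riemann constraint forces $\partial_\tau\sigma=\partial_\gamma\gamma=1$); it deserves at least a sentence rather than a hand-wave. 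Second, in deriving \eqref{12b} you silently discard the $\partial_d$-term that a literal reading of $\tilde L(\partial)e_m=e_m\partial_d+\sum_{j=0}^{d-1}(A_je_m)\partial_{x_j}$ would produce; this is harmless because $R_m e_m=\sum_{k\neq m}\dfrac{P_kP_m e}{\omega_m-\omega_k}=0$, but that one-line observation should be stated so that \eqref{12b} is seen to be a genuine reformulation of \eqref{12a} rather than a redefinition of the symbol $\tilde L(\partial)e_m$.
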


Our main result in this paragraph reads as follows.

\begin{prop}\label{40}
There are real constants $c$ and $d$ such that taking the $\theta_0$-derivative of equation \eqref{39} is 
equivalent to the following equation for the function $a(x',\theta_0)$ in \eqref{m1}:
\begin{align}\label{41}
X_{Lop} a +c \, a +d\, \partial_{\theta_0}(a^2)=-b\cdot\partial_{\theta_0}G \, .
\end{align}
\end{prop}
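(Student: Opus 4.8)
The plan is to start from the solvability condition \eqref{39} and rewrite each of its three bracketed terms in terms of the scalar amplitude $a(x',\theta_0)$, using the boundary relation \eqref{m0} and the profile transport structure from Proposition \ref{m3az}. First I would substitute $\cU^0|_{x_d=\xi_d=0}=\sigma_1(x',0,\theta_0)r_1+\sigma_3(x',0,\theta_0)r_3 = a(x',\theta_0)(e_1+e_3) = a(x',\theta_0)e$ (using $\sigma_2\equiv 0$ and \eqref{m0}) into the quadratic term $({\rm d}\psi(0)\cU^0)\cU^0$. Since this is bilinear and $\cU^0=ae$ on the boundary corner, that term becomes $a^2$ times a fixed vector, so $b\cdot({\rm d}\psi(0)\cU^0)\cU^0 = d_0\, a^2$ for a real constant $d_0$; after applying $\partial_{\theta_0}$ this produces the $d\,\partial_{\theta_0}(a^2)$ term on the left of \eqref{41} (modulo sign/constant). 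The term $b\cdot\partial_{\theta_0}G$ is already in the desired form.

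Next I would treat the integral term $b\cdot Br_2\int_{\R^+}F_2(x',0,\theta_0-\omega_2 s,s)\,{\rm d}s$, which is where the vector field $X_{Lop}$ and the transport term $c\,a$ must emerge. By Proposition \ref{m3az}, $F_2 = -X_{\phi_2}\sigma_2 - \sum_{k\neq 2}V^2_k\sigma_k + \sum_k e^2_k\sigma_k$; with $\sigma_2\equiv 0$ this reduces to a first-order tangential expression in $\sigma_1,\sigma_3$. The integral $\int_{\R^+}(\cdots)(x',0,\theta_0-\omega_2 s,s)\,{\rm d}s$ applied to the incoming profiles is exactly the action of (a component of) ${\bf R}_\infty$; the cleaner route is to recognize $F_2 = l_2\cdot F^0$ and that, restricted to the boundary corner, the profiles $\sigma_m(x',0,\theta_0)r_m = a(x',\theta_0)e_m$, $m\in\{1,3\}$, satisfy the transport equations $X_{\phi_m}\sigma_m = e_m^m\sigma_m$ from \eqref{m1}. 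Carrying out the $s$-integration (integrating by parts against the transport flow, exploiting the decay of profiles to kill boundary contributions at $s=+\infty$), and then applying $\partial_{\theta_0}$, the $V^2_k$-pieces assemble, via the identity \eqref{12b} of Proposition \ref{m5}, into $b\cdot B\sum_{m\in\cI}R_m\tilde L(\partial)e_m = X_{Lop}$ acting on $a$, while the zeroth-order pieces $e^2_k$ together with the constants $e_m^m$ and $\kappa$ collect into the lower-order coefficient $c\,a$. Here $R_m = \sum_{k\neq m}P_k/(\omega_m-\omega_k)$ is precisely the combination that arises from the factor $1/(\omega_k-\omega_2)$ type denominators produced by integrating $\int_{\R^+}e^{\text{(translation by }\omega_k-\omega_2\text{)}}\,{\rm d}s$ — i.e. the resolvent-type operator that inverts $\tilde\cL$ on the outgoing component.

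The main obstacle I expect is the bookkeeping in the second step: making the integral $\int_{\R^+}F_2(x',0,\theta_0-\omega_2 s,s)\,{\rm d}s$ rigorous (the profiles decay only like $\langle\theta\rangle^{-2}$, so one must be careful that the integrals converge and that no nonzero boundary terms survive at $s\to\infty$) and, more substantively, correctly matching the algebra so that the $V^2_k$-contributions reorganize into $b\cdot B\sum_{m}R_m A_j e_m = \kappa\,\partial_{\eta_j}\sigma(\beta)$ as in \eqref{12a}. This requires writing $F_2$ with the explicit vector fields, commuting the $x'$-derivatives past the $s$-integral, and using the transport equations \eqref{m1} to convert $X_{\phi_m}$-derivatives of $\sigma_m$ into algebraic multiples of $\sigma_m$, so that after differentiating in $\theta_0$ everything is expressed through $a$, $\partial_t a$, $\partial_{x_j}a$, and $\partial_{\theta_0}(a^2)$ with the stated real constants. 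Once the coefficient of $\partial_t a$ is normalized using $\partial_\tau\sigma(\beta)=1$ and $\kappa\neq 0$ from Proposition \ref{m5}, dividing through by $\kappa$ yields exactly \eqref{41}, with $c$ and $d$ real; the final step is just to note that every constant produced is real because $b$, $B$, the $e_m$'s, the $r_m$'s, the eigenvalues $\omega_m$, and $\sigma(\beta)$ are all real.
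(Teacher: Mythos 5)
Your proposal follows the same three-way split as the paper (quadratic term $\to d\,\partial_{\theta_0}(a^2)$; the $D(0)\cU^0$ part of $F_2$ $\to c\,a$; the $\tilde L(\partial)\cU^0$ part of $F_2$ $\to X_{Lop}a$ via \eqref{12a}--\eqref{12b}), and the overall plan is sound. One remark, though: you describe the treatment of the integral term as ``integrating by parts against the transport flow'' and say you will ``use the transport equations \eqref{m1} to convert $X_{\phi_m}$-derivatives of $\sigma_m$ into algebraic multiples.'' This step never actually occurs: with $\sigma_2\equiv 0$, Proposition \ref{m3az} gives $F_2=-\sum_{k\in\{1,3\}}V^2_k\sigma_k+\sum_{k\in\{1,3\}}e^2_k\sigma_k$, so no $X_{\phi_m}\sigma_m$ term ever appears in the integrand, and the transport equations from \eqref{m1} play no role in this proof. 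What the paper does instead is the direct substitution $u=\theta_0+(\omega_k-\omega_2)s$ in the $s$-integral, which manufactures the $1/(\omega_k-\omega_2)$ denominators (the source of the $R_m$-combination), produces antiderivatives $\cA_k$ of $a$, and then $\partial_{\theta_0}\cA_k=a$ converts everything into $a$ and $\partial_j a$ so that \eqref{12b} closes the argument. Relatedly, the coefficients $e_m^m$ from the interior transport equations do not enter the constant $c$ — only the $e^2_k$'s do — and there is no final ``division by $\kappa$,'' since $\kappa$ is already absorbed into the definition of $X_{Lop}$ in \eqref{12b}. None of these slips is fatal (you correctly identify where the resolvent-type denominators come from), but the transport-equation detour would lead you nowhere if you tried to carry it out literally.
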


\begin{proof}
\textbf{1.)} Let us first observe that $\cU^0 =a(x',\theta_0)\, e$ on $x_d=\xi_d=0$, so the bilinear term 
$({\rm d}\psi(0) \, \cU^0)\cU^0$ in \eqref{39} reads $a^2 \, ({\rm d}\psi(0) \, e)e$, which gives an $a^2$ 
contribution (up to a real multiplicative constant). This term contributes to the "Burgers like" term 
$d\, \partial_{\theta_0}(a^2)$ in \eqref{41} after taking the $\theta_0$-derivative.

\textbf{2.)} Let us recall the definition $F^0=-(\tilde L(\partial)\cU^0+D(0)\cU^0)$, and $F_2 := l_2 \cdot 
F^0$. Another easy contribution to \eqref{39} is thus given when computing 
\begin{multline*}
\int_{\R^+} l_2 \cdot D(0) (\sigma_1(x',0,\theta_0+(\omega_1-\omega_2) \, s) \, r_1 
+\sigma_3(x',0,\theta_0+(\omega_3-\omega_2) \, s) \, r_3) \, {\rm d}s \\
=\int_{\R^+} l_2 \cdot D(0) (a(x',\theta_0+(\omega_1-\omega_2) \, s) \, e_1 
+a(x',\theta_0+(\omega_3-\omega_2) \, s) \, e_3) \, {\rm d}s \, .
\end{multline*}
Taking the $\theta_0$ derivative of this integral gives a contribution of the form $c\, a(x',\theta_0)$ 
for a suitable real constant $c$.

\textbf{3.)} It remains to examine the last integral term in \eqref{39}, that is
\begin{align*}
I&:=\int_{\R^+} l_2\cdot \tilde L(\partial)\cU^0 (x',0,\theta_0-\omega_2s,s) \, {\rm d}s \\
&=\int_{\R^+} l_2\cdot \tilde L(\partial) (\sigma_1(x',0,\theta_0 +(\omega_1-\omega_2) \, s) \, r_1 
+\sigma_3(x',0,\theta_0 +(\omega_3-\omega_2) \, s) \, r_3) \, {\rm d}s \, .
\end{align*}
Since $l_2 \cdot r_i=0$, $i=1,3$, the coefficient of the $\partial_{x_d}$ term in the latter integral vanishes 
and we obtain
\begin{align}
I \, r_2=&\left[\sum^{d-1}_{j=0}(l_2A_je_1)\int_{\R^+} \partial_j a(x',\theta_0+(\omega_1-\omega_2)s) 
\, {\rm d}s\right] \, r_2 \notag\\
&+\left[\sum^{d-1}_{j=0}(l_2A_je_3) \int_{\R^+} \partial_j a(x',\theta_0+(\omega_3-\omega_2)s) 
\, {\rm d}s \right] \, r_2 \, .\label{42}
\end{align}
Let $\cA_m(x',\theta_0)$, $m=1,3$ be the unique antiderivative(s) of $a$ in $\theta_0$ such that
\begin{equation*}
\cA_m(x',(\omega_m-\omega_2)\infty)=0 \, .
\end{equation*}
The integrals in \eqref{42} can be computed and the function $I \, r_2$ can be rewritten
\begin{align}\label{42a}
I \, r_2 =\sum^{d-1}_{j=0} (l_2A_j e_1) \dfrac{\partial_j \cA_1(x',\theta_0)}{\omega_1-\omega_2} \, r_2 
+\sum^{d-1}_{j=0}(l_2A_j e_3) \dfrac{\partial_j \cA_3(x',\theta_0)}{\omega_3-\omega_2} \, r_2.
\end{align}
Using \eqref{12a}, \eqref{12b}, and the fact that $b\cdot Be_i=0$, $i=1,3$, we obtain from \eqref{42a} that 
the final contribution in taking the $\theta_0$-derivative of \eqref{39} reads
\begin{equation*}
b\cdot B\partial_{\theta_0} (I\, r_2)=X_{Lop}a.
\end{equation*}
\end{proof}

Combining \eqref{m1} with Proposition \ref{40}, we have the following key subsystem satisfied by 
$(\sigma_1,\sigma_3)$:
\begin{align}\label{m6}
\begin{split}
&(a)\;X_{\phi_1}\sigma_1 -e_1^1 \, \sigma_1=0\\
&(b)\;X_{\phi_3}\sigma_3 -e_3^3 \, \sigma_3=0\\
&(c)\;X_{Lop}a +c \, a +d\, \partial_{\theta_0}(a^2)  =-b\cdot \partial_{\theta_0}G \text{ on }\{x_d=0,\xi_d=0\}.
\end{split}
\end{align}
where
\begin{align}\label{m7}
\sigma_i(x',0,\theta_0) r_i=a(x',\theta_0)e_i
\end{align}
and all unknowns are zero in $t<0$. Recall that the outgoing component $\sigma_2$ of $\cU^0$ vanishes. 
In Proposition \ref{m8} we solve for $a$ by Picard iteration, and then obtain $\sigma_1$ and $\sigma_3$ 
in the interior by simply integrating along the appropriate characteristics.

\begin{rem}
\textup{In the case of wavetrains the interior equation for $\tau_2$ is coupled to $\sigma_1$ and $\sigma_3$ 
by a nonlinear interaction integral whenever there is a resonance of the form
\begin{align*}
n\phi_1+n_3\phi_3=n_2\phi_2, \text{ with }n_i\in\Z.
\end{align*}
Thus, $\tau_2$, the outgoing component of ${\bf E}\cU^1$ is not necessarily zero, and this leads to the presence 
of an extra term of the form $\partial_{\theta_0}\tau_2 $ in the last equation of \eqref{m6}. That extra term 
leads to a loss of derivatives in the estimate for the linearized system, and forced us to use Nash-Moser 
iteration to solve for $(\sigma_1,\sigma_3,\tau_2)$ in \cite{CGW3}.}
\end{rem}

We shall work with profiles lying in a class of Sobolev spaces weighted in $\theta$. Let
\begin{align*}
\Omega_T=\{(x,\theta)\in \overline{\mathbb{R}}^{d+1}_+ \times\R: t<T\},
\end{align*}
and introduce the following spaces.

\begin{defn}\label{c1}
For $s\in\N$, define the spaces
\begin{equation}
\Gamma^s_T:=\{a(x,\theta)\in L^2(\Omega_T): (\theta,\partial_{x},\partial_{\theta})^\beta a\in L^2(\Omega_T) 
\text{ for }|\beta|\leq s\text{ and }a=0\text { in }t<0\} \, ,
\end{equation}
with norms
\begin{align}\label{c1a}
|a|_{s,T}=\sum_{|\beta|=|\beta_1,\beta_2,\beta_3|\leq s} 
|\theta^{\beta_1}\partial_x^{\beta_2}\partial_\theta^{\beta_3}a|_{L^2(\Omega_T)}.
\end{align}
The analogous norms of functions of $b(x',\theta)$ defined  on $b\Omega_T:=\{(x',\theta)\in \mathbb{R}^d 
\times \R:t<T\}$ are denoted with brackets: $\langle b\rangle_{s,T}$.  We denote the corresponding spaces 
by $b\Gamma^s_T$.

We will let $H^s_T$ denote the usual Sobolev space on $\Omega_T$ with norm defined just as in \eqref{c1a} 
but without the $\theta$ weights.  These spaces and those below have the obvious meanings when a function 
is vector-valued.
\end{defn}

\begin{rem}\label{f3}
\textup{By Sobolev embedding if the functions $f^i_k$, $g^i_{l,m}$, $h^i_{l,m}$ appearing in \eqref{30a} lie in 
$\Gamma^s_T$ for $s>\frac{d+2}{2}+3$, then $F$ as in \eqref{30aa} is of type $\cF$.}
\end{rem}

\begin{prop}\label{m8}
Let $T>0$, $m>\frac{d+1}{2}+1$ and suppose $G\in b\Gamma^{m+1}_T$ and vanishes in $t<0$. Then provided 
$\langle G\rangle_{b\Gamma^{m+1}_T}$ is small enough, the system \eqref{m6}, \eqref{m7} has a unique solution 
satisfying $a \in b\Gamma^m_{T}$ and  $\sigma_i\in\Gamma^m_{T}$, $i=1,3$.
\end{prop}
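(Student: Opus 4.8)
The plan is to solve the system \eqref{m6}--\eqref{m7} in two stages: first solve the scalar boundary equation \eqref{m6}(c) for the trace function $a(x',\theta_0)$ on $b\Omega_T$ by a Picard iteration, then recover $\sigma_1$ and $\sigma_3$ in the interior by integrating the linear transport equations \eqref{m6}(a)--(b) along the characteristic vector fields $X_{\phi_1}$ and $X_{\phi_3}$ starting from the boundary data prescribed by \eqref{m7}. The key structural point is that \eqref{m6}(c) is a single scalar first-order equation in $(x',\theta_0)$ of Burgers type: the principal part $X_{Lop} a$ is transport along a fixed vector field tangent to $b\Omega_T$, the term $c\,a$ is a zero-order linear term, and the only nonlinearity is the semilinear term $d\,\partial_{\theta_0}(a^2) = 2d\, a\,\partial_{\theta_0}a$. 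Because $X_{Lop}$ does not involve $\partial_{\theta_0}$ (see \eqref{12b}, where $\theta_0$ does not appear among the differentiated variables), $\partial_{\theta_0}$ is a "good" direction and the equation can be treated as a quasilinear transport equation in the $x'$-variables with $\theta_0$ as a parameter carrying one derivative.

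First I would set up the iteration scheme: define $a^{(0)}=0$ and let $a^{(n+1)}$ solve the \emph{linear} equation $X_{Lop}a^{(n+1)} + c\,a^{(n+1)} + 2d\,a^{(n)}\partial_{\theta_0}a^{(n+1)} = -b\cdot\partial_{\theta_0}G$ with $a^{(n+1)}=0$ in $t<0$. Each step is a linear transport problem along the (now $a^{(n)}$-modified, but still tangential and $\theta_0$-free in its top-order part) field, solvable by the method of characteristics with the standard energy estimate in $b\Gamma^m_T$: one differentiates the equation $|\beta|\le m$ times in $(\theta,\partial_{x'},\partial_\theta)$, commutes derivatives through $X_{Lop}$ (producing only lower-order commutator terms since $X_{Lop}$ has constant coefficients), uses the Moser-type product estimates in the weighted spaces $b\Gamma^s_T$ to control $\|a^{(n)}\partial_{\theta_0}a^{(n+1)}\|$, and closes via Gronwall on $[0,T]$. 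Here the smallness of $\langle G\rangle_{b\Gamma^{m+1}_T}$ (note one extra derivative, to absorb the $\partial_{\theta_0}G$ on the right) guarantees that the iterates stay in a fixed small ball in $b\Gamma^m_T$; the choice $m>\frac{d+1}{2}+1$ is exactly what makes $b\Gamma^m_T \hookrightarrow L^\infty$ (Sobolev embedding in the $d+1$ variables $(x',\theta_0)$, with one derivative to spare for the $\partial_{\theta_0}a^{(n+1)}$ coefficient), so the transport coefficient $2d\,a^{(n)}$ is bounded and Lipschitz and the characteristics are well-defined. Then I would show $\{a^{(n)}\}$ is Cauchy in $b\Gamma^{m-1}_T$ (losing one derivative in the contraction, as is standard for quasilinear problems), pass to the limit, and bootstrap regularity back to $b\Gamma^m_T$ by weak-$*$ compactness, obtaining $a\in b\Gamma^m_T$. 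Uniqueness follows from the same contraction estimate applied to the difference of two solutions.

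Once $a$ is known, \eqref{m7} prescribes $\sigma_i(x',0,\theta_0)$ for $i=1,3$ as scalar multiples of $a$ (namely $\sigma_i(x',0,\theta_0)r_i = a(x',\theta_0)e_i$ with $P_i e_i = e_i$), so each $\sigma_i$ has a well-defined boundary trace in $b\Gamma^m_T$; then \eqref{m6}(a)--(b) are decoupled linear ODEs along the characteristic curves of $X_{\phi_i}$ (which, by Definition \ref{def2} and Proposition \ref{23a}, are transverse to $\{x_d=0\}$ since $\phi_1,\phi_3$ are incoming), so $\sigma_i$ is obtained by solving $X_{\phi_i}\sigma_i = e^i_i\sigma_i$ with the given trace, explicitly $\sigma_i$ = (trace)$\times\exp(e^i_i\cdot\text{arclength})$. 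The estimate $\sigma_i\in\Gamma^m_T$ then follows from the boundary trace estimate for the non-characteristic hyperbolic (here: transport) problem in the weighted spaces, again commuting $(\theta,\partial_x,\partial_\theta)^\beta$ through $X_{\phi_i}$ and using that $\theta$-weights are preserved because $X_{\phi_i}$ does not differentiate $\theta_0$ either. The vanishing in $t<0$ propagates from the boundary data along the characteristics.

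The main obstacle I anticipate is the closure of the $b\Gamma^m_T$ energy estimate for the quasilinear step in the \emph{weighted} spaces: one must verify that the weighted Moser estimate $\langle a\,\partial_{\theta_0}b\rangle_{m,T}\lesssim \langle a\rangle_{m,T}\langle b\rangle_{m,T}$ (for $m$ above the embedding threshold) holds with the polynomial weights $\theta^{\beta_1}$, i.e. that the weight distributes acceptably across a product under the Leibniz rule, and that the commutators $[\theta^{\beta_1}, X_{Lop}]$ and $[\theta^{\beta_1}, 2d\,a^{(n)}\partial_{\theta_0}]$ are controlled. Since $X_{Lop}$ and $\partial_{\theta_0}$ have constant coefficients and $\theta^{\beta_1}$ commutes with the $x'$-derivatives, the only genuinely new commutator is $[\theta^{\beta_1},\partial_{\theta_0}]$, which lowers the weight power and is therefore harmless; the product estimate is standard once one notes $\theta^{\beta_1}(ab) = \sum \binom{\beta_1}{\gamma}(\theta^\gamma a)(\theta^{\beta_1-\gamma}b)$ trivially (the weight is multiplicative, unlike a derivative), so in fact the weighted estimate is \emph{easier} than the unweighted one. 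Thus I expect the weights to cause no real trouble and the proof to reduce to the textbook Picard argument for a semilinear transport equation, with the only care needed being bookkeeping of the one-derivative loss from $\partial_{\theta_0}G$ on the right-hand side (accounting for the $b\Gamma^{m+1}_T$ hypothesis on $G$).
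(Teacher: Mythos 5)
Your proposal follows essentially the same route as the paper: a Picard iteration on the scalar boundary equation \eqref{m6}(c), with energy estimates in the weighted spaces $b\Gamma^m_T$ closed via Sobolev embedding (using $m>\frac{d+1}{2}+1$) and the extra derivative from $G\in b\Gamma^{m+1}_T$, followed by integration of the decoupled linear transport equations along the incoming characteristics to recover $\sigma_1,\sigma_3\in\Gamma^m_T$. The only cosmetic difference is that the paper's iteration scheme places the zero-order term $c\,a^n$ on the right-hand side rather than keeping $c\,a^{n+1}$ implicit on the left, which does not affect the argument.
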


\begin{proof}
First one solves \eqref{m6}(c) using the iteration scheme
\begin{equation*}
X_{Lop}a^{n+1} +2\, d \, a^n\partial_{\theta_0}a^{n+1}  =-b\cdot \partial_{\theta_0}G -c \, a^n.
\end{equation*}
The standard proof showing convergence towards a solution $a\in H^m_T(x',\theta_0)$ is easily adapted to the 
weighted spaces $b\Gamma^m_T$. Details of a very similar argument are given in the proof of \cite[Proposition 3.6]{CW4}. 
The conclusion for $\sigma_1$ and $\sigma_3$ follows easily.
\end{proof}

\subsection{The first corrector $\cU^1$}

\emph{\quad} To understand the properties of $\cU^1$ we need:

\begin{lem}\label{m9}
The functions $\sigma_1$, $\sigma_3$, and $a$ all have moment zero.
\end{lem}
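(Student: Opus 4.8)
\textbf{Proof plan for Lemma \ref{m9}.}
The plan is to prove the ``moment zero'' property by integrating the evolution equations in \eqref{m6} over $\theta_0 \in \R$ and exploiting the fact that all data vanish in $t<0$. Define, for each relevant profile, its total moment as a function of the remaining variables: for instance $\bar a(x') := \int_\R a(x',\theta_0)\,{\rm d}\theta_0$ and $\bar\sigma_i(x) := \int_\R \sigma_i(x,\theta)\,{\rm d}\theta$ for $i=1,3$. Since $a \in b\Gamma^m_T$ and $\sigma_i \in \Gamma^m_T$ with $m > \frac{d+1}{2}+1$, the $\theta$-weight built into these spaces guarantees enough decay in $\theta$ that these integrals converge and that one may differentiate under the integral sign and discard boundary terms at $\theta = \pm\infty$; this integrability is what makes the argument work, and I would state it at the outset.

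First I would handle $a$. Integrating \eqref{m6}(c) in $\theta_0$ over $\R$: the transport term $X_{Lop}a$ becomes $X_{Lop}\bar a$ (the vector field $X_{Lop}$ only involves $\partial_t$ and $\partial_{x_j}$, not $\partial_{\theta_0}$, so it commutes with $\int{\rm d}\theta_0$); the Burgers-type term $d\,\partial_{\theta_0}(a^2)$ integrates to zero because $a^2$ decays as $|\theta_0|\to\infty$; and the right-hand side $-b\cdot\partial_{\theta_0}G$ likewise integrates to zero since $G \in b\Gamma^{\tilde a}_T$ decays in $\theta_0$. This leaves the linear homogeneous transport equation $X_{Lop}\bar a + c\,\bar a = 0$ with $\bar a = 0$ in $t<0$; integrating along the characteristics of $X_{Lop}$ (as in the derivation of $\sigma_2\equiv 0$ after \eqref{equationF0}) gives $\bar a \equiv 0$, i.e. $a$ has moment zero.

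Next, for $\sigma_1$ and $\sigma_3$ I would integrate the interior equations \eqref{m6}(a),(b) in $\theta$. Here the characteristic field $X_{\phi_i} = \partial_{x_d} + \sum_{j}(-\partial_{\xi_j}\omega_i(\beta))\partial_{x_j}$ again involves no $\partial_\theta$, so it commutes with $\int_\R{\rm d}\theta$, and \eqref{m6}(a),(b) yield $X_{\phi_i}\bar\sigma_i - e_i^i\,\bar\sigma_i = 0$ for $i=1,3$. The boundary relation \eqref{m7} gives $\bar\sigma_i(x',0)\,r_i = \bar a(x')\,e_i = 0$ by the previous step, so $\bar\sigma_i$ vanishes on $\{x_d=0\}$ and also in $t<0$; since $\phi_1,\phi_3$ are incoming, the characteristics of $X_{\phi_i}$ enter the domain through $\{x_d = 0\}$ (or from $t<0$), and integrating the homogeneous ODE along them forces $\bar\sigma_i \equiv 0$. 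The main obstacle, such as it is, is purely bookkeeping: one must verify carefully that membership in the weighted spaces $\Gamma^m_T$, $b\Gamma^m_T$ (together with the polynomial decay of $G$) really does justify interchanging integration with the vector fields and the vanishing of all the $\theta$-boundary terms, and that the characteristics of $X_{Lop}$ and of the incoming $X_{\phi_i}$ genuinely reach back into $\{t<0\}$ or the boundary so that the homogeneous transport equations propagate the zero moment — both facts are already implicit in the well-posedness arguments of Proposition \ref{m8} and the discussion following \eqref{equationF0}.
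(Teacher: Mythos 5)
Your proof is correct and follows essentially the same route as the paper: integrate \eqref{m6}(c) in $\theta_0$ to get the homogeneous transport equation $X_{Lop}\bar a + c\,\bar a = 0$ with zero data in $t<0$, conclude $\bar a\equiv 0$, and then propagate the zero moment to $\sigma_1,\sigma_3$ via the interior transport equations and the boundary relation \eqref{m7}. The paper's own proof is just a terser version of this argument, so no substantive discrepancy.
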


\begin{proof}
Let $\ua(x)=\int_\R a(x',\theta_0) \, {\rm d}\theta_0$. Taking the moment of equation \eqref{m6}(c), we obtain 
the problem for $\ua$:
\begin{equation*}
X_{Lop}\ua+c_4\ua=0,\;\;\ua=0\text{ in }t<0.
\end{equation*}
Thus $\ua=0$ and so the equations for $\sigma_i$, $i=1,3$ imply their moments are zero as well.
\end{proof}

\begin{prop}\label{m10a}
Let $\sigma(x,\theta)\in\Gamma^s_T$, $s>\frac{d}{2}+3$, have moment zero and let $\sigma^*(x,\theta)$ 
denote the unique $\theta$-primitive of $\sigma$ that decays to zero as $|\theta|\to\infty$. Then $\sigma^* 
\in\Gamma^{s-1}_T$.
\end{prop}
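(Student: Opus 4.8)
The plan is to work on the Fourier side in $\theta$, where the $\theta$-primitive operation becomes division by the frequency variable, and then transfer estimates back to the spaces $\Gamma^s_T$. Write $\eta$ for the dual variable to $\theta$. Since $\sigma$ has moment zero, $\hat\sigma(x,0)=0$; combined with the assumed decay and regularity of $\sigma$, this means $\hat\sigma(x,\eta)$ vanishes to first order at $\eta=0$, so $\hat\sigma(x,\eta)/(i\eta)$ extends to a function that is as regular as needed near $\eta=0$ and is exactly the Fourier transform of the decaying primitive $\sigma^*$. The first step is therefore to record the identity $\widehat{\sigma^*}(x,\eta)=\hat\sigma(x,\eta)/(i\eta)$ and to justify that $\sigma^*$ so defined is indeed the unique decaying $\theta$-primitive of $\sigma$ (uniqueness is immediate since two primitives differ by a function of $x$ alone, which cannot decay unless it is zero).

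Next I would estimate the various pieces of the $\Gamma^{s-1}_T$-norm of $\sigma^*$. There are three types of terms in \eqref{c1a}: pure $x$-derivatives $\partial_x^\beta$, $\theta$-derivatives $\partial_\theta^\beta$, and $\theta$-weights $\theta^\beta$, and mixtures. The $x$-derivatives commute with the primitive operation, so $\partial_x^\beta \sigma^*$ is the decaying primitive of $\partial_x^\beta\sigma$, and one reduces to the case of no $x$-derivatives. For the $\theta$-derivatives, $\partial_\theta\sigma^*=\sigma$, so up to one derivative is for free and only $|\beta|\le s-1$ derivatives in $\theta$ are asked of $\sigma^*$, matching $|\beta|\le s$ derivatives of $\sigma$; these contribute terms already controlled by $|\sigma|_{s,T}$. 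The genuinely new work is the $\theta$-weighted terms: one must bound $\theta^{\beta_1}\partial_\theta^{\beta_3}\sigma^*$ in $L^2$ for $|\beta_1|+|\beta_3|\le s-1$. On the Fourier side, multiplication by $\theta$ is $i\partial_\eta$, so $\theta^{\beta_1}\sigma^*$ corresponds to $\partial_\eta^{\beta_1}\big(\hat\sigma(x,\eta)/(i\eta)\big)$, and by the Leibniz rule this is a sum of terms $\partial_\eta^{j}\hat\sigma(x,\eta)\cdot \partial_\eta^{\beta_1-j}(1/\eta)$; the factors $\partial_\eta^{\beta_1-j}(1/\eta)$ are singular like $\eta^{-(\beta_1-j)-1}$ at the origin, so I need $\partial_\eta^j\hat\sigma$ to vanish to order $\beta_1-j+1$ at $\eta=0$, i.e. I need $\hat\sigma$ itself to vanish to order $\beta_1+1$ there — but moment zero only gives vanishing to order one.

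This is the main obstacle, and the resolution is that one does not estimate $\theta^{\beta_1}\sigma^*$ by brute Leibniz expansion but rather uses a Hardy-type inequality adapted to the problem. Concretely, split the frequency integral into $|\eta|\le 1$ and $|\eta|\ge 1$. On $|\eta|\ge 1$ the multiplier $1/(i\eta)$ and all its $\eta$-derivatives are bounded, so the high-frequency part of $\sigma^*$ is controlled by the corresponding weighted norm of $\sigma$ directly via Plancherel. On $|\eta|\le 1$ one uses the first-order vanishing of $\hat\sigma$: writing $\hat\sigma(x,\eta)=\eta\,g(x,\eta)$ with $g(x,\eta)=\int_0^1 (\partial_\eta\hat\sigma)(x,t\eta)\,dt$, one has $\widehat{\sigma^*}=g/i$ on this region, and $g$ together with its $\eta$-derivatives up to order $s-1$ is controlled in $L^2(|\eta|\le 1)$ by the $\eta$-derivatives of $\hat\sigma$ up to order $s$, which by Plancherel are the $\theta$-weighted $L^2$ norms of $\sigma$ up to weight $s$. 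Thus both the low- and high-frequency contributions are bounded by $|\sigma|_{s,T}$ (with $x$-derivatives carried along passively), giving $\sigma^*\in\Gamma^{s-1}_T$. The hypothesis $s>\frac d2+3$ is presumably only needed to ensure $\sigma$ (and hence $\sigma^*$) is continuous with the stated pointwise decay so that "the unique $\theta$-primitive decaying to zero" is a meaningful pointwise statement and matches the $L^2$-based construction; I would invoke Sobolev embedding in $(x,\theta)$ for that at the end. Finally, the condition $a=0$ in $t<0$ passes to $\sigma^*$ since the primitive in $\theta$ does not affect the $t$-support, so $\sigma^*$ indeed lies in $\Gamma^{s-1}_T$ as claimed.
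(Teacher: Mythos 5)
Your proof is correct and rests on the same key idea as the paper's: pass to the Fourier side in $\theta$, use moment zero to write $\widehat{\sigma^*}(x,m)=\hat\sigma(x,m)/(im)$ and, crucially, to obtain the non-singular integral representation $\widehat{\sigma^*}(x,m)=-i\int_0^1(\partial_m\hat\sigma)(x,sm)\,ds$, and then verify the $\Gamma^{s-1}_T$ characterization. The one real difference is that you split into $|\eta|\le 1$ (integral representation) and $|\eta|\ge 1$ (boundedness of $1/\eta$ and its derivatives), whereas the paper applies the integral representation globally. Your split is valid but not needed: after reducing the $p=\beta_3\ge 1$ cases via $m^p\widehat{\sigma^*}=m^{p-1}\hat\sigma/i$, the remaining $p=0$ case $\partial_m^k\partial_x^\alpha\widehat{\sigma^*}=-i\int_0^1 s^k(\partial_m^{k+1}\partial_x^\alpha\hat\sigma)(x,sm)\,ds$ is already in $L^2$ on all of $\R_m$ by Minkowski's inequality and the change of variables $u=sm$ (the factor $\int_0^1 s^{k-1/2}\,ds$ is finite since $k\ge 0$). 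So you correctly identified the obstruction (naive Leibniz on $\hat\sigma/(i\eta)$ demands higher-order vanishing at $\eta=0$) and the resolution (use the divided-difference form of $\hat\sigma$), and the extra frequency decomposition is a harmless but superfluous detour.
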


\begin{proof}
We have $\sigma\in\Gamma^s_T$ if and only if $\partial_m^k \partial_x^\alpha (m^p \, \hat\sigma(x,m)) \in 
L^2(x,m)$ for all $k,\alpha,p$ such that $k+|\alpha|+p\leq s$. Since $\hat \sigma(x,0)=0$ we have
\begin{equation*}
\hat\sigma(x,m)=m\int^1_0(\partial_m\hat\sigma) (x,sm) \, {\rm d}s
\end{equation*}
so
\begin{equation*}
\widehat{\sigma^*}(x,m)=\dfrac{1}{im} \, \hat\sigma(x,m) =-i \, \int^1_0(\partial_m\hat\sigma) (x,sm) \, 
{\rm d}s,
\end{equation*}
and the result follows.
\end{proof}

\begin{prop}\label{m11a}
Suppose $G\in b\Gamma^{s+1}_T$, $s>\frac{d}{2}+4$, and vanishes in $t<0$. Then the first corrector 
$\cU^1$ in \eqref{14b} satisfies
\begin{multline}\label{m11aa}
(I-{\bf E})\cU^1 (x,\theta_0,\xi_d)=\sum^3_{i=1}W_i(x,\theta_0,\xi_d) \, r_i \, ,\\
\text{\rm where } W_i(x,\theta_0,\xi_d)=\sum_{k\neq i}b_k(x,\theta_0+\omega_k\xi_d) \, ,\quad 
\text{\rm with  } b_k\in\Gamma^{s-2}_T.
\end{multline}
\end{prop}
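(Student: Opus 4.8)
The plan is to read off the structure of $(I-{\bf E})\cU^1$ from its explicit formula \eqref{15}, namely $(I-{\bf E})\cU^1 = {\bf R}_\infty F^0$ with $F^0 = -(\tilde L(\partial)\cU^0 + D(0)\cU^0)$, and then to show that applying ${\bf R}_\infty$ to $F^0$ produces a sum of single-phase profiles with the asserted regularity. First I would recall from Proposition \ref{m3az} that $F^0 = \sum_{i=1}^3 F_i \, r_i$ with
\begin{equation*}
F_i(x,\theta_0,\xi_d) = -X_{\phi_i}\sigma_i - \sum_{k\neq i} V^i_k \sigma_k + \sum_{k=1}^3 e^i_k \, \sigma_k \, ,
\end{equation*}
and that $\sigma_2 \equiv 0$, so only the $k\in\{1,3\}$ terms survive. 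Each term in $F_i$ is of the form $c_{i,k}(x,\theta_0+\omega_k\xi_d)$ for a profile $c_{i,k}$ built from $\sigma_k$ and its first-order $x$-derivatives; since $\sigma_k \in \Gamma^{s}_T$ (by Proposition \ref{m8} with the regularity bookkeeping from the hypothesis $G\in b\Gamma^{s+1}_T$, $s>\frac{d}{2}+4$), each $c_{i,k}$ lies in $\Gamma^{s-1}_T$ — note the diagonal term $F_i$ gets $X_{\phi_i}\sigma_i$ which costs one derivative, while off-diagonal $V^i_k\sigma_k$ and the zero-order $e^i_k\sigma_k$ cost at most one.

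Next I would compute ${\bf R}_\infty F^0$ termwise. By the definition \eqref{R}, ${\bf R}_\infty$ acts on the $i$-th component by $c_{i,k}(x,\theta_0+\omega_k\xi_d) \mapsto -\int_{\xi_d}^{+\infty} c_{i,k}(x,\theta_0+\omega_i(\xi_d-s)+\omega_k s)\,{\rm d}s$. When $k=i$ the argument is $\theta_0+\omega_i\xi_d$, independent of $s$, so $E F^0 = 0$ forces those diagonal pieces to vanish — which is exactly the content of \eqref{14a}(b), consistent with $X_{\phi_i}\sigma_i - e^i_i\sigma_i = 0$ from \eqref{equationF0}. Hence only the off-diagonal terms $k\neq i$ contribute, and for those a change of variable $\theta = \theta_0 + \omega_k\xi_d + (\omega_i-\omega_k)(\xi_d - s)$ turns the $s$-integral into a $\theta$-primitive of $c_{i,k}$ evaluated between $\theta_0+\omega_i\xi_d$ and $\theta_0+\omega_k\xi_d$ (divided by $\omega_i-\omega_k\neq 0$). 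This shows $(I-{\bf E})\cU^1 = \sum_i W_i \, r_i$ with $W_i = \sum_{k\neq i} b_k(x,\theta_0+\omega_k\xi_d)$ where each $b_k$ is a constant multiple of a $\theta$-primitive of the profile $c_{i,k}\in\Gamma^{s-1}_T$ (plus a term constant in the $\theta_0+\omega_i\xi_d$ direction that gets absorbed into the $k=i$ slot, which is zero).

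The key point, and where Lemma \ref{m9} and Proposition \ref{m10a} enter, is that the $\theta$-primitive we just produced must genuinely decay and lie in a weighted space $\Gamma^{s-2}_T$. By Lemma \ref{m9} the profiles $\sigma_1,\sigma_3$ have moment zero; since $c_{i,k}$ is a sum of an $x$-derivative of $\sigma_k$ (whose moment is the $x$-derivative of $0$) and a constant multiple of $\sigma_k$ itself, $c_{i,k}$ also has moment zero. Therefore the \emph{decaying} $\theta$-primitive $c_{i,k}^*$ is well-defined and unique, and Proposition \ref{m10a} applied to $c_{i,k}\in\Gamma^{s-1}_T$ (legitimate since $s-1 > \frac{d}{2}+3$) gives $c_{i,k}^* \in \Gamma^{s-2}_T$. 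Setting $b_k$ to be the appropriate linear combination of these primitives over the pairs $(i,k)$ yields $b_k \in \Gamma^{s-2}_T$, which is \eqref{m11aa}.

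The main obstacle is purely bookkeeping: one must check carefully that the particular $\theta$-primitive arising from the $\int_{\xi_d}^{+\infty}$ formula in \eqref{R} is exactly the decaying one (so that Proposition \ref{m10a} applies and no nondecaying constant-in-$\theta$ remainder is left over), and that the moment-zero property survives the operations $\sigma_k \mapsto X_{\phi_i}\sigma_k,\ V^i_k\sigma_k$ — both of which are tangential differential operators in $(x,\theta)$ that preserve "moment in $\theta$ equals $x$-derivative of the moment", hence preserve moment zero. Once this is pinned down, the regularity claim is an immediate consequence of Proposition \ref{m10a} and the derivative-counting above; no small-divisor or cutoff issue arises here because these are exact (not Fourier-truncated) primitives of genuinely moment-zero profiles.
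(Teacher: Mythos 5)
Your proof is correct and follows essentially the same route as the paper: start from the explicit formula $(I-{\bf E})\cU^1={\bf R}_\infty F^0$, use the decomposition of $F^0$ from Proposition \ref{m3az} together with the profile equations \eqref{equationF0} to discard the diagonal ($k=i$) pieces, change variables in the ${\bf R}_\infty$ integral to produce $\theta$-primitives along the $k$-th phase, and finally invoke Lemma \ref{m9} and Proposition \ref{m10a} to get the moment-zero and $\Gamma^{s-2}_T$ regularity of the resulting $b_k$'s. One small inaccuracy in the bookkeeping: you describe the integral as the primitive of $c_{i,k}$ evaluated between $\theta_0+\omega_i\xi_d$ and $\theta_0+\omega_k\xi_d$ — that is the $\int_0^{\xi_d}$ integral of \eqref{22}, whereas ${\bf R}_\infty$ is $-\int_{\xi_d}^{+\infty}$, which after the substitution is the decaying primitive evaluated at the single point $\theta_0+\omega_k\xi_d$ (the other endpoint being $\pm\infty$, contributing nothing by decay); this is in fact cleaner, since no absorption into the ``$k=i$ slot'' is then needed, and it is precisely what makes Proposition \ref{m10a} directly applicable.
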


\begin{proof}
We have $(I-{\bf E})\cU^1={\bf R}_\infty F^0={\bf R}_\infty (I-{\bf E})F^0$, so using Proposition \ref{15} we obtain
\begin{align*}
((I-{\bf E})\cU^1)_i&= -\sum_{k\neq i} e^i_k \, \int_{\xi_d}^{+\infty} 
\sigma_k(x,\theta_0+\omega_i\xi_d+s(\omega_k-\omega_i)) \, {\rm d}s\\
&+\sum_{k\neq i} \int_{\xi_d}^{+\infty} V^i_k \sigma_k(x,\theta_0+\omega_i\xi_d+s(\omega_k-\omega_i)) 
\, {\rm d}s \, .
\end{align*}
Now $\sigma_k\in\Gamma^s_T$ has moment zero, so
\begin{align*}
\int_{\xi_d}^{+\infty}Ê\sigma_{k}(x,\theta_0+\omega_i\xi_d+s(\omega_k-\omega_i)) \, {\rm d}s 
=\sigma_k^*(x,\theta_0+\omega_k\xi_d)/(\omega_k-\omega_i),
\end{align*}
where $\sigma_k^*(x,\theta_k)\in \Gamma^{s-1}_T$ by Proposition \ref{m10a}. The other terms are treated 
similarly using $V^i_k\sigma_k\in\Gamma^{s-1}_T$.
\end{proof}

\begin{prop}\label{m12a}
Suppose $G\in b\Gamma^{s+1}_T$, $s>\frac{d}{2}+4$, and vanishes in $t<0$. Then there exists a solution 
$\cU^1$ to \eqref{14b} that satisfies
\begin{equation*}
{\bf E}\cU^1 (x,\theta_0,\xi_d) =\tau_1(x,\theta_0+\omega_1\xi_d)r_1 +\tau_3(x,\theta_0+\omega_3\xi_d)r_3 
\quad \text{\rm with  } \tau_k\in\Gamma^{s-3}_T.
\end{equation*}
\end{prop}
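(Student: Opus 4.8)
The plan is to construct $\cU^1 = {\bf E}\cU^1 + (I-{\bf E})\cU^1$ by using the decomposition already set up: $(I-{\bf E})\cU^1 = {\bf R}_\infty F^0$ is determined explicitly (and Proposition \ref{m11a} gives its regularity), so the only thing to construct is ${\bf E}\cU^1 = \tau_1 r_1 + \tau_2 r_2 + \tau_3 r_3$, and we have already seen that $\tau_2 \equiv 0$. So it remains to produce $\tau_1, \tau_3$ solving the system \eqref{15bb}. First I would apply ${\bf E}$ to \eqref{15bb}(a): by Proposition \ref{23a} and Remark \ref{30ab}, ${\bf E}(\tilde L(\partial)+D(0)){\bf E}\cU^1$ decomposes into scalar transport equations $X_{\phi_i}\tau_i - e^i_i\tau_i = (\text{source}_i)$ for $i=1,3$, where the sources are the incoming components of $-{\bf E}(({\rm d}D(0)\cU^0)\cU^0) - {\bf E}(\tilde L(\partial)+D(0)){\bf R}_\infty F^0$, evaluated on the respective characteristics. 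These sources are built from $\sigma_1,\sigma_3 \in \Gamma^{s-?}_T$ (actually from $\sigma_i$, products $\sigma_i\sigma_j$, and the components $\sigma_k^*, V^i_k\sigma_k \in \Gamma^{s-1}_T$ of ${\bf R}_\infty F^0$ from Proposition \ref{m11a}), hence lie in $\Gamma^{s-2}_T$ roughly.

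Next I would handle the boundary condition \eqref{15bb}(b): $B{\bf E}\cU^1 = G - ({\rm d}\psi(0)\cU^0)\cU^0 - {\bf R}_\infty F^0$ on $\{x_d=\xi_d=0\}$. Since ${\bf E}\cU^1$ is purely incoming it is valued in $\E^s(\beta)$; the solvability condition $b\cdot[\cdots]=0$ is exactly \eqref{39}, which by Proposition \ref{40} is equivalent (after a $\theta_0$-derivative) to the equation \eqref{41} that we already solved for $a$ in Proposition \ref{m8}. Given that $a$, hence the right-hand side, satisfies the solvability constraint, the trace of ${\bf E}\cU^1$ in $\E^s(\beta) = \ker L({\rm d}\phi_1)\oplus\ker L({\rm d}\phi_3)$ is determined modulo the one-dimensional space $\text{Span}\,\{e\}$; one fixes that remaining degree of freedom, i.e. the value of $\tau_1(x',0,\theta_0)r_1 + \tau_3(x',0,\theta_0)r_3$ projected onto $\text{Span}\{e\}$, by a scalar transport equation along $X_{Lop}$ obtained by pairing \eqref{15bb}(b) against $b$ (this is where the $\theta_0$-primitive issue is handled, but note we only need the trace of $\tau_i$, not of its primitive, so no moment-zero problem arises here — the primitive issue is deferred to $\cU^2_p$). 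Concretely: decompose the trace as $\tau_i(x',0,\theta_0)r_i = \alpha(x',\theta_0)e_i + (\text{part fixed by }B{\bf E}\cU^1 = \text{known in }B\E^s(\beta))$, and solve a transport equation for $\alpha$.

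With the Cauchy data on $\{x_d=0\}$ in hand and the transport equations $X_{\phi_i}\tau_i - e^i_i\tau_i = \text{source}_i$ in the interior, I would integrate along the characteristics of $X_{\phi_i}$ (exactly as in the proof of Proposition \ref{m8} for $\sigma_i$) to obtain $\tau_1,\tau_3$ globally on $\Omega_T$, vanishing in $t<0$. The regularity bookkeeping: the source terms live in $\Gamma^{s-2}_T$, the boundary data built from $a \in b\Gamma^{s-?}_T$ and the traces of the $\Gamma^{s-1}_T$ functions from Proposition \ref{m11a}; integrating along characteristics loses no derivatives but restricting/extending and the source regularity combine to give $\tau_k \in \Gamma^{s-3}_T$, matching the claim. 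I would check the $\theta$-weighted estimates are preserved along the flow of $X_{\phi_i}$ — this is routine since $X_{\phi_i}$ has constant coefficients and commutes with $\partial_{\theta_0}$, and multiplication by $\theta$ costs nothing along characteristics of the form $\theta_0 + \omega_i x_d = \text{const}$.

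\textbf{Main obstacle.} The delicate point is the bookkeeping that shows the source term $-{\bf E}(\tilde L(\partial)+D(0)){\bf R}_\infty F^0$ in \eqref{15bb}(a) genuinely lies in the asserted Sobolev-with-$\theta$-weight space: ${\bf R}_\infty F^0$ involves $\theta$-primitives $\sigma_k^*$, and applying $\tilde L(\partial)$ (which contains $\partial_{x_j}$ but crucially no bare $\partial_{\theta_0}$ that would cancel the primitive) to it, then projecting with ${\bf E}$, must be tracked carefully to confirm the $O(\langle\theta\rangle^{-2})$-type decay survives — this is exactly the kind of moment/decay accounting that Proposition \ref{m11a} and Proposition \ref{m10a} were set up to make routine, so the obstacle is more organizational than deep. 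A secondary subtlety is verifying that the solvability condition derived from \eqref{15bb}(b) is consistent with (not merely implied by) the already-chosen $a$, i.e. that Proposition \ref{40}'s equivalence "$\partial_{\theta_0}$ of \eqref{39} $\Leftrightarrow$ \eqref{41}" can be integrated back — this requires knowing \eqref{39} holds at $\theta_0 = +\infty$ (or $-\infty$), which follows from all quantities vanishing there by the decay properties and the moment-zero Lemma \ref{m9}.
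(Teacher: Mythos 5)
Your overall plan matches the paper's (terse) proof: $(I-{\bf E})\cU^1$ is already known from Proposition~\ref{m11a}; $\tau_2\equiv 0$; the incoming components of \eqref{15bb}(a) give constant-coefficient scalar transport equations $X_{\phi_i}\tau_i - e^i_i\tau_i=\text{source}_i$ whose sources lie (after accounting for $\tilde L(\partial)$ hitting the $\Gamma^{s-2}_T$ components of ${\bf R}_\infty F^0$) in $\Gamma^{s-3}_T$; and integrating along characteristics preserves $\Gamma^r_T$ regularity. Your ``roughly $\Gamma^{s-2}_T$'' for the sources is one notch optimistic, but your stated conclusion $\tau_k\in\Gamma^{s-3}_T$ is the correct one.

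There is, however, one genuine confusion. You propose to fix the residual degree of freedom in the boundary trace --- the component $\alpha$ of $\tau_1 r_1+\tau_3 r_3$ along $\operatorname{Span}\{e\}$ --- ``by a scalar transport equation along $X_{Lop}$ obtained by pairing \eqref{15bb}(b) against $b$.'' But pairing \eqref{15bb}(b) against $b$ annihilates $\alpha$ entirely (since $b\cdot Be_i=0$) and simply reproduces the solvability condition \eqref{m2}/\eqref{39}, which was already spent determining $a$, hence $\cU^0$. It yields no equation for $\alpha$. A transport equation along $X_{Lop}$ for $\alpha$ would come only from the solvability condition of the \emph{next}-order boundary equation (for $\cU^2$), which is not part of the system \eqref{14b} and which the paper deliberately does not impose --- instead the paper builds $\cU^2_p$ purely via $-{\bf R}_\infty$ and never needs boundary solvability at that order. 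Within Proposition~\ref{m12a}, $\alpha$ is simply unconstrained: the paper says ``we may make a choice'' of boundary data in $b\Gamma^{s-3}_T$ solving the $(p-1)$-dimensional system $B\,{\bf E}\cU^1|_{x_d=\xi_d=0}=\text{RHS}$, e.g.\ take the unique solution with $\alpha\equiv 0$. Replacing your step 4 by this arbitrary (regular) choice repairs the argument; the remaining steps and the final observation about integrating \eqref{41} back to \eqref{39} using decay at $\theta_0\to\pm\infty$ are sound.
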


\begin{proof}
We have already shown $\tau_2=0$.  Since the $\sigma_i\in\Gamma^s_T$, $i=1,3$, have moment zero, and 
the functions $b_k$ in \eqref{m11aa} lie in $\Gamma^{s-2}_T$, it follows that the right side of \eqref{15bb}(a) 
has components given by functions in $\Gamma^{s-3}_T$. Similarly, the right side of the boundary equation 
\eqref{15bb}(b) has components given by functions in $b\Gamma^{s-3}_T$. The solvability condition \eqref{m2} 
holds, so we may make a choice of boundary data ${\bf E}\cU^1(x',0,\theta_0,0)$ satisfying \eqref{15bb}(b) 
whose components are elements of $b\Gamma^{s-3}_T$. Thus, solving the system \eqref{15bb}, we obtain 
a solution ${\bf E}\cU^1$ with components given by $\tau_k\in\Gamma ^{s-3}_T$, $k=1,3$.
\end{proof}

\begin{defn}\label{m13ab}
Suppose $W(x,\theta_0,\xi_d)={\bf E} W +(I-{\bf E})W$ and
\begin{align*}
{\bf E}W(x,\theta_0,\xi_d) &= 
\sum^3_{i=1}a_i(x,\theta_0+\omega_i\xi_d)r_i,  \;\text{\rm with }a_i(x,\theta_i)\in\Gamma^s_T;\\
(I-{\bf E})W(x,\theta_0,\xi_d) &= \sum^3_{i=1}w_ir_i, \, \, \text{\rm where }w_i(x,\theta_0,\xi_d) 
= \sum_{k\neq i}b_k(x,\theta_0+\omega_k\xi_d) \, \, \text{\rm with  }b_k(x,\theta_k)\in\Gamma^{s}_T.
\end{align*}
Then we write $W\in\tilde\Gamma^s_T$.
\end{defn}

We may summarize our construction of $\cU^0$ and $\cU^1$ as follows.

\begin{theo}
\label{m14ab}
Suppose $G\in b\Gamma^{s+1}_T$, $s>\frac{d}{2}+4$, and vanishes in $t<0$. Then provided 
$\langle G\rangle_{b\Gamma^{s+1}_T}$ is small enough, there exist solutions $\cU^0\in\tilde\Gamma^s_T$, 
$\cU^1\in\tilde\Gamma^{s-3}_{T}$, satisfying the profile equations \eqref{14a}, \eqref{14b}. Moreover, 
$\cU^0$ and ${\bf E}\cU^1$ are purely incoming.
\end{theo}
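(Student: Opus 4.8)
The plan is to assemble Theorem \ref{m14ab} directly from the pieces already established in this section, since each ingredient has essentially been proven. First I would apply Proposition \ref{m8}: given $G\in b\Gamma^{s+1}_T$ with $s>\frac{d}{2}+4$, we certainly have $G\in b\Gamma^{m+1}_T$ for $m=s$, and $m>\frac{d+1}{2}+1$ holds because $s>\frac{d}{2}+4>\frac{d+1}{2}+1$; hence for $\langle G\rangle_{b\Gamma^{s+1}_T}$ small enough the subsystem \eqref{m6}--\eqref{m7} has a unique solution with $a\in b\Gamma^s_T$ and $\sigma_1,\sigma_3\in\Gamma^s_T$. Together with $\sigma_2\equiv 0$ (derived above from \eqref{equationF0} and the fact that the outgoing mode vanishes in $t<0$), this produces $\cU^0=\sigma_1 r_1+\sigma_3 r_3$. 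One then checks that $\cU^0\in\tilde\Gamma^s_T$ in the sense of Definition \ref{m13ab}: indeed ${\bf E}\cU^0=\cU^0$ by \eqref{14a}(a), so the $(I-{\bf E})$ part is trivial and the ${\bf E}$ part has the required form with components in $\Gamma^s_T$. By construction $\cU^0$ is purely incoming.

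Next I would invoke Propositions \ref{m11a} and \ref{m12a} to handle $\cU^1$. By Lemma \ref{m9} the profiles $\sigma_1,\sigma_3$ (and $a$) have moment zero, which is exactly what is needed to run Proposition \ref{m10a} and conclude, via Proposition \ref{m11a}, that $(I-{\bf E})\cU^1={\bf R}_\infty F^0$ has the form $\sum_i W_i r_i$ with $W_i=\sum_{k\ne i} b_k(x,\theta_0+\omega_k\xi_d)$ and $b_k\in\Gamma^{s-2}_T$. Then Proposition \ref{m12a} gives a solution ${\bf E}\cU^1=\tau_1(x,\theta_0+\omega_1\xi_d)r_1+\tau_3(x,\theta_0+\omega_3\xi_d)r_3$ with $\tau_k\in\Gamma^{s-3}_T$ and $\tau_2\equiv 0$ (already shown using Proposition \ref{23a} and vanishing in $t<0$). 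Combining, $\cU^1={\bf E}\cU^1+(I-{\bf E})\cU^1$ has its ${\bf E}$-part with components in $\Gamma^{s-3}_T$ and its $(I-{\bf E})$-part with components in $\Gamma^{s-2}_T\subset\Gamma^{s-3}_T$, so $\cU^1\in\tilde\Gamma^{s-3}_T$, and ${\bf E}\cU^1$ is purely incoming since $\tau_2=0$.

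Finally I would verify that the pair $(\cU^0,\cU^1)$ so constructed actually satisfies the full profile systems \eqref{14a} and \eqref{14b}, not merely the reduced subsystems. For \eqref{14a}: (a) is ${\bf E}\cU^0=\cU^0$, true by construction; (b) is ${\bf E}F^0=0$, which is \eqref{equationF0} and holds because the interior transport equations \eqref{m6}(a),(b) hold and $\sigma_2\equiv 0$; (c) is $B\cU^0=0$ on $x_d=\xi_d=0$, which follows from \eqref{m7} together with $Be_i=0$ (as $e\in\ker B$, $e=e_1+e_3$, and the decomposition \eqref{defe} — note each $Be_i$ need not vanish individually, so one uses $B e = 0$ and linearity); (d) is the vanishing in $t<0$. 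For \eqref{14b}: (a) is the \emph{definition} \eqref{15} of $(I-{\bf E})\cU^1$; (b) and (c) are \eqref{15bb}(a),(b), whose solvability is guaranteed by the condition \eqref{m2}$\Leftrightarrow$\eqref{39}, which in turn holds because its $\theta_0$-derivative is equation \eqref{m6}(c) (Proposition \ref{40}) and all data vanish in $t<0$, so integrating in $\theta_0$ recovers \eqref{39} itself; (d) is again the $t<0$ condition.

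The only genuinely delicate point — and the one I would flag as the main obstacle — is bookkeeping the regularity loss and confirming that the smallness hypothesis on $\langle G\rangle_{b\Gamma^{s+1}_T}$ suffices uniformly. Proposition \ref{m8} needs $G\in b\Gamma^{s+1}_T$ and smallness in that norm; Propositions \ref{m11a} and \ref{m12a} need $G\in b\Gamma^{s+1}_T$ with $s>\frac{d}{2}+4$, which is the stated hypothesis; and one must check that the intermediate objects ($F^0$, its components, ${\bf R}_\infty F^0$, the boundary data for ${\bf E}\cU^1$) are of type $\cF$ so that Propositions \ref{35} and \ref{31} legitimately apply — this is where Remark \ref{f3}'s Sobolev embedding $s>\frac{d+2}{2}+3$ is used, and one should confirm $s-3>\frac{d+2}{2}+3$ is \emph{not} actually required at leading order but that the relevant profiles have enough regularity at each stage. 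Once these index inequalities are checked to be consistent (they are, given $s>\frac{d}{2}+4$ and the losses $s\to s-2\to s-3$), the theorem follows by concatenating the cited results with no further computation.
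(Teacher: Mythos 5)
Your assembly is correct and matches the paper's intent: Theorem \ref{m14ab} is explicitly presented as a summary of the preceding constructions (Propositions \ref{m8}, \ref{m9}, \ref{m10a}, \ref{m11a}, \ref{m12a}), and the paper gives no separate proof beyond concatenating them exactly as you do. The one minor imprecision is your justification for recovering \eqref{39} from its $\theta_0$-derivative: the integration ``constant'' (a function of $x'$ alone) vanishes because the profiles decay as $|\theta_0|\to\infty$ by virtue of lying in the weighted $\Gamma$-spaces, not because the data vanish in $t<0$.
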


\section{The exact solution of the singular system}
\label{exact}

\emph{\quad} In this section we state the main estimates for the linearized singular system and also the existence 
theorem for the nonlinear singular system \eqref{15p}.  We begin by gathering the notation for spaces and norms 
that is needed below.

\begin{nota}
\label{spaces}
Here  we take $s\in \N=\{0,1,2,\dots\}$.

(a)\; Let $\Omega:=\overline{\mathbb{R}}^{d+1}_+\times\mathbb{R}$, $\Omega_T:=\Omega\cap\{-\infty<t<T\}$, 
$b\Omega:=\mathbb{R}^d\times\mathbb{R}$, $b\Omega_T:=b\Omega\cap \{-\infty<t<T\}$,  and set $\omega_T 
:=\overline{\mathbb{R}}^{d+1}_+\cap\{-\infty<t<T\}$.

(b)\; Let $H^s\equiv H^s(b\Omega)$, the standard Sobolev space with norm $\langle V(x',\theta_0)\rangle_s$. 
For $\gamma\geq 1$ we set $H^s_\gamma:=e^{\gamma t} \, H^s$ and $\langle V\rangle_{s,\gamma} := \langle 
e^{-\gamma t} \, V \rangle_s$.

(c)\; $L^2H^s\equiv L^2(\overline{\mathbb{R}}_+,H^s(b\Omega))$ with norm $|U(x,\theta_0)|_{L^2H^s} \equiv 
|U|_{0,s}$ given by
\begin{equation*}
|U|_{0,s}^2=\int^\infty_0|U(x',x_d,\theta_0)|_{H^s(b\Omega)}^2dx_d.
\end{equation*}
The corresponding norm on $L^2H^s_\gamma$ is denoted $|V|_{0,s,\gamma}$.

(d)\; $CH^s\equiv C(\overline{\mathbb{R}}_+,H^s(b\Omega))$ denotes the space of continuous bounded 
functions of $x_d$ with values in $H^s(b\Omega)$, with norm $|U(x,\theta_0)|_{CH^s} =|U|_{\infty,s} := 
\sup_{x_d\geq 0} |U(.,x_d,.)|_{H^s(b\Omega_T)}$.
The corresponding norm on $CH^s_\gamma$ is denoted $|V|_{\infty,s,\gamma}$.

(e)\; Let $M_0:=3d+5$ and define $C^{0,M_0} :=C(\overline{\mathbb{R}}_+,C^{M_0}(b\Omega))$ as 
the space of continuous bounded functions of $x_d$ with values in $C^{M_0}(b\Omega)$, with norm 
$|U(x,\theta_0)|_{C^{0,M_0}}:= |U|_{L^\infty W^{M_0,\infty}}$. Here $L^\infty W^{M_0,\infty}$ denotes 
the space $L^\infty(\overline{\R}_+;W^{M_0,\infty}(b\Omega))$\footnote{The size of $M_0$ is determined 
by the requirements of the singular calculus described in Appendix \ref{calculus}.}.

(f)\; The corresponding spaces on $\Omega_T$ are denoted $L^2H^s_T$, $L^2H^s_{\gamma,T}$, $CH^s_T$, 
$CH^s_{\gamma,T}$ and $C^{0,M_0}_T$ with norms $|U|_{0,s,T}$, $|U|_{0,s,\gamma,T}$, $|U|_{\infty,s,T}$, 
$|U|_{\infty,s,\gamma,T}$, and $|U|_{C^{0,M_0}_T}$ respectively. On $b\Omega_T$ we use the spaces 
$H^s_T$ and $H^s_{\gamma,T}$ with norms $\langle U\rangle_{s,T}$ and $\langle U\rangle_{s,\gamma,T}$.

(g)\; All constants appearing in the estimates below are independent of $\eps$, $\gamma$, and $T$ unless 
such dependence is explicitly noted.
\end{nota}

The following spaces appear in the statement of the main existence theorem and are used in its proof.

\begin{defn}
\label{Espaces}
For $s\in \{ 0,1,2,\dots \}$, let
\begin{align*}
E^s_T &:=CH^s_T\cap L^2H^{s+1}_T \, ,\quad \text{\rm with the norm } 
|U(x,\theta_0)|_{E^s_T}:=|U|_{\infty,s,T}+|U|_{0,s+1,T} \, ,\\
E^s_{\gamma,T} &:=CH^s_{\gamma,T}\cap L^2H^{s+1}_{\gamma,T}\, ,\quad \text{\rm with the norm }
|U(x,\theta_0)|_{E^s_{\gamma,T}} := |U|_{\infty,s,\gamma,T} +|U|_{0,s+1,\gamma,T} \, .
\end{align*}
 \end{defn}

The linearization of the singular problem \eqref{15p} at $U(x,\theta_0)$ has the form
\begin{align}\label{i3}
\begin{split}
&(a)\, \partial_d \dot U_\eps +\mathbb{A} \left( \partial_{x'}+\dfrac{\beta \partial_{\theta_0}}{\eps} \right)
\dot U_\eps +\cD (\eps U) \, \dot U_\eps=f(x,\theta_0) \quad \text{ on }\Omega \, ,\\
&(b)\, \cB (\eps U) \, \dot U_\eps|_{x_d=0} =g(x',\theta_0) \, ,\\
&(c)\, \dot U_\eps=0 \text{ in } t<0,
\end{split}
\end{align}
where the matrices $\cB(\eps U)$, $\cD(\eps U)$ are defined in \eqref{caaa}\footnote{Here and below we often 
suppress the subscript $\eps$ on $\dot U$.}. Instead of \eqref{i3}, consider the equivalent problem satisfied by 
$\dot U^\gamma :=e^{-\gamma t}\dot U$:
\begin{align}\label{i5}
\begin{split}
&\partial_d \dot U^\gamma +\mathbb{A} \left( (\partial_{t}+\gamma,\partial_{x''})
+\dfrac{\beta \, \partial_{\theta_0}}{\eps} \right) \dot U^\gamma +\cD (\eps U) \, \dot U^\gamma
=f^\gamma(x,\theta_0) \, ,\\
&\cB (\eps U) \, \dot U^\gamma|_{x_d=0} =g^\gamma(x',\theta_0) \, ,\\
&\dot U^\gamma=0 \text{ in } t<0 \, .
\end{split}
\end{align}
Below we let $\Lambda_D$ denote the singular Fourier multiplier (see \eqref{singularpseudop}) associated 
with the symbol
\begin{equation*}
\Lambda(X,\gamma) :=\left( \gamma^2+\left|\xi'+\frac{k\, \beta}{\eps}\right|^2 \right)^{1/2},\;
X := \xi'+\dfrac{k\, \beta}{\eps}.
\end{equation*}
The basic  estimate for the linearized singular problem \eqref{i5} is given in the next Proposition. Observe that the 
estimate \eqref{aprioriL2} exhibits a loss of one ``singular derivative" $\Lambda_D$. This is quite a high price to pay, 
which counts as a factor $1/\eps$. In view of \cite[Theorem 4.1]{CG}, there is strong evidence that the loss below is 
optimal.

\begin{prop}[Main $L^2$ linear estimate]\label{i5z}
We make the structural assumptions of Theorem \ref{main}, let $s_0:=\left[\frac{d+1}{2}\right]+1$,  and recall 
$M_0 =3d+5$. Fix $K>0$ and suppose $|\eps \, \partial_d U|_{C^{0,M_0-1}} +|U|_{C^{0,M_0}} +|U|_{CH^{s_0}} 
\leq K$ for $\eps \in (0,1]$. There exist positive constants $\eps_0(K)>0$, $C(K)>0$ and $\gamma_0(K) \ge 1$ 
such that sufficiently smooth solutions $\dot U$ of the linearized singular problem \eqref{i3} satisfy\footnote{Note 
that the norms $|u|_{0,1}$ and $|\Lambda_D u|_{0,0}$ are not equivalent.}:
\begin{align}\label{aprioriL2}
|\dot U^\gamma|_{0,0} +\dfrac{\langle \dot U^\gamma\rangle_0}{\sqrt{\gamma}} 
\leq C(K) \left( \dfrac{|\Lambda_D f^\gamma|_{0,0}+|f^\gamma/\eps|_{0,0}}{\gamma^2} 
+\dfrac{\langle\Lambda_D g^\gamma \rangle_0 +\langle g^\gamma/\eps \rangle_0}{\gamma^{3/2}} \right) 
\end{align}
for $\gamma \geq \gamma_0(K)$, \;$0<\eps\leq \eps_0(K)$.

The same estimate holds if $\cB(\eps U)$ in \eqref{i3} is replaced by $\cB(\eps U,\eps\cU)$ and $\cD(\eps U)$ 
is replaced by $\cD(\eps U,\eps\cU)$ as long as $|\eps \partial_d(U,\cU)|_{C^{0,M_0-1}} +|U,\cU|_{C^{0,M_0}} 
+|U,\cU|_{CH^{s_0}} \leq K$ for $\eps\in (0,1]$.
\end{prop}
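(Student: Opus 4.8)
The plan is to reduce the estimate \eqref{aprioriL2} to a microlocal analysis based on the block-diagonalization \eqref{conjugator} of $\cA(\zeta)$, exactly as in the classical Kreiss argument, but carried out in the singular calculus of Appendix \ref{calculus}. First I would replace $\dot U^\gamma$ by its ``frozen-coefficient'' version: using the singular pseudodifferential calculus, conjugate the operator in \eqref{i5} by the singular Fourier multiplier built from $Q_0$ of \eqref{conjugator}, picking up commutator errors that are of lower order in $\Lambda_D$ and hence absorbable (this is where the smoothing property of the calculus from \cite{CGW2}, emphasized in the introduction, is essential — one needs the remainder to be genuinely regularizing, not merely $O(1)$ on $L^2$ with small norm). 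After this reduction the system decouples into an outgoing block, where $\re(i\omega_j)\ge c\gamma$, and an incoming block, where $\re(i\omega_j)\le -c\gamma$. For the outgoing block one integrates the $\partial_d$ equation from $x_d=0$ inward and gets a direct estimate of the trace and the $L^2H^0$ norm by the interior forcing, with a gain of $1/\gamma$ per integration, accounting for the $\gamma^{-2}$ and $\gamma^{-3/2}$ weights. For the incoming block one integrates from $x_d=+\infty$, again gaining powers of $\gamma$.

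The crux is coupling the two blocks through the boundary condition $\cB(\eps U)\dot U^\gamma|_{x_d=0}=g^\gamma$. Here the weak stability hypothesis, Assumption \ref{nonlinbc} (or, in the $\cB(v_1)$ form, Remark \ref{ca3}(2)), enters: the factorization \eqref{ca3a} says that $\cB$ composed with the incoming columns $Q_{in}$ is, up to invertible $C^\infty$ factors, $\mathrm{diag}(\gamma+i\sigma,1,\dots,1)$. The $p-1$ unit entries give control of all but one incoming component by the boundary data with no loss; the single degenerate entry $\gamma+i\sigma$ is elliptic of order one in $(\gamma,\Lambda_D)$ only away from its characteristic set $\{\gamma=\sigma(v_1,v_2,\zeta)=0\}$, which by Assumption \ref{nonlinbc} lies in the hyperbolic region $\cH$. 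Near that characteristic set one cannot invert $\gamma+i\sigma$, and recovering the missing incoming component costs a division by something vanishing to first order — this is precisely the source of the factor $\Lambda_D$ (equivalently $1/\eps$) lost on the right-hand side. One builds a Kreiss-type symmetrizer, or more directly exploits that $\sigma$ is real-valued and real-analytic on $\cH$ so that $\gamma+i\sigma$ is of ``real principal type'' there, to get the degenerate component in terms of $\Lambda_D g^\gamma$ and $\Lambda_D f^\gamma$ with the stated $\gamma$-weights. I expect \textbf{this boundary analysis near the glancing-free characteristic variety of the Lopatinskii determinant to be the main obstacle}, both because it forces the derivative loss and because it must be done uniformly in $\eps$ within the singular calculus, where frequencies $\xi'+k\beta/\eps$ range over a noncompact set.

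The remaining steps are bookkeeping. One patches the microlocal estimates near $\overline{\Upsilon}\subset\cH$ with the easy estimates elsewhere — away from $\overline{\Upsilon}$ the uniform Lopatinskii condition holds for $\cB(\eps U)$ (for $U$ small, guaranteed by the $C^{0,M_0}$ bound and continuity, cf.\ Remark \ref{careful}), giving the no-loss estimate there — using a partition of unity in $\zeta\in\Sigma$ that is compatible with the singular calculus. The zeroth-order terms $\cD(\eps U)\dot U^\gamma$ are handled by taking $\gamma_0(K)$ large: they contribute $|\cD(\eps U)|_{C^{0,M_0}}|\dot U^\gamma|_{0,0}\le C(K)|\dot U^\gamma|_{0,0}$, absorbed by the $\gamma^{-2}$, $\gamma^{-3/2}$ gains once $\gamma\ge\gamma_0(K)$; the hypotheses $|\eps\partial_d U|_{C^{0,M_0-1}}+|U|_{C^{0,M_0}}+|U|_{CH^{s_0}}\le K$ are exactly what the calculus needs to treat $\cD(\eps U)$ and $\cB(\eps U)$ as admissible singular symbols with constants depending only on $K$. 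Finally, the last sentence of the Proposition — the version with $\cB(\eps U,\eps\cU)$ and $\cD(\eps U,\eps\cU)$ — follows with no change: \eqref{ca3a} is stated for the two-argument operator $\cB(v_1,v_2)$ and \eqref{caaa} is the special case $v_2=v_1$, so the entire argument applies verbatim once one substitutes the bound $|\eps\partial_d(U,\cU)|_{C^{0,M_0-1}}+|(U,\cU)|_{C^{0,M_0}}+|(U,\cU)|_{CH^{s_0}}\le K$. As noted in the introduction, the full details of this argument are identical to the wavetrain case in \cite{CGW3} once the singular pulse calculus of Appendix \ref{calculus} replaces the wavetrain calculus, so one may invoke that reference for the lengthy symmetrizer construction rather than reproduce it.
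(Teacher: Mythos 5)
Your proposal is correct and takes the same route as the paper: the paper's own proof of Proposition~\ref{i5z} consists solely of the remark that the argument is identical to that of Proposition~2.2 in \cite{CGW3} once the singular pulse calculus of Appendix~\ref{calculus} replaces the wavetrain calculus (the hypothesis on $|U|_{CH^{s_0}}$ being exactly what licenses that substitution), and your sketch --- conjugation by $Q_0$, block splitting, the factorization \eqref{ca3a} isolating $\gamma+i\sigma$ as the source of the one-derivative loss, absorption of $\cD(\eps U)$ by taking $\gamma_0(K)$ large, and the final appeal to \cite{CGW3} for the symmetrizer details --- accurately describes that referenced argument. One minor slip worth correcting: you have the integration directions reversed --- the \emph{outgoing} block has $\re(i\omega_j)\ge c\gamma$ (ODE-unstable), so its $L^2$ solution and trace are obtained by integrating inward \emph{from} $x_d=+\infty$ against the interior forcing, while the \emph{incoming} block has $\re(i\omega_j)\le -c\gamma$ (ODE-stable) and is integrated forward \emph{from} $x_d=0$ once its trace is recovered from the boundary condition --- but this does not affect the structure of the argument.
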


The proof is identical to the proof of Proposition 2.2 in \cite{CGW3}. Here our hypothesis on $|U|_{CH^{s_0}}$ is 
needed to allow the pulse calculus summarized in Appendix \ref{calculus} to be used in exactly the same way that 
the wavetrain calculus was used in \cite{CGW3}. The corollary below follows from Proposition \ref{i5z} by the same 
argument used to derive Corollary 2.3 from Proposition 2.2 in \cite{CGW3}.

\begin{cor}[Main $H^1_{tan}$ linear estimate]\label{estimH1}
Under the same assumptions as in Proposition \ref{i5z}, smooth enough solutions $\dot U$ of the linearized singular 
problem \eqref{i3} satisfy:
\begin{align}\label{i6}
|\dot U^\gamma|_{\infty,0}+|\dot U^\gamma|_{0,1}+\frac{\langle \dot U^\gamma\rangle_1}{\sqrt{\gamma}} 
\leq C(K) \left( \frac{|\Lambda_D f^\gamma|_{0,1}+|f^\gamma/\eps|_{0,1}}{\gamma^2} 
+\frac{\langle\Lambda_D g^\gamma \rangle_1 +\langle g^\gamma/\eps \rangle_1}{\gamma^{3/2}} \right) 
\end{align}
for $\gamma \geq \gamma_0(K)$, \;$0<\eps\leq \eps_0(K)$.
\end{cor}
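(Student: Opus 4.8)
The plan is to derive Corollary \ref{estimH1} from Proposition \ref{i5z} by differentiating the linearized singular problem \eqref{i5} with respect to the tangential variables $(x',\theta_0)$ and then applying the $L^2$ estimate \eqref{aprioriL2} to the differentiated system. Concretely, fix a multi-index $\alpha$ in the variables $(t,x'',\theta_0)$ with $|\alpha| \le 1$ and let $\dot U^\gamma_\alpha := \partial^\alpha_{(x',\theta_0)} \dot U^\gamma$. Because the singular operator $\mathbb{A}\big((\partial_t+\gamma,\partial_{x''}) + \beta\partial_{\theta_0}/\eps\big)$ has constant coefficients and commutes with $\partial^\alpha_{(x',\theta_0)}$, and because $\Lambda_D$ is a Fourier multiplier in $(x',\theta_0)$ that also commutes with these derivatives, the only commutators that arise come from the zero-order terms $\cD(\eps U)\dot U^\gamma$ and from the boundary operator $\cB(\eps U)\dot U^\gamma|_{x_d=0}$. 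Thus $\dot U^\gamma_\alpha$ solves a problem of the same form \eqref{i5} with $\cD$, $\cB$ unchanged but with modified forcing terms $f^\gamma_\alpha := \partial^\alpha f^\gamma - [\partial^\alpha, \cD(\eps U)]\dot U^\gamma$ and $g^\gamma_\alpha := \partial^\alpha g^\gamma - [\partial^\alpha, \cB(\eps U)]\dot U^\gamma|_{x_d=0}$.

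Next I would apply Proposition \ref{i5z} to each $\dot U^\gamma_\alpha$, $|\alpha|\le 1$, and sum. The left-hand side produces $|\dot U^\gamma|_{0,1} + \langle \dot U^\gamma\rangle_1/\sqrt\gamma$, and one recovers the $CH^0$ norm $|\dot U^\gamma|_{\infty,0}$ on the left by a separate argument: integrate the equation \eqref{i5} in $x_d$ to control $\partial_d |\dot U^\gamma(\cdot,x_d,\cdot)|^2_{L^2}$ pointwise in $x_d$, or equivalently use the standard trace/interpolation inequality $|U|^2_{CH^0} \lesssim |U|_{0,0}\,|U|_{0,1} + |U|^2_{0,0}$ together with the control of $\partial_d \dot U^\gamma$ coming from the equation — here one must be careful that $\partial_d\dot U^\gamma$ itself involves the singular factor $1/\eps$, but in the $E^0$-type norm this is exactly what the combination $|\Lambda_D f^\gamma|_{0,0} + |f^\gamma/\eps|_{0,0}$ on the right-hand side is designed to absorb. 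On the right-hand side, the genuine forcing contributions $\partial^\alpha f^\gamma$ and $\partial^\alpha g^\gamma$ assemble into $|\Lambda_D f^\gamma|_{0,1} + |f^\gamma/\eps|_{0,1}$ and $\langle\Lambda_D g^\gamma\rangle_1 + \langle g^\gamma/\eps\rangle_1$ as claimed (using that $\Lambda_D$ and $\partial^\alpha_{(x',\theta_0)}$ commute).

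The main obstacle — and the only place real work is needed — is estimating the commutator terms $[\partial^\alpha, \cD(\eps U)]\dot U^\gamma$ and $[\partial^\alpha, \cB(\eps U)]\dot U^\gamma|_{x_d=0}$ so that, after division by the powers of $\gamma$ in \eqref{aprioriL2}, they can be absorbed into the left-hand side for $\gamma$ large. For $|\alpha|=1$ the commutator is $(\partial^\alpha \cD(\eps U))\dot U^\gamma$, which is zero-order in $\dot U^\gamma$ and whose coefficient $\partial^\alpha\cD(\eps U)$ carries a factor $\eps$ times $\partial U$; since the hypothesis bounds $|U|_{C^{0,M_0}}$ and $|\eps\partial_d U|_{C^{0,M_0-1}}$ by $K$, one gets $|[\partial^\alpha,\cD(\eps U)]\dot U^\gamma|_{0,0} \le C(K)\,|\dot U^\gamma|_{0,0}$, and the corresponding $f^\gamma_\alpha$ contribution to the right-hand side of \eqref{aprioriL2} is $\le C(K)|\dot U^\gamma|_{0,0}/\gamma^2$, absorbable for $\gamma \ge \gamma_0(K)$ large; note there is no $1/\eps$ loss here because the $\eps$ from $\cD(\eps U)$ cancels it (this is precisely why the commutator must be handled via the $|f^\gamma/\eps|$ slot rather than the $|\Lambda_D f^\gamma|$ slot). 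The boundary commutator is handled the same way using the trace estimate and the structure of $\cB(\eps U)$, with the $\gamma^{3/2}$ denominator giving the absorption. Since this is exactly the argument used to pass from Proposition 2.2 to Corollary 2.3 in \cite{CGW3}, and nothing in it is sensitive to the pulse-versus-wavetrain distinction (the singular pulse calculus of Appendix \ref{calculus} enters only through Proposition \ref{i5z} itself), I would simply carry it over verbatim, checking only that the regularity thresholds $s_0$ and $M_0$ suffice for the commutator bounds above.
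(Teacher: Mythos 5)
Your overall strategy — differentiate the singular system tangentially, commute the lower-order coefficients $\cD(\eps U)$ and $\cB(\eps U)$, apply the $L^2$ estimate \eqref{aprioriL2} to each differentiated problem, and then recover the $CH^0$ piece separately — is the natural one, and indeed the paper itself only says that Corollary \ref{estimH1} ``follows from Proposition \ref{i5z} by the same argument used to derive Corollary 2.3 from Proposition 2.2 in \cite{CGW3}.'' The $L^2H^1$ and trace pieces of your sketch are essentially sound, modulo one imprecision: you write that the commutator ``must be handled via the $|f^\gamma/\eps|$ slot rather than the $|\Lambda_D f^\gamma|$ slot,'' but the estimate \eqref{aprioriL2} forces you to bound \emph{both} $|\Lambda_D f^\gamma_\alpha|_{0,0}$ and $|f^\gamma_\alpha/\eps|_{0,0}$; you don't get to choose. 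For the commutator $c_\alpha=(\partial^\alpha\cD(\eps U))\dot U^\gamma$ the $f/\eps$ slot is easy as you say, but the $\Lambda_D$ slot needs its own argument: writing $\partial^\alpha\cD(\eps U)=\eps\,\tilde a$ with $|\tilde a|_\infty\le C(K)$, the symbol bound $\eps\,\Lambda(X,\gamma)\lesssim \eps\gamma+\eps|\xi'|+|k\beta|$ converts $\eps\Lambda_D$ into $\gamma\cdot\mathrm{Id}$ plus tangential derivatives, which — together with the product/commutation rules of the singular pulse calculus (Proposition \ref{prop20}) — gives $|\Lambda_D c_\alpha|_{0,0}\lesssim C(K)(\gamma|\dot U^\gamma|_{0,0}+|\dot U^\gamma|_{0,1})$, absorbable after dividing by $\gamma^2$. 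That step should be spelled out.

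The genuine gap is in the $CH^0$ bound $|\dot U^\gamma|_{\infty,0}$. The trace/interpolation inequality you quote, $|U|^2_{CH^0}\lesssim|U|_{0,0}|U|_{0,1}+|U|^2_{0,0}$, is false: neither $|U|_{0,0}$ nor $|U|_{0,1}$ controls the normal variation of $U$, and a tall narrow bump in $x_d$ has small $L^2H^1$ norm but large $CH^0$ norm. The correct version is $|U|^2_{CH^0}\lesssim|U|_{0,0}\,|\partial_d U|_{0,0}$, and here your ``use the equation'' suggestion runs into exactly the obstruction you notice but then wave away. Using $\partial_d\dot U^\gamma=-\mathbb{A}_D\dot U^\gamma-\cD(\eps U)\dot U^\gamma+f^\gamma$ gives $|\partial_d\dot U^\gamma|_{0,0}\lesssim\gamma|\dot U^\gamma|_{0,0}+\eps^{-1}|\dot U^\gamma|_{0,1}+|f^\gamma|_{0,0}$ (the factor $\eps^{-1}$ comes from $\beta\partial_{\theta_0}/\eps$ and cannot be removed by the $\Lambda_D f$ vs.\ $f/\eps$ structure of the right-hand side); feeding this into the trace inequality and using the bounds on $|\dot U^\gamma|_{0,0}$ and $|\dot U^\gamma|_{0,1}$ already obtained produces $|\dot U^\gamma|_{\infty,0}\lesssim\eps^{-1/2}\,M$, where $M$ is the right-hand side of \eqref{i6} — a loss of $\eps^{-1/2}$ that destroys the estimate. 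The point is that $|\mathbb{A}_D\dot U^\gamma|_{0,0}\sim|\Lambda_D\dot U^\gamma|_{0,0}$ is not controlled by the left-hand side of either \eqref{aprioriL2} or its differentiated version, and applying \eqref{aprioriL2} to $\Lambda_D\dot U^\gamma$ instead only produces the disallowed quantity $|\Lambda_D^2 f^\gamma|_{0,0}$. The $CH^0$ bound has to be extracted from \emph{inside} the Kreiss-symmetrizer energy identity (the energy $\langle R_D\dot U^\gamma(x_d),\dot U^\gamma(x_d)\rangle$ is controlled uniformly in $x_d$, not just at $x_d=0$), not by a posteriori trace/interpolation from the endpoint estimate \eqref{aprioriL2}. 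This is a substantive step that your proposal does not supply and that the paper takes from \cite{CGW3}.
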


The next proposition localizes the estimate \eqref{i6} to $\Omega_T$. By considering data $\eps f$, $\eps g$ 
instead of $f,g$ we can recast \eqref{i6} in a form where the loss of a singular derivative is replaced by loss of 
an ordinary derivative. The proof is identical to that of \cite[Proposition 2.8]{CGW3}. Let us write the linearized 
operators on the left sides of \eqref{i3}(a) and (b) as $\bL'(\eps U)\dot U$ and $\bB'(\eps U) \dot U$ respectively.

\begin{prop}\label{i14}
Let  $s_0=\left[\frac{d+1}{2}\right]+1$, fix $K>0$, and suppose $|\eps \partial_d U|_{C^{0,M_0-1}_T} 
+|U|_{C^{0,M_0}_T}+|U|_{CH^{s_0}_T}\leq K$ for $\eps\in (0,1]$. There exist positive constants $\eps_0(K)$, 
$\gamma_0(K)$ such that solutions of the singular problem
\begin{align*}
\begin{split}
&\bL'_\eps(U)\dot U=\eps f \quad \text{ in } \Omega_T \, ,\\
&\bB'_\eps(U)\dot U=\eps g \quad \text{ on } b\Omega_T \, ,\\
&\dot U=0 \quad \text{ in }t<0,
\end{split}
\end{align*}
satisfy
\begin{equation*}
|\dot U|_{\infty,0,\gamma,T} +|\dot U|_{0,1,\gamma,T} 
+\dfrac{\langle \dot U|_{x_d=0} \rangle_{1,\gamma,T}}{\sqrt{\gamma}} \le 
C(K) \, \left( \dfrac{|f|_{0,2,\gamma,T}}{\gamma^2} +\dfrac{\langle g \rangle_{2,\gamma,T}}{\gamma^{3/2}} \right) 
\end{equation*}
for $0<\eps \leq \eps_0(K)$, $\gamma \geq \gamma_0(K)$, and the constant $C(K)$ only depends on $K$.

The same estimate holds if $\cB(\eps U)$ in \eqref{i3} is replaced by $\cB(\eps U,\eps\cU)$ given in \eqref{ca2}, 
and $\cD(\eps U)$ is replaced by $\cD(\eps U,\eps\cU)$ given in \eqref{caa}, as long as there holds $|\eps 
\partial_d(U,\cU)|_{C^{0,M_0-1}_T} +|U,\cU|_{C^{0,M_0}_T}+|U,\cU|_{CH^{s_0}_T}\leq K$ for $\eps\in (0,1]$.
\end{prop}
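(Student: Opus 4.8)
The plan is to derive Proposition \ref{i14} from Corollary \ref{estimH1}, following the localization scheme of \cite[Proposition 2.8]{CGW3}; there are only two substantive points, the conversion of the singular-derivative loss into an ordinary-derivative loss and the passage from global time to $\Omega_T$.

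First I would apply the estimate \eqref{i6} of Corollary \ref{estimH1} to the solution $\dot U$ of \eqref{i3}, but with the sources $f$, $g$ replaced by $\eps f$, $\eps g$ (the hypotheses on $U$ being unchanged). The right-hand side of \eqref{i6} then involves $\eps\,|\Lambda_D f^\gamma|_{0,1}$, $|f^\gamma|_{0,1}$, $\eps\,\langle\Lambda_D g^\gamma\rangle_1$ and $\langle g^\gamma\rangle_1$. The key observation is that $\eps\Lambda_D$ has symbol $(\eps^2\gamma^2+|\eps\xi'+k\beta|^2)^{1/2}$, which for $0<\eps\le1$ is dominated by $C(1+|\beta|)\,(\gamma^2+|\xi'|^2+k^2)^{1/2}$, i.e. by a constant multiple of the ordinary $\gamma$-weighted first-order Fourier multiplier in $(x',\theta_0)$. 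Hence $\eps\,|\Lambda_D f^\gamma|_{0,1}\le C|f^\gamma|_{0,2}$ and $\eps\,\langle\Lambda_D g^\gamma\rangle_1\le C\langle g^\gamma\rangle_2$, while $|f^\gamma|_{0,1}\le|f^\gamma|_{0,2}$ and $\langle g^\gamma\rangle_1\le\langle g^\gamma\rangle_2$ are trivial. This already gives the inequality of Proposition \ref{i14} with all norms taken over $\Omega$, $b\Omega$ in place of $\Omega_T$, $b\Omega_T$.

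It then remains to localize in time. Given $\dot U$ solving the problem on $\Omega_T$ with $\dot U=0$ in $t<0$, I would extend $U$ to all of $\Omega$ (say constant in $t$ for $t\ge T$), keeping $|\eps\partial_d U|_{C^{0,M_0-1}}+|U|_{C^{0,M_0}}+|U|_{CH^{s_0}}\le K$, extend $f$, $g$ by zero for $t\ge T$, and invoke the existence theory for the linearized singular problem (which, as for the estimates, transfers verbatim from \cite{CGW3}) to produce a global solution $\dot V$ of the extended problem. Step~1 applied to $\dot V$ controls it globally, and causality in $t$ for the operator $\partial_d+\mathbb{A}(\partial_{x'}+\beta\partial_{\theta_0}/\eps)+\cD(\eps U)$, together with uniqueness, forces $\dot V=\dot U$ on $\Omega_T$; restricting the global estimate for $\dot V$ and bounding the data norms of the extensions by $|f|_{0,2,\gamma,T}$, $\langle g\rangle_{2,\gamma,T}$ gives the claim. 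The variant with $\cB(\eps U,\eps\cU)$, $\cD(\eps U,\eps\cU)$ follows identically from the corresponding part of Corollary \ref{estimH1}. I expect the localization step to be the only delicate one — one must check that finite speed of propagation in the time variable really holds for this singular problem and that the coefficient extension does not spoil the $C^{0,M_0}$-type bounds — and this is precisely the content of \cite[Proposition 2.8]{CGW3}, which I would simply cite for the details.
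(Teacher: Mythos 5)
Your proposal takes the same route as the paper: rescale the sources to $\eps f,\eps g$ so that, via Corollary \ref{estimH1}, the loss of a singular derivative becomes a loss of an ordinary derivative (your symbol bound $\eps\Lambda(X,\gamma)\le C(1+|\beta|)(\gamma^2+|\xi'|^2+k^2)^{1/2}$ is exactly this observation), and then localize to $\Omega_T$ by a causality/extension argument. The paper itself gives no further detail and simply declares the proof identical to that of \cite[Proposition 2.8]{CGW3}, which you also cite for the localization step.
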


We need the following higher derivative estimate in the error analysis. This estimate is also needed in the Nash-Moser 
iteration used to prove the nonlinear existence Theorem \eqref{k8b}. We recall the definitions \eqref{a11z} for the 
indices $a_0,a_1,a,\tilde{a}$.

\begin{prop}[Tame estimate for the linearized system]
\label{i33a}
Fix $K>0$  and suppose
\begin{align}\label{i33az}
|\eps \, \partial_dU|_{C^{0,M_0-1}_T} +|U|_{C^{0,M_0}_T}+|U|_{CH^{s_0}_T}\leq K \text{ for }\eps\in (0,1].
\end{align}
Let  $\mu_0 := a_0+2$ and $s\in[0,\tilde a]$, where $\tilde{a}$ is defined in \eqref{a11z}. There exist positive 
constants $\gamma=\gamma(K)$, $\kappa_0(\gamma,T)$, $\eps_0$, and $C$ such that if
\begin{equation*}
|U|_{0,\mu_0,\gamma,T} +\langle U|_{x_d=0} \rangle_{\mu_0,\gamma,T} \le \kappa_0,
\end{equation*}
then solutions $\dot U$ of the linearized system \eqref{i3} satisfy for $0<\eps \leq \eps_0$:
\begin{multline*}
|\dot U|_{E^s_{\gamma,T}} +\langle \dot U|_{x_d=0} \rangle_{s+1,\gamma,T} \\
\le C\, \Big[ |f|_{0,s+2,\gamma,T} +\langle g\rangle_{s+2,\gamma,T} +\left( |f|_{0,\mu_0\gamma,T} 
+\langle g\rangle_{\mu_0,\gamma,T} \right) \, \left(|U|_{0,s+2,\gamma,T} 
+\langle U|_{x_d=0} \rangle_{s+2,\gamma,T} \right) \Big].
\end{multline*}
The same estimate holds for $0<\eps\leq \eps_0$ if $\cB(\eps U)$ in \eqref{i3} is replaced by $\cB(\eps U,\eps\cU)$ 
and $\cD(\eps U)$ is replaced by $\cD(\eps U,\eps\cU)$ as long as $|\eps \partial_d(U,\cU)|_{C^{0,M_0-1}_T} 
+|U,\cU|_{C^{0,M_0}_T}+|U,\cU|_{CH^{s_0}_T} \leq K$ for $\eps\in (0,1]$ and
\begin{equation*}
|U,\cU|_{0,\mu_0,\gamma,T} +\langle U|_{x_d=0},\cU|_{x_d=0} \rangle_{\mu_0,\gamma,T} \le \kappa_0.
\end{equation*}
\end{prop}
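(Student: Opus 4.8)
The plan is to establish the tame estimate in Proposition \ref{i33a} by bootstrapping from the basic $L^2$ estimate of Proposition \ref{i5z} and its $H^1_{tan}$ corollary, Corollary \ref{estimH1}, using commutator arguments adapted to the singular calculus of Appendix \ref{calculus}. Since the excerpt already asserts that ``the proof of that estimate and the Nash-Moser iteration can be carried over {\it verbatim} from the corresponding argument in \cite{CGW3}'', the honest proof here is a pointer: I would state precisely which results from \cite{CGW3} are being invoked, verify that each hypothesis used there (in particular the weak stability Assumption \ref{assumption3} or \ref{nonlinbc}, the structural Assumptions \ref{assumption1}--\ref{assumption2}, and the bounds \eqref{i33az}) is in force here, and confirm that the only substitution needed is the singular pulse calculus of Appendix \ref{calculus} in place of the singular wavetrain calculus. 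The key point making this substitution legitimate is that both calculi are built in \cite{CGW2} with the same smoothing property for remainders in compositions of zero-order operators, which is exactly what allows energy estimates with loss of one tangential derivative; nothing in the tame estimate argument of \cite{CGW3} uses periodicity of profiles, only the mapping and commutator properties of the calculus.

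Concretely, the steps I would carry out are as follows. First, I would recall the localized lower-order estimate of Proposition \ref{i14} and the $L^2$/$H^1_{tan}$ estimates, noting that they already contain the loss of one singular derivative and the powers of $1/\gamma$ that will be iterated. Second, I would apply tangential singular derivatives $\partial^\alpha_{(x',\theta_0)}$ (built from the multiplier $\Lambda_D$ and ordinary tangential derivatives) to the linearized system \eqref{i3}, commuting them through the singular operator $\mathbb{A}(\partial_{x'}+\beta\partial_{\theta_0}/\eps)$ and through the zero-order terms $\cD(\eps U)$, $\cB(\eps U)$. The commutators are handled by the symbolic calculus of Appendix \ref{calculus}: products of the form (derivative of coefficient)$\times$(lower-order derivative of $\dot U$) are estimated by the Moser-type tame product estimates in the $C^{0,M_0}$ and $E^s_{\gamma,T}$ norms, with the low-regularity factor controlled by $K$ via \eqref{i33az} and the smallness of $|U|_{0,\mu_0,\gamma,T}+\langle U|_{x_d=0}\rangle_{\mu_0,\gamma,T}$ by $\kappa_0$. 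Third, I would feed these commutator terms back into the basic estimate, absorbing the top-order contribution of $\dot U$ into the left side using the $\kappa_0$-smallness and choosing $\gamma=\gamma(K)$ large, which yields the stated tame inequality with the characteristic structure: a clean term $|f|_{0,s+2,\gamma,T}+\langle g\rangle_{s+2,\gamma,T}$ plus a product of a low norm of the data with a high norm of the coefficient $U$. Fourth, I would observe that the argument is entirely insensitive to whether $\cB(\eps U)$ is replaced by $\cB(\eps U,\eps\cU)$ and $\cD(\eps U)$ by $\cD(\eps U,\eps\cU)$, since these enter only as zero-order coefficients subject to the same bounds, giving the final sentence of the proposition.

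The normal-derivative estimate requires a separate remark: because of the factors $1/\eps$ in \eqref{15p} one cannot recover $\partial_d\dot U$ by simply using the equation, so the $E^s_{\gamma,T}$ norm (which includes the $CH^s$ part, i.e. control of $\dot U$ at each fixed $x_d$) must be obtained as in \cite{CGW3} by an energy argument in the $x_d$ variable applied to the tangentially-differentiated system, exploiting the $L^2H^{s+1}$ control already in hand. This is the step where the invertibility of $B_d$ (Assumption \ref{assumption2}) and the noncharacteristic boundary are used. The main obstacle — or rather the main thing to be careful about — is bookkeeping the interplay between the singular multiplier $\Lambda_D$ (which counts as $1/\eps$), the weights $e^{-\gamma t}$, and the $\theta_0$-regularity, so that the commutator terms genuinely land at one order lower in the singular scale and can be absorbed; this is precisely the calculation done in \cite{CGW3} and I would not reproduce it, but I would flag that the pulse calculus of Appendix \ref{calculus} has been verified in \cite{CGW2} to satisfy the same composition, adjoint, and commutator estimates that the verbatim argument relies on, so that the transfer is rigorous rather than merely formal.
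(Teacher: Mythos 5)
Your proposal is correct and matches the paper's approach exactly: the paper's proof of Proposition \ref{i33a} is the single sentence ``The proof is exactly the same as the proof of Proposition 2.15 in \cite{CGW3},'' and you correctly identify this and supply a faithful sketch of what that referenced argument does (bootstrapping Proposition \ref{i5z} and Corollary \ref{estimH1} via tangential commutator estimates, absorbing top-order terms using $\kappa_0$-smallness and large $\gamma$, and recovering the $CH^s$ part of the $E^s_{\gamma,T}$ norm by an energy argument in $x_d$). You also correctly flag the only substantive point to check in carrying the argument over verbatim, namely that the singular pulse calculus of Appendix \ref{calculus} supplies the same composition, adjoint, and smoothing estimates as the wavetrain calculus used in \cite{CGW3}, and that nothing in the tame estimate depends on periodicity in $\theta_0$.
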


The proof is exactly the same as the proof of Proposition 2.15  in \cite{CGW3}. Finally, we state the existence 
theorem for solutions of the singular system \eqref{15p}.

\begin{theo}\label{k8b}
Fix $T>0$, define $a$, $a_0$, and $\tilde a$ as in \eqref{a11z}, and suppose $G\in H^{\tilde a} (b\Omega_T)$. 
There exists $\eps_0>0$ such that if $\langle G\rangle_{a+2}$ is small enough, there exists a solution $U_\eps$ 
of the system  \eqref{15p} on $\Omega_T$ for $0<\eps\leq \eps_0$ with $U_\eps\in E^{a-1}$, $U_\eps|_{x_d=0} 
\in H^a$. This statement remains true if $a$ is increased and if $\tilde a\geq 2a-a_0$.
\end{theo}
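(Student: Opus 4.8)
The plan is to solve the nonlinear singular system \eqref{15p} by a Nash--Moser iteration on the scale of spaces $\{E^s_{\gamma,T}\}$ on $\Omega_T$ and $\{H^{s+1}_{\gamma,T}\}$ on $b\Omega_T$, importing \emph{essentially verbatim} the argument used for the analogous existence theorem in \cite{CGW3}, with the singular wavetrain calculus replaced everywhere by the singular pulse calculus of Appendix \ref{calculus}; only the structural inputs and the bookkeeping need to be recorded here. Recalling that for pulses the variable $\theta_0\in\R$ is treated as an extra tangential variable (so the $E^s_T$ spaces and the pulse calculus play the role that periodic Sobolev spaces and the wavetrain calculus played in \cite{CGW3}), write \eqref{15p} as a single equation $\Phi_\eps(U)=(0,\eps G)$, where
\[
\Phi_\eps(U):=\Bigl(\partial_d U+\mathbb{A}\bigl(\partial_{x'}+\tfrac{\beta\,\partial_{\theta_0}}{\eps}\bigr)U+D(\eps U)\,U,\ \psi(\eps U)\,U|_{x_d=0}\Bigr),
\]
with all functions vanishing in $t<0$. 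One runs the iteration starting from $U^0=0$, for which the initial residual is $(0,\eps G)$.

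\textbf{Ingredients.} The iteration rests on four inputs, each available uniformly in $\eps\in(0,\eps_0]$. First, a family of smoothing operators acting in the tangential variables $(x',\theta_0)\in\R^{d+1}$ (with $x_d$ a parameter), satisfying the usual Nash--Moser properties on the $E^s_{\gamma,T}$ and $H^s_{\gamma,T}$ scales; these are insensitive to $\eps$. Second, the \emph{tame estimate for the linearized inverse}, which is precisely Proposition \ref{i33a}: for $s\in[0,\tilde a]$ the solution operator of the linearized singular problem \eqref{i3} loses only a fixed number of derivatives (data controlled at order $s+2$, solution at order $s$), with tame factor governed by the fixed low order $\mu_0=a_0+2$, and with constants depending only on the bound $K$ for $|\eps\,\partial_dU|_{C^{0,M_0-1}_T}+|U|_{C^{0,M_0}_T}+|U|_{CH^{s_0}_T}$ and on the $\kappa_0$-smallness of the $\mu_0$-norm of $U$. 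Third, tame product and composition (Moser) estimates in $E^s_{\gamma,T}$ and $H^s_{\gamma,T}$ for the smooth nonlinearities $v\mapsto D(\eps v)v$ and $v\mapsto\psi(\eps v)v$, the prefactor $\eps\le1$ being harmless and all constants uniform in $\eps$. Fourth, the resulting quadratic estimate for the error $\Phi_\eps(U+\dot U)-\Phi_\eps(U)-\Phi_\eps'(U)\dot U$. Feeding these into the abstract Nash--Moser scheme as in \cite{CGW3} produces, for $\langle G\rangle_{a+2}$ small enough, a solution with $U_\eps\in E^{a-1}_T$ and $U_\eps|_{x_d=0}\in H^a_T$, for all $0<\eps\le\eps_0$.

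\textbf{Index bookkeeping and propagation of hypotheses.} The exponents in \eqref{a11z} are arranged so that the scheme closes: the loss in Proposition \ref{i33a} forces the top regularity $\tilde a=2a-a_0$ to be available, while $a\ge 2a_0+3$ together with Sobolev embedding of $E^{a-1}_{\gamma,T}$ into $C^{0,M_0}_T$-type spaces in the $(x',\theta_0)$ variables guarantees that the structural bound $|\eps\,\partial_dU^n|_{C^{0,M_0-1}_T}+|U^n|_{C^{0,M_0}_T}+|U^n|_{CH^{s_0}_T}\le K$ is preserved along the iterates $U^n$ (this is the only place the pulse setting enters, and it enters exactly as the wavetrain calculus did in \cite{CGW3}). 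The low-order smallness $|U^n|_{0,\mu_0,\gamma,T}+\langle U^n|_{x_d=0}\rangle_{\mu_0,\gamma,T}\le\kappa_0$ needed to apply Proposition \ref{i33a} follows, for $\gamma=\gamma(K)$ large, from the uniform-in-$n$ bound on a fixed higher $\gamma$-weighted norm of $U^n$ and the standard gain of a factor $\gamma^{-1}$ on low-order $\gamma$-weighted norms relative to higher ones (using $U^n\equiv0$ in $t<0$). Smallness of $\langle G\rangle_{a+2}$ is used both to place $(0,\eps G)$ inside the basin of convergence and to keep every iterate inside the ball on which the above estimates hold, uniformly for $0<\eps\le\eps_0$; and the last sentence of the theorem is immediate, since enlarging the regularity indices only strengthens the hypothesis on $G$ while leaving the scheme intact.

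\textbf{Main obstacle.} The essential difficulty is that the linearized problem genuinely loses regularity---before the $\eps$-rescaling of the data the loss is a full \emph{singular} derivative $\Lambda_D$, counting as a factor $1/\eps$, as in \eqref{aprioriL2} of Proposition \ref{i5z}---so no contraction-mapping or Picard argument can work; this is precisely what Nash--Moser, fed by the tame estimate of Proposition \ref{i33a}, overcomes. The delicate point requiring care, handled in \cite{CGW3} and reused here, is that \emph{every} constant ($\eps_0$, $\gamma_0$, $\kappa_0$, and the tame constants) be independent of $\eps$; this rests on the $\eps$-uniformity of the singular pulse calculus of Appendix \ref{calculus} and of the Moser estimates, together with the propagation of the $C^{0,M_0}_T$ and $\kappa_0$-smallness hypotheses described above. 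Since none of these points differs from the wavetrain case beyond the substitution of the pulse calculus, we do not reproduce the lengthy details and refer to \cite{CGW3}.
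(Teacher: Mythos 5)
Your proposal is correct and follows essentially the same route as the paper: both invoke the Nash--Moser iteration of \cite{CGW3} verbatim, built on the tame estimate of Proposition \ref{i33a}, smoothing operators adapted to $\theta_0\in\R$, and propagation of the $C^{0,M_0}_T$/$\kappa_0$-smallness hypotheses along the iterates. The only cosmetic difference is that the paper fixes the relation $\gamma\,T=1$ as in \cite{CGW3}, whereas you argue the low-order smallness via $\gamma$ large and the gain of $\gamma^{-1}$ on $\gamma$-weighted norms; both are standard and lead to the same conclusion.
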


\begin{proof}
The proof is an exact repetition of the Nash-Moser argument used to prove Theorem 5.13 in \cite{CGW3}. 
One fixes $T>0$, $K>0$ and $\gamma=\gamma(K)$ as in Proposition \ref{i33a}, and uses the scale of spaces 
$E^s_{\gamma,T}$ (Definition \ref{Espaces}) with $\gamma \, T=1$. The smoothing operators $S_\theta$ are 
defined just as in Lemma 5.12 of \cite{CGW3}, taking account in the obvious way for the fact that now functions 
$u(x,\theta_0) \in E^s_{\gamma,T}$ are defined for $\theta_0 \in \R$ instead of $\theta_0 \in {\mathbb T}$. The 
verification of the condition \eqref{i33az} needed to apply the tame estimate of Proposition \ref{i33a} in the 
induction argument is done just as in Lemma 5.20 of \cite{CGW3}.
\end{proof}

\section{Error analysis}
\label{error}

\subsection{Moment-zero approximations}\label{mzero}

\quad When constructing a corrector to the leading part of the approximate solution we must take primitives in 
$\theta$ of functions $f(x,\theta)$ that decay to zero as $|\theta|\to\infty$. A difficulty is that such primitives do 
not necessarily decay to zero as $|\theta|\to \infty$, and this prevents us from using those primitives directly in 
the error analysis. The failure of the primitive to decay manifests itself on the Fourier transform side as a small 
divisor problem. To get around this difficulty we work with the primitive of a \emph{moment-zero approximation}, 
because such a primitive does have the desired decay.

We will use  the following spaces:

\begin{defn}\label{d22}
1.)  For $s\geq 0$, let $E^s_T$ be given in Definition \ref{Espaces}.


2.)  Let $\cE^s_T:=\{\cU(x,\theta_0,\xi_d):|\cU|_{\cE^s_T}:=sup_{\xi_d\geq 0}|\cU(\cdot,\cdot,\xi_d)|_{E^s_T} < \infty\}$.
\end{defn}


\begin{prop}\label{d23a}
For $s>(d+1)/2$, the spaces $E^s_T$ and $\cE^s_T$ are Banach algebras.
\end{prop}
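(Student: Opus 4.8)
The plan is to reduce the statement to the standard fact that $H^s(b\Omega_T)$ is a Banach algebra when $s>(d+1)/2$ (note $b\Omega_T\subset\mathbb R^{d+1}$, so the relevant Sobolev threshold is $(d+1)/2$), and then to track how the $x_d$- and $\xi_d$-parameters are handled. First I would recall that $E^s_T = CH^s_T\cap L^2H^{s+1}_T$, so an element $U\in E^s_T$ is a function of $x_d\ge 0$ with values in $H^s(b\Omega_T)$ that is continuous and bounded in $x_d$, and which additionally lies in $L^2$ in $x_d$ with values in $H^{s+1}(b\Omega_T)$. Given $U,V\in E^s_T$ I must show $UV\in E^s_T$ with $|UV|_{E^s_T}\lesssim |U|_{E^s_T}|V|_{E^s_T}$.

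For the $CH^s_T$ part: for each fixed $x_d$, $U(\cdot,x_d,\cdot)$ and $V(\cdot,x_d,\cdot)$ lie in $H^s(b\Omega_T)$, so by the $H^s$ algebra property their product lies in $H^s(b\Omega_T)$ with $|U(\cdot,x_d,\cdot)V(\cdot,x_d,\cdot)|_{H^s}\le C|U(\cdot,x_d,\cdot)|_{H^s}|V(\cdot,x_d,\cdot)|_{H^s}$; taking $\sup_{x_d\ge0}$ gives $|UV|_{\infty,s,T}\le C|U|_{\infty,s,T}|V|_{\infty,s,T}$, and continuity of $x_d\mapsto U(\cdot,x_d,\cdot)V(\cdot,x_d,\cdot)$ in $H^s$ follows from continuity of each factor together with the bilinear bound. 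For the $L^2H^{s+1}_T$ part: I want $\int_0^\infty |U(\cdot,x_d,\cdot)V(\cdot,x_d,\cdot)|^2_{H^{s+1}}\,dx_d<\infty$. Here I would use the refined Moser-type product estimate in $H^{s+1}(b\Omega_T)$: since $s+1>(d+1)/2$ as well, $H^{s+1}$ is an algebra, but more usefully one has the ``tame'' bound $|fg|_{H^{s+1}}\le C(|f|_{L^\infty}|g|_{H^{s+1}}+|f|_{H^{s+1}}|g|_{L^\infty})$, and since $s>(d+1)/2$ the embedding $H^s(b\Omega_T)\hookrightarrow L^\infty(b\Omega_T)$ controls $|f|_{L^\infty}\le C|f|_{H^s}$. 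Thus
\[
|U(\cdot,x_d,\cdot)V(\cdot,x_d,\cdot)|_{H^{s+1}} \le C\big(|U(\cdot,x_d,\cdot)|_{H^s}|V(\cdot,x_d,\cdot)|_{H^{s+1}} + |U(\cdot,x_d,\cdot)|_{H^{s+1}}|V(\cdot,x_d,\cdot)|_{H^s}\big).
\]
Squaring, integrating in $x_d$, and bounding the $H^s$ factors by $\sup_{x_d}|\cdot|_{H^s}\le |U|_{\infty,s,T}$ (resp.\ for $V$) pulls them out of the integral, leaving $\int_0^\infty|V(\cdot,x_d,\cdot)|^2_{H^{s+1}}\,dx_d = |V|_{0,s+1,T}^2$ (resp.\ $|U|_{0,s+1,T}^2$). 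Hence $|UV|_{0,s+1,T}\le C(|U|_{\infty,s,T}|V|_{0,s+1,T}+|U|_{0,s+1,T}|V|_{\infty,s,T})\le C|U|_{E^s_T}|V|_{E^s_T}$. Combining the two parts gives the algebra property of $E^s_T$, and of course $E^s_T$ is complete as the intersection of two Banach spaces with the sum norm.

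For $\cE^s_T$ the argument is immediate once $E^s_T$ is handled: the norm $|\cU|_{\cE^s_T}=\sup_{\xi_d\ge0}|\cU(\cdot,\cdot,\xi_d)|_{E^s_T}$ is just the uniform norm of an $E^s_T$-valued function of $\xi_d$, so for $\cU,\cV\in\cE^s_T$ and each fixed $\xi_d$ the $E^s_T$ algebra estimate gives $|\cU(\cdot,\cdot,\xi_d)\cV(\cdot,\cdot,\xi_d)|_{E^s_T}\le C|\cU(\cdot,\cdot,\xi_d)|_{E^s_T}|\cV(\cdot,\cdot,\xi_d)|_{E^s_T}\le C|\cU|_{\cE^s_T}|\cV|_{\cE^s_T}$; taking $\sup_{\xi_d\ge0}$ yields $|\cU\cV|_{\cE^s_T}\le C|\cU|_{\cE^s_T}|\cV|_{\cE^s_T}$. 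Completeness of $\cE^s_T$ is the standard fact that bounded functions into a Banach space, with the sup norm, form a Banach space. I expect the only genuinely delicate point to be the $L^2H^{s+1}_T$ estimate — specifically, making sure one uses the tame/Moser product inequality (one factor in $L^\infty\supset H^s$, the other in $H^{s+1}$) rather than the plain algebra inequality in $H^{s+1}$, so that the $x_d$-integrability of the single $H^{s+1}$ factor is preserved; everything else is bookkeeping. I would cite the classical Moser estimates (e.g.\ as used elsewhere in the paper) for the $H^s$ and $H^{s+1}$ product bounds rather than reprove them.
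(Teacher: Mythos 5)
Your proof is correct and follows the same route the paper indicates: the paper's one-line argument invokes Sobolev embedding ($H^s(b\Omega_T)\hookrightarrow L^\infty$ for $s>(d+1)/2$) together with the fact that $L^\infty\cap H^s$ is a Banach algebra for $s\ge 0$ (i.e.\ the tame Moser product estimate), which is precisely the combination you unpack for the $CH^s_T$ and $L^2H^{s+1}_T$ pieces and then transfer to $\cE^s_T$ by taking the sup over $\xi_d$. Your observation that the tame estimate, not the plain $H^{s+1}$ algebra bound, is what preserves $x_d$-integrability of the product is exactly the point the paper's terse proof is silently relying on.
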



\begin{proof}
This is a consequence of Sobolev embedding and the fact that $L^\infty(b\Omega_T)\cap H^s(b\Omega_T)$ 
is a Banach algebra for $s\geq 0$.
\end{proof}

The proofs of the following two propositions follow directly from the definitions.

\begin{prop}\label{h1}
(a)\;For $s\geq 0$, let $\sigma(x,\theta)\in H^{s+1}_T$ and set $\tilde \sigma(x,\theta_0,\xi_d):=\sigma 
(x,\theta_0+\omega\xi_d)$, $\omega\in\R$. Then there holds
\begin{align*}
|\tilde\sigma|_{\cE^s_T}\leq C|\sigma|_{H^{s+1}_T}.
\end{align*}

(b) Set $\tilde \sigma_\eps(x,\theta_0):=\tilde\sigma(x,\theta_0,\frac{x_d}{\eps})$.  Then
\begin{align*}
|\tilde\sigma_\eps|_{E^s_T} \leq |\tilde\sigma|_{\cE^s_T}.
\end{align*}
\end{prop}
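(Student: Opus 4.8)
\textbf{Proof proposal for Proposition \ref{h1}.}

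The plan is to reduce both inequalities to elementary Fourier-analytic facts, since the statement is really just bookkeeping about how the $\xi_d$-dependence and the rescaling $\xi_d = x_d/\eps$ interact with the $E^s_T$ norms. For part (a), the key observation is that for fixed $\xi_d \ge 0$ the map $(x,\theta_0) \mapsto \sigma(x,\theta_0 + \omega \xi_d)$ is a pure translation in the $\theta_0$-variable by the constant $\omega \xi_d$, and translation is an isometry on each of the Sobolev-type norms $\langle\cdot\rangle_{s,T}$, $|\cdot|_{0,s+1,T}$, $|\cdot|_{\infty,s,T}$ that build up $|\cdot|_{E^s_T}$. Hence $|\tilde\sigma(\cdot,\cdot,\xi_d)|_{E^s_T}$ is \emph{independent} of $\xi_d$, and taking the supremum over $\xi_d \ge 0$ changes nothing. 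So the real content is a single estimate $|\sigma(x,\theta_0)|_{E^s_T} \le C |\sigma(x,\theta)|_{H^{s+1}_T}$, comparing the $E^s_T = CH^s_T \cap L^2 H^{s+1}_T$ norm of $\sigma$ — viewed as a function of $(x',x_d,\theta_0)$ with $x_d$ the distinguished variable — against the full Sobolev norm $H^{s+1}_T$ in all of $(x,\theta)$. I would prove this by noting $|\sigma|_{0,s+1,T} = |\sigma|_{L^2 H^{s+1}_T} \le |\sigma|_{H^{s+1}_T}$ trivially (the $L^2 H^{s+1}$ norm is just the $H^{s+1}$ norm with $x_d$-derivatives discarded), and $|\sigma|_{\infty,s,T} = \sup_{x_d} |\sigma(\cdot,x_d,\cdot)|_{H^s(b\Omega_T)} \le C |\sigma|_{H^{s+1}_T}$ by the one-dimensional trace/Sobolev embedding $H^1(\R_{x_d}; H^s) \hookrightarrow C(\R_{x_d}; H^s)$, which costs exactly one derivative in $x_d$ — this is why the hypothesis is $\sigma \in H^{s+1}_T$ rather than $H^s_T$. (Here $H^{s+1}_T$ is the plain Sobolev space on $\Omega_T$ from Definition \ref{c1}.)

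For part (b) the argument is even shorter. Setting $\tilde\sigma_\eps(x,\theta_0) := \tilde\sigma(x,\theta_0, x_d/\eps)$, I want to compare $|\tilde\sigma_\eps|_{E^s_T}$ with $\sup_{\xi_d \ge 0} |\tilde\sigma(\cdot,\cdot,\xi_d)|_{E^s_T} = |\tilde\sigma|_{\cE^s_T}$. The point is that the substitution $\xi_d = x_d/\eps$ does not introduce any $\eps$-dependent constants because in the definition of $E^s_T$ the $x_d$-variable enters only through the $L^2$ (for the $L^2 H^{s+1}$ part) or $\sup$ (for the $CH^s$ part) over $x_d \ge 0$, and \emph{no $x_d$-derivatives are taken}. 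Concretely, for the $CH^s_T$ part, $|\tilde\sigma_\eps|_{\infty,s,T} = \sup_{x_d \ge 0} |\tilde\sigma(\cdot, x_d, \cdot, x_d/\eps)|_{H^s(b\Omega_T)} \le \sup_{x_d \ge 0}\sup_{\xi_d \ge 0} |\tilde\sigma(\cdot,x_d,\cdot,\xi_d)|_{H^s(b\Omega_T)} \le \sup_{\xi_d \ge 0} |\tilde\sigma(\cdot,\cdot,\xi_d)|_{\infty,s,T}$; for the $L^2 H^{s+1}_T$ part, $|\tilde\sigma_\eps|_{0,s+1,T}^2 = \int_0^\infty |\tilde\sigma(\cdot,x_d,\cdot,x_d/\eps)|_{H^{s+1}(b\Omega_T)}^2\, dx_d \le \int_0^\infty \big(\sup_{\xi_d \ge 0} |\tilde\sigma(\cdot,x_d,\cdot,\xi_d)|_{H^{s+1}(b\Omega_T)}\big)^2 dx_d$, and since the integrand is bounded pointwise in $x_d$ by $\big(\sup_{\xi_d}|\tilde\sigma(\cdot,\cdot,\xi_d)|_{0,s+1,T}\big)$ after also integrating — here one uses that $\tilde\sigma(x,\theta_0,\xi_d) = \sigma(x,\theta_0+\omega\xi_d)$ so in fact the $x_d$-integral of the frozen-$\xi_d$ norm is itself finite and equals a fixed multiple of $|\sigma|_{H^{s+1}_T}^2$ independent of $\eps$. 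In the end all three pieces of $|\tilde\sigma_\eps|_{E^s_T}$ are dominated by the corresponding pieces of $|\tilde\sigma|_{\cE^s_T}$, giving the claimed bound with constant $1$.

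I do not expect a genuine obstacle here; the proposition is flagged in the text as one whose proof "follows directly from the definitions." The only place requiring a moment of care is making the second part rigorous for the $L^2 H^{s+1}$ component: one must check that $\int_0^\infty |\tilde\sigma(\cdot,x_d,\cdot,x_d/\eps)|_{H^{s+1}}^2\,dx_d$ is controlled \emph{uniformly in $\eps$}, and the clean way to see this is to exploit the explicit product structure $\tilde\sigma(x,\theta_0,\xi_d)=\sigma(x,\theta_0+\omega\xi_d)$ rather than treating $\tilde\sigma$ as an abstract element of $\cE^s_T$ — the translation-invariance of the $\theta_0$-Sobolev norms then makes $|\tilde\sigma(\cdot,x_d,\cdot,\xi_d)|_{H^{s+1}(b\Omega_T)}$ equal to $|\sigma(\cdot,x_d,\cdot)|_{H^{s+1}(b\Omega_T)}$, whose square integrates to $|\sigma|_{H^{s+1}_T}^2 < \infty$ with no reference to $\eps$. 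With that observation in hand both inequalities drop out immediately.
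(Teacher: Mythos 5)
Your proof is correct and is exactly the direct unpacking of the definitions that the paper has in mind (the paper states the proposition without proof, remarking only that it "follows directly from the definitions"). The two key observations you isolate — translation invariance of the $\theta_0$-Sobolev norms, which makes $|\tilde\sigma(\cdot,\cdot,\xi_d)|_{E^s_T}$ constant in $\xi_d$ and hence also handles the substitution $\xi_d = x_d/\eps$ with constant $1$, and the one-dimensional embedding $H^1(\R_{x_d};H^s)\hookrightarrow C(\R_{x_d};H^s)$ which costs the single extra derivative explaining the hypothesis $\sigma\in H^{s+1}_T$ — are precisely the content, and you correctly flag that for the $L^2H^{s+1}$ piece of (b) one must invoke the specific translate structure of $\tilde\sigma$ rather than treat it as an abstract element of $\cE^s_T$ (for a general $\tilde\sigma$ the claimed inequality would fail).
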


\begin{defn}[Moment-zero approximations]\label{h5}
Let $0<p \le 1$, and let $\phi\in C^\infty(\R)$ have $\mathrm{supp}\;\phi\subset \{m:|m|\leq 2\}$, with $\phi=1$ 
on $|m|\leq 1$. Set $\phi_p(m)=\phi(\frac{m}{p})$ and $\chi_p=1-\phi_p$. For $\sigma(x,\theta)\in L^2$, define 
the \emph{moment zero approximation} to $\sigma$, $\sigma_p(x,\theta)$ by
\begin{align}\label{h5a}
\hat\sigma_p(x,m):=\chi_p(m) \, \hat\sigma(x,m),
\end{align}
where the hat denotes the Fourier transform in $\theta$.
\end{defn}

\begin{prop}\label{h6}
For $s\geq 1$ suppose $\sigma(x,\theta)\in \Gamma^{s+2}$ and $\tilde \sigma(x,\theta_0,\xi_d):=\sigma 
(x,\theta_0+\omega\xi_d)$. Then
\begin{align*}
\begin{split}
&(a) \, \, |\tilde\sigma-\tilde \sigma_p|_{\cE^s_T}\leq C \, |\sigma|_{\Gamma^{s+2}_T} \, \sqrt{p},\\
&(b) \, \, |\partial_{x_d}\tilde\sigma-\partial_{x_d}\tilde \sigma_p|_{\cE^{s-1}_T}\leq C \, |\sigma|_{\Gamma^{s+2}_T} 
\, \sqrt{p}.
\end{split}
\end{align*}
\end{prop}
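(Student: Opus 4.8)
Proposition~\ref{h6} estimates the error between a profile and its moment-zero approximation in the $\cE^s_T$ norm, with a rate $\sqrt{p}$. The plan is to reduce everything to a one-dimensional Fourier estimate in the variable $\theta$, handled separately on the low-frequency region $|m| \le 2p$ (where $\chi_p$ differs from $1$) and the rest (where $\chi_p \equiv 1$ and the difference vanishes). Since $\tilde\sigma(x,\theta_0,\xi_d) = \sigma(x,\theta_0+\omega\xi_d)$ is just a translate of $\sigma$ in $\theta$, and the $\cE^s_T$ norm is a supremum over $\xi_d$ of translation-invariant $E^s_T$ norms, Proposition~\ref{h1}(a) lets me bound $|\tilde\sigma - \tilde\sigma_p|_{\cE^s_T}$ by $C\,|\sigma - \sigma_p|_{H^{s+1}_T}$, where the $H^{s+1}_T$ norm includes the tangential $x$-derivatives and one $\theta$-derivative but no $\theta$-weights. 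So the task becomes: show $|\sigma - \sigma_p|_{H^{s+1}_T} \le C\,|\sigma|_{\Gamma^{s+2}_T}\,\sqrt{p}$.

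First I would write, for each multi-index $\alpha$ in $x$ and each $j \le 1$ (number of $\theta$-derivatives), using Plancherel in $\theta$,
\begin{equation*}
|\partial_x^\alpha \partial_\theta^j(\sigma - \sigma_p)|_{L^2(\Omega_T)}^2 = c\int\!\!\int |m|^{2j}\,|\phi_p(m)|^2\,|\widehat{\partial_x^\alpha\sigma}(x,m)|^2\,{\rm d}m\,{\rm d}x,
\end{equation*}
where the $m$-integral is effectively over $|m| \le 2p$ because $\mathrm{supp}\,\phi_p \subset \{|m| \le 2p\}$. On that region $|m|^{2j} \le (2p)^{2j} \le (2p)^{2}$ for $j \ge 1$; for $j = 0$ I instead keep one power of $m$ in reserve. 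The key point is that $\sigma \in \Gamma^{s+2}_T$ carries a $\theta$-weight, i.e.\ $\theta\,\partial_x^\alpha\sigma \in L^2$, which on the Fourier side says $\partial_m(\widehat{\partial_x^\alpha\sigma}) \in L^2$ in $(x,m)$. Combined with $\widehat{\partial_x^\alpha\sigma}(x,0)=0$ — which holds because $\sigma$ has... wait, actually here I do \emph{not} get to assume moment zero for $\sigma$ itself; rather I only need the weaker fact that $\widehat{\partial_x^\alpha\sigma}(x,m)$ is continuous in $m$ with an $L^2_x$-valued derivative, so that $\sup_{|m|\le 2p}\|\widehat{\partial_x^\alpha\sigma}(\cdot,m)\|_{L^2_x}$ is controlled by $\|\widehat{\partial_x^\alpha\sigma}(\cdot,0)\|_{L^2_x} + \int_{|m|\le 2p}\|\partial_m\widehat{\partial_x^\alpha\sigma}(\cdot,m)\|_{L^2_x}\,{\rm d}m$. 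Then
\begin{equation*}
\int_{|m|\le 2p}\!\!\int |\widehat{\partial_x^\alpha\sigma}(x,m)|^2\,{\rm d}x\,{\rm d}m \le C\,p\,\big(\text{sup over }|m|\le 2p\big)^2 \le C\,p\,|\sigma|_{\Gamma^{s+2}_T}^2,
\end{equation*}
where the $\theta$-weight in $\Gamma^{s+2}_T$ supplies the $\partial_m$ bound; multiplying by $|m|^{2j} \le Cp^2$ for $j=1$ is harmless, and for $j=0$ the bare $\int_{|m|\le 2p}$ already gives the factor $p$. Summing over $|\alpha| \le s+1$ and $j \le 1$ yields $|\sigma - \sigma_p|_{H^{s+1}_T}^2 \le C\,p\,|\sigma|_{\Gamma^{s+2}_T}^2$, which is (a). For (b), I note $\partial_{x_d}\tilde\sigma - \partial_{x_d}\tilde\sigma_p = \partial_{x_d}(\sigma-\sigma_p)(x,\theta_0+\omega\xi_d)$ plus the $\omega$-multiple of the $\theta$-derivative (by the chain rule $\partial_{x_d}[\sigma(x,\theta_0+\omega\xi_d)] = (\partial_{x_d}\sigma + \omega\,\partial_\theta\sigma)(x,\theta_0+\omega\xi_d)$); both pieces are of the same form as in (a) but cost one extra tangential or $\theta$ derivative, hence land in $\cE^{s-1}_T$ and are bounded by $C|\sigma|_{\Gamma^{s+2}_T}\sqrt p$ by the identical argument with $s$ replaced by $s-1$ (and the extra $\theta$-derivative still fits inside the $s+2$ budget since $(s-1)+1+1 = s+1 \le s+2$).

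The main obstacle is the endpoint low-frequency estimate: making precise the claim that $\sup_{|m|\le 2p}\|\widehat{\partial_x^\alpha\sigma}(\cdot,m)\|_{L^2_x}$ is finite and bounded by $|\sigma|_{\Gamma^{s+2}_T}$. This requires the $\theta$-weight of order at least two in the definition of $\Gamma^{s+2}_T$ (so that $m$-derivatives of $\hat\sigma$ are themselves $L^2$ and continuous in $m$), which is why the hypothesis is stated with $s+2$ rather than $s+1$ — one weight to control $\partial_m\hat\sigma$ and one more to ensure that derivative is a continuous $L^2_x$-valued function of $m$ via a further Sobolev embedding in $m$. I would handle this by interpolation/Sobolev embedding in the $m$-variable on a fixed interval, being careful that constants do not degenerate as $p \to 0$ since the interval $[-2p,2p]$ shrinks (embedding constants on shrinking intervals only improve). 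Everything else — translation invariance, Plancherel, summation over multi-indices — is routine bookkeeping that the paper explicitly flags as following "directly from the definitions."
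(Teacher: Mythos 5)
Your approach to part (a) is essentially the paper's: pass to Fourier in $\theta$, localize to the low-frequency band $|m|\le 2p$ where $1-\chi_p$ is supported, bound each $|m|^k$ by a constant there, and use the $\theta$-weight in $\Gamma^{s+2}_T$ to obtain a uniform-in-$m$ bound on $\hat\sigma(\cdot,m)$ in $H^{s+1}_x$ so that integrating over $|m|\le 2p$ yields the factor $p$. Where the paper organizes this as the embedding chain $\hat\sigma\in H^{s+2}(x,m)\subset H^1(m,H^{s+1}(x))\subset L^\infty(m,H^{s+1}(x))$ and then writes $\int_{|m|\le 2p}|\hat\sigma(\cdot,m)|^2_{H^{s+1}(x)}\,{\rm d}m\le C\,|\sigma|^2_{\Gamma^{s+2}_T}\,(2p)$, you redo the same estimate via the fundamental theorem of calculus in $m$; these are equivalent. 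One correction to your bookkeeping: only a single $\theta$-weight (one $m$-derivative) is needed, since $H^1(\R_m)$ already embeds in $L^\infty(\R_m)$ with an $L^2_x$- (or $H^{s+1}_x$-)valued range. Your remark that two weights are required would, if taken literally, overshoot the regularity budget — for $|\alpha|=s+1$ there is room only for one extra $m$-derivative inside $\Gamma^{s+2}_T$, and one is enough.

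There is a genuine (though harmless) error in your treatment of part (b). You write
$\partial_{x_d}[\sigma(x,\theta_0+\omega\xi_d)]=(\partial_{x_d}\sigma+\omega\,\partial_\theta\sigma)(x,\theta_0+\omega\xi_d)$,
but in $\tilde\sigma(x,\theta_0,\xi_d)$ the variable $\xi_d$ is an independent coordinate, held fixed when taking $\partial_{x_d}$, so there is no chain-rule term: simply $\partial_{x_d}\tilde\sigma=(\partial_{x_d}\sigma)(x,\theta_0+\omega\xi_d)$. The chain rule you invoke belongs to $\tilde\sigma_\eps(x,\theta_0):=\tilde\sigma(x,\theta_0,x_d/\eps)$, where the substitution $\xi_d=x_d/\eps$ does produce a term $(\omega/\eps)\,\partial_\theta\sigma$, with the $1/\eps$ you omitted. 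Fortunately your argument is conservative: the spurious $\omega\partial_\theta\sigma$ piece would be estimable by the same method anyway, and the term that is actually present is a subset of what you considered. The paper's ``the proof of (b) is essentially the same'' just means applying the part-(a) estimate with $s-1$ in place of $s$ to $\partial_{x_d}\sigma\in\Gamma^{s+1}_T$, and your computation, stripped of the phantom term, reduces to that.
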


\begin{proof}
\textbf{1.) } Recall that $\sigma \in \Gamma^s_T \Leftrightarrow \theta^{\beta_1} \partial_x^{\beta_2} 
\partial_\theta^{\beta_3} \sigma(x,\theta) \in L^2(x,\theta)$ for $|\beta|\leq s\Leftrightarrow \partial_m^{\beta_1} 
\partial_x^{\beta_2} m^{\beta_3} \hat\sigma(x,m) \in L^2(x,m)$ for $|\beta|\leq s$. It follows that
\begin{align}\label{h7}
\sigma\in\Gamma^{s+2}_T\Rightarrow \hat \sigma(x,m)\in H^{s+2}_T(x,m)\subset H^1(m,H^{s+1}(x)) 
\subset L^\infty(m,H^{s+1}(x)).
\end{align}


\textbf{2.) }We have
\begin{align*}
\begin{split}
&|\sigma-\sigma_p|^2_{H^{s+1}_T} \sim \sum_{|\alpha|+k\leq s+1} \Big| \partial_x^\alpha m^k \hat\sigma(x,m) 
(1-\chi_p(m)) \Big|^2_{L^2(x,m)} \\
&=\sum_{|\alpha|+k\leq s+1} \int_{|m|\leq 2p} \int |\partial_x^\alpha m^k\hat\sigma(x,m)(1-\chi_p(m))|^2 \, {\rm d}x 
\, {\rm d}m \\
&\leq C \, \int_{|m|\leq 2p} |\hat\sigma(x,m)|^2_{H^{s+1}(x)} \, {\rm d}m \leq C \, |\sigma|^2_{\Gamma^{s+2}_T} (2p),
\end{split}
\end{align*}
where the last inequality uses \eqref{h7}. The conclusion now follows from Propostion \ref{h1}.

\textbf{3.) }The proof of part $(b)$ is essentially the same.
\end{proof}

\begin{prop}\label{h8}
Let $\sigma(x,\theta)\in H^s_T$, $s\geq 0$, and let $\sigma_p$ be a moment-zero approximation to $\sigma$. We have
\begin{align*}
\begin{split}
&(a) \, \, |\sigma_p|_{H^s_T} \leq C \, |\sigma|_{H^s_T} \, ,\\
&(b) \, \, \text{If }\sigma\in\Gamma^s_T, \text{ then }|\sigma_p|_{\Gamma^s_T} \leq \dfrac{C}{p^s} \, 
|\sigma|_{\Gamma^s_T}.
\end{split}
\end{align*}
\end{prop}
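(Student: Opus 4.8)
The plan is to work entirely on the Fourier side in $\theta$, since both the $H^s_T$ and $\Gamma^s_T$ norms have clean characterizations there (as recalled in Step 1 of the proof of Proposition \ref{h6}): $\sigma\in H^s_T$ iff $\partial_x^\alpha m^k\hat\sigma(x,m)\in L^2(x,m)$ for $|\alpha|+k\le s$, and $\sigma\in\Gamma^s_T$ iff additionally all $\partial_m^j$ derivatives up to total order $s$ are square integrable, i.e. $\partial_m^{j}\partial_x^\alpha m^k\hat\sigma\in L^2(x,m)$ for $j+|\alpha|+k\le s$. Since $\hat\sigma_p(x,m)=\chi_p(m)\hat\sigma(x,m)$, everything reduces to controlling how the cutoff $\chi_p$ interacts with these norms.

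For part (a), the key observation is simply that $0\le\chi_p\le 1$ pointwise and $\chi_p$ depends only on $m$, not on $x$. Hence for each multi-index with $|\alpha|+k\le s$ we have $|\partial_x^\alpha m^k\hat\sigma_p(x,m)|=\chi_p(m)|\partial_x^\alpha m^k\hat\sigma(x,m)|\le|\partial_x^\alpha m^k\hat\sigma(x,m)|$, so after squaring, summing, and integrating in $(x,m)$ we get $|\sigma_p|_{H^s_T}\le|\sigma|_{H^s_T}$. This is immediate and requires no real work; the constant $C$ can be taken to be $1$ (or absorbs the equivalence constants between the Fourier-side norm and $|\cdot|_{H^s_T}$).

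For part (b), the difference is that now we must also differentiate in $m$, and each $\partial_m$ landing on $\chi_p(m)=1-\phi(m/p)$ produces a factor of $1/p$: indeed $\partial_m^j\chi_p(m)=-p^{-j}\phi^{(j)}(m/p)$, which is supported in $\{|m|\le 2p\}$ and bounded by $C_j p^{-j}$. The plan is to apply the Leibniz rule: for $j+|\alpha|+k\le s$,
\begin{equation*}
\partial_m^j\big(\chi_p(m)\,m^k\partial_x^\alpha\hat\sigma(x,m)\big)=\sum_{j_1+j_2+j_3=j}\binom{j}{j_1,j_2,j_3}\big(\partial_m^{j_1}\chi_p(m)\big)\big(\partial_m^{j_2}m^k\big)\big(\partial_m^{j_3}\partial_x^\alpha\hat\sigma(x,m)\big).
\end{equation*}
Each term has a factor $\partial_m^{j_1}\chi_p$ bounded by $C p^{-j_1}\ge C p^{-s}$ in $L^\infty$ (using $p\le 1$), while $\partial_m^{j_2}m^k$ is a polynomial of degree $k-j_2\le k\le s$ and $\partial_m^{j_3}\partial_x^\alpha\hat\sigma$ has $m$-order $j_3$ plus $x$-order $|\alpha|$ summing to at most $s$, so it is controlled in $L^2(x,m)$ by $|\sigma|_{\Gamma^s_T}$. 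Taking $L^2(x,m)$ norms, summing over the finitely many terms and multi-indices, and pulling out the worst power $p^{-s}$ gives $|\sigma_p|_{\Gamma^s_T}\le C p^{-s}|\sigma|_{\Gamma^s_T}$.

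The only mild subtlety — and the closest thing to an obstacle — is bookkeeping: one must check that in every Leibniz term the combined number of $m$-derivatives hitting $\hat\sigma$, together with the $x$-derivatives and the effective polynomial weight, never exceeds $s$, so that the factor involving $\hat\sigma$ is genuinely bounded by the $\Gamma^s_T$ norm. This is automatic from $j_2\le k$ (the polynomial weight $m^{k-j_2}$ has degree $\le k\le s-j_1-|\alpha|-j_3\le s$) and $j_3+|\alpha|\le j+|\alpha|\le s$. Collecting the estimates over the finite index set yields the claim with a constant depending only on $s$, $d$, and the fixed cutoff $\phi$.
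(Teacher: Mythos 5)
Your proof is correct and takes essentially the same approach as the paper: pass to the Fourier side in $\theta$, observe that the cutoff $\chi_p$ is bounded and $x$-independent (giving part (a) immediately with no derivative loss), and apply the Leibniz rule for part (b), noting that each $\partial_m$ landing on $\chi_p$ costs a factor $p^{-1}$ while the remaining factors are controlled by the $\Gamma^s_T$ norm. The paper's own proof is just a terser version of your argument, stating the key estimate $|\partial_m^{\beta_1}\chi_p|\le C/p^{\beta_1}$ and leaving the Leibniz bookkeeping implicit.
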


\begin{proof}
Part $(b)$ follows from \eqref{h5a}. Indeed, for $|\beta|\leq s$, there holds
\begin{align*}
|\partial_m^{\beta_1}\partial_x^{\beta_2}m^{\beta_3}\hat\sigma_p(x,m)|_{L^2_T} \leq \frac{C}{p^{\beta_1}} \, 
|\sigma|_{\Gamma^s_T} \, ,
\end{align*}
since $|\partial_m^{\beta_1}\chi_p|\leq C/p^{\beta_1}$. Taking $\beta_1=0$ we similarly obtain part $(a)$.
\end{proof}

Next we consider primitives of moment-zero approximations.

\begin{prop}\label{h9}
Let $\sigma(x,\theta)\in\Gamma^s_T$, $s>\frac{d}{2}+3$. Let $\sigma_p^*(x,\theta)$ be the unique primitive of 
$\sigma_p$ in $\theta$ that decays to zero as $|\theta|\to \infty$. Then $\sigma^*_p\in\Gamma^s_T$ with moment 
zero, and
\begin{align}\label{h10}
\begin{split}
&(a) \, \, |\sigma^*_p|_{H^s_T}\leq C \, \frac{|\sigma_p|_{H^s_T}}{p}\\
&(b) \, \, |\sigma^*_p|_{\Gamma^s_T}\leq C \, \frac{|\sigma_p|_{\Gamma^s_T}}{p^{s+1}}.
\end{split}
\end{align}
\end{prop}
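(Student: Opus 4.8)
The plan is to do everything on the Fourier side in $\theta$, where passing to the decaying primitive is just division by $im$. The essential point is that $\widehat{\sigma_p}(x,\cdot)=\chi_p\,\hat\sigma(x,\cdot)$, and every $m$-derivative of it, is supported in $\{|m|\ge p\}$, because $\chi_p=1-\phi_p$ vanishes on $\{|m|\le p\}$; consequently the factor $1/m$ is bounded by $1/p$ there and causes no small-divisor problem, which is exactly why the moment-zero approximation was introduced. For the setup: since $\sigma\in\Gamma^s_T$ with $s>\frac d2+3$, Proposition \ref{h8}(b) gives $\sigma_p\in\Gamma^s_T$, and $\sigma_p$ has moment zero because $\widehat{\sigma_p}(x,0)=\chi_p(0)\hat\sigma(x,0)=0$. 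Hence Proposition \ref{m10a} applies to $\sigma_p$: the unique $\theta$-primitive $\sigma_p^*$ of $\sigma_p$ decaying as $|\theta|\to\infty$ exists, lies a priori in $\Gamma^{s-1}_T$, and satisfies $\widehat{\sigma_p^*}(x,m)=\tfrac{1}{im}\,\widehat{\sigma_p}(x,m)$. Since the right-hand side vanishes in a neighbourhood of $m=0$, we get $\int_\R\sigma_p^*(x,\theta)\,{\rm d}\theta=\widehat{\sigma_p^*}(x,0)=0$, the claimed moment-zero property; and $\widehat{\sigma_p^*}(x,\cdot)$, like $\widehat{\sigma_p}(x,\cdot)$, is supported in $\{|m|\ge p\}$.

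For part (a) I would use the Fourier description $|\tau|_{H^s_T}\sim\sum_{|\alpha|+k\le s}|\partial_x^\alpha m^k\hat\tau|_{L^2(x,m)}$. Since $1/(im)$ depends only on $m$, one has $\partial_x^\alpha m^k\widehat{\sigma_p^*}=\tfrac{1}{im}\,\partial_x^\alpha m^k\widehat{\sigma_p}$, and $|1/(im)|\le 1/p$ on the support of $\widehat{\sigma_p}$, so $|\partial_x^\alpha m^k\widehat{\sigma_p^*}|_{L^2}\le p^{-1}|\partial_x^\alpha m^k\widehat{\sigma_p}|_{L^2}$. Summing over $|\alpha|+k\le s$ gives (a).

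For part (b) I would repeat the computation for the $\Gamma^s_T$-norm $|\tau|_{\Gamma^s_T}\sim\sum_{\beta_1+\beta_2+\beta_3\le s}|\partial_m^{\beta_1}\partial_x^{\beta_2}m^{\beta_3}\hat\tau|_{L^2}$; the only new feature is the $\partial_m^{\beta_1}$ derivatives. Writing $\partial_m^{\beta_1}\partial_x^{\beta_2}m^{\beta_3}\widehat{\sigma_p^*}=\partial_m^{\beta_1}\bigl(\tfrac{1}{im}\,\partial_x^{\beta_2}(m^{\beta_3}\widehat{\sigma_p})\bigr)$ and expanding twice by Leibniz (first distributing $\partial_m^{\beta_1}$ between $1/(im)$ and the rest, then between $m^{\beta_3}$ and $\widehat{\sigma_p}$), each resulting term is a constant times $m^{-(j+1)}\cdot m^{a}\partial_m^{b}\partial_x^{\beta_2}\widehat{\sigma_p}$ with $0\le j\le\beta_1$ and $a+b+\beta_2\le\beta_1+\beta_2+\beta_3\le s$. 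On $\{|m|\ge p\}$ we have $|m^{-(j+1)}|\le p^{-(j+1)}\le p^{-(s+1)}$ (using $p\le1$ and $j\le s$), while $|m^{a}\partial_m^{b}\partial_x^{\beta_2}\widehat{\sigma_p}|_{L^2}\le C\,|\sigma_p|_{\Gamma^s_T}$ after commuting $\partial_m^{b}$ past $m^{a}$ at the cost of further lower-order terms of the same type. Hence each term is $\le C\,p^{-(s+1)}|\sigma_p|_{\Gamma^s_T}$, and summing over $|\beta|\le s$ proves (b); in particular $\sigma_p^*\in\Gamma^s_T$, which is precisely the gain over Proposition \ref{m10a} that the cutoff buys us.

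The only step needing care is the bookkeeping in part (b): one must check that after all the Leibniz differentiations the order $a+b+\beta_2$ of the factor carrying $\widehat{\sigma_p}$ never exceeds $s$ — each $\partial_m$ hitting $m^{\beta_3}$ lowers the power of $m$ by one and raises the derivative count by at most one, so the total does not grow — and that the negative powers of $m$ are harmless because $\widehat{\sigma_p}$ and all its $m$-derivatives are supported in $\{|m|\ge p\}$. Once this is in place both estimates are immediate, and since $\chi_p$ acts only in the $\theta$-frequency variable everything is uniform in $T$ and preserves vanishing in $t<0$.
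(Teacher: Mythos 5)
Your proof is correct and follows essentially the same route as the paper's: pass to the Fourier side where $\widehat{\sigma_p^*}=\chi_p\hat\sigma/(im)$, note the support constraint $|m|\ge p$, and bound the resulting multiplier — using $|\chi_p/m|\le 1/p$ for part (a) and (in the paper's phrasing) $|\partial_m^s(\chi_p/m)|\le C/p^{s+1}$ for part (b). The only cosmetic difference is that you write the multiplier as $1/(im)$ acting on $\widehat{\sigma_p}$ and carry out the Leibniz bookkeeping explicitly, whereas the paper treats $\chi_p/m$ as a single symbol acting on $\hat\sigma$; the two are the same computation.
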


\begin{proof}
\textbf{1.) }Since $\sigma_p(x,\theta)\in\Gamma^s_T$, $s>\frac{d}{2}+3$, we have $|\sigma_p(x,\theta)|\leq C \, 
\langle\theta\rangle^{-2}$ for all $(x,\theta)$. The unique $\theta$-primitive of $\sigma_p$ decaying to zero as 
$|\theta|\to \infty$ is thus
\begin{align*}
\sigma^*_p(x,\theta)=-\int^{+\infty}_\theta \sigma_p(x,s) \, {\rm d}s =\int^\theta_{-\infty}\sigma_p(x,s) \, {\rm d}s.
\end{align*}
Moreover, we have
\begin{align}\label{h11}
\partial_\theta\sigma^*_p=\sigma_p \Leftrightarrow im \, \widehat{\sigma^*_p}=\widehat{\sigma_p}=\chi_p \, 
\hat\sigma, \text{ so }\widehat{\sigma^*_p}=\frac{\chi_p\, \hat\sigma}{im}.
\end{align}
Since $|m|\geq p$ on the support of $\chi_p$, this gives
\begin{align*}
|\widehat{\sigma^*_p}(x,m)|\leq C \, \frac{|\hat\sigma(x,m)|}{p}
\end{align*}
and \eqref{h10}(a) follows directly from this. From \eqref{h11} we also obtain $\widehat{\sigma^*_p}(x,0)=0$.

\textbf{2.) }The proof of \eqref{h10}(b) is almost the same, except that now one uses
\begin{align*}
\left|\partial_m^s\left(\frac{\chi_p}{m}\right)\right|\leq \frac{C}{p^{s+1}}.
\end{align*}
\end{proof}

\begin{prop}\label{h11a}
Let $\sigma(x,\theta)$ and $\tau(x,\theta)$ belong to $H^s_T$, $s>\frac{d+2}{2}$. Then
\begin{align}\label{h11b}
|\sigma \, \tau -(\sigma \, \tau)_p|_{H^s_T}\leq C \, |\sigma|_{H^s_T} \, |\tau|_{H^s_T} \, \sqrt{p}.
\end{align}
\end{prop}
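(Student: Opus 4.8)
Write $\phi_p(D_\theta)$ for the Fourier multiplier in $\theta$ with symbol $\phi_p$. Since $\chi_p=1-\phi_p$, equation \eqref{h5a} gives $\sigma\tau-(\sigma\tau)_p=\phi_p(D_\theta)(\sigma\tau)$, a function whose $\theta$-frequencies are confined to $\{|m|\le 2p\}$. The whole point is that the convolution kernel of $\phi_p(D_\theta)$, namely $K_p(\theta)=p\,\check\phi(p\theta)$ where $\check\phi$ is the inverse Fourier transform of $\phi$, satisfies $\|K_p\|_{L^2(\R)}=\sqrt p\,\|\check\phi\|_{L^2(\R)}$; this is where the factor $\sqrt p$ will come from, at the cost of having to place the \emph{product} of the two factors in $L^1$ in $\theta$ rather than $L^2$.

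First I would reduce matters to $x$-derivatives only. Because $\phi_p(D_\theta)(\sigma\tau)$ has $\theta$-frequency at most $2p\le 2$, each application of $\partial_\theta$ costs only a bounded constant, so by the definition of the $H^s_T$ norm (Definition \ref{c1}) and Plancherel in $\theta$, together with the Leibniz rule,
\[
|\sigma\tau-(\sigma\tau)_p|_{H^s_T}\le C\sum_{|\alpha|\le s}\big|\phi_p(D_\theta)\,\partial_x^\alpha(\sigma\tau)\big|_{L^2(\Omega_T)}\le C\sum_{|\alpha|\le s}\ \sum_{\beta\le\alpha}\big|\phi_p(D_\theta)\big(\partial_x^\beta\sigma\cdot\partial_x^{\alpha-\beta}\tau\big)\big|_{L^2(\Omega_T)},
\]
the sums running over $x$-multi-indices (here $\phi_p(D_\theta)$ commutes with $\partial_x$). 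Thus it suffices to bound $\big|\phi_p(D_\theta)(uv)\big|_{L^2(\Omega_T)}$ for $u=\partial_x^\beta\sigma$, $v=\partial_x^{\alpha-\beta}\tau$ with $|\beta|+|\alpha-\beta|\le s$.

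Next, for fixed $x$, Young's inequality in $\theta$ (the convolution $L^2\ast L^1\subset L^2$) followed by Cauchy--Schwarz in $\theta$ gives
\[
\|\phi_p(D_\theta)(uv)(x,\cdot)\|_{L^2_\theta}\le \|K_p\|_{L^2_\theta}\,\|u(x,\cdot)\,v(x,\cdot)\|_{L^1_\theta}\le C\sqrt p\ \|u(x,\cdot)\|_{L^2_\theta}\,\|v(x,\cdot)\|_{L^2_\theta},
\]
so squaring and integrating in $x$,
\[
\big|\phi_p(D_\theta)(uv)\big|^2_{L^2(\Omega_T)}\le Cp\int \|u(x,\cdot)\|^2_{L^2_\theta}\,\|v(x,\cdot)\|^2_{L^2_\theta}\,{\rm d}x.
\]
Viewing $\sigma$ and $\tau$ as $L^2_\theta$-valued functions of $x\in\R^{d+1}$, one has $u\in H^{s-|\beta|}$ and $v\in H^{s-|\alpha-\beta|}$ in $x$ (with values in $L^2_\theta$), and since $(s-|\beta|)+(s-|\alpha-\beta|)=2s-|\alpha|\ge s>(d+1)/2$, a H\"older argument using the vector-valued Sobolev embeddings $H^{s'}(\R^{d+1};L^2_\theta)\hookrightarrow L^q(\R^{d+1};L^2_\theta)$ bounds the last integral by $C\,|\sigma|^2_{H^s_T}\,|\tau|^2_{H^s_T}$. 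Chaining the three displays yields \eqref{h11b}.

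The only step requiring care — and the main ``obstacle'', although it is entirely routine — is the choice of H\"older exponents in that last estimate: when $|\beta|$ and $|\alpha-\beta|$ are both of intermediate size, neither factor lies in $L^\infty_x$, so one must split the exponents according to Sobolev scaling, the strict inequality $2s-|\alpha|>(d+1)/2$ keeping one off the endpoint. This is exactly the computation that underlies the Banach-algebra property of $H^s_T$ used in Proposition \ref{d23a}, so I would invoke it rather than grind through the exponent bookkeeping.
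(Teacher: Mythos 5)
Your argument is correct, but it takes a genuinely different route from the paper's.  The paper works entirely on the Fourier side in $\theta$: observing that $1-\chi_p$ is supported in $\{|m|\le 2p\}$ where the weights $m^k$ are harmless, it pulls out the $H^s(x)$ norm, uses the Banach-algebra property of $H^s$ in the $x$-variable \emph{inside} the $m$-convolution (so $|\widehat{\sigma\tau}(\cdot,m)|_{H^s_x}\le(\,|\hat\sigma|_{H^s_x}\!*_m|\hat\tau|_{H^s_x})(m)$), bounds the resulting convolution in $L^\infty_m$ by Cauchy--Schwarz, and then the factor $\sqrt p$ comes from $\int_{|m|\le 2p}{\rm d}m=4p$.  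You instead go to physical space: you compute the $L^2$-norm of the kernel $K_p=p\,\check\phi(p\,\cdot)$ (which is where your $\sqrt p$ comes from), use Young $L^1*L^2\subset L^2$ plus Cauchy--Schwarz in $\theta$, and then a Leibniz expansion in $x$ together with bilinear Sobolev/H\"older estimates for the cross terms.  Both derivations are sound and extract $\sqrt p$ from essentially the same place (the $L^2$-smallness of the low-frequency cutoff), but the paper's version is shorter because it invokes the algebra property of $H^s(\R^{d+1}_x)$ directly, avoiding any Leibniz expansion and the ensuing exponent bookkeeping that you yourself flag as the only delicate point.  One small remark: the bilinear estimate you defer to is the one underlying the Banach algebra property of $H^s(\R^{d+1})$ for scalar-valued functions, not Proposition \ref{d23a} (which concerns the spaces $E^s_T$, $\cE^s_T$); citing the former would be cleaner, or better still, using it from the start as the paper does would shorten your argument considerably.
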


\begin{proof}
With $*$ denoting convolution in $m$ we have
\begin{align*}
|\sigma \, \tau -(\sigma \, \tau)_p|^2_{H^s_T} &\sim \sum_{|\alpha|+k\leq s+1} |\partial_x^\alpha \, m^k \, 
(\hat\sigma *\hat\tau)(x,m) \, (1-\chi_p(m))|^2_{L^2(x,m)} \\
&\le C\, \int_{|m|\leq 2p} |(\hat\sigma *\hat\tau)(x,m)|^2_{H^s(x)} \, {\rm d}m \\
&\leq C \, \int_{|m|\leq 2p} 
\left( \int |\hat\sigma(x,m-m_1)|_{H^s(x)} \, |\hat\tau(x,m_1)|_{H^s(x)} \, {\rm d}m_1\right)^2 \, {\rm d}m \\
&\le C\, p \, |\hat\sigma(x,m)|^2_{L^2(m,H^s(x))} \, |\hat\tau(x,m)|^2_{L^2(m,H^s(x))} \leq 
C\, p \, |\sigma|^2_{H^s_T} \, |\tau|^2_{H^s_T}.
\end{align*}
\end{proof}

\begin{prop}\label{h11c}
Let $\sigma(x,\theta)$ and $\tau(x,\theta)$ belong to $\Gamma^s_T$, $s>\frac{d}{2}+3$ and let $(\sigma\tau)_p^*$ 
denote the unique primitive of $(\sigma\tau)_p$ that decays to zero as $|\theta|\to \infty$.  Then
\begin{align*}
|(\sigma \, \tau)_p^*|_{H^s_T} \leq C\, \dfrac{|\sigma|_{H^s_T} \, |\tau|_{H^s_T}}{p}.
\end{align*}
\end{prop}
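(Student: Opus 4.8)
The estimate is a short combination of facts already in hand. The plan is first to reduce to the case of a single moment-zero function, then to invoke the $H^s_T$ bounds for moment-zero approximations and their primitives, and finally to use that $H^s_T$ is a Banach algebra.

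First I would check that $(\sigma\tau)_p^*$ is meaningful and identify it on the Fourier side. Since $s>\frac{d}{2}+3$, each of $\sigma,\tau$ decays pointwise like $\langle\theta\rangle^{-2}$ uniformly in $x$ (this is exactly the decay used in Step~1 of the proof of Proposition~\ref{h9}), and a Leibniz-rule computation together with Sobolev embedding shows that $\sigma\tau\in\Gamma^s_T$ as well. Hence by Proposition~\ref{h8}(b) the moment-zero approximation $(\sigma\tau)_p$ lies in $\Gamma^s_T$, still decays like $\langle\theta\rangle^{-2}$, and has vanishing moment because $\chi_p(0)=0$; therefore, repeating verbatim Step~1 of the proof of Proposition~\ref{h9} with $\sigma\tau$ in place of $\sigma$, the unique $\theta$-primitive of $(\sigma\tau)_p$ decaying to zero at $\pm\infty$ is well defined and satisfies $\widehat{(\sigma\tau)_p^*}(x,m)=\chi_p(m)\,\widehat{\sigma\tau}(x,m)/(im)$.

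Now I would chain the three available estimates. Applying Proposition~\ref{h9}(a) with $\sigma\tau$ in place of $\sigma$ gives $|(\sigma\tau)_p^*|_{H^s_T}\le C\,|(\sigma\tau)_p|_{H^s_T}/p$; Proposition~\ref{h8}(a) gives $|(\sigma\tau)_p|_{H^s_T}\le C\,|\sigma\tau|_{H^s_T}$; and since $s>\frac{d}{2}+3>\frac{d+2}{2}$, the space $H^s_T$ is a Banach algebra (it is $L^\infty(b\Omega_T)\cap H^s(b\Omega_T)$-type reasoning as in the proof of Proposition~\ref{h11a}), so $|\sigma\tau|_{H^s_T}\le C\,|\sigma|_{H^s_T}\,|\tau|_{H^s_T}$. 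Composing these three inequalities yields the claim. Equivalently, one can argue directly on the Fourier side: because the $H^s_T$ norm involves only $\langle m\rangle$-weights and $x$-derivatives and no $m$-derivatives, no derivative ever falls on the singular symbol $\chi_p(m)/m$, and since $|m|\ge p$ on $\mathrm{supp}\,\chi_p$ one has $\langle m\rangle^s\chi_p(m)/|m|\le C\langle m\rangle^s/p$, giving $|(\sigma\tau)_p^*|_{H^s_T}\le (C/p)\,|\sigma\tau|_{H^s_T}$ at once, after which the algebra property finishes the proof.

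There is essentially no obstacle here; the only point deserving a word of care is precisely why one loses only a single power of $p$ (in contrast with the $p^{-s-1}$ loss in Proposition~\ref{h9}(b)): it is because the target norm $H^s_T$ carries no $\theta$-weights, so one never differentiates the frequency cutoff $\chi_p$. One should also state at the outset that $\sigma\tau$ decays fast enough for its moment-zero approximation and the decaying primitive to be defined, which is immediate from $s>\frac{d}{2}+3$.
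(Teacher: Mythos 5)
Your proof is correct and follows essentially the same route as the paper: chain Proposition~\ref{h9}(a), Proposition~\ref{h8}(a), and the Banach algebra property of $H^s_T$, together with the $\Gamma^s_T$-algebra property to make sense of the moment-zero approximation of the product. The extra remarks (the direct Fourier-side argument, and the observation about why only one power of $p$ is lost since no $m$-derivatives hit $\chi_p$) are accurate but not part of the paper's proof, which simply invokes the three lemmas.
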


\begin{proof}
Since $\Gamma^s_T$ is a Banach algebra, Proposition \ref{h9} implies $(\sigma\tau)_p^*\in\Gamma^s_T$ with 
moment zero and
\begin{align*}
|(\sigma \, \tau)_p^*|_{H^s_T} \leq C \, \frac{|(\sigma \, \tau)_p|_{H^s_T}}{p}.
\end{align*}
Since $H^s_T$ is a Banach algebra, the result now follows from Proposition \ref{h8}(a).
\end{proof}

\subsection{Proof of Theorem \ref{main}}
\label{proofmain}

\paragraph{Choice of $a$ and $\tilde a$.} The conditions on the boundary datum $G(x',\theta_0)$ are 
different in Theorems \ref{k8b} and \ref{m14ab} We need to choose $a$, $\tilde a$, and $G(x',\theta_0)$ 
so that both Theorems apply simultaneously. We also need $a$ large enough so that we can apply 
Proposition \ref{i33a} in the step \eqref{m26} of the error analysis below. These conditions are  met 
if we take $a$ and $\tilde{a}$ as in \eqref{a11z}, and choose $G\in b\Gamma^{\tilde a}_T$ such that 
$\langle G\rangle_{b\Gamma^{a+2}_T}$ is small enough. Applying Theorems \ref{k8b} and  \ref{m14ab}, 
we now have for $0<\eps\leq \eps_0$ an exact solution $U_\eps(x,\theta_0) \in E^{a-1}_T$ to the singular 
system \eqref{15p} and profiles $\cU^0\in \tilde\Gamma^{a+1}_T$, $\cU^1\in  \tilde\Gamma^{a-2}_T$ 
satisfying the profile equations \eqref{14a} and \eqref{14b}.

\paragraph{Approximation.} Fix $0<p \le 1$, we use Proposition \ref{h6} to choose moment zero approximations 
$\cU^0_p$ and $\cU^1_p$ such that
\begin{align}\label{m10}
\begin{split}
&|\cU^0-\cU^0_p|_{\cE^{a-1}_T} \le C\, \sqrt{p},\quad 
|\partial_{x_d}\cU^0-\partial_{x_d}\cU^0_p|_{\cE^{a-2}_T} \le C\, \sqrt{p},\\
&|\cU^1-\cU^1_p|_{\cE^{a-4}_T} \le C\, \sqrt{p}, \quad 
|\partial_{x_d}\cU^1-\partial_{x_d}\cU^1_p|_{\cE^{a-5}_T} \le C\, \sqrt{p}.
\end{split}
\end{align}
Having made these choices, we can now state the main result of this section, which yields the final 
convergence result of Theorem \ref{main} as an immediate corollary.

\begin{theo}\label{main2}
We make the same Assumptions as in Theorem \ref{main} and let $a$ and $\tilde a$ be chosen as in \eqref{a11z}. 
Consider the leading order approximate solution to the singular semilinear system \eqref{15p} given by
\begin{align*}
\cU^0_\eps(x,\theta_0):=\cU^0 \big( x,\theta_0,\frac{x_d}{\eps} \big),
\end{align*}
and let $U_\eps(x,\theta_0)\in E^{a-1}_T$ be the exact solution to \eqref{15p} just obtained. Then
\begin{align*}
|U_\eps(x,\theta_0)-\cU^0_\eps(x,\theta_0)|_{E^{a-6}_T}\leq C \, \eps^{\frac{1}{2M_1+3}},\quad 
\text{\rm where } M_1=\left[\frac{d}{2}+3\right].
\end{align*}
\end{theo}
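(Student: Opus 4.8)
The strategy is to apply the tame estimate of Proposition \ref{i33a} to the singular system \eqref{m22} satisfied by the difference $W_\eps := U_\eps - \cU_\eps$, where $\cU_\eps = \cU^0_\eps + \eps\,\cU^1_\eps + \eps^2\,\cU^2_{p,\eps}$ is the corrected approximate solution, and then to choose the cutoff parameter $p = \eps^b$ for a suitable $b>0$ so that all residual errors close up. First I would write down the interior and boundary residuals of $\cU_\eps$ as a solution of \eqref{15p}: by construction $\cU^0$ and $\cU^1$ satisfy the profile equations \eqref{14a}, \eqref{14b}, so the residual at orders $\eps^{-1}$ and $\eps^0$ vanishes identically; the residual at order $\eps^1$ is governed by the choice of $\cU^2_p$. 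The point of the moment-zero modification (replacing $\cU^0,\cU^1$ by $\cU^0_p,\cU^1_p$ and the self-interaction products $\sigma_{m,k,p}\sigma_{m,k',p}$ by $(\sigma_{m,k,p}\sigma_{m,k',p})_p$ as described around \eqref{q5}, \eqref{k3}) is precisely to guarantee that $\cU^2_{p,\eps}$ and its $x_d$-derivative lie in the $E^s_T$ spaces needed to feed Proposition \ref{i33a}; I would record the norm bounds on $\cU^2_{p,\eps}$ in terms of negative powers of $p$, using Propositions \ref{h6}, \ref{h8}, \ref{h9}, \ref{h11a}, \ref{h11c} and the interaction estimates of Appendix \ref{interaction}.

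**Main steps.** (1) Derive the equation for $W_\eps$: subtract the (approximate) equation satisfied by $\cU_\eps$ from \eqref{15p}, linearize the nonlinear terms $D(\eps U_\eps)U_\eps - D(\eps\cU_\eps)\cU_\eps$ and $\psi(\eps U_\eps)U_\eps - \psi(\eps\cU_\eps)\cU_\eps$ around $\cU_\eps$, so that $W_\eps$ solves a linearized singular problem $\bL'_\eps(\cU_\eps)W_\eps = \eps\, f_\eps$, $\bB'_\eps(\cU_\eps)W_\eps = \eps\, g_\eps$ with $W_\eps = 0$ in $t<0$, where $f_\eps, g_\eps$ collect (i) the $O(\eps)$ approximation residual of $\cU_\eps$ (after dividing out $\eps$), (ii) the moment-zero approximation errors from \eqref{m10}, and (iii) the quadratic-in-$W_\eps$ terms. (2) Check the hypotheses of Proposition \ref{i33a} with $U = \cU_\eps$: the $C^{0,M_0}_T$, $CH^{s_0}_T$ and $\eps\partial_d$ bounds on $\cU_\eps$ follow from $\cU^0\in\tilde\Gamma^{a+1}_T$, $\cU^1\in\tilde\Gamma^{a-2}_T$, Proposition \ref{h1}, and the $p$-dependent bounds on $\cU^2_{p,\eps}$, provided $\eps$ is small relative to $p$; the smallness condition $|\cU_\eps|_{0,\mu_0,\gamma,T} + \langle\cdot\rangle \le \kappa_0$ is arranged by taking $\langle G\rangle$ small and $\eps$ small. (3) Apply the tame estimate at a fixed Sobolev level (here $s$ chosen so that $s+2 \le a-?$ matches the regularity available, landing at $E^{a-6}_T$ for $W_\eps$), absorb the quadratic term $\eps\,|W_\eps|_{E^s}^2$ into the left side for $\eps$ small (a standard Gronwall/continuous-induction in $T$ or a fixed-point argument), and obtain $|W_\eps|_{E^{a-6}_T} \le C(\text{residual norms})$. (4) Estimate the residual: the genuine approximation residual contributes $O(\eps)$, the moment-zero errors contribute $O(\sqrt p)$ by \eqref{m10} and Propositions \ref{h11a}, \ref{h11c}, while the worst term is $\eps^2\,\bL'\cU^2_{p,\eps}$ (and the error in dropping $(I-\mathbf E)$-type pieces), which by the negative powers of $p$ in Propositions \ref{h8}(b), \ref{h9}(b) is of size $\eps^2/p^{M_1}$ for $M_1 = [d/2+3]$. (5) Balance: choosing $p = \eps^{2/(2M_1+3)}$ makes $\sqrt p = \eps^{1/(2M_1+3)}$ and $\eps^2/p^{M_1} = \eps^{2 - 2M_1/(2M_1+3)} = \eps^{(2M_1+6)/(2M_1+3)} \le \eps^{1/(2M_1+3)}$, so all contributions are $O(\eps^{1/(2M_1+3)})$, giving the stated rate; finally $E^{a-6}_T \hookrightarrow L^\infty$ (Sobolev, since $a-6$ exceeds $(d+1)/2$ by \eqref{a11z}) transfers the bound to $|U_\eps - \cU^0_\eps|_{L^\infty}$ after noting $|\cU_\eps - \cU^0_\eps|_{E^{a-6}_T} = O(\eps + \sqrt p)$.

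**Main obstacle.** The crux is step (4): bookkeeping the exact negative power of $p$ produced by the second corrector. One must track how many $\theta$-primitives and moment-zero cutoffs enter $\cU^2_p$ — each primitive of a cut-off function costs a factor $p^{-1}$ in $\Gamma^s$ (Proposition \ref{h9}(b)) and each application of $\sigma_p$ costs $p^{-s}$ (Proposition \ref{h8}(b)) — and confirm that, for the chosen working regularity $s$, the total is no worse than $p^{-M_1}$ with $M_1 = [d/2+3]$; the transversal interactions (Proposition \ref{h12}) must be seen to cost nothing, and only the self-interactions $\sigma_{m,k,p}\sigma_{m,k',p}$, after the extra cutoff $(\cdot)_p$, must be shown to cost exactly the claimed power. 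A secondary technical point is verifying that $\cU^2_{p,\eps}$ genuinely lies in $E^s_T$ (not merely $\cE^s_T$ bounds) so that Proposition \ref{i33a} applies with $\cU_\eps$ as coefficient — this uses that all the building blocks of $\cU^2_p$, after the moment-zero corrections, have primitives decaying at infinity, which is exactly the content of the moment-zero machinery of Section \ref{mzero}. Once these powers are pinned down the balancing in step (5) is forced and the rate $\eps^{1/(2M_1+3)}$ emerges.
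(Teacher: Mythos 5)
Your proposal follows the paper's proof essentially step for step: construct the corrected approximant $\cU_\eps = \cU^0_\eps + \eps\,\cU^1_\eps + \eps^2\,\cU^2_{p,\eps}$ with $\cU^2_p = -{\bf R}_\infty[(I-{\bf E})\cG]_{mod}$ (replacing self-interaction products by their moment-zero cutoffs), bound the residuals using Propositions \ref{h6}--\ref{h11c} and Appendix \ref{interaction}, feed the difference $W_\eps = U_\eps - \cU_\eps$ into the tame estimate of Proposition \ref{i33a}, and close by balancing $p = \eps^b$. One small bookkeeping slip: the dominant residual after dividing out $\eps$ is $\eps/p^{M_1+1}$ (coming from $\eps^2\,\partial_{x_d}\cU^2_p$, which costs one extra power of $p$ over the $\cU^2_p$ bound itself), not $\eps^2/p^{M_1}$; balancing $\sqrt{p}$ against $\eps/p^{M_1+1}$ is what forces $b = 2/(2M_1+3)$, whereas your stated term would have permitted a larger $b$ and a better rate. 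Since you nonetheless chose the correct $p = \eps^{2/(2M_1+3)}$, your final estimate goes through, but the derivation of $b$ should be tied to $\eps/p^{M_1+1}$.
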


\begin{proof}
We shall fill in the sketch provided in section \ref{erran}. The approximate solution to the singular system defined by 
$(\cU^0+\eps\cU^1)(x,\theta_0,\frac{x_d}{\eps})$ is too crude for the error analysis, so we must construct an additional 
corrector.

\textbf{1.) } As a first try  we could use  Proposition \ref{35} to construct $\tilde \cU^2_p(x,\theta_0,\xi_d)$ satisfying
\begin{align}\label{m13}
\tilde \cL(\partial_{\theta_0},\partial_{\xi_d})\tilde \cU^2_p =-(I-{\bf E})\left(\tilde L(\partial)\cU^1_p +D(0)\cU^1_p 
+({\rm d}D(0) \cU^0_p)\cU^0_p \right):=-(I-{\bf E})\cG.
\end{align}
This choice of $\tilde\cU^2_p(x,\theta_0,\frac{x_d}{\eps})$ turns out not to lie in any $E^r_T$ space, and is thus too 
large to be useful in the error analysis. Observe however that $\cG$ is of type $\cF$.

To remedy this problem we replace $(I-{\bf E})\cG$ by a modification $[(I-{\bf E})\cG]_{mod}$ defined as follows. 
First, using the earlier formulas for ${\bf E}\cU^1$ and $(I-{\bf E})\cU^1$ and Remark \ref{30ab}, we have
\begin{align}\label{k1}
\begin{split}
-(I-{\bf E})\cG=& \sum^3_{i=1} \left(-\sum_{k\neq i}V^i_k\tau_{k,p}-\sum_{k\neq i}X_{\phi_i}b_{k,p} 
-\sum_{k\neq i,l\neq k,l\neq i}V^i_kb_{l,p}+\sum_{k\neq i}e^i_k\tau_{k,p}\right)r_i\\
&+\sum^3_{i=1}\left(\sum_{k\neq i} c^i_k\sigma^2_{k,p}+\sum_{l\neq m}d^i_{l,m}\sigma_{l,p}\sigma_{m,p}\right)r_i 
=:A+B,
\end{split}
\end{align}
where $B$ is the second sum over $i$ in \eqref{k1}, $\sigma_{q,p}=\sigma_{q,p}(x,\theta_0+\omega_q\xi_d)$, 
and $\tau_{r,p}=\tau_{r,p}(x,\theta_0+\omega_r\xi_d)$. The problem is caused by the self-interaction terms given 
by the  sum over $k\neq i$ in $B$, so we define
\begin{align}\label{k2}
-[(I-{\bf E})\cG]_{mod}=A+\sum^3_{i=1}\left(\sum_{k\neq i} c^i_k(\sigma^2_{k,p})_p 
+\sum_{l\neq m}d^i_{l,m}\sigma_{l,p}\sigma_{m,p}\right) r_i \, ,
\end{align}
and we set
\begin{align}\label{k3}
\cU^2_p:=-{\bf R}_\infty[(I-{\bf E})\cG]_{mod}.
\end{align}
Instead of \eqref{m13} we have
\begin{align}\label{k4}
\tilde{\cL}(\partial_{\theta_0},\partial_{\xi_d})\cU^2_p=-[(I-{\bf E})\cG]_{mod}.
\end{align}
For later use we set
\begin{align*}
D(x,\theta_0,\xi_d):=(I-{\bf E})\cG-[(I-{\bf E})\cG]_{mod}
\end{align*}
and estimate
\begin{align*}
|D(x,\theta_0,\frac{x_d}{\eps})|_{E^{a}_T}\leq C \, \sqrt{p}.
\end{align*}
Indeed, using Propositions \ref{h11a} and \ref{h8}(a) we have
\begin{align*}
\Big| \left(\sigma_{k,p}^2-(\sigma_{k,p}^2)_p\right)(x,\theta_0+\omega_k\frac{x_d}{\eps}) \Big|_{E^{a}_T} 
\leq |\left(\sigma_{k,p}^2-(\sigma_{k,p}^2)_p\right)|_{H^{a+1}_T} \leq |\sigma_k|^2_{H^{a+1}_T} \, \sqrt{p}.
\end{align*}

We then define the corrected approximate solution
\begin{align*}
\cU_\eps(x,\theta_0) := \cU^0(x,\theta_0,\frac{x_d}{\eps})+\eps\cU^1(x,\theta_0,\frac{x_d}{\eps})
+\eps^2 \, \cU^2_p(x,\theta_0,\frac{x_d}{\eps}).
\end{align*}
Since $\cU^1\in \tilde \Gamma^{a-2}_T$, Proposition \ref{h1} implies $\cU^1_\eps\in E^{a-3}_T$.

\textbf{2.) Estimate of $|\cU^2_{p,\eps}|_{E^{a-4}_T}$.}  For $A$ as in \eqref{k1} let us write $A =\sum^3_{i=1} 
a_i(x,\theta_0,\xi_d)r_i$. By \eqref{k3}, \eqref{k2} and the formula \eqref{R} for ${\bf R}_\infty$, for $i=1,2,3$ we 
must estimate $|b_i(x,\theta_0,\frac{x_d}{\eps})|_{E^{s-2}_T}$, where $b_i(x,\theta_0,\xi_d)$ reads as follows:
\begin{align}\label{k8}
&\sum_{k\neq i} c^i_k \int_{\xi_d}^{+\infty} \left(\sigma^2_{k,p}\right)_p(x,\theta_0+\omega_i\xi_d+s(\omega_k-\omega_i)) 
\, {\rm d}s \notag\\
+&\sum_{m\neq i}d^i_{i,m} \int_{\xi_d}^{+\infty} \sigma_{i,p}(x,\theta_0+\omega_i\xi_d)\sigma_{m,p} 
(x,\theta_0+\omega_i\xi_d +s(\omega_m-\omega_i)) \, {\rm d}s \notag\\
+&\sum_{l\neq i} d^i_{l,i} \int_{\xi_d}^{+\infty} \sigma_{l,p}(x,\theta_0+\omega_i\xi_d+s(\omega_l-\omega_i)) 
\sigma_{i,p}(x,\theta_0+\omega_i\xi_d) \, {\rm d}s\\
+&\sum_{l\neq m,l\neq i,m\neq i}d^i_{l,m} \int_{\xi_d}^{+\infty} \sigma_{l,p}(x,\theta_0+\omega_i\xi_d+s(\omega_l-\omega_i)) 
\sigma_{m,p}(x,\theta_0+\omega_i\xi_d+s(\omega_m-\omega_i)) \, {\rm d}s \notag\\
+&\int_{\xi_d}^{+\infty} a_i(x,\theta_0+\omega_i(\xi_d-s),s) \, {\rm d}s=\sum^5_{r=1}b_{i,r}(x,\theta_0,\xi_d),\notag
\end{align}
where $b_{i,r}$, $r=1,\dots,5$ are defined by the respective lines of \eqref{k8}. Since $\cU^0\in \tilde\Gamma^{a+1}_T$, 
using Corollary \ref{h17} we find
\begin{align*}
\Big| b_{i,1} \Big( x,\theta_0,\frac{x_d}{\eps} \Big) \Big|_{E^{a}_T}\leq C \sum_{k\neq i}\frac{|\sigma_k|^2_{H^{a+1}_T}}{p} 
\leq C/p.
\end{align*}
Similarly, from Proposition \ref{h19} we get
\begin{align*}
\Big| b_{i,2} \Big( x,\theta_0,\frac{x_d}{\eps} \Big) \Big|_{E^{a}_T} \leq C/p;\;\; 
\Big| b_{i,3} \Big( x,\theta_0,\frac{x_d}{\eps} \Big) \Big|_{E^{a}_T} \leq C/p.
\end{align*}
Proposition \ref{h12} on transversal interactions implies
\begin{align*}
\Big| b_{i,4} \Big( x,\theta_0,\frac{x_d}{\eps} \Big) \Big|_{E^{a}_T} \leq  \frac{C}{p^{M_1}},
\end{align*}
where we have used Proposition \ref{h8}(b) to estimate $|\sigma_{m,p}|_{\Gamma^{M_1}_T}\leq 
\frac{C}{p^{M_1}} \, |\sigma_{m}|_{\Gamma^{M_1}_T}$.

Since $A\in\tilde\Gamma^{a-3}_T$, Proposition \ref{h20} implies $|b_{i,5}(x,\theta_0,\frac{x_d}{\eps})|_{E^{a-4}_T} 
\leq  C/p$, so adding up we obtain
\begin{align*}
\big| \cU^2_{p,\eps} \big|_{E^{a-4}_T}\leq \frac{C}{p^{t}}.
\end{align*}
To estimate $(\partial_{x_d}\cU^2_p)_\eps$ we differentiate \eqref{k8} and estimate as above to find
\begin{align*}
\big| (\partial_{x_d}\cU^2_p)_\eps \big|_{E^{a-5}_T}\leq \frac{C}{p^{t+1}}.
\end{align*}

\textbf{3.) } Next we study the error upon substituting the corrected approximate solution $\cU_\eps$ into the singular 
system \eqref{15p}. Letting $\bL_\eps$ denote the operator defined by the left side of \eqref{15p}, we compute
\begin{equation}\label{m15}
\bL_\eps(\cU_\eps) =\eps \, \left[ (\tilde \cL(\partial_{\theta_0},\partial_{\xi_d})\cU^2_p)|_{\xi_d=\frac{x_d}{\eps}}
+\left(\tilde L(\partial)\cU^1+D(0)\cU^1+({\rm d}D(0) \cU^0)\cU^0\right)|_{\xi_d=\frac{x_d}{\eps}}
\right] +O(\eps^2).
\end{equation}
Here the profile equations \eqref{13} imply that the terms of order $\eps^{-1}$ and $\eps^0$ vanish. 
Using \eqref{k4} we can rewrite the  coefficient of $\eps$ in \eqref{m15} as
\begin{multline*}
\left[ \tilde L(\partial)(\cU^1-\cU^1_p)+D(0)(\cU^1-\cU^1_p) +({\rm d}D(0) \cU^0)\cU^0
-({\rm d}D(0) \cU^0_p)\cU^0_p \right]
|_{\xi_d=\frac{x_d}{\eps}} \\
+\left[ {\bf E}\left(\tilde L(\partial)\cU^1_p+D(0)\cU^1_p +({\rm d}D(0) \cU^0_p)\cU^0_p \right) \right]
|_{\xi_d=\frac{x_d}{\eps}} +D(x,\theta_0,\frac{x_d}{\eps}):=A+B+ D(x,\theta_0,\frac{x_d}{\eps})\, .
\end{multline*}
Using \eqref{m10}, Proposition \ref{h1}, and the fact that $E^s_T$ is a Banach algebra for $s\geq [(d+1)/2]$,
we see that
\begin{align*}
|A|_{E^{a-5}(\Omega_T)}< C\sqrt{p}.
\end{align*}

To estimate $B$, let
\begin{align*}
F=\tilde L(\partial)\cU^1 +D(0)\cU^1 +({\rm d}D(0) \cU^0)\cU^0 \text{ and }F_p
=\tilde L(\partial)\cU^1_p +D(0)\cU^1_p +({\rm d}D(0) \cU^0_p)\cU^0_p.
\end{align*}
The profile equation \eqref{14b}(b) implies ${\bf E}F=0$. Since $F$ and $F_p$ are of type $\cF$, we can use the 
explicit formula for the action of ${\bf E}$ in Remark \ref{30ab} and \eqref{m10} to obtain
\begin{align*}
|B|_{E^{a-5}_T}=\left|({\bf E}F_p)|_{\xi_d=\frac{x_d}{\eps}}\right|_{E^{a-5}_T}
=\left|({\bf E}(F-F_p))|_{\xi_d=\frac{x_d}{\eps}}\right|_{E^{a-5}_T} \le C\, \sqrt{p}.
\end{align*}

\textbf{4.) } The $O(\eps^2)$ terms in \eqref{m15} consist of
\begin{align}\label{m18}
\left| \eps^2 \, \left(L(\partial)\cU^2_p(x,\theta_0,\xi_d)\right)|_{\xi_d=\frac{x_d}{\eps}}
\right|_{E^{a-5}_T}\leq C\, \frac{\eps^2}{p^{t+1}},
\end{align}
as well as terms coming from the Taylor expansion of $D(\eps\cU_\eps)\cU_\eps$ like $\eps^2 \, ({\rm d}D(0)
\cU^0)\cU^1 |_{\xi_d=\frac{x_d}{\eps}}$, all of which satisfy better estimates than \eqref{m18}.

Setting $R_\eps(x,\theta_0):=\bL_\eps(\cU_\eps)$, we have shown
\begin{align}\label{m19}
|R_\eps|_{E^{a-5}_T} \leq C \, \eps \, \left( \sqrt{p}+\frac{\eps}{p^{t+1}} \right).
\end{align}

\textbf{5.) } The boundary profile equations \eqref{14}, and the fact that the traces of $\cU^0_\eps$ and $\cU^1_\eps$ 
lie in $H^{a}(b\Omega_T)$ and $H^{a-3}(b\Omega_T)$, respectively, imply
\begin{align}\label{m20}
\left\langle r_\eps(x',\theta_0)\right\rangle_{H^{a-4}(b\Omega_T)} \leq C\frac{\eps^2}{p^t},\quad 
\text{\rm where } r_\eps :=\psi(\eps\cU_\eps) \, \cU_\eps -\eps \, G(x',\theta_0).
\end{align}
Indeed, these $O(\eps^2)$ terms include
\begin{align*}
\left\langle \eps^2 \, B \, \cU^2_{p,\eps}(x',0,\theta_0)\right\rangle_{H^{a-4}(b\Omega_T)} \leq C\, \frac{\eps^2}{p^t} \, ,
\end{align*}
and other terms satisfying the same estimate coming from the Taylor expansion of $\psi(\eps\cU_\eps)\, \cU_\eps$.

\textbf{6.) } Next we consider the singular problem satisfied by the difference $W_\eps:=U_\eps-\cU_\eps$:
\begin{align}\label{m22}
\begin{split}
&\partial_d W_\eps +\bA\left(\partial_{x'}+\frac{\beta\partial_{\theta_0}}{\eps}\right)W_\eps
+D_2(\eps U_\eps,\eps \cU_\eps)W_\eps=-R_\eps\\
&\psi_2(\eps U_\eps,\eps \cU_\eps)W_\eps=-r_\eps\text{ on }x_d=0\\
&W_\eps=0\text{ in }t<0,
\end{split}
\end{align}
where
\begin{align*}
D_2(\eps U_\eps,\eps \cU_\eps)W_\eps &:= D(\eps U_\eps)U_\eps-D(\eps \cU_\eps)\cU_\eps \\
&=D(\eps U_\eps)W_\eps +\left(\int^1_0 {\rm d}D(\eps\cU_\eps+s\eps (U_\eps-\cU_\eps)) \, W_\eps \, {\rm d}s\right) 
\, (\eps\cU_\eps) \, ,
\end{align*}
and $\psi_2(\eps U_\eps,\eps \cU_\eps)W_\eps$ is defined similarly.   Since $U_\eps\in E^{a-1}_T$ and
$\cU_\eps\in E^{a-4}_T$, a short computation shows
\begin{align*}
\psi_2(\eps U_\eps,\eps \cU_\eps)W_\eps =\psi(\eps U_\eps) W_\eps +({\rm d}\psi(\eps U) W_\eps) \eps\cU_\eps 
+O\left( \frac{\eps^2}{p^{M_1}} \right) =\cB(\eps U,\eps \cU)W_\eps +O\left( \frac{\eps^2}{p^{M_1}} \right),
\end{align*}
where the error term is measured in $H^{a-4}(b\Omega_T)$ and $\cB$ is defined in \eqref{ca2}. Similarly,
\begin{align*}
D_2(\eps U_\eps,\eps \cU_\eps)W_\eps =\cD(\eps U,\eps \cU)W_\eps+O\left( \frac{\eps^2}{p^{M_1}} \right) 
\text{ in }E^{a-4}_T.
\end{align*}
Thus, using \eqref{m19} and \eqref{m20} we find
\begin{align*}
\begin{split}
&\partial_d W_\eps+\bA\left(\partial_{x'}+\frac{\beta\partial_{\theta_0}}{\eps}\right)W_\eps
+\cD(\eps U_\eps,\eps \cU_\eps)W_\eps=\eps \, O\left( \sqrt{p}+\frac{\eps}{p^{M_1+1}} \right) \text{ in }E^{a-5}_T\\
&\cB(\eps U_\eps,\eps \cU_\eps)W_\eps|_{x_d=0} =O\left( \frac{\eps^2}{p^{M_1}} \right) \text{ in }H^{a-4}(b\Omega_T)\\
&W_\eps=0\text{ in }t<0.
\end{split}
\end{align*}
Provided $\langle G\rangle_{b\Gamma^{\alpha+2}_T}$ is small enough, we can apply the estimate of Proposition 
\ref{i33a} to obtain
\begin{align}\label{m26}
|W_\eps|_{E^{a-6}_T} \leq C\, \left( \sqrt{p}+\frac{\eps}{p^{M_1+1}} \right).
\end{align}
Setting $p=\eps^b$ and $\sqrt{p}=\eps/p^{M_1+1}$, we find $b=\frac{2}{2M_1+3}$. Thus, $|W_\eps|_{E^{a-6}_T} \leq 
C\, \eps^{\frac{1}{2M_1+3}}$, so \eqref{m26} implies
\begin{align*}
|U_\eps-\cU^0_\eps|_{E^{a-6}_T} \leq C \, \eps^{\frac{1}{2M_1+3}}.
\end{align*}
\end{proof}


\section{Extension to the general $N\times N$ system}
\label{extension}

\emph{\quad} It is only the discussion of profiles and the error analysis that needs to be extended to the general case. 
For each $m\in \{1,\dots,M\}$ we let
\begin{equation*}
\ell_{m,k},\;k=1,\dots,\nu_{k_m}
\end{equation*}
denote a basis of real vectors for the left eigenspace of the real matrix $i\cA(\beta)$ associated to the real eigenvalue 
$-\omega_m$ and chosen to satisfy
\begin{equation*}
\ell_{m,k}\cdot r_{m',k'}=\begin{cases}1, \;\text{ if }m=m'\text{ and }k=k',\\
0,\;\text{ otherwise.}\end{cases}
\end{equation*}
For $v\in\C^N$ we set
\begin{equation*}
P_{m,k}v :=(\ell_{m,k}\cdot v) \, r_{m,k} \, ,\quad \text{\rm (no complex conjugation here)}.
\end{equation*}

Functions of type $\cF$ (see Definition \ref{30}) have the form
\begin{equation*}
F(x,\theta_0,\xi_d)=\sum_{m=1}^M\sum_{k=1}^{\nu_{k_m}}F_{m,k}(x,\theta_0,\xi_d) \, r_{m,k} \, ,
\end{equation*}
where
\begin{multline*}
F_{m,k}=\sum_{m'} f^{m,k}_{m'}(x,\theta_0+\omega_{m'} \, \xi_d) \\
+\sum_{m',k',m'',k''} g^{m,k}_{m',k',m'',k''}(x,\theta_0+\omega_{m'} \, \xi_d) \, 
h^{m,k}_{m',k',m'',k''}(x,\theta_0+\omega_{m''} \, \xi_d) \, .
\end{multline*}
The operator ${\bf E}$ is given by
\begin{equation*}
{\bf E}F:=\sum_{m,k} \left( \lim_{T\to\infty} \dfrac{1}{T} \int^T_0 F_{m,k}(x,\theta_0+\omega_m(\xi_d-s),s)\, 
{\rm d}s \right) \, r_{m,k} \, ,
\end{equation*}
and we leave it to the reader to formulate the analogue of Remark \ref{30ab}. On functions of type $\cF$ such 
that ${\bf E}F=0$ the action of the operator ${\bf R}_\infty$ is given by
\begin{equation*}
{\bf R}_\infty F:=-\sum_{m,k}\left(\int_{\xi_d}^{+\infty} F_{m,k}(x,\theta_0+\omega_m(\xi_d-s),s)\, {\rm d}s \right) \, 
r_{m,k} \, .
\end{equation*}
The general form of the profile equations \eqref{14a}, \eqref{14b} still applies. With
\begin{equation*}
W(x,\theta_0,\xi_d)=\sum_{m,k}w_{m,k}(x,\theta_0,\xi_d) \, r_{m,k},
\end{equation*}
the decomposition of Proposition \ref{23a} now has the form
\begin{align}\label{r7}
\tilde L(\partial)W=\sum_{m,k}(X_{\phi_m}\, w_{m,k}) \, r_{m,k} 
+\sum_{m,k} \left( \sum_{m'\neq m,k'} V^{m,k}_{m',k'} \, w_{m',k'} \right) \, r_{m,k},
\end{align}
where $V^{m,k}_{m',k'}$ is the tangential vector field
\begin{equation*}
V^{m,k}_{m',k'}=\sum^{d-1}_{j=0}(\ell_{m,k}A_j r_{m',k'})\partial_{x_j}.
\end{equation*}

Recalling  that $\{1,\dots,M\}=\cO\cup\cI$, where $\cO$ and $\cI$ contain the indices corresponding to outgoing 
and incoming phases, we have the expressions in \eqref{a11} for $\cU^0$ and $\cU^1$ in the general case. As 
in \eqref{defe} we have
\begin{equation*}
\ker B \cap \E^s(\beta)= \mathrm{Span }\{ e \} =\sum _{m\in\cI}e_m 
=\sum_{m\in \cI} \sum_{k=1}^{\nu_{k_m}}e_{m,k} \, ,
\end{equation*}
where $e_{m,k}:=P_{m,k}e$. Using \eqref{r7} and arguing as in Propositions \ref{m3az} and \ref{m5}, we find that 
the key subsystem \eqref{m6} now takes the form
\begin{align}\label{r8}
\begin{split}
&X_{\phi_m}\sigma_{m,k} +e_{m,k} \, \sigma_{m,k}=0,\; m\in \cI,\;k=1,\dots,\nu_{k_m} \, ,\\
&X_{Lop}a +c\, a +d\, \partial_{\theta_0}(a^2) =-b\cdot \partial_{\theta_0}G \text{ on }\{x_d=0,\xi_d=0\} \, ,
\end{split}
\end{align}
where
\begin{equation*}
\sigma_{m,k}(x',0,\theta_0) \, r_{m,k} =a(x',\theta_0) \, e_{m,k} \, ,
\end{equation*}
and all unknowns are zero in $t<0$. The key subsystem and then the full profile system are solved as before 
(using a Picard iteration for $a$ and integration along the characteristics for the $\sigma_{m,k}$'s), and Theorem 
\ref{m14ab} on the existence and regularity of $\cU^0$ and $\cU^1$ still holds as stated.

The error analysis in the proof of Theorem \ref{main2} goes through with the obvious minor changes. For example, 
the troublesome self-interaction terms $c^i_k\sigma^2_{k,p}$ in the sum that is part of $B$ in \eqref{k1} are now 
replaced by terms of the form $c^i_{m,k,k'} \sigma_{m,k,p}\sigma_{m,k',p}$, $m\neq i$, where the index $p$ as 
before denotes a moment-zero approximation. These terms are handled just as before by introducing $-[(I-{\bf E}) 
\cG]_{mod}$, see \eqref{k2}, in which they are replaced by  $c^i_{m,k,k'} (\sigma_{m,k,p}\sigma_{m,k',p})_p$. 
The contribution of these terms to $\cU^2_p$ is estimated as before using Corollary \ref{h17}. The statement 
of Theorem \ref{main2} applies unchanged in the general case.

\appendix
\section{Estimates of interaction integrals}
\label{interaction}

\emph{\quad}Pulses do not interact to produce resonances that affect the leading order profiles as in the 
wavetrain case. However, interaction integrals must be estimated carefully in the error analysis of Section 
\ref{error}. All results below are proved in our earlier paper \cite{CW4} so we shall not reproduce the proofs 
here.

The following propositions are used in the error analysis for estimating integrals appearing in the definition 
of $\cU^2_{p,\eps}$ (step \textbf{2} of Theorem \ref{main2}). In particular, we must estimate primitives of 
products of pulses. In some of the estimates below we must introduce moment-zero approximations to 
avoid errors that would fail to lie in any $E^r_T$ space, which are thus too large to be useful in the error 
analysis. We begin with an estimate of ``transversal interactions", see Proposition 4.10 in \cite{CW4}.

\begin{prop}\label{h12}
Let $t$ be the smallest integer greater than $\frac{d}{2}+3$, and let $s\geq 0$. Let $\sigma_1(x,\theta)$, 
$\sigma_2(x,\theta)$ belong to $\Gamma^t_T\cap H^{s+1}_T$, and define
\begin{align*}
u(x,\theta_0,\xi_d):= -\int^{+\infty}_{\xi_d} \sigma_1(x,\theta_0+\omega\xi_d+\alpha s) \, 
\sigma_2(x,\theta_0+\omega\xi_d+s) \, {\rm d}s,
\end{align*}
where $\omega$, $\alpha$ are real and $\alpha\notin\{0,1\}$. With $u_\eps(x,\theta_0):= 
u(x,\theta_0,\frac{x_d}{\eps})$, we have
\begin{align*}
|u_\eps|_{E^s_T}\leq C \, \big( |\sigma_1|_{H^{s+1}_T} \, |\sigma_2|_{\Gamma^t_T} 
+|\sigma_2|_{H^{s+1}_T} \, |\sigma_1|_{\Gamma^t_T} \big) \, ,
\end{align*}
uniformly for $\eps\in (0,1]$.
\end{prop}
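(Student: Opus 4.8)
The plan is to reduce everything to the undifferentiated (``$s=0$'') estimate and then propagate derivatives by the Leibniz rule. The first observation is that in $E^s_T = CH^s_T\cap L^2H^{s+1}_T$ the base variables are $(x',\theta_0)=(t,y,\theta_0)$ and $x_d$ is only a parameter, so I never differentiate $u_\eps$ in $x_d$: it suffices, for each fixed $x_d$, to control the $H^r(x',\theta_0)$‑norm of $u_\eps(\cdot,x_d,\cdot)$ (with $r=s$ for the $CH^s$ part, $r=s+1$ for the $L^2H^{s+1}$ part), and then take a supremum, resp. an $L^2$ integral, in $x_d$. The key device is a change of variables: for fixed $x=(x',x_d)$ put $b:=\theta_0+\omega\,x_d/\eps$ and substitute $\lambda:=\theta_0+\omega\,x_d/\eps+s$, which turns $u_\eps$ into
\[
u_\eps(x,\theta_0)=-\int_{\theta_0+(\omega+1)x_d/\eps}^{+\infty}\sigma_1\big(x,\alpha\lambda+(1-\alpha)b\big)\,\sigma_2(x,\lambda)\,{\rm d}\lambda .
\]
Two facts become visible. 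First, as a function of $\theta_0$, $u_\eps(x,\cdot)$ is the translate by $-\omega x_d/\eps$ of a function of $b$ that depends on $\eps$ only through the (nonnegative‑integrand, hence harmless) lower limit; so all $\theta_0$‑norms of $u_\eps$ and of its $(x',\theta_0)$‑derivatives are bounded uniformly in $\eps$ by the same quantities with the lower limit replaced by $-\infty$. Second, the kernel $K(b,\lambda):=\langle\alpha\lambda+(1-\alpha)b\rangle^{-2}$ obeys $\sup_b\int|K(b,\lambda)|\,{\rm d}\lambda=|\alpha|^{-1}\|\langle\cdot\rangle^{-2}\|_{L^1(\R)}<\infty$ and $\sup_\lambda\int|K(b,\lambda)|\,{\rm d}b=|1-\alpha|^{-1}\|\langle\cdot\rangle^{-2}\|_{L^1(\R)}<\infty$, the first using $\alpha\neq0$ and the second using precisely $\alpha\neq1$. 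This is the mechanism by which transversality produces decay of $u_\eps$ in $\theta_0$.

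For the core estimate, since $t$ is the smallest integer greater than $d/2+3=(d+2)/2+2$, Sobolev embedding in $(x,\theta)\in\R^{d+2}$ applied to $\langle\theta\rangle^2\sigma_1$ gives the pointwise bound $|\sigma_1(x,\theta)|\le C\,|\sigma_1|_{\Gamma^t_T}\,\langle\theta\rangle^{-2}$ with $C$ independent of $x$ (and likewise for $\sigma_2$). Inserting this into the displayed formula yields $|u_\eps(x,\theta_0)|\le C\,|\sigma_1|_{\Gamma^t_T}\int_\R K(b,\lambda)\,|\sigma_2(x,\lambda)|\,{\rm d}\lambda$, and Schur's lemma with the two bounds above gives $\|u_\eps(x,\cdot)\|_{L^2_{\theta_0}}=\|u_\eps(x,\cdot)\|_{L^2_b}\le C\,|\sigma_1|_{\Gamma^t_T}\,\|\sigma_2(x,\cdot)\|_{L^2_\theta}$. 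Pulling $|\sigma_1|_{\Gamma^t_T}$ out, taking the $L^\infty$ (resp.\ $L^2$) norm in $x_d$ and integrating over $x'$, and using the elementary one‑dimensional Sobolev bound $\sup_{x_d}\|\sigma_2(\cdot,x_d,\cdot)\|_{H^r(x',\theta)}\lesssim\|\sigma_2\|_{H^{r+1}_T}$ for the $CH$ part, produces the claimed bound with the factor $|\sigma_1|_{\Gamma^t_T}\,|\sigma_2|_{H^{s+1}_T}$; the symmetric term is obtained by running the same argument with the roles of $\sigma_1$ and $\sigma_2$ exchanged.

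To pass to general $s$ I would apply the Leibniz rule to $\partial^\beta_{(x',\theta_0)}u_\eps$ for $|\beta|\le s+1$. A $\partial_{x'}$ lands on the first argument of $\sigma_1$ or $\sigma_2$; a $\partial_{\theta_0}$ lands on a $\theta$‑slot, and — this is the bookkeeping device — using $\partial_\theta[\sigma_1(x,\theta_0+\omega\xi_d+\alpha s)]=\alpha^{-1}\partial_s[\cdots]$ and $\partial_\theta[\sigma_2(x,\theta_0+\omega\xi_d+s)]=\partial_s[\cdots]$, all $\theta$‑derivatives can be moved, by integrating by parts in $s$, onto whichever factor is assigned the ``$H^{s+1}$'' role, at the cost of boundary terms at $s=x_d/\eps$. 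Those boundary terms are products of pulses with arguments $\theta_0+c\,x_d/\eps$, $c\neq0$, estimated exactly as in the core case (and in fact carrying extra powers of $\eps$ because $c\neq0$). One is then reduced, term by term, to configurations in which one factor carries at most $s+1$ derivatives — bounded by $|\cdot|_{H^{s+1}_T}$ — multiplied against an undifferentiated‑in‑$\theta$ copy of the other factor, placed in $L^\infty$ with $\langle\theta\rangle^{-2}$ decay via $|\cdot|_{\Gamma^t_T}$ as above; intermediate distributions of the $x'$‑derivatives are reduced to these two endpoint configurations by Gagliardo–Nirenberg interpolation, exactly as in the proof of the Kato–Ponce/tame product inequality, so that no intermediate‑order norm of the decaying factor ever appears. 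Summing the finitely many terms gives the estimate.

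The step I expect to be the main obstacle is the core estimate, i.e.\ genuinely converting the hypothesis $\alpha\notin\{0,1\}$ into the $\theta_0$‑decay of $u_\eps$ \emph{uniformly in $\eps$} — this is the content of the change of variables plus the two‑sided Schur bound — together with keeping the derivative bookkeeping honest: one must ensure that the fixed ($s$‑independent) $\Gamma^t$ regularity is used only at the bottom of the scale, for an $L^\infty$‑with‑$\langle\theta\rangle^{-2}$‑decay bound of an undifferentiated factor, which is precisely what $t>d/2+3$ provides.
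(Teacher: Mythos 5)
The paper does not reproduce a proof of this proposition — it is quoted from \cite{CW4} (Proposition~4.10 there), so a line‑by‑line comparison is not possible; what follows is an evaluation of your argument on its own terms. Your core $s=0$ estimate is correct: the change of variables $b=\theta_0+\omega x_d/\eps$, $\lambda=b+s$, the invariance of $L^2_{\theta_0}$ under translation, and the two Schur bounds for the kernel $\langle\alpha\lambda+(1-\alpha)b\rangle^{-2}$ (one using $\alpha\neq0$, the other $\alpha\neq1$) do give $\|u_\eps(\cdot,x_d,\cdot)\|_{L^2(x',\theta_0)}\lesssim|\sigma_1|_{\Gamma^t_T}\|\sigma_2(\cdot,x_d,\cdot)\|_{L^2}$ uniformly in $\eps$, and the pointwise bound $|\sigma_1(x,\theta)|\lesssim|\sigma_1|_{\Gamma^t_T}\langle\theta\rangle^{-2}$ does follow from Sobolev in the $d+2$ variables $(x,\theta)$ since $t>d/2+3=(d+2)/2+2$. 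This is exactly the transversality mechanism and you have it right.

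There is a genuine gap in the passage to general $s$. After Leibniz, a typical term of $\partial^\beta_{(x',\theta_0)}u_\eps$ is $\int_{\xi_d}^\infty(\partial^\gamma\sigma_1)(\cdots)(\partial^\delta\sigma_2)(\cdots)\,{\rm d}s$ with $\gamma+\delta=\beta$. Integration by parts in $s$ can indeed move the $\theta$‑derivatives onto one factor, but it cannot move the $x'$‑derivatives, so the ``kernel'' factor in the Schur step is then $\partial^{\gamma'}_{x'}\sigma_1$ with $|\gamma'|>0$ in general. Your claim that this factor sits in ``$L^\infty$ with $\langle\theta\rangle^{-2}$ decay via $|\cdot|_{\Gamma^t_T}$'' fails at that point: since $t$ is the \emph{smallest} integer above $d/2+3$, one has $0<t-(d/2+3)\le 1$, so the $\Gamma^t_T$‑induced decay $|\partial^{\gamma'}_{x'}\sigma(x,\theta)|\lesssim\langle\theta\rangle^{-2}$ is available only for $|\gamma'|=0$; for $|\gamma'|\ge1$ the Schur/Young bound on the $s$‑integral needs an $L^1_\theta$ control on $\partial^{\gamma'}_{x'}\sigma_1(x,\cdot)$ that is no longer supplied by $|\sigma_1|_{\Gamma^t_T}$. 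Your appeal to Gagliardo--Nirenberg ``exactly as in Kato--Ponce'' does not close this: in a Moser product estimate the interpolation is between $L^\infty$ and $H^{s+1}$ under the H\"older relation $1/p_1+1/p_2=1/2$, whereas here you are inside a Young \emph{convolution} bound in $\theta$ with $1/p_1+1/p_2=3/2$, so the endpoint for the kernel factor is $L^1_\theta$ (not $L^\infty_\theta$), and the interpolation has to be carried out jointly in $(x',\theta)$ while preserving that $L^1_\theta$ structure; this compatibility is precisely the crux, and the sketch does not address it. (A smaller issue: the boundary terms at $s=\xi_d$ are pointwise products $\sigma_1(x,\theta_0+c_1\xi_d)\sigma_2(x,\theta_0+c_2\xi_d)$ with $c_1\neq c_2$; there is no $s$‑integral left, so they are \emph{not} ``estimated exactly as in the core case'' by Schur, but by a direct $L^\infty_\theta\times L^2_\theta$ bound — easy, but different, and the asserted extra power of $\eps$ is not actually needed.)
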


The previous estimate of transversal interactions did not require the use of moment-zero approximations. 
However, nontransversal interactions of pulses can produce errors that do not lie in any $E^r_T$ space, and 
are thus too big to be helpful in the error analysis. Thus, we are forced to use a moment-zero approximation in 
the next two propositions (see corresponding Proposition 4.11, Corollary 4.12 and Proposition 4.14 in \cite{CW4}).

\begin{prop}\label{h16}
Let $\sigma(x,\theta)$ and $\tau(x,\theta)$ belong to $\Gamma^s_T$, $s>\frac{d}{2}+3$. For $\alpha, \omega\in\R$, 
$\alpha\neq 0$ set
\begin{equation*}
f(x,\theta_0,\xi_d) :=-\int_{\xi_d}^{+\infty} (\sigma \, \tau)_p(x,\theta_0+\omega \, \xi_d+\alpha \, s) \, {\rm d}s.
\end{equation*}
Then
\begin{equation*}
\left| f \Big( x,\theta_0,\frac{x_d}{\eps} \Big) \right|_{E^{s-1}_T} \leq C \, \dfrac{|\sigma|_{H^s_T} \, |\tau|_{H^s_T}}{p} \, .
\end{equation*}
\end{prop}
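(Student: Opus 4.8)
The plan is to reduce the estimate of $f(x,\theta_0,x_d/\eps)$ to a statement about a single pulse profile in the variable $\theta$, namely the primitive of $(\sigma\tau)_p$, and then invoke the machinery already assembled in Section \ref{mzero}. First I would observe that, because $\alpha\neq 0$, the change of variables $s\mapsto \theta_0+\omega\xi_d+\alpha s$ turns the integral defining $f$ into $\tfrac{1}{\alpha}$ times a one-sided primitive of $(\sigma\tau)_p$ evaluated at the moving argument $\theta_0+(\omega+\alpha\cdot\frac{?}{})\xi_d$; more precisely, since the integrand only sees the combination $\theta_0+\omega\xi_d+\alpha s$ and the lower limit is $\xi_d$, one checks that
\begin{equation*}
f(x,\theta_0,\xi_d) = -\frac{1}{\alpha}\,\big[(\sigma\tau)_p\big]^{*}\!\big(x,\theta_0+(\omega+\alpha)\,\xi_d\big) + \text{a boundary contribution},
\end{equation*}
where $[(\sigma\tau)_p]^{*}$ denotes the unique $\theta$-primitive of $(\sigma\tau)_p$ that decays at $\pm\infty$. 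The key point is that $(\sigma\tau)_p$ has moment zero by construction (Definition \ref{h5}), so this decaying primitive exists and is well-behaved; in fact $[(\sigma\tau)_p]^{*}=(\sigma\tau)_p^{*}$ in the notation of Proposition \ref{h11c}. The upshot is that $f(x,\theta_0,\xi_d)$ is, up to the constant $-1/\alpha$, a pulse profile $g(x,\theta_m)$ with $\theta_m=\theta_0+(\omega+\alpha)\xi_d$ and $g = (\sigma\tau)_p^{*}$.

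Next I would apply Proposition \ref{h1}(a) to this pulse profile: since $g=(\sigma\tau)_p^{*}$, the function $\tilde g(x,\theta_0,\xi_d):=g(x,\theta_0+(\omega+\alpha)\xi_d)$ satisfies $|\tilde g|_{\cE^{s-1}_T}\le C\,|g|_{H^{s}_T}$, and then Proposition \ref{h1}(b) gives $|\tilde g_\eps|_{E^{s-1}_T}\le |\tilde g|_{\cE^{s-1}_T}$ for the restriction at $\xi_d=x_d/\eps$, uniformly in $\eps\in(0,1]$. It remains to estimate $|g|_{H^s_T}=|(\sigma\tau)_p^{*}|_{H^s_T}$, and this is exactly the content of Proposition \ref{h11c}, which yields
\begin{equation*}
\big|(\sigma\tau)_p^{*}\big|_{H^s_T}\le C\,\frac{|\sigma|_{H^s_T}\,|\tau|_{H^s_T}}{p}\,.
\end{equation*}
Chaining the three inequalities produces the claimed bound $\big|f(x,\theta_0,x_d/\eps)\big|_{E^{s-1}_T}\le C\,|\sigma|_{H^s_T}|\tau|_{H^s_T}/p$, with the constant absorbing the harmless factor $1/|\alpha|$.

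The one genuine point requiring care — and the main obstacle — is the bookkeeping for the boundary contribution that appears when one rewrites the one-sided integral $-\int_{\xi_d}^{+\infty}$ as a decaying primitive: concretely, $-\int_{\xi_d}^{+\infty}(\sigma\tau)_p(x,\theta_0+\omega\xi_d+\alpha s)\,{\rm d}s$ equals $\tfrac{1}{\alpha}\big((\sigma\tau)_p^{*}(x,\theta_0+\omega\xi_d+\alpha\xi_d)-(\sigma\tau)_p^{*}(x,+\infty\cdot\mathrm{sgn}\,\alpha)\big)$, and one must verify that the boundary value at infinity vanishes precisely because $(\sigma\tau)_p$ has moment zero, so that no constant-in-$\theta_m$ term survives (such a term would not decay and would fail to lie in $\cE^{s-1}_T$). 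This is where the moment-zero approximation is essential, and it is the reason the statement is formulated for $(\sigma\tau)_p$ rather than $\sigma\tau$. Once that cancellation is checked, the argument is a direct concatenation of Propositions \ref{h1} and \ref{h11c}; I would note the parallel with the proof of Proposition \ref{m10a}, where the same small-divisor factor $1/p$ (here hidden inside $1/m$ on the support of $\chi_p$) is what produces the loss of a power of $p$.
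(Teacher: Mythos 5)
Your argument is correct, and the reduction is clean. The change of variables $u = \theta_0+\omega\xi_d+\alpha s$ turns $f(x,\theta_0,\xi_d)$ exactly into $\frac{1}{\alpha}\,(\sigma\tau)_p^{*}\big(x,\theta_0+(\omega+\alpha)\xi_d\big)$, with no residual boundary term: for $\alpha>0$ the $u$-integral runs to $+\infty$ and gives the one-sided primitive $-\int_\theta^{+\infty}$ directly, while for $\alpha<0$ it runs to $-\infty$ and you get $\int_{-\infty}^\theta$, and the two one-sided primitives coincide precisely because $(\sigma\tau)_p$ has moment zero (by construction $\widehat{(\sigma\tau)_p}(x,0)=0$). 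This is the point you flag, and it is the right one. Once $f$ is recognized as a constant times a single-variable pulse profile in $\theta_m=\theta_0+(\omega+\alpha)\xi_d$ with profile $(\sigma\tau)_p^{*}\in H^s_T$, Proposition~\ref{h1}(a) (with $s$ replaced by $s-1$) and~\ref{h1}(b) give $\big|f(\cdot,\cdot,x_d/\eps)\big|_{E^{s-1}_T}\le \frac{C}{|\alpha|}\,|(\sigma\tau)_p^{*}|_{H^s_T}$, and Proposition~\ref{h11c} supplies the final factor $|\sigma|_{H^s_T}|\tau|_{H^s_T}/p$. Note the paper itself does not reprove this statement; it defers to \cite{CW4} (Proposition 4.11 there), so there is no in-text proof to compare against, but the route you take is the natural one that the surrounding moment-zero machinery is built to support.

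One minor slip: in your displayed identity you write $f=-\frac{1}{\alpha}(\sigma\tau)_p^{*}(\cdots)+\text{boundary}$, but tracking signs carefully gives $f=+\frac{1}{\alpha}(\sigma\tau)_p^{*}\big(x,\theta_0+(\omega+\alpha)\xi_d\big)$ with the boundary term identically zero. This has no effect on the norm estimate, but is worth getting right since the same computation feeds the explicit profile formulas elsewhere (e.g.\ Proposition~\ref{m11a}). Also worth noting: the case $\omega+\alpha=0$ is allowed by the hypotheses (only $\alpha\neq 0$ is assumed) and is handled automatically by Proposition~\ref{h1} with $\omega=0$, which your phrasing quietly covers.
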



\begin{cor}\label{h17}
Let $\sigma(x,\theta)$, $\tau(x,\theta)$, and $\omega,\alpha$ be as in Proposition \ref{h16} and set
\begin{align*}
g(x,\theta_0,\xi_d) :=-\int_{\xi_d}^{+\infty} (\sigma_p \, \tau_p)_p(x,\theta_0+\omega \, \xi_d+\alpha \, s) \, {\rm d}s.
\end{align*}
Then
\begin{equation*}
\left| g \Big( x,\theta_0,\frac{x_d}{\eps} \Big) \right|_{E^{s-1}_T} \leq C \, \dfrac{|\sigma|_{H^s_T} \, |\tau|_{H^s_T}}{p} \, .
\end{equation*}
\end{cor}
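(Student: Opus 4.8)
The plan is to obtain this directly from Proposition \ref{h16} by feeding in the moment-zero approximations $\sigma_p$ and $\tau_p$ in place of $\sigma$ and $\tau$. The only hypothesis of Proposition \ref{h16} that is not immediately in hand is that its two arguments lie in $\Gamma^s_T$, so the first step is to check that $\sigma_p,\tau_p\in\Gamma^s_T$ whenever $\sigma,\tau\in\Gamma^s_T$. This is exactly Proposition \ref{h8}(b), which gives $|\sigma_p|_{\Gamma^s_T}\le (C/p^s)\,|\sigma|_{\Gamma^s_T}$ and likewise for $\tau$; although this bound degenerates as $p\to 0$, mere membership in $\Gamma^s_T$ is all that is required to apply Proposition \ref{h16}.

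With that in place, I would apply Proposition \ref{h16} with $\sigma$ replaced by $\sigma_p$ and $\tau$ replaced by $\tau_p$. The integrand $(\sigma_p\,\tau_p)_p$ then coincides with the one defining $g$, so the proposition yields
\[
\left| g\Big(x,\theta_0,\tfrac{x_d}{\eps}\Big) \right|_{E^{s-1}_T} \le C\,\frac{|\sigma_p|_{H^s_T}\,|\tau_p|_{H^s_T}}{p}.
\]
It then remains only to replace the right-hand side by the $H^s_T$ norms of $\sigma$ and $\tau$ themselves. For this I would invoke Proposition \ref{h8}(a), which bounds the $H^s_T$ norm of a moment-zero approximation by a constant multiple of the $H^s_T$ norm of the original function, uniformly in $p$: $|\sigma_p|_{H^s_T}\le C\,|\sigma|_{H^s_T}$ and $|\tau_p|_{H^s_T}\le C\,|\tau|_{H^s_T}$. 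Substituting these into the displayed estimate gives the asserted bound.

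There is no real obstacle here; the corollary is a short consequence of Proposition \ref{h16} together with the elementary mapping properties of the cutoff $\sigma\mapsto\sigma_p$ recorded in Proposition \ref{h8}. The one point worth keeping straight is the bookkeeping of indices: Proposition \ref{h16} already loses one derivative (from $\Gamma^s_T$-type data to $E^{s-1}_T$), while Proposition \ref{h8}(a) is lossless on $H^s_T$, so the output index $s-1$ and the $H^s_T$ norms on the right are consistent and the additional cutoff consumes no further regularity.
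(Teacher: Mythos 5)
Your argument is correct and is the intended short proof: substitute $\sigma_p,\tau_p$ into Proposition \ref{h16} (using Proposition \ref{h8}(b) to verify $\sigma_p,\tau_p\in\Gamma^s_T$, with no need to track the $p$-dependence of those $\Gamma^s_T$ norms since they do not enter the bound), then remove the subscripts $p$ on the right-hand side via the uniform bound $|\sigma_p|_{H^s_T}\le C|\sigma|_{H^s_T}$ of Proposition \ref{h8}(a). The bookkeeping remark at the end is exactly the right sanity check; this matches what the corollary is recorded as following from in \cite{CW4}.
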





\begin{prop}\label{h19}
For $s> \frac{d}{2}+3$ let $\sigma(x,\theta)\in H^s_T$, $\tau(x,\theta)\in\Gamma^s_T$. With $\omega,\alpha \in \R$, 
$\alpha\neq 0$ set
\begin{align*}
j(x,\theta_0,\xi_d) :=-\sigma (x,\theta_0+\omega \, \xi_d) \, \int_{\xi_d}^{+\infty} 
\tau_p (x,\theta_0+\omega \, \xi_d+\alpha \, s) \, {\rm d}s.
\end{align*}
Then 
\begin{equation*}
\left| j(x,\theta_0,\frac{x_d}{\eps}) \right|_{E^{s-2}_T} \leq C \, \dfrac{|\sigma|_{H^s_T} \, |\tau|_{H^{s-1}_T}}{p} \, .
\end{equation*}
\end{prop}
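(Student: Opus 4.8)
The plan is to collapse the iterated object into an elementary product of two transversally moving pulse profiles, and then to combine the ``one derivative costs $1/p$'' bound for the primitive of a moment-zero approximation with the Banach algebra property of $E^{s-2}_T$ and Proposition \ref{h1}. The first step is to evaluate the inner integral. Since $\chi_p(0)=0$, the function $\tau_p$ has moment zero, so its primitive $\tau_p^*(x,\theta):=-\int_\theta^{+\infty}\tau_p(x,u)\,{\rm d}u=\int_{-\infty}^\theta\tau_p(x,u)\,{\rm d}u$ is well defined and decays to $0$ at both ends of $\R$; the integral converges because $\tau\in\Gamma^s_T$ with $s>\tfrac{d}{2}+3$ yields $|\tau_p(x,\theta)|\le C\langle\theta\rangle^{-2}$ uniformly in $x$, exactly as in the proof of Proposition \ref{h9}. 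Substituting $u=\theta_0+\omega\,\xi_d+\alpha\,s$ in the defining integral then gives, for either sign of $\alpha$,
\begin{equation*}
\int_{\xi_d}^{+\infty}\tau_p(x,\theta_0+\omega\,\xi_d+\alpha\,s)\,{\rm d}s=-\frac{1}{\alpha}\,\tau_p^*\big(x,\theta_0+(\omega+\alpha)\,\xi_d\big),
\end{equation*}
so that
\begin{equation*}
j(x,\theta_0,\xi_d)=\frac{1}{\alpha}\,\sigma(x,\theta_0+\omega\,\xi_d)\,\tau_p^*\big(x,\theta_0+(\omega+\alpha)\,\xi_d\big).
\end{equation*}
Thus, after putting $\xi_d=x_d/\eps$, the function $j_\eps$ is a pointwise product of two pulse profiles travelling at the distinct velocities $\omega$ and $\omega+\alpha$; here $\alpha\neq 0$ is essential.

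The second step is the estimate $|\tau_p^*|_{H^{s-1}_T}\le \tfrac{C}{p}\,|\tau|_{H^{s-1}_T}$. On the Fourier side $\widehat{\tau_p^*}(x,m)=\chi_p(m)\,\hat\tau(x,m)/(im)$, and since $|m|\ge p$ on $\mathrm{supp}\,\chi_p$ while $|\chi_p|\le 1$, the factor $1/m$ costs only a single power of $1/p$; crucially, the $H^{s-1}_T$-norm involves only $\partial_x$ and $\partial_\theta$ (i.e.\ multiplication by $m$ on the Fourier side) and never $\partial_m$, so no derivative ever lands on $\chi_p$. This is where it is important to stay with $H$-type norms: passing instead through a $\Gamma$-norm of $\tau_p^*$ would, by Proposition \ref{h9}(b), cost $1/p^{s}$ rather than $1/p$.

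For the third step, note that $\sigma(x,\theta_0+\omega\,\xi_d)$ and $\tau_p^*(x,\theta_0+(\omega+\alpha)\,\xi_d)$ are both of the ``profile composed with a translate'' form to which Proposition \ref{h1} applies directly after setting $\xi_d=x_d/\eps$: with index $s-1$ and input $\sigma\in H^s_T$ (then using the inclusion $E^{s-1}_T\hookrightarrow E^{s-2}_T$) one gets $\big|\sigma(x,\theta_0+\omega\tfrac{x_d}{\eps})\big|_{E^{s-2}_T}\le C|\sigma|_{H^s_T}$, and with index $s-2$ and input $\tau_p^*\in H^{s-1}_T$ one gets $\big|\tau_p^*(x,\theta_0+(\omega+\alpha)\tfrac{x_d}{\eps})\big|_{E^{s-2}_T}\le C|\tau_p^*|_{H^{s-1}_T}\le \tfrac{C}{p}|\tau|_{H^{s-1}_T}$. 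Since $s-2>\tfrac{d+1}{2}$, the space $E^{s-2}_T$ is a Banach algebra by Proposition \ref{d23a}, so multiplying the two factors yields
\begin{equation*}
\big|j_\eps\big|_{E^{s-2}_T}\le \frac{C}{|\alpha|}\,\big|\sigma(x,\theta_0+\omega\tfrac{x_d}{\eps})\big|_{E^{s-2}_T}\,\big|\tau_p^*(x,\theta_0+(\omega+\alpha)\tfrac{x_d}{\eps})\big|_{E^{s-2}_T}\le \frac{C}{|\alpha|\,p}\,|\sigma|_{H^s_T}\,|\tau|_{H^{s-1}_T},
\end{equation*}
which is the claim, the constant absorbing $1/|\alpha|$. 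The only genuinely delicate point is the bookkeeping just indicated: the moment-zero property must be attributed to $\tau_p$ rather than to $\tau$ — this is precisely what makes $\tau_p^*$ well defined and decaying — and all intermediate norms must be kept of $H$-type so that the power of $1/p$ stays sharp; everything else is the routine combination of a change of variables, a Fourier-side multiplier estimate, and the algebra property of $E^{s-2}_T$.
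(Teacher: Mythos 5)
Your proof is correct. The key formula
\begin{equation*}
\int_{\xi_d}^{+\infty}\tau_p(x,\theta_0+\omega\,\xi_d+\alpha\,s)\,{\rm d}s=-\frac{1}{\alpha}\,\tau_p^*\big(x,\theta_0+(\omega+\alpha)\,\xi_d\big)
\end{equation*}
is verified for both signs of $\alpha$ (using the moment-zero property of $\tau_p$ when $\alpha<0$), the bound $|\tau_p^*|_{H^{s-1}_T}\le Cp^{-1}|\tau|_{H^{s-1}_T}$ is a direct consequence of $\widehat{\tau_p^*}=\chi_p\hat\tau/(im)$ and $|m|\ge p$ on $\mathrm{supp}\,\chi_p$ (the paper's Proposition \ref{h9}(a) argument, correctly applied at the index $s-1$), and the regularity bookkeeping works out: $s>\tfrac{d}{2}+3$ guarantees both $s-2>\tfrac{d+1}{2}$ for the Banach algebra property of $E^{s-2}_T$ (Proposition \ref{d23a}) and the $\langle\theta\rangle^{-2}$-decay of $\tau_p$ needed to define $\tau_p^*$. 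Combined with Proposition \ref{h1} applied at indices $s-1$ and $s-2$ respectively, this gives exactly the stated bound. The paper delegates the proof to \cite{CW4}, so I cannot compare line by line, but given the lemmas marshaled in section \ref{mzero} this reduction to a product of two translated profiles followed by the algebra estimate is clearly the intended route, and your emphasis on keeping all intermediate norms of $H$-type (so only one power of $1/p$ appears) is the right point to flag.
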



The next  Proposition requires moment zero approximations because forcing by a noninteracting pulse can 
also produce errors that fail to lie in any $E^r_T$ space, see Proposition 4.15 in \cite{CW4}.

\begin{prop}\label{h20}
For $s>\frac{d}{2}+3$ and $\omega,\alpha\in\R$, $\alpha\neq 0$, let $\sigma\in\Gamma^s_T$ and set
\begin{align*}
k(x,\theta_0,\xi_d)=-\int_{\xi_d}^{+\infty} \sigma_p(x,\theta_0+\omega\xi_d+\alpha s) \, {\rm d}s.
\end{align*}
Then
\begin{equation*}
\left| k(x,\theta_0,\frac{x_d}{\eps}) \right|_{E^{s-1}_T} \leq C \, \dfrac{|\sigma|_{H^s_T}}{p} \, .
\end{equation*}
\end{prop}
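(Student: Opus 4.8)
The plan is to reduce the integral defining $k$ to a single shifted $\theta$-primitive of the moment-zero approximation $\sigma_p$, and then to invoke the embedding estimate of Proposition \ref{h1} together with the quantitative bound on primitives from Proposition \ref{h9}.

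First I would carry out the change of variables $u=\theta_0+\omega\,\xi_d+\alpha\,s$ in the integral. Since $\sigma\in\Gamma^s_T$ with $s>\frac{d}{2}+3$, the proof of Proposition \ref{h9} gives the pointwise bound $|\sigma_p(x,\theta)|\le C\,\langle\theta\rangle^{-2}$, so the integral converges absolutely; moreover $\hat\sigma_p(x,0)=\chi_p(0)\,\hat\sigma(x,0)=0$, i.e.\ $\sigma_p$ has moment zero. A short computation, in which the moment-zero property is used to reconcile the two cases $\alpha>0$ and $\alpha<0$ (the limits of integration reverse, but $\int_{\R}\sigma_p(x,\cdot)=0$ collapses both cases to the same formula), yields
\[
k(x,\theta_0,\xi_d)=\frac{1}{\alpha}\,\sigma_p^*\big(x,\theta_0+(\omega+\alpha)\,\xi_d\big),
\]
where $\sigma_p^*(x,\theta):=-\int_\theta^{+\infty}\sigma_p(x,s)\,{\rm d}s$ is the unique decaying $\theta$-primitive of $\sigma_p$ studied in Proposition \ref{h9}. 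It is precisely here that the moment-zero cutoff is essential: without it the primitive would not decay to zero and $k(x,\theta_0,\frac{x_d}{\eps})$ would fail to lie in any $E^r_T$ space.

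Next, observe that $\frac{1}{\alpha}\sigma_p^*(x,\theta_0+(\omega+\alpha)\xi_d)$ is exactly a function of the form treated in Proposition \ref{h1}(a), with interior profile $\frac{1}{\alpha}\sigma_p^*$ and slope $\omega+\alpha$. By Proposition \ref{h9}(a) followed by Proposition \ref{h8}(a),
\[
|\sigma_p^*|_{H^s_T}\le C\,\frac{|\sigma_p|_{H^s_T}}{p}\le C\,\frac{|\sigma|_{H^s_T}}{p},
\]
so $\frac{1}{\alpha}\sigma_p^*\in H^{(s-1)+1}_T=H^s_T$, and Proposition \ref{h1}(a) gives
\[
\Big|\tfrac{1}{\alpha}\sigma_p^*\big(x,\theta_0+(\omega+\alpha)\xi_d\big)\Big|_{\cE^{s-1}_T}\le C\,|\sigma_p^*|_{H^s_T}\le C\,\frac{|\sigma|_{H^s_T}}{p}.
\]
Finally, specializing $\xi_d=\frac{x_d}{\eps}$ and applying Proposition \ref{h1}(b), which bounds the $E^{s-1}_T$ norm of the specialization by the $\cE^{s-1}_T$ norm uniformly in $\eps$, produces the claimed estimate $|k(x,\theta_0,\frac{x_d}{\eps})|_{E^{s-1}_T}\le C\,|\sigma|_{H^s_T}/p$.

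I do not expect a serious obstacle here: the entire quantitative content is front-loaded into the small-divisor bound of Proposition \ref{h9}(a) (the source of the $1/p$) and the embedding $\cE^{s-1}_T\hookrightarrow E^{s-1}_T$ of Proposition \ref{h1}(b). The only step requiring any care is the bookkeeping in the change of variables when $\alpha<0$; once one notes that $\int_{\R}\sigma_p(x,\cdot)=0$ forces both sign cases into the single formula above, the sign of $\alpha$ only enters through the harmless multiplicative constant $1/|\alpha|$, which is absorbed into $C$.
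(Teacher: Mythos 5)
Your proof is correct. The paper does not reproduce a proof of this proposition but refers to Proposition 4.15 of \cite{CW4}; your argument---observing that $k(x,\theta_0,\xi_d)=\frac{1}{\alpha}\,\sigma_p^*\big(x,\theta_0+(\omega+\alpha)\xi_d\big)$ with the two sign cases for $\alpha$ reconciled by the moment-zero property $\hat\sigma_p(x,0)=0$, then applying Proposition \ref{h9}(a), Proposition \ref{h8}(a), and Proposition \ref{h1}---is the natural one given the machinery of Section \ref{mzero} and is what that reference does.
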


\section{Singular pseudodifferential calculus for pulses}
\label{calculus}

\emph{\quad} Here we summarize the parts of the singular pulse calculus constructed in \cite{CGW2} that 
are needed in this article. First we define the singular Sobolev spaces used to describe mapping properties.

The variable in $\R^{d+1}$ is denoted $(x,\theta)$, $x \in \R^d$, $\theta \in \R$, and the associated frequency 
is denoted $(\xi,k)$. In this new context, the singular Sobolev spaces are defined as follows. We consider a 
fixed vector $\beta \in \R^d \setminus \{ 0\}$. Then for $s \in \R$ and $\eps \in \, ]0,1]$, the anisotropic Sobolev 
space $H^{s,\eps} (\R^{d+1})$ is defined by
\begin{multline*}
H^{s,\eps}(\R^{d+1}) := \Big\{ u \in {\mathcal S}'(\R^{d+1}) \, / \, \widehat{u} \in L^2_{\rm loc}(\R^{d+1}) \\
\text{\rm and} \quad \int_{\R^{d+1}} \left( 1+\left| \xi+\dfrac{k \, \beta}{\eps} \right|^2 \right)^s
\, \big| \widehat{u}(\xi,k) \big|^2 \, {\rm d}\xi \, {\rm d}k <+\infty \Big\} \, .
\end{multline*}
Here $\widehat{u}$ denotes the Fourier transform of $u$ on $\R^{d+1}$. The space $H^{s,\eps}(\R^{d+1})$ is
equipped with the family of norms
\begin{equation*}
\forall \, \gamma \ge 1 \, ,\quad \forall \, u \in H^{s,\eps}(\R^{d+1}) \, ,\quad
\| u \|_{H^{s,\eps},\gamma}^2 := \dfrac{1}{(2\, \pi)^{d+1}} \, \int_{\R^{d+1}}
\left( \gamma^2 +\left| \xi+\dfrac{k \, \beta}{\eps} \right|^2 \right)^s
\, \big| \widehat{u}(\xi,k) \big|^2 \, {\rm d}\xi \, {\rm d}k \, .
\end{equation*}
When $m$ is an integer, the space $H^{m,\eps} (\R^{d+1})$ coincides with the space of functions $u \in L^2
(\R^{d+1})$ such that the derivatives, in the sense of distributions,
\begin{equation*}
\left( \partial_{x_1} +\dfrac{\beta_1}{\eps} \, \partial_\theta \right)^{\alpha_1} \dots
\left( \partial_{x_d} +\dfrac{\beta_d}{\eps} \, \partial_\theta \right)^{\alpha_d} \, u \, ,\quad
\alpha_1+\dots+\alpha_d \le m \, ,
\end{equation*}
belong to $L^2 (\R^{d+1})$. In the definition of the norm $\| \cdot \|_{H^{m,\eps},\gamma}$, one power of
$\gamma$ counts as much as one derivative.

\subsection{Symbols}

\emph{\quad} Our singular symbols are built from the following sets of classical symbols.

\begin{defn}\label{n1}
Let $\cO\subset \R^N$ be an open subset that contains the origin.  For $m\in\R$ we let $\bfS^m(\cO)$ denote
the class of all functions $\sigma:\cO\times \R^d\times [1,\infty)\to \C^{N \times N}$, $N \ge 1$, such that 
$\sigma$ is $C^\infty$ on $\cO \times \R^d$ and for all compact sets $K\subset \cO$:
\begin{equation*}
\sup_{v\in K} \, \sup_{\xi'\in\R^d} \, \sup_{\gamma\geq 1} \, (\gamma^2+|\xi|^2)^{-(m-|\nu|)/2} \,
|\partial^\alpha_v\partial_{\xi'}^\nu \sigma(v,\xi,\gamma)| \leq C_{\alpha,\nu,K}.
\end{equation*}
\end{defn}

Let ${\mathcal C}^k_b(\R^{d+1})$, $k \in \N$, denote the space of continuous and bounded functions 
on $\R^{d+1}$, whose derivatives up to order $k$ are continuous and bounded. Let us first define the 
singular symbols.

\begin{defn}[Singular symbols]
\label{def4}
Fix $\beta\in\R^d\setminus 0$, let $m \in \R$, and let $n \in \N$. Then we let $S^m_n$ denote the set of 
families of functions $(a_{\eps,\gamma})_{\eps \in ]0,1],\gamma \ge 1}$ that are constructed as follows:
\begin{equation}
\label{singularsymbolp}
\forall \, (x,\theta,\xi,k) \in \R^{d+1} \times \R^{d+1} \, ,\quad a_{\eps,\gamma} (x,\theta,\xi,k) =
\sigma \left( \eps \, V(x,\theta),\xi+\dfrac{k \, \beta}{\eps},\gamma \right) \, ,
\end{equation}
where $\sigma \in {\bf S}^m({\mathcal O})$, $ V$ belongs to the space ${\mathcal C}^n_b (\R^{d+1})$ and 
where furthermore $V$ takes its values in a convex compact subset $K$ of ${\mathcal O}$ that contains 
the origin (for instance $K$ can be a closed ball centered round the origin).
\end{defn}

All results below extend to the case where in place of a function $V$ that is independent of $\eps$, the 
representation \eqref{singularsymbolp} is considered with a function $V_\eps$ that is indexed by $\eps$, 
provided that we assume that all functions $\eps \, V_\eps$ take values in a {\it fixed} convex compact 
subset $K$ of ${\mathcal O}$ that contains the origin, and $(V_\eps)_{\eps \in (0,1]}$ is a bounded family 
of ${\mathcal C}^n_b (\R^{d+1})$. Such singular symbols with a function $V_\eps$ are exactly the kind 
of symbols that we manipulated in the construction of exact solutions to the singular system \eqref{15p}.

\subsection{Definition of operators and action on Sobolev spaces}
\label{sect8}

To each symbol $a = (a_{\eps,\gamma})_{\eps \in ]0,1],\gamma \ge 1} \in S^m_n$ given by the formula 
\eqref{singularsymbolp}, we associate a singular pseudodifferential operator $\opeg (a)$, with $\eps \in \, ]0,1]$ 
and $\gamma \ge 1$, whose action on a function $u \in {\mathcal S} (\R^{d+1} ; \C^N)$ is defined by
\begin{equation}
\label{singularpseudop}
\opeg (a) \, u \, (x,\theta) := \dfrac{1}{(2\, \pi)^{d+1}} \, \int_{\R^{d+1}} {\rm e}^{i\, (\xi \cdot x +k \, \theta)} \,
\sigma \left( \eps \, V(x,\theta),\xi+\dfrac{k \, \beta}{\eps},\gamma \right) \, \widehat{u} (\xi,k)
\, {\rm d}\xi \, {\rm d}k \, .
\end{equation}
Let us briefly note that for the Fourier multiplier $\sigma (v,\xi,\gamma) =i\, \xi_1$, the corresponding
singular operator is $\partial_{x_1} +(\beta_1/\eps) \, \partial_\theta$. We now describe the action of 
singular pseudodifferential operators on Sobolev spaces.

\begin{prop}
\label{prop13}
Let $n \ge d+1$, and let $a \in S^m_n$ with $m \le 0$. Then $\opeg (a)$ in \eqref{singularpseudop} defines
a bounded operator on $L^2 (\R^{d+1})$: there exists a constant $C>0$, that only depends on $\sigma$
and $V$ in the representation \eqref{singularsymbolp}, such that for all $\eps \in \, ]0,1]$ and for all
$\gamma \ge 1$, there holds
\begin{equation*}
\forall \, u \in {\mathcal S} (\R^{d+1}) \, ,\quad \left\| \opeg (a) \, u \right\|_0 \le \dfrac{C}{\gamma^{|m|}} \, \| u \|_0 \, .
\end{equation*}
\end{prop}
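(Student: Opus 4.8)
The plan is to reduce, via a change of frequency variable adapted to the singular structure, to an $L^2$ bound in which the factor $1/\eps$ disappears. The symbol $a_{\eps,\gamma}(x,\theta,\xi,k)$ depends on the frequency $(\xi,k)\in\R^{d+1}$ only through $X=\xi+k\beta/\eps\in\R^d$, so the substitution $X=\xi+k\beta/\eps$ (Jacobian one, for fixed $k$) should let me integrate out the remaining one-dimensional direction. Concretely, for $u\in\mathcal S(\R^{d+1})$ I would insert $\widehat u(\xi,k)=\int e^{-i(\xi\cdot x'+k\theta')}u(x',\theta')\,dx'\,d\theta'$ into \eqref{singularpseudop}, substitute $X=\xi+k\beta/\eps$, and carry out the $k$- and $\theta'$-integrals (Fourier inversion in $\theta'$, since $\sigma$ does not depend on $k$), arriving at
\begin{equation*}
\opeg(a)\,u(x,\theta)=\int_{\R^d}k_\sigma\big(\eps V(x,\theta),\,x-x',\,\gamma\big)\,u\Big(x',\,\theta-\tfrac{\beta\cdot(x-x')}{\eps}\Big)\,dx',\qquad k_\sigma(v,y,\gamma):=\frac{1}{(2\pi)^d}\int_{\R^d}e^{iy\cdot X}\sigma(v,X,\gamma)\,dX.
\end{equation*}
(When $m$ is close to $0$ the kernel $k_\sigma$ is only a tempered distribution in $y$; the identity is then read after truncating $\sigma$ in $X$ and passing to the limit.) The gain of this form is twofold: the variable $\theta$ enters only through the shear $(x,\theta)\mapsto(x-x',\theta-\beta\cdot(x-x')/\eps)$, which is a translation of $\R^{d+1}$ and hence Lebesgue-measure-preserving; and the amplitude $\eps V(x,\theta)$ is differentiated in $(x,\theta)$ only through $\eps V$, so every $(x,\theta)$-derivative of it is $O(\eps)$ — no $1/\eps$ losses survive the reduction.

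Granting a pointwise bound $\sup_{v\in K}|k_\sigma(v,y,\gamma)|\le g(y)$ with $g\in L^1(\R^d)$, where $K\subset\mathcal O$ is the fixed convex compact set containing the range of $\eps V$ (with $\eps\in(0,1]$), the estimate follows by a Schur-type argument. Indeed, from the displayed formula, after the change $y=x-x'$, Cauchy--Schwarz in $y$ gives
\begin{equation*}
|\opeg(a)u(x,\theta)|^2\le\|g\|_{L^1}\int_{\R^d}g(y)\,\Big|u\big(x-y,\theta-\tfrac{\beta\cdot y}{\eps}\big)\Big|^2\,dy,
\end{equation*}
and integrating in $(x,\theta)$, then using Fubini and the measure-preservation of $(x,\theta)\mapsto(x-y,\theta-\beta\cdot y/\eps)$ for each fixed $y$, yields $\|\opeg(a)u\|_0\le\|g\|_{L^1}\,\|u\|_0$. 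Everything therefore reduces to producing such a $g$ with $\|g\|_{L^1}\le C\gamma^{-|m|}$, uniformly in $\gamma\ge1$ and $\eps\in(0,1]$, with $C$ depending only on finitely many of the constants $C_{\alpha,\nu,K}$ of Definition \ref{n1} (with $\alpha=0$) and on $K$.

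The kernel estimate is the routine part. Integrating by parts in $X$ and using the bounds of Definition \ref{n1} on $K$,
\begin{equation*}
\big|y^\alpha\,k_\sigma(v,y,\gamma)\big|\le C_\alpha\int_{\R^d}(\gamma^2+|X|^2)^{(m-|\alpha|)/2}\,dX=C_\alpha'\,\gamma^{m-|\alpha|+d}\qquad\text{whenever }|\alpha|>m+d,
\end{equation*}
so, combining $|\alpha|=0$ and $|\alpha|=d+1$, one gets $|k_\sigma(v,y,\gamma)|\le C\,\gamma^{m+d}\langle\gamma y\rangle^{-d-1}$, hence $\|k_\sigma(v,\cdot,\gamma)\|_{L^1_y}\le C\gamma^{m+d}\cdot\gamma^{-d}=C\gamma^{-|m|}$ using $m\le0$; this proves the proposition when $m<-d$. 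For general $-d\le m\le0$ I would first decompose $\sigma=\sum_{j\ge0}\sigma_j$ dyadically in $X$, with $\sigma_j$ supported in $\{|X|\sim 2^j\gamma\}$; the same computation gives $\|k_{\sigma_j}(v,\cdot,\gamma)\|_{L^1}\le C\,2^{jm}\gamma^{-|m|}$, which is summable for $m<0$ and yields $\|\opeg(a)u\|_0\le C\gamma^{-|m|}\|u\|_0$ by the triangle inequality. Density of $\mathcal S(\R^{d+1})$ in $L^2(\R^{d+1})$ then gives the bounded operator.

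The one genuinely delicate point, and the main obstacle, is the endpoint $m=0$: there the kernel $k_\sigma$ is not $L^1$ and the dyadic triangle inequality diverges, so one must run the standard almost-orthogonality (Cotlar--Stein) argument for the pieces $\opeg(\sigma_j)$. The reason this can be made uniform in $\eps$ is precisely the structural observation above: in the kernel representation the only differentiations that could manufacture $1/\eps$ factors are those of $\sigma$ in the variable $X$ (the frequency derivatives $\partial_X^\alpha\sigma$ lower the order and stay $\eps$-independent), while the $(x,\theta)$-derivatives that enter the adjoints and compositions only ever hit the amplitude $\eps V$, producing the harmless factors $\eps\,\partial_{(x,\theta)}^\alpha V$ — this is where the regularity hypothesis $n\ge d+1$ on $V$ is used. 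The full verification of this endpoint argument is carried out in \cite{CGW2}; the reduction to it is the substantive content of the present statement.
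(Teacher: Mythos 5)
The paper itself gives no proof of this proposition: every result in Appendix B, including this one, is cited to the companion paper \cite{CGW2}, so there is no in-house argument to compare your write-up against. Your reduction, however, is sound and captures the structural mechanism that makes the singular calculus work: after the substitution $X=\xi+k\beta/\eps$ and Fourier inversion in the one remaining frequency $k$, the operator becomes a composition of a $d$-dimensional convolution-type kernel $k_\sigma$ with the shear $(x',\theta')\mapsto(x-x',\theta-\beta\cdot(x-x')/\eps)$, the shear is measure-preserving, and the factor $1/\eps$ has been absorbed into a harmless translation. Your kernel identity is correct, the Schur bound with the scaling $\|k_\sigma(v,\cdot,\gamma)\|_{L^1_y}\le C\gamma^{-|m|}$ is correct for $m<-d$, and the dyadic summation extending this to $-d\le m<0$ is also correct.

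The genuine gap is the endpoint $m=0$, and it is not minor: in this paper that is essentially the only case that gets used (G{\aa}rding's inequality in Theorem \ref{thm11}, the adjoint result of Proposition \ref{prop18}, and part (a) of Proposition \ref{prop20} all live at order zero), and it is also where the regularity hypothesis $n\ge d+1$ is actually needed. Your Schur argument for $m<0$ uses only that $\eps V$ takes values in the fixed compact $K$, and no derivatives of $V$ at all; by contrast, the Cotlar--Stein / Calder\'on--Vaillancourt argument for order zero requires estimating kernels of $T_jT_k^*$ and $T_j^*T_k$, which brings in $(x,\theta)$-derivatives of the amplitude up to order $d+1$ and is the source of the dependence of $C$ on $\|V\|_{{\mathcal C}^{d+1}_b}$ recorded in the remark following the proposition. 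Writing ``run Cotlar--Stein, it is carried out in \cite{CGW2}'' is an honest acknowledgment, but the almost-orthogonality computation in the presence of the $\theta$-shear and of the amplitude $\eps V(x,\theta)$ is precisely the substantive content of the proposition, and your outline stops exactly there. To close the argument you would need at least to exhibit the kernel of $T_jT_k^*$, track where the integrations by parts generate $\eps\,\partial^\alpha_{(x,\theta)}V$ (and verify that no uncompensated $1/\eps$ survives), and derive an off-diagonal decay in $|j-k|$ that is summable.
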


The constant $C$ in Proposition \ref{prop13} depends uniformly on the compact set in which $V$ takes its
values and on the norm of $V$ in ${\mathcal C}^{d+1}_b$. For operators defined by symbols of order $m>0$ 
we have:

\begin{prop}
\label{prop14}
Let $n \ge d+1$, and let $a \in S^m_n$ with $m>0$. Then $\opeg (a)$ in \eqref{singularpseudop} defines
a bounded operator from $H^{m,\eps}(\R^{d+1})$ to $L^2 (\R^{d+1})$: there exists a constant $C>0$, that
only depends on $\sigma$ and $V$ in the representation \eqref{singularsymbolp}, such that for all $\eps \in
\, ]0,1]$ and for all $\gamma \ge 1$, there holds
\begin{equation*}
\forall \, u \in {\mathcal S} (\R^{d+1}) \, ,\quad \left\| \opeg (a) \, u \right\|_0 \le C \, \| u \|_{H^{m,\eps},\gamma} \, .
\end{equation*}
\end{prop}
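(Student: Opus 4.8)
The plan is to factor $\opeg(a)$ through the singular Fourier multiplier $\Lambda_D$ of order one (the analogue, in the variables $(\xi,k)$ of this appendix, of the operator introduced in Section~\ref{exact}, with symbol $\Lambda(X,\gamma)=(\gamma^2+|X|^2)^{1/2}$, $X=\xi+k\beta/\eps$), thereby reducing the claim to the order-zero $L^2$ boundedness already established in Proposition~\ref{prop13}. The symbol $\Lambda^m=(\gamma^2+|\xi|^2)^{m/2}$, viewed as a function of $\xi$ alone and independent of $v$, lies in $\bfS^m(\cO)$ for every $\cO$, so $\Lambda_D^m\in S^m_n$ for every $n$, and for $u\in\cS(\R^{d+1})$ Plancherel's theorem together with the very definition of the norm gives
\begin{equation*}
\|\Lambda_D^m u\|_0^2 = \frac{1}{(2\pi)^{d+1}}\int_{\R^{d+1}}\left(\gamma^2+\Big|\xi+\frac{k\,\beta}{\eps}\Big|^2\right)^{m}\big|\widehat u(\xi,k)\big|^2\,{\rm d}\xi\,{\rm d}k = \|u\|_{H^{m,\eps},\gamma}^2 .
\end{equation*}
Hence it suffices to prove $\|\opeg(a)\,u\|_0\le C\,\|\Lambda_D^m u\|_0$ with $C$ depending only on $\sigma$ and $V$.

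First I would split off the elliptic factor at the level of classical symbols: set
\begin{equation*}
\tilde\sigma(v,\xi,\gamma):=\sigma(v,\xi,\gamma)\,(\gamma^2+|\xi|^2)^{-m/2},\qquad\text{so that}\qquad \sigma=\tilde\sigma\cdot\Lambda^m .
\end{equation*}
Because $\gamma\ge 1$, the factor $(\gamma^2+|\xi|^2)^{-m/2}$ is smooth in $\xi$ and satisfies $|\partial_\xi^\nu(\gamma^2+|\xi|^2)^{-m/2}|\le C_\nu(\gamma^2+|\xi|^2)^{(-m-|\nu|)/2}$ uniformly in $\gamma\ge1$; combining this with $\sigma\in\bfS^m(\cO)$ via the Leibniz rule shows that $\tilde\sigma\in\bfS^0(\cO)$, with seminorms controlled by those of $\sigma$ (and absolute constants). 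Consequently $\tilde a:=\big(\tilde\sigma(\eps\,V(x,\theta),X,\gamma)\big)_{\eps,\gamma}$ is a singular symbol in $S^0_n$, with the same $V\in\cC^n_b$ and hence the same $n\ge d+1$.

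Next I would record the \emph{exact} operator identity $\opeg(a)=\opeg(\tilde a)\circ\Lambda_D^m$. Indeed, for $u\in\cS(\R^{d+1})$ the function $w:=\Lambda_D^m u$ (which again lies in $\cS$) has Fourier transform $\widehat w(\xi,k)=\Lambda(\xi+k\beta/\eps,\gamma)^m\,\widehat u(\xi,k)$, and since the singular quantization \eqref{singularpseudop} multiplies $\widehat w$ by $\tilde\sigma(\eps\,V(x,\theta),\xi+k\beta/\eps,\gamma)$ before inverting the Fourier transform, one obtains precisely the formula \eqref{singularpseudop} for $\opeg(a)u$ — no commutator or remainder appears, because here the two symbols are composed purely on the frequency side. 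Applying Proposition~\ref{prop13} with $m$ replaced by $0$ to the operator $\opeg(\tilde a)$ then yields a constant $C>0$ depending only on $\tilde\sigma$ and $V$, hence only on $\sigma$ and $V$, such that $\|\opeg(\tilde a)\,w\|_0\le C\,\|w\|_0$ for all $w\in\cS(\R^{d+1})$. Taking $w=\Lambda_D^m u$ and invoking the Plancherel identity above gives the asserted bound on $\cS(\R^{d+1})$, whence on all of $H^{m,\eps}(\R^{d+1})$ by density.

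The only genuinely technical step is the symbol-class bookkeeping showing $\tilde\sigma\in\bfS^0(\cO)$ with seminorms uniform in $\gamma\ge1$; everything else is either the definition of the norms or the bilinear structure of \eqref{singularpseudop}, which makes the factorization an exact identity rather than an approximate one, so that none of the composition/smoothing machinery of \cite{CGW2} is needed for this particular statement.
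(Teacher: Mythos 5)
Your proof is correct. The paper itself does not supply a proof of Proposition~\ref{prop14} --- Appendix~\ref{calculus} only summarizes results established in \cite{CGW2} --- but your argument is the natural order-reduction one and the key step is sound: writing $\sigma = \tilde\sigma\,\Lambda^m$ with $\tilde\sigma\in\bfS^0(\cO)$ (seminorms uniform in $\gamma\ge 1$ by the Leibniz rule, exactly as you note), the identity $\opeg(a) = \opeg(\tilde a)\circ\Lambda_D^m$ holds \emph{exactly} because $\Lambda_D^m$ is a pure Fourier multiplier with no $(x,\theta)$-dependence, so the two symbols are multiplied pointwise under the integral in \eqref{singularpseudop} and no commutator or remainder can arise. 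Combined with the Plancherel identity $\|\Lambda_D^m u\|_0 = \|u\|_{H^{m,\eps},\gamma}$ and the $L^2$-boundedness of Proposition~\ref{prop13} applied to $\tilde a\in S^0_n$, this yields the stated bound with a constant depending only on $\sigma$ and $V$.
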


The next proposition describes the smoothing effect of operators of order $-1$.

\begin{prop}
\label{prop15}
Let $n \ge d+2$, and let $a \in S^{-1}_n$. Then $\opeg (a)$ in \eqref{singularpseudop} defines a bounded
operator from $L^2 (\R^{d+1})$ to $H^{1,\eps}(\R^{d+1})$: there exists a constant $C>0$, that only depends
on $\sigma$ and $V$ in the representation \eqref{singularsymbolp}, such that for all $\eps \in \, ]0,1]$ and for
all $\gamma \ge 1$, there holds
\begin{equation*}
\forall \, u \in {\mathcal S} (\R^{d+1}) \, ,\quad \left\| \opeg (a) \, u \right\|_{H^{1,\eps},\gamma} \le C \, \| u \|_0 \, .
\end{equation*}
\end{prop}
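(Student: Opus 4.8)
The plan is to deduce Proposition \ref{prop15} from the $L^2$ boundedness of Proposition \ref{prop13} by composing $\opeg(a)$ on the left with the singular Fourier multiplier $\Lambda_D$ whose symbol is $\Lambda(\X,\gamma)=(\gamma^2+|\X|^2)^{1/2}$. The first remark is purely a matter of definitions and Plancherel: for every $w\in\cS(\R^{d+1})$ one has $\|w\|_{H^{1,\eps},\gamma}=\|\Lambda_D w\|_0$, so the conclusion of the proposition is equivalent to the uniform bound $\|\Lambda_D\,\opeg(a)\,u\|_0\le C\,\|u\|_0$ for $\eps\in(0,1]$, $\gamma\ge1$. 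Write the symbol of $a$ as $\sigma(\eps V(z),\X,\gamma)$ with $z=(x,\theta)$, $\sigma\in\bfS^{-1}(\cO)$, $V\in\cC^n_b(\R^{d+1})$, $n\ge d+2$ (Definition \ref{def4}). Since $\Lambda\in\bfS^1$ is $v$-independent, the pointwise product $\tau:=\Lambda\,\sigma$ lies in $\bfS^0(\cO)$, so the family $a^\tau$ with symbol $\tau(\eps V(z),\X,\gamma)$ belongs to $S^0_n$.

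The heart of the argument is to show that
\[
\Lambda_D\circ\opeg(a)=\opeg(a^\tau)+R,
\]
where $R$ is bounded on $L^2$, uniformly in $\eps$ and $\gamma$. Granting this, Proposition \ref{prop13} applied to $a^\tau\in S^0_n$ (using $n\ge d+1$) gives $\|\opeg(a^\tau)u\|_0\le C\|u\|_0$, and together with the bound on $R$ this yields $\|\Lambda_D\opeg(a)u\|_0\le C\|u\|_0$. To produce the decomposition, write $X$ for the linear map $(\xi,k)\mapsto\X$ and set $\sigma_\zeta(z):=\sigma(\eps V(z),X(\zeta),\gamma)$; on the Fourier side $\widehat{\opeg(a)u}(\eta)=(2\pi)^{-(d+1)}\int\widehat{\sigma_\zeta}(\eta-\zeta)\,\hat u(\zeta)\,d\zeta$, so $\opeg(a^\tau)$ is the term obtained by replacing $\Lambda(X(\eta),\gamma)$ by $\Lambda(X(\zeta),\gamma)$, and $R$ is the operator whose Fourier-side kernel is $\big(\Lambda(X(\eta),\gamma)-\Lambda(X(\zeta),\gamma)\big)\,\widehat{\sigma_\zeta}(\eta-\zeta)$. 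Taylor expanding and using linearity of $X$,
\[
\Lambda(X(\eta),\gamma)-\Lambda(X(\zeta),\gamma)=\int_0^1\nabla_X\Lambda\big(X(\zeta)+tX(\eta-\zeta),\gamma\big)\cdot X(\eta-\zeta)\,dt,
\]
so everything reduces to understanding the factor $X(\eta-\zeta)\,\widehat{\sigma_\zeta}(\eta-\zeta)$.

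The key identity is that, componentwise for $1\le j\le d$,
\[
X(\eta-\zeta)_j\,\widehat{\sigma_\zeta}(\eta-\zeta)=-i\,\widehat{\Psi^j_\zeta}(\eta-\zeta),\qquad
\Psi^j_\zeta(z):=\sum_l(\partial_{v_l}\sigma)(\eps V(z),X(\zeta),\gamma)\,\big[\eps\,\partial_{x_j}V_l(z)+\beta_j\,\partial_\theta V_l(z)\big],
\]
which follows by rewriting $\xi_\mu$ and $(\beta_j/\eps)k_\mu$ as $z$-derivatives and using the chain rule $\partial_z\big(\sigma(\eps V(z),\cdot)\big)=\eps(\partial_v\sigma)(\eps V(z),\cdot)\,\partial_z V$. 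The point is that the $1/\eps$ hidden inside the shifted frequency $X(\eta-\zeta)$ is cancelled exactly by the $\eps$ produced when one differentiates the slow argument $\eps V(z)$: the bracket $\eps\partial_{x_j}V_l+\beta_j\partial_\theta V_l$ is a bounded element of $\cC^{n-1}_b$ with no negative power of $\eps$, while $\partial_{v_l}\sigma\in\bfS^{-1}(\cO)$ still has order $-1$. Feeding this back, the first Taylor term of $R$ is a finite sum of operators of the form $M_W\circ\opeg(\rho)$, with $W\in\cC^{n-1}_b$ bounded (so $M_W$ is bounded on $L^2$) and $\rho$ a clean singular symbol in $S^{-1}_n$ assembled from $\partial_X\Lambda\in\bfS^0$ and $\partial_v\sigma\in\bfS^{-1}(\cO)$; each such operator is bounded on $L^2$, with norm $O(1/\gamma)$, by Proposition \ref{prop13}. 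The Taylor remainder carries a further factor $X(\eta-\zeta)$ and is handled by iterating the same identity a bounded number of times — each step consumes one derivative of $V$ and lowers the $\Lambda$-order by one — until the residual symbol is of sufficiently negative order and still possesses at least $d+1$ derivatives, which is exactly where the hypothesis $n\ge d+2$ is used; the residual operator is then bounded on $L^2$ by Proposition \ref{prop13} once more. This establishes the decomposition and hence the proposition.

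The main obstacle is precisely the uniformity in $\eps$ of the remainder $R$: naively $X(\eta-\zeta)$ is only controlled by $C\langle\eta-\zeta\rangle/\eps$, so an $O(1)$ shift in $(\xi,k)$ moves $\X$ by $O(1/\eps)$, and one must verify at every order of the symbolic expansion that this $1/\eps$ is absorbed by the compensating $\eps$ coming from $z$-derivatives of $\eps V(z)$. This balance is exactly what the singular pseudodifferential calculus of \cite{CGW2} is built to exploit, and the argument above runs parallel to the proofs of Propositions \ref{prop13} and \ref{prop14}; the net gain of one derivative — mapping $L^2$ into $H^{1,\eps}$ rather than merely $L^2$ — comes from the order $-1$ of $\sigma$ together with the fact that $\Lambda_D$ is an exact Fourier multiplier, so that the leading symbol of the composition is the exact product $\Lambda\,\sigma$ with no positive-order correction.
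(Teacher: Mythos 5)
Your overall strategy is the natural one, and you have correctly identified the decisive mechanism: rewrite the goal as $\|\Lambda_D\opeg(a)u\|_0\le C\|u\|_0$, decompose $\Lambda_D\opeg(a)=\opeg(a^\tau)+R$ with $\tau=\Lambda\sigma\in\bfS^0$ so the first piece is handled by Proposition \ref{prop13}, and then trade the dangerous factor $X(\eta-\zeta)$ in the Fourier-side kernel of $R$ for $z$-derivatives of $V$ via integration by parts, so that the $1/\eps$ in $k\beta/\eps$ is exactly annihilated by the $\eps$ sitting in front of $V$. The identity
$X(\eta-\zeta)_j\,\widehat{\sigma_\zeta}(\eta-\zeta)=-i\,\widehat{\Psi^j_\zeta}(\eta-\zeta)$ with $\Psi^j_\zeta=\sum_l(\partial_{v_l}\sigma)(\eps V,X(\zeta),\gamma)\big[\eps\partial_{x_j}V_l+\beta_j\partial_\theta V_l\big]$ is correct, and you account correctly for where the extra hypothesis $n\ge d+2$ (compared with $n\ge d+1$ in Proposition \ref{prop13}) is consumed.

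The gap is in the estimate of $R$ after that first integration by parts. What is left is the operator with Fourier-side kernel $\int_0^1\nabla_X\Lambda\big(X(\zeta)+tX(\eta-\zeta),\gamma\big)\cdot(-i\widehat{\Psi_\zeta}(\eta-\zeta))\,dt$, and the factor $\nabla_X\Lambda$ is evaluated at the \emph{mixed} argument $(1-t)X(\zeta)+tX(\eta)$. For $t>0$ this is neither a function of $\zeta$ alone (which would let you absorb it into a singular symbol and apply Proposition \ref{prop13}) nor of $\eta$ alone (which would make it a Fourier multiplier composed on the left), nor a tensor product of such. So your claimed reduction of $R$ to finite sums of $M_W\circ\opeg(\rho)$ is valid only for the $t=0$ term $\nabla_X\Lambda(X(\zeta),\gamma)$; the piece with coefficient $\nabla_X\Lambda(Y_t,\gamma)-\nabla_X\Lambda(X(\zeta),\gamma)$ has no such representation. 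Re-Taylor-expanding and iterating the same identity does not escape this difficulty: each pass reproduces a factor $\nabla^{k+1}_X\Lambda$ still evaluated at a convex combination of $X(\zeta)$ and $X(\eta)$, so the residual is never a clean singular pseudodifferential operator to which Proposition \ref{prop13} applies. Moreover, the hypothesis $n\ge d+2$ leaves you exactly one derivative of $V$ to spare, so the ``bounded number of iterations'' is at most one, after which the residual retains both the mixed argument and only $n-2\ge d$ derivatives of $V$, short of what any direct application of Proposition \ref{prop13} would need. To close the argument one needs an additional device for the mixed-argument kernel — for instance a direct Cotlar--Stein almost-orthogonality estimate exploiting that $\nabla_X\Lambda$ is uniformly bounded, or a representation of $\nabla_X\Lambda$ that decouples the two arguments into unimodular factors — and this step, which is where the real difficulty of the result lies, is not supplied.
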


\begin{rem}\label{a4}
\textup{In applications of the pulse calculus, we verify the hypothesis that for $V$ as in \eqref{singularsymbolp}, 
$V\in \mathcal{C}^n_b(\R^{d+1})$, by showing $V\in H^s(\R^{d+1})$ for some $s>\frac{d+1}{2}+n$.}
\end{rem}

\subsection{Adjoints and products}
\label{sect9}

For proofs of the following results we refer to \cite{CGW2}. The two first results deal with adjoints of singular 
pseudodifferential operators while the last two deal with products.

\begin{prop}
\label{prop18}
Let $a=\sigma(\eps V,\X,\gamma) \in S_n^0$, $n \ge 2\, (d+1)$, where $V\in H^{s_0}(\R^{d+1})$ for some 
$s_0>\frac{d+1}{2}+1$, and let $a^*$ denote the conjugate transpose of the symbol $a$. Then $\opeg (a)$ 
and $\opeg (a^*)$ act boundedly on $L^2$ and there exists a constant $C \ge 0$ such that for all $\eps \in 
\, ]0,1]$ and for all $\gamma \ge 1$, there holds
\begin{equation*}
\forall \, u \in {\mathcal S} (\R^{d+1}) \, ,\quad
\left\| \opeg (a)^* \, u -\opeg (a^*) \, u \right\|_0 \le \dfrac{C}{\gamma} \, \| u \|_0 \, .
\end{equation*}

If $n \ge 3\, d +3$, then for another constant $C$, there holds
\begin{equation*}
\forall \, u \in {\mathcal S} (\R^{d+1}) \, ,\quad
\left\| \opeg (a)^* \, u -\opeg (a^*) \, u \right\|_{H^{1,\eps},\gamma} \le C \, \| u \|_0 \, ,
\end{equation*}
uniformly in $\eps$ and $\gamma$.
\end{prop}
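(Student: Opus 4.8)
The plan is to run the classical symbolic reduction of an adjoint, the one point specific to the singular calculus being the cancellation of the singular factor $1/\eps$: a derivative in the slow variables $(x,\theta)$ produces a factor $\eps$ by the chain rule applied to $\sigma(\eps V(x,\theta),\cdot,\cdot)$, while a derivative $\partial_k$ applied to $\sigma^*(\cdot,\xi+k\beta/\eps,\cdot)$ produces a factor $\beta/\eps$, and the two compensate to give a genuine gain of one order---equivalently a factor $1/\gamma$ after invoking Proposition \ref{prop13}.

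First I would write $\opeg(a)^*$ in amplitude form. Pairing $\opeg(a)u$ with $v\in{\mathcal S}(\R^{d+1})$ and unravelling the Fourier integrals gives
\[
\opeg(a)^* v(x,\theta) = \frac{1}{(2\pi)^{d+1}}\int_{\R^{d+1}}\int_{\R^{d+1}} e^{i(x-y)\cdot\xi + i(\theta-\tau)k}\,\sigma^*\!\Big(\eps\, V(y,\tau),\,\xi+\tfrac{k\,\beta}{\eps},\,\gamma\Big)\, v(y,\tau)\,{\rm d}y\,{\rm d}\tau\,{\rm d}\xi\,{\rm d}k,
\]
where $\sigma^*\in{\bf S}^0({\mathcal O})$ is the conjugate transpose of $\sigma$ and the oscillatory integral is understood in the usual regularized sense. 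Since the ``diagonal'' amplitude $\sigma^*(\eps V(x,\theta),\xi+k\beta/\eps,\gamma)$ is precisely the symbol $a^*$, the difference $\opeg(a)^*-\opeg(a^*)$ has amplitude $q(x,\theta,y,\tau,\xi,k) = \int_0^1 \eps\big(V(y,\tau)-V(x,\theta)\big)\cdot(\partial_v\sigma^*)\big(\eps V(x,\theta)+t\eps(V(y,\tau)-V(x,\theta)),\,\xi+k\beta/\eps,\,\gamma\big)\,{\rm d}t$, which vanishes on the diagonal $y=x,\tau=\theta$ and carries an explicit factor $\eps$.

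Next I would write $V(y,\tau)-V(x,\theta) = \sum_j (y_j-x_j)\!\int_0^1\! \partial_{x_j}V(\ldots)\,{\rm d}s + (\tau-\theta)\!\int_0^1\!\partial_\theta V(\ldots)\,{\rm d}s$ and integrate by parts, using $(y_j-x_j)e^{i(x-y)\cdot\xi} = i\partial_{\xi_j}e^{i(x-y)\cdot\xi}$ and $(\tau-\theta)e^{i(\theta-\tau)k} = i\partial_k e^{i(\theta-\tau)k}$, to move $\partial_{\xi_j}$, resp.\ $\partial_k$, onto the amplitude. In the $(\tau-\theta)$ term, the only dangerous one, $\partial_k\big[(\partial_v\sigma^*)(\cdots,\xi+k\beta/\eps,\gamma)\big] = (\beta/\eps)\,(\partial_\xi\partial_v\sigma^*)(\cdots,\xi+k\beta/\eps,\gamma)$, and the prefactor $\eps$ cancels the $1/\eps$; in the $(y_j-x_j)$ terms one simply obtains $\eps(\partial_{x_j}V)(\partial_{\xi_j}\partial_v\sigma^*)$, which is even better. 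Hence $\opeg(a)^*-\opeg(a^*) = \opeg(r)$ where $r$ is a finite sum of amplitudes of the shape (a smooth bounded function of the variables) times $(\partial_\xi\partial_v\sigma^*)(\eps V(\cdot),\xi+k\beta/\eps,\gamma)$, hence of order $-1$ in the singular sense; iterating the slow-variable expansion a fixed number of further times reduces $r$ to a sum of genuine singular left symbols of orders $-1,\dots,-K$ plus an amplitude of arbitrarily large negative order $-K$.

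Finally I would read off the two estimates. For the $L^2$ bound, Proposition \ref{prop13} applied to the order-$(-1)$ symbols, together with a direct Schur-type bound for the order-$(-K)$ remainder, gives $\|\opeg(a)^* v - \opeg(a^*) v\|_0 \le (C/\gamma)\|v\|_0$ uniformly in $\eps,\gamma$; for the $H^{1,\eps}$ bound one replaces Proposition \ref{prop13} by Proposition \ref{prop15}, under which an order-$(-1)$ operator maps $L^2$ into $H^{1,\eps}$ with $\eps$- and $\gamma$-uniform norm. The regularity thresholds $n\ge 2(d+1)$ and $n\ge 3d+3$ are exactly those needed so that, once the slow-variable Taylor expansion has consumed the requisite derivatives of $V$, enough smoothness survives for the amplitude-boundedness arguments underlying Propositions \ref{prop13} and \ref{prop15} to apply. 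The main obstacle is the bookkeeping of the expansion: one must verify that every term and every integration by parts really produces a full gain of one order, so that no uncancelled power of $1/\eps$ survives, and that the high-order remainder amplitude is genuinely bounded, resp.\ smoothing, with constants independent of $\eps$ and $\gamma$---which is precisely where the structure of the classes $S^m_n$ and the earlier boundedness propositions are essential.
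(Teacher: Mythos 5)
The paper does not prove Proposition~\ref{prop18}; the text of Section~\ref{sect9} explicitly defers all proofs of the adjoint and product results to \cite{CGW2}, so there is no ``paper's own proof'' here to compare against. Judged on its own terms, your outline correctly identifies the one non-classical point of the argument: the $\eps$ produced by the chain rule on $\sigma^*(\eps V(\cdot),\cdot,\cdot)$ must be matched against the $1/\eps$ that $\partial_k$ pulls out of the singular frequency $\xi+k\beta/\eps$, and this cancellation is indeed what gives a genuine gain of one order.

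There is, however, a real gap in how you propose to turn that observation into the two estimates. After one integration by parts, the remainder is an \emph{amplitude} $r(x,\theta,y,\tau,\xi,k)$; it depends on $(y,\tau)$ both through the factor $\partial V$ and through the intermediate point $\eps V(x,\theta)+t\eps\bigl(V(y,\tau)-V(x,\theta)\bigr)$ inside $\partial_\xi\partial_v\sigma^*$. It is therefore \emph{not} of the left-symbol form $\sigma(\eps V(x,\theta),\xi+k\beta/\eps,\gamma)$ to which Propositions~\ref{prop13} and~\ref{prop15} apply, and iterating the Taylor step does not cure this: since $V$ has only a fixed finite number of bounded derivatives, each step leaves behind another amplitude remainder, so the expansion never terminates in a finite sum of left symbols. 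What is actually needed is a boundedness result for singular \emph{amplitude} operators of order $\le -1$ (a Calder\'on--Vaillancourt-type lemma adapted to the class $S^m_n$, uniform in $\eps,\gamma$), and that is precisely the content buried in the phrase ``a direct Schur-type bound.'' That lemma carries the whole weight of the proof and cannot be waved at, since the uniformity in $\eps$ of the amplitude bounds, and the fact that no stray negative power of $\eps$ survives when derivatives in $\xi$ and $k$ hit the amplitude, are exactly what the regularity budgets $n\ge 2(d+1)$ and $n\ge 3d+3$ are there to secure. Your closing remark that these thresholds ``are exactly those needed'' is an assertion of the conclusion, not a verification: one must actually count how many $(\xi,k)$-derivatives the kernel estimate and, for the $H^{1,\eps}$ bound, the additional singular derivative in Proposition~\ref{prop15}, consume, and check that the budget closes with $V\in\mathcal{C}^n_b$ (equivalently $V\in H^{s_0}$ with $s_0>(d+1)/2+1$, as the statement requires).
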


\begin{prop}
\label{prop19}
Let $a=\sigma(\eps V,\X,\gamma) \in S_n^1$, $n \ge 3\, d +4$, where $V\in H^{s_0}(\R^{d+1})$ for some 
$s_0>\frac{d+1}{2}+1$, and let $a^*$ denote the conjugate transpose of the symbol $a$. Then $\opeg (a)$ 
and $\opeg (a^*)$ map $H^{1,\eps}$ into $L^2$ and there exists a family of operators $R^{\eps,\gamma}$ 
that satisfies
\begin{itemize}
 \item there exists a constant $C \ge 0$ such that for all $\eps \in \, ]0,1]$ and for all
      $\gamma \ge 1$, there holds
\begin{equation*}
\forall \, u \in {\mathcal S} (\R^{d+1}) \, ,\quad
\left\| R^{\eps,\gamma} \, u \right\|_0 \le C \, \| u \|_0 \, ,
\end{equation*}

 \item the following duality property holds
\begin{equation*}
\forall \, u,v \in {\mathcal S} (\R^{d+1}) \, ,\quad
\langle \opeg (a) \, u,v \rangle_{L^2} -\langle u,\opeg (a^*) \, v \rangle_{L^2} =\langle
R^{\eps,\gamma} \, u,v \rangle_{L^2} \, .
\end{equation*}
In particular, the adjoint $\opeg (a)^*$ for the $L^2$ scalar product maps $H^{1,\eps}$ into $L^2$.
\end{itemize}
\end{prop}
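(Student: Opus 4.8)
The plan is to peel off one singular derivative and reduce to the order-zero adjoint result of Proposition~\ref{prop18}. Let $\Lambda_D$ be the singular Fourier multiplier with symbol $\Lambda(X,\gamma)=(\gamma^2+|X|^2)^{1/2}$, $X=\xi+k\beta/\eps$. Since $\Lambda$ is a real scalar depending only on the singular frequency $X$, the operator $\Lambda_D$ is (formally) self-adjoint on $L^2$, it satisfies $\|\Lambda_D u\|_0=\|u\|_{H^{1,\eps},\gamma}$ and hence maps $H^{1,\eps}$ isometrically into $L^2$, and $\Lambda_D^{-1}$ has the order $-1$ symbol $\Lambda^{-1}\in S^{-1}_n$. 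Writing $a=\sigma(\eps V,X,\gamma)$ with $\sigma\in\bfS^1(\cO)$, set $b:=\Lambda^{-1}\sigma\in S^0_n$. Because $\Lambda_D$ is a pure Fourier multiplier, inspection of the defining integral \eqref{singularpseudop} gives the \emph{exact} factorization $\opeg(a)=\opeg(b)\,\Lambda_D$, so $\opeg(a)^*=\Lambda_D\,\opeg(b)^*$.

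First I would apply Proposition~\ref{prop18} to the order-zero symbol $b\in S^0_n$; its hypotheses hold because $n\ge 3d+4\ge 3d+3$ and $V\in H^{s_0}$ with $s_0>\frac{d+1}{2}+1$. This yields $\opeg(b)^*=\opeg(b^*)+R_0$ with $b^*=\Lambda^{-1}\sigma^*\in S^0_n$ (the real scalar $\Lambda^{-1}$ passes through the conjugate transpose), where — and this is the point — $R_0$ maps $L^2$ boundedly into $H^{1,\eps}$, uniformly in $\eps$ and $\gamma$. Composing with $\Lambda_D$ on the left gives $\opeg(a)^*=\Lambda_D\,\opeg(b^*)+\Lambda_D R_0$, and the second term factors as $L^2\xrightarrow{R_0}H^{1,\eps}\xrightarrow{\Lambda_D}L^2$, hence is a bounded operator on $L^2$, uniformly in $\eps,\gamma$; it will be part of the remainder $R^{\eps,\gamma}$.

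There remains $\Lambda_D\,\opeg(b^*)$. Because one factor is the explicit elliptic Fourier multiplier $\Lambda_D$, a finite symbolic composition expansion (Taylor-expanding $\Lambda(X)$ against the $(x,\theta)$-frequencies) gives $\Lambda_D\,\opeg(b^*)=\opeg(\Lambda\,b^*)+R_1$, in which each correction term pairs a derivative $\partial_X\Lambda$ (of order $0$) with an $(x,\theta)$-derivative of $b^*$, and each such derivative of $b^*$ produces a factor $\eps\,\partial V$ from the dependence on $\eps V$; hence $R_1$ has order $1+0-1=0$ and is $L^2$-bounded by Proposition~\ref{prop13}, uniformly in $\eps,\gamma$. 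Since $\Lambda\,b^*=\Lambda\Lambda^{-1}\sigma^*=\sigma^*=a^*$, we obtain $\opeg(a)^*=\opeg(a^*)+R^{\eps,\gamma}$ with $R^{\eps,\gamma}:=R_1+\Lambda_D R_0$ bounded on $L^2$ uniformly in $\eps,\gamma$. The duality identity of the statement is then just the definition of $R^{\eps,\gamma}$ tested against Schwartz functions, and since $\opeg(a^*)$ maps $H^{1,\eps}$ into $L^2$ by Proposition~\ref{prop14} while $R^{\eps,\gamma}$ maps $L^2\supset H^{1,\eps}$ into $L^2$, so does $\opeg(a)^*$.

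The hard part is the uniformity, in $\eps$ and $\gamma$, of the remainder bound in the $\Lambda_D\,\opeg(b^*)$ composition: one must iterate the expansion far enough that the final remainder is genuinely $L^2$-bounded — reducing any double/amplitude symbol that appears to a left symbol in $S^0_{n'}$ and then invoking Proposition~\ref{prop13} — while checking at every step that the powers of $1/\eps$ created whenever a $\theta$-frequency derivative lands on $X=\xi+k\beta/\eps$ are exactly absorbed by $\eps$'s coming from differentiating $\eps V$; this balancing, together with the count of bounded derivatives of $V$ required, is why the threshold rises from $3d+3$ in Proposition~\ref{prop18} to $3d+4$ here. If one wishes to avoid the Fourier-multiplier composition lemma altogether, the same conclusion follows by a direct computation: write $\langle\opeg(a)u,v\rangle_{L^2}-\langle u,\opeg(a^*)v\rangle_{L^2}$ as a triple oscillatory integral, use $\sigma(\eps V(w),X,\gamma)-\sigma(\eps V(z),X,\gamma)=\eps\int_0^1(\partial_v\sigma)(\dots)\,dt\cdot(V(w)-V(z))$ together with $V(w)-V(z)=(w-z)\cdot\int_0^1(\nabla V)(\dots)\,ds$, and integrate by parts to trade the factor $w-z$ for a frequency derivative; the $\eps$ from the first step compensates the $1/\eps$ produced when $\partial_k$ hits $X$, so the resulting amplitude operator has order $0$, and a reduction to left symbols followed by Proposition~\ref{prop13} finishes the proof.
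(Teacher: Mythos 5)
Your reduction to Proposition~\ref{prop18} via the elliptic Fourier multiplier $\Lambda_D$ is a legitimate strategy, and the exact factorization $\opeg(a)=\opeg(b)\,\Lambda_D$ with $b=\Lambda^{-1}\sigma\in S^0_n$ is indeed exact (it's not an approximate commutation: since $\Lambda_D$ acts on the Fourier side \emph{before} $\opeg(b)$, folding the scalar symbol $\Lambda^{-1}$ into the integrand of \eqref{singularpseudop} absorbs $\Lambda$ with no error). The paper defers the proof of this proposition entirely to \cite{CGW2}, which most likely argues by a direct oscillatory-integral computation of the adjoint kernel -- precisely the fallback you sketch in your last paragraph -- so your peeling-off argument is a genuinely different, and in some ways cleaner, route; what it buys you is that the only nontrivial analysis is pushed into one composition estimate, for which you do not actually need the informal symbolic expansion you outline: the composition $\Lambda_D\,\opeg(b^*)-\opeg(\Lambda b^*)$ falls squarely under Proposition~\ref{prop20}(b) with $a=\Lambda\in S^1_n$ (a pure Fourier multiplier, trivially of the required form) and $b=b^*=\Lambda^{-1}\sigma^*\in S^0_n$ of the form $\tilde\sigma(\eps V,X,\gamma)$ with $V\in H^{s_0}$, and $n\ge 3d+4$ is exactly its threshold. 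Citing that proposition replaces your heuristic $\eps$-balancing discussion with a clean reference, and also accounts for the jump from $3d+3$ to $3d+4$ without hand-waving.

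One detail to fix: you conclude $\opeg(a)^*=\opeg(a^*)+\tilde R$ with $\tilde R:=R_1+\Lambda_D R_0$ bounded on $L^2$, but the duality identity in the statement places $R^{\eps,\gamma}$ on $u$, not on $v$. From your decomposition one gets
$\langle \opeg(a)u,v\rangle-\langle u,\opeg(a^*)v\rangle=\langle u,\tilde R v\rangle=\langle \tilde R^*u,v\rangle$,
so the remainder required by the statement is $R^{\eps,\gamma}=\tilde R^*$, not $\tilde R$ itself. This is harmless -- the adjoint of an $L^2$-bounded operator is $L^2$-bounded with the same norm -- but the step should be stated explicitly. With that correction, and with Proposition~\ref{prop20}(b) invoked in place of the informal expansion, your argument is complete.
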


\begin{prop}
\label{prop20}
(a)\; Let $a,b \in S_n^0$, $n \ge 2\, (d+1)$, and suppose $b=\sigma(\eps V,\X,\gamma)$ where $V \in 
H^{s_0}(\R^{d+1})$ for some $s_0>\frac{d+1}{2}+1$. Then there exists a constant $C \ge 0$ such that 
for all $\eps \in \, ]0,1]$ and for all $\gamma \ge 1$, there holds
\begin{equation*}
\forall \, u \in {\mathcal S} (\R^{d+1}) \, ,\quad
\left\| \opeg (a) \, \opeg (b) \, u -\opeg (a \, b) \, u \right\|_0 \le \dfrac{C}{\gamma} \, \| u \|_0 \, .
\end{equation*}
If $n \ge 3\, d +3$, then for another constant $C$, there holds
\begin{equation*}
\forall \, u \in {\mathcal S} (\R^{d+1}) \, ,\quad
\left\| \opeg (a) \, \opeg (b) \, u -\opeg (a \, b) \, u \right\|_{H^{1,\eps},\gamma} \le C \, \| u \|_0 \, ,
\end{equation*}
uniformly in $\eps$ and $\gamma$.

(b)\; Let $a \in S_n^1,b \in S_n^0$ or $a \in S_n^0,b \in S_n^1$, $n \ge 3\, d +4$, and in each case suppose 
$b=\sigma(\eps V,\X,\gamma)$ where  $V\in H^{s_0}(\R^{d+1})$ for some $s_0>\frac{d+1}{2}+1$. Then there 
exists a constant $C \ge 0$ such that for all $\eps \in \, ]0,1]$ and for all $\gamma \ge 1$, there holds
\begin{equation*}
\forall \, u \in {\mathcal S} (\R^{d+1}) \, ,\quad
\left\| \opeg (a) \, \opeg (b) \, u -\opeg (a \, b) \, u \right\|_0 \le C \, \| u \|_0 \, .
\end{equation*}
\end{prop}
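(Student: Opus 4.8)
The plan is to follow the usual symbolic-calculus route: compute the composed symbol of $\opeg(a)\opeg(b)$, isolate the ``diagonal'' term (which is $\opeg(ab)$), and feed the remainder into the mapping properties of Propositions~\ref{prop13} and~\ref{prop15}, all the while bookkeeping powers of $\eps$. Write $a=\sigma_a(\eps V_a(x,\theta),\X,\gamma)$ and $b=\sigma_b(\eps V_b(x,\theta),\X,\gamma)$ as in \eqref{singularsymbolp}. First I would reduce to $u\in\cS(\R^{d+1})$: the operator $\opeg(a)\opeg(b)-\opeg(ab)$ will be shown to be a singular pseudodifferential operator $\opeg(r_{\eps,\gamma})$ of order $m_a+m_b-1$, which in particular extends to $L^2$ by Proposition~\ref{prop13}, so it suffices to prove the estimates on the dense subspace $\cS$. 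Since $\opeg(b)u$ need not be Schwartz, after a routine regularization of $\sigma_a,\sigma_b$ (e.g. a frequency cut-off), with constants tracked so that the regularization may be removed at the end, one may write $\opeg(a)\opeg(b)=\opeg(c_{\eps,\gamma})$ with
\[
c_{\eps,\gamma}(x,\theta,\xi,k)=\frac{1}{(2\pi)^{d+1}}\iint e^{i\,w\cdot\zeta}\,\sigma_a\!\Big(\eps V_a(x,\theta),\,\X+\zeta'+\tfrac{\zeta_0\,\beta}{\eps},\,\gamma\Big)\,\sigma_b\!\Big(\eps V_b\big((x,\theta)-w\big),\,\X,\,\gamma\Big)\,dw\,d\zeta,
\]
$w=(w',w_0)$, $\zeta=(\zeta',\zeta_0)\in\R^d\times\R$, whose value at $w=\zeta=0$ is exactly $\sigma_a\sigma_b=ab$.

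The heart of the argument is to show that $r_{\eps,\gamma}:=c_{\eps,\gamma}-ab$ is, uniformly in $\eps\in(0,1]$ and $\gamma\ge1$, a singular symbol of order $m_a+m_b-1$ in a class to which Proposition~\ref{prop13} applies; then, when $m_a+m_b=0$, Proposition~\ref{prop13} with $m=-1$ gives the $O(1/\gamma)$ $L^2$-bound and Proposition~\ref{prop15} gives the $H^{1,\eps}$-bound, while in case (b) Proposition~\ref{prop13} with $m=0$ gives the $L^2$-bound. To see this I would expand $\sigma_a(\ldots,\X+\zeta'+\tfrac{\zeta_0\beta}{\eps},\ldots)-\sigma_a(\ldots,\X,\ldots)$ as a first-order Taylor remainder in the frequency slot and integrate by parts in $w$; this produces a representation of $r_{\eps,\gamma}$ whose integrand carries one frequency derivative of $\sigma_a$ against one position derivative of $b$, schematically $\partial_{\xi_j}a\cdot\partial_{x_j}b$ and $\partial_k a\cdot\partial_\theta b$. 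The decisive point is the $\eps$-bookkeeping: a derivative $\partial_{\xi_j}$ (or any $v$-derivative) on $\sigma_a(\ldots,\X,\ldots)$ is harmless, but $\partial_k$ produces a factor $\beta/\eps$ — and this is exactly compensated because the integration by parts that generates the $\partial_k$-term simultaneously forces a $\partial_\theta$ onto $b=\sigma_b(\eps V_b,\ldots)$, which contributes $\eps\,\partial_\theta V_b\cdot\nabla_v\sigma_b$; likewise $\partial_{x_j}$ on $b$ contributes $\eps\,\partial_{x_j}V_b\cdot\nabla_v\sigma_b$, an extra power of $\eps$. Hence $r_{\eps,\gamma}$ has the claimed order with $\eps$-uniform seminorms. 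The additional work needed to make the oscillatory integral absolutely convergent — exploiting the decay in $w$ of the position-Fourier transform of $\sigma_b(\eps V_b,\cdot)$, which is why $V_b$ must have enough Sobolev regularity ($V_b\in H^{s_0}$, $s_0>\tfrac{d+1}{2}+1$), together with enough frequency-smoothness of $\sigma_a$ — determines the thresholds on $n$: $n\ge2(d+1)$ to land in the class allowed by Proposition~\ref{prop13}, $n\ge3d+3$ for Proposition~\ref{prop15}, and $n\ge3d+4$ in case (b), where one order more must be controlled. All of this is carried out in \cite{CGW2}, to which I defer for the routine but lengthy details.

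The step I expect to be the main obstacle is exactly this $\eps$-uniformity. Viewed as a symbol in the genuine frequency $(\xi,k)$, $\sigma_a(\eps V_a,\X,\gamma)$ is \emph{not} a classical symbol with $\eps$-uniform bounds — a single $\partial_k$ costs $1/\eps$ — so the textbook composition asymptotics cannot be invoked off the shelf, and a naive estimate of $r_{\eps,\gamma}$ would blow up as $\eps\to0$. The resolution, and the one structural fact that has to be checked with care, is that in the composed symbol each $\partial_k$ on the left factor is irrevocably paired with a $\partial_\theta$ on the right factor, whose amplitude enters precisely as $\eps V_b$; the $1/\eps$ and $\eps$ cancel, and it is this cancellation — not any new estimate — that makes the singular calculus behave like an ordinary one. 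A secondary, purely technical, obstacle is that $\opeg(b)u$ is not Schwartz, which is handled by the regularization mentioned above.
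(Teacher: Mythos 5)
The paper does not supply a proof of Proposition \ref{prop20}: the only ``proof'' given is the sentence at the head of Section B.3, ``For proofs of the following results we refer to \cite{CGW2}.'' You also ultimately defer to \cite{CGW2}, so at the level of what this paper asserts, your conclusion and the paper's agree. Your added sketch is a plausible outline and correctly puts its finger on the central structural fact that makes the singular calculus work: a $\partial_k$ hitting the left symbol costs $\beta/\eps$, but in the composition remainder it is always contracted against a position derivative of the right symbol, whose $(x,\theta)$-dependence is through $\eps V_b$ and therefore contributes a compensating $\eps$. That pairing really is what rescues the symbol classes from blowing up as $\eps\to 0$.

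Two points in your outline deserve a caveat, though, and they are the places where a reader would push back. First, the quantity $r_{\eps,\gamma}=c_{\eps,\gamma}-ab$ is not literally a symbol of the class $S^{m_a+m_b-1}_n$ of Definition \ref{def4}: after the cancellation, a typical term reads $\partial_\theta V_b(x,\theta)\,\rho\bigl(\eps V_a(x,\theta),\eps V_b(x,\theta),\X,\gamma\bigr)$ with $\rho\in\bfS^{m_a+m_b-1}$, and the prefactor $\partial_\theta V_b$ is \emph{not} of the form $\eps W$ for a bounded $W$. So you cannot quote Propositions \ref{prop13} and \ref{prop15} verbatim. For the $L^2$ bound this is repairable by factoring out the bounded multiplication $M_{\partial_\theta V_b}$ in front of $\opeg(\rho)$, but for the $H^{1,\eps}$ estimate this factoring alone does not suffice (multiplication by a merely bounded function does not preserve $H^{1,\eps}$), which is a hint that the real argument does not proceed by reducing to the mapping theorems for the class $S^m_n$. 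Second, and related, your sketch treats the $L^2$ bound and the $H^{1,\eps}$ (smoothing) bound on an equal footing, but as the introduction of this paper emphasizes, the smoothing property of the remainder is precisely the improvement of \cite{CGW2} over the earlier calculus of \cite{W1}, where only an $L^2$-small remainder was obtained; the smoothing estimate is genuinely harder and, as far as I am aware, uses a phase-space splitting via the extended-calculus cutoff $\chi^e$ of Section B.4 rather than a one-line appeal to Proposition \ref{prop15}. None of this contradicts your high-level picture, but as written the sketch gives an over-optimistic impression of how directly the cited mapping theorems can be invoked.
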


\begin{prop}
\label{prop21}
Let $a \in S_n^{-1},b \in S_n^1$, $n \ge 3\, d +4$, and suppose $b=\sigma(\eps V,\X,\gamma)$ where $V 
\in H^{s_0}(\R^{d+1})$ for some $s_0>\frac{d+1}{2}+1$. Then $\opeg (a) \, \opeg (b)$ defines a bounded 
operator on $H^{1,\eps}$ and there exists a constant $C \ge 0$ such that for all $\eps \in \, ]0,1]$ and for 
all $\gamma \ge 1$, there holds
\begin{equation*}
\forall \, u \in {\mathcal S} (\R^{d+1}) \, ,\quad
\left\| \opeg (a) \, \opeg (b) \, u -\opeg (a \, b) \, u \right\|_{H^{1,\eps},\gamma} \le C \, \| u \|_0 \, .
\end{equation*}
\end{prop}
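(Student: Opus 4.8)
This is the ``smoothing is preserved under composition'' instance of the singular pulse calculus, and the plan is to prove the expected symbolic composition formula with a remainder of order $-1$ and then invoke the smoothing estimate already recorded. Concretely, I would show that
\[
\opeg(a)\,\opeg(b) = \opeg(a\,b) + \opeg(r) \, ,
\]
for some symbol $r \in S^{-1}_n$ whose seminorms are controlled, uniformly in $\eps \in \,]0,1]$ and $\gamma \ge 1$, by those of $a$ and $b$; granting this, Proposition \ref{prop15} applied to $\opeg(r)$ immediately gives $\opeg(a)\opeg(b) - \opeg(ab) = \opeg(r) : L^2(\R^{d+1}) \to H^{1,\eps}(\R^{d+1})$ boundedly, with constant depending only on the symbols, which is exactly the assertion.

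\textbf{The composition formula.} To obtain the displayed identity I would run the usual pseudodifferential composition argument in the $(x,\theta)$-variables with dual variables $(\xi,k)$: write $\opeg(a)\opeg(b)u$ as an iterated oscillatory integral, Taylor-expand the symbol of $b$ around the diagonal to first order, integrate by parts in the resulting phase, and keep the first-order integral remainder in closed form. This produces $\opeg(a)\opeg(b) = \opeg(a\#b)$ with
\[
a\#b = a\,b + \frac{1}{i}\Big( \sum_{j=1}^d \partial_{\xi_j} a \cdot \partial_{x_j} b + \partial_{k} a \cdot \partial_{\theta} b \Big) + r' \, ,
\]
$r'$ an explicit integral of second $(\xi,k)$-derivatives of $a$ against second $(x,\theta)$-derivatives of $b$. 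The point on which everything hinges --- and the reason the singular symbols are written in the form $\sigma(\eps V, \X, \gamma)$ --- is the bookkeeping of powers of $\eps$. Because $a = \sigma_a(\eps V_a, \X, \gamma)$ and $b = \sigma_b(\eps V_b, \X, \gamma)$, a derivative $\partial_{x_j}$ or $\partial_\theta$ hitting $b$ lands on the slot $\eps V_b$ and therefore gains a factor $\eps$ while leaving the order of $\sigma_b$ unchanged; a derivative $\partial_{\xi_j}$ hitting $a$ lowers the order of $\sigma_a$ by one with no change in $\eps$-power, and a derivative $\partial_k$ hitting $a$ passes through the slot $\xi+\tfrac{k\beta}{\eps}$, losing a factor $\eps^{-1}$ but again lowering the order by one. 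Thus in each term of $a\#b$ the $\eps^{-1}$'s coming from $\partial_k$-derivatives of $a$ are exactly matched by $\eps$'s coming from $\partial_\theta$-derivatives of $b$, the extra $\partial_x$-derivatives of $b$ only contribute harmless factors $\eps \le 1$, and the orders add so that the $|\alpha|=1$ terms have order $(-1-1)+1 = -1$ and the remainder $r'$ has order $(-1-2)+1 = -2$. Hence $r := a\#b - ab \in S^{-1}_n$, uniformly in $\eps$ and $\gamma$; the hypotheses $n \ge 3d+4$ and $V \in H^{s_0}$ with $s_0 > \tfrac{d+1}{2}+1$ are precisely what is needed to make sense of the derivatives of the symbols and of $V_b$ entering this computation and to bound the resulting $L^2$-operators.

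\textbf{Conclusion and main obstacle.} Combining the composition formula with Proposition \ref{prop15}, and noting $S^{-2}_n \subset S^{-1}_n$ so that the piece $r'$ of $r$ is also covered, finishes the proof. The only genuinely delicate point is making the oscillatory-integral remainder $r'$ estimate uniform in $(\eps,\gamma)$: the $1/\eps$ factors built into the singular symbols render the naive bounds useless, and one must verify term by term the cancellation of each $\eps^{-1}$ against an $\eps$ as described above, together with the corresponding gains of ordinary decay in $\X$. This is the heart of the singular calculus of \cite{CGW2}, and it is where I expect essentially all of the work to lie; the reduction to Proposition \ref{prop15} afterwards is routine.
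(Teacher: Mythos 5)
The paper itself gives no proof of Proposition \ref{prop21}: Section \ref{sect9} defers all of these composition and adjoint statements to \cite{CGW2}. So there is no internal argument to compare against; I can only assess whether your proposal would actually work. It does not, and the failure is exactly at the point you flag as "the heart" of the matter.

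Your plan is the classical one: write $\opeg(a)\opeg(b)=\opeg(a\#b)$, Taylor--expand to first order, and show that $r := a\#b - ab$ has singular order $-1$ uniformly in $\eps$, so that Proposition \ref{prop15} applies. The first--order terms are indeed controlled, and your $\eps$--bookkeeping is right there: one $\partial_k$ on $a=\sigma_a(\eps V_a,\X,\gamma)$ costs $\eps^{-1}$, one $\partial_\theta$ on $b=\sigma_b(\eps V_b,\X,\gamma)$ gains one $\eps$, so $\partial_k a\,\partial_\theta b$ is $O(1)$. But your claim that this cancellation persists in the integral remainder $r'$ --- i.e.\ that "each $\partial_\theta$ on $b$ gains a factor $\eps$" so the second--order remainder has order $-2$ --- is false. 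Differentiating $\sigma_b(\eps V_b(x,\theta),\X,\gamma)$ twice in $\theta$ gives by the chain and Leibniz rules
\begin{equation*}
\partial_\theta^2 b \;=\; \eps\,(\partial_\theta^2 V_b)\cdot(\nabla_v\sigma_b)(\eps V_b,\X,\gamma)
\;+\;\eps^2\,(\partial_\theta V_b)^{\otimes 2}:(\nabla_v^2\sigma_b)(\eps V_b,\X,\gamma)\, ,
\end{equation*}
so the first piece carries only a single power of $\eps$. Pairing it with $\partial_k^2 a = \eps^{-2}\,(\beta\cdot\nabla_\xi)^2\sigma_a(\eps V_a,\X,\gamma)$ (order $-3$) produces a contribution to $r'$ of size $\eps^{-1}(\gamma^2+|\X|^2)^{-1}$. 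The corresponding operator has $L^2\to H^{1,\eps}$ norm of order $(\eps\gamma)^{-1}$, which blows up as $\eps\to 0$; your cancellation simply does not occur for the term of $D^2_{\xi,k}a : D^2_{x,\theta}b$ in which one $\partial_\theta$ lands on $\partial_\theta V_b$ rather than on the outer symbol. This is precisely the obstruction that makes singular composition nontrivial, and it is why \cite{CGW2} cannot proceed by the "usual pseudodifferential composition argument" with a second--order Taylor remainder. So the step "$r'\in S^{-2}_n$" is wrong as stated, and with it the reduction to Proposition \ref{prop15} collapses.

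Two smaller remarks. First, even where the $\eps$--powers do work out, the first--order and remainder terms are iterated oscillatory integrals built from $\sigma_a$, $\sigma_b$, $V_a$, $V_b$ and their derivatives; they are not literally of the required form $\sigma(\eps V,\X,\gamma)$ in Definition \ref{def4}, so membership in $S^{-1}_n$ is not the right criterion --- one must estimate these remainder operators directly, not by plugging them into Proposition \ref{prop15}. Second, the regularity hypotheses $n\ge 3d+4$ and $V\in H^{s_0}$ with $s_0>\tfrac{d+1}{2}+1$ are indeed there to control derivatives of the symbols, but they will not rescue an estimate that is wrong by a power of $\eps$; the resolution in \cite{CGW2} must lie elsewhere (e.g.\ a frequency decomposition tailored to $\X=\xi+k\beta/\eps$ that avoids ever taking a full second--order Taylor remainder in $k$), not in more smoothness.
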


\noindent Our final result is G{\aa}rding's inequality.

\begin{theo}
\label{thm11}
Let $\sigma \in {\bf S}^0$ satisfy $\text{\rm Re} \, \sigma (v,\xi,\gamma) \ge C_K>0$ for all $v$ in a compact 
subset $K$ of ${\mathcal O}$. Let now $a \in S_0^n$, $n \ge 2\, d+2$ be given by \eqref{singularsymbolp}, 
where $V\in H^{s_0}(\R^{d+1})$ for some $s_0>\frac{d+1}{2}+1$ and is valued in a convex compact subset 
$K$. Then for all $\delta >0$, there exists $\gamma_0$ which depends uniformly on $V$, the constant $C_K$ 
and $\delta$, such that for all $\gamma \ge \gamma_0$ and all $u \in {\mathcal S}(\R^{d+1})$, there holds
\begin{equation*}
\text{\rm Re } \langle \opeg (a) \, u ;u \rangle_{L^2} \ge (C_K-\delta) \, \| u \|_0^2 \, .
\end{equation*}
\end{theo}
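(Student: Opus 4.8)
The plan is to adapt the classical proof of the (sharp) G{\aa}rding inequality: build an approximate square root of the symbol and control all adjoint and composition errors by $O(1/\gamma)$ using the singular calculus of Propositions~\ref{prop18} and \ref{prop20}. Note that $a\in S_n^0$ with $n\ge 2d+2=2(d+1)$, which is exactly the regularity those propositions demand, and that $V\in H^{s_0}$ with $s_0>\frac{d+1}{2}+1$ is the hypothesis they require on the function appearing in \eqref{singularsymbolp}.

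\textbf{Step 1: reduce to the Hermitian part.} Since $2\,\text{\rm Re}\,\langle\opeg(a)\,u,u\rangle_{L^2}=\langle(\opeg(a)+\opeg(a)^*)\,u,u\rangle_{L^2}$, Proposition~\ref{prop18} gives $\opeg(a)+\opeg(a)^*=\opeg(a+a^*)+E_1$ with $\|E_1\|_{L^2\to L^2}\le C/\gamma$ uniformly for $\eps\in(0,1]$, $\gamma\ge1$. The symbol $a+a^*$ is the singular symbol associated with $2\,\tilde\sigma$, where $\tilde\sigma:=\tfrac12(\sigma+\sigma^*)$ is the Hermitian part; it still lies in $\bfS^0(\cO)$ and satisfies $\tilde\sigma(v,\xi,\gamma)\ge C_K\,I$ for $v\in K$. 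Hence, writing $\tilde a$ for the singular symbol of $\tilde\sigma$, it suffices to bound $\text{\rm Re}\,\langle\opeg(\tilde a)\,u,u\rangle_{L^2}$ from below.

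\textbf{Step 2: the square-root symbol.} First I would set $q(v,\xi,\gamma):=\bigl(\tilde\sigma(v,\xi,\gamma)-(C_K-\tfrac{\delta}{2})\,I\bigr)^{1/2}$, the positive Hermitian square root; it is well defined and smooth on $K\times\R^d\times[1,\infty)$ because there $\tilde\sigma-(C_K-\tfrac{\delta}{2})\,I\ge\tfrac{\delta}{2}\,I>0$. Differentiating the identity $q^2=\tilde\sigma-(C_K-\tfrac{\delta}{2})\,I$ and inducting on the order $|\alpha|+|\nu|$ of the derivatives, one checks that $q\in\bfS^0(\cO)$, with seminorms depending only on $\delta$, on $K$, and on the seminorms of $\sigma$. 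Let $b_q\in S_n^0$ be the associated singular symbol as in \eqref{singularsymbolp}.

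\textbf{Step 3: calculus and conclusion.} Since $q$ is Hermitian, $b_q^*\,b_q$ is the singular symbol of $\tilde\sigma-(C_K-\tfrac{\delta}{2})\,I$. Combining Propositions~\ref{prop18} and \ref{prop20}(a),
\begin{equation*}
\opeg(b_q)^*\,\opeg(b_q)=\opeg(b_q^*\,b_q)+E_2=\opeg(\tilde a)-(C_K-\tfrac{\delta}{2})\,\Id+E_2,
\end{equation*}
with $\|E_2\|_{L^2\to L^2}\le C/\gamma$. Pairing with $u$ and using $\langle\opeg(b_q)^*\opeg(b_q)u,u\rangle_{L^2}=\|\opeg(b_q)u\|_0^2\ge0$ gives $\text{\rm Re}\,\langle\opeg(\tilde a)u,u\rangle_{L^2}\ge(C_K-\tfrac{\delta}{2}-\tfrac{C}{\gamma})\|u\|_0^2$. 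Inserting this into Step~1 and choosing $\gamma_0\ge1$ large enough that the two $O(1/\gamma_0)$ error terms together contribute at most $\tfrac{\delta}{2}\|u\|_0^2$ yields $\text{\rm Re}\,\langle\opeg(a)u,u\rangle_{L^2}\ge(C_K-\delta)\|u\|_0^2$ for $\gamma\ge\gamma_0$, with $\gamma_0$ depending only on $C_K$, $\delta$, and $V$ (through its $\cC^n_b$, equivalently $H^{s_0}$, norm), as claimed. The only non-routine point is Step~2 — verifying that the matrix-valued square root $q$ lies in $\bfS^0(\cO)$ with controlled seminorms; the uniform bound $\tilde\sigma-(C_K-\tfrac{\delta}{2})I\ge\tfrac{\delta}{2}I$ is precisely what makes the operator square root smooth and lets the inductive differentiation of $q^2$ close. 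Everything else is a direct application of the calculus recalled above.
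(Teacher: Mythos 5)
The paper does not include a proof of Theorem \ref{thm11}: Appendix B states explicitly that the proofs of Propositions \ref{prop18}--\ref{prop21} and Theorem \ref{thm11} are deferred to \cite{CGW2}, so there is no in-paper argument to compare against. Judged on its own terms, your argument is the standard ``square-root symbol'' proof of G{\aa}rding's inequality and it goes through in this calculus: Step~1 correctly reduces to the Hermitian part via Proposition \ref{prop18} (the hypothesis $n\ge 2d+2 = 2(d+1)$ and $V\in H^{s_0}$, $s_0>\tfrac{d+1}{2}+1$, is exactly what that proposition needs), Step~3 uses Propositions \ref{prop18} and \ref{prop20}(a) plus the $L^2$-boundedness of Proposition \ref{prop13} to collect all errors into an $O(1/\gamma)$ remainder, and the nonnegativity of $\|\opeg(b_q)u\|_0^2$ gives the lower bound.

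There is one point in Step~2 that you gloss over and should make explicit. The square root $q=(\tilde\sigma-(C_K-\tfrac{\delta}{2})I)^{1/2}$ is only defined where $\tilde\sigma-(C_K-\tfrac{\delta}{2})I$ is positive definite, i.e.\ on $K\times\R^d\times[1,\infty)$, whereas Definition \ref{n1} requires an element of $\bfS^0(\cO')$ to be smooth on an \emph{open} set $\cO'\times\R^d$. To form the singular symbol $b_q$ you must therefore replace $\cO$ by a smaller open neighborhood $\cO'\supset K$ on which the lower bound $\tilde\sigma-(C_K-\tfrac{\delta}{2})I\ge\tfrac{\delta}{4}I$ still holds; this follows by using the uniform (in $(\xi,\gamma)$) bound on $\partial_v\tilde\sigma$ over a slightly larger compact set, which is part of the $\bfS^0$ seminorm package. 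Since $\eps V$ takes values in $K\subset\cO'$, this shrinking is harmless and $b_q\in S^0_n$ with seminorms controlled by $\delta$, $K$, and $\sigma$, which is what you need for the uniform choice of $\gamma_0$. With that repair your proof is complete, and the sketched induction showing $q\in\bfS^0(\cO')$ (differentiate $q^2=\tilde\sigma-cI$, invert the Sylvester map $X\mapsto qX+Xq$ using $q\ge\sqrt{\delta/4}\,I$) is routine.
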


\subsection{Extended calculus}
\label{extended}

\emph{\quad} In the proof of Corollary \ref{estimH1} we use a slight extension of the singular calculus. For 
given parameters $0<\delta_1<\delta_2<1$, we choose a cutoff $\chi^e (\xi',\frac{k\, \beta}{\eps},\gamma)$ 
such that
\begin{align*}
\begin{split}
&0\leq \chi^e \leq 1\, ,\\
&\chi^e \left( \xi',\dfrac{k\, \beta}{\eps},\gamma \right) =1 \text{ on } \left\{
(\gamma^2 +|\xi'|^2)^{1/2} \leq \delta_1 \, \left| \dfrac{k\, \beta}{\eps} \right| \right\} \, ,\\
&\mathrm{supp }\chi^e \subset \left\{ (\gamma^2 +|\xi'|^2)^{1/2} \leq \delta_2 \, \left| \dfrac{k\, \beta}{\eps} \right|
\right\} \, ,
\end{split}
\end{align*}
and define a corresponding Fourier multiplier $\chi_D$ in the extended calculus by the formula \eqref{singularpseudop} 
with $\chi^e (\xi',\frac{k\, \beta}{\eps},\gamma)$ in place of $\sigma(\eps V,X,\gamma)$. Composition laws involving 
such operators are proved in \cite{CGW2}, but here we need only the fact that part {\it (a)} of Proposition \ref{prop20} 
holds when either $a$ or $b$ is replaced by an extended cutoff $\chi^e$.

\bibliographystyle{alpha}
\bibliography{Pulses2}
\end{document}